\documentclass[11pt,oneside]{article}

\usepackage{pzorin-symbols}
\usepackage{pzorin-fonts}
\usepackage{hyperref}
\hypersetup{
  pdfstartview=FitH,
  pdfauthor={Pavel Zorin-Kranich},
  pdftitle={Martingale inequalities},
  pdfborder={0 0 0},
  bookmarks=true,
  unicode
}

\usepackage[paperheight=279mm,paperwidth=16cm,textheight=26cm,textwidth=14cm]{geometry}
\usepackage[english]{babel}

\usepackage[style=alphabetic]{biblatex}
\addbibresource{pzorin.bib}

\newcommand{\bmin}{\wedge}
\newcommand{\bmax}{\vee}
\newcommand{\Ito}{It\^o}

\newcommand{\pred}{\mathrm{pred}}
\newcommand{\bv}{\mathrm{bv}}
\DeclarePairedDelimiterXPP\EE[1]{\E}{\lparen}{\rparen}{}{\renewcommand\given{\SetSymbol[\delimsize]}#1} % Conditional expectation \EE{ f \given A }
\def\dist#1#2{\abs{#1-#2}}
\def\stoptime#1#2{\tau^{(#1)}_{#2}}
\newcommand{\startat}[2]{\prescript{#1}{}\!#2}
\def\P{\mathbb{P}}
\def\Pimax{\Pi^*}
\newcommand\cadlag{c\`{a}dl\`{a}g}

% Separator for lectures
\newcounter{vorl}
\def\vorllabel#1{[\arabic{vorl}:\, #1]}
\def\neuevorl#1{\clearpage \hfill \lastvorl%
\hrule \stepcounter{vorl}\global\def\lastvorl{\vorllabel{#1}}%
\hfill \lastvorl \\}
\def\lastvorl{}
% Deactivate separator for the arxiv version
\def\neuevorl#1{}

\begin{document}
\setcounter{section}{-1}

\title{Martingale inequalities}
\author{Pavel Zorin-Kranich}
\date{University of Bonn, Winter term 2021/22}
\maketitle

These lecture notes cover a few techniques for proving $L^{p}$ estimates for martingales and the most basic applications to \Ito{} integration and rough paths.
They have been created for a course that consisted of 13 lectures \'a 90 minutes.
Due to leaving academia, I didn't have time to polish these notes much, but you are welcome to, because:

This work is licensed under the Creative Commons Attribution 4.0 International License.
To view a copy of this license, visit
\begin{center}
\url{https://creativecommons.org/licenses/by/4.0/}
\end{center}
or send a letter to Creative Commons, PO Box 1866, Mountain View, CA 94042, USA.

\paragraph*{Literature}
There are two books whose titles include our topic \cite{MR0448538,MR1224450}.
They can be useful for some of the basics, but are overall outdated.

For vector-valued martingales, modern references are \cite{MR3617205,MR3617459}.

The only book about sharp constants in martingale inequalties is \cite{MR2964297}.

An extensive treatment of rough paths can be found in \cite{MR4174393}.

One stochastic analysis text that pays attention to inequalities is \cite{MR4226142}.

\tableofcontents

\clearpage

\neuevorl{2021-10-12}
\section{Review of martingale basics}
Random variables will be usually defined on a filtered probability space $(\Omega,(\calF_{n})_{n\in\N},\mu)$.
We denote by $\calF_{\infty}$ the $\sigma$-algebra generated by $\cup_{n\in\N} \calF_{n}$.
The following examples are useful to keep in mind.
\begin{example}[Dyadic filtration]
\label{ex:dyadic-filtration}
$\Omega = [0,1]$, $\mu$ Lebesgue measure, $\calF_{n}$ is the $\sigma$-algebra generated by the dyadic intervals of length $2^{-n}$, that is, intervals of the form $[2^{-n}k,2^{-n}(k+1)]$ with $k\in\Z$.

A higher dimensional version involves dyadic cubes.
One can also construct similar filtrations on more general manifolds, or even metric measure spaces.
\end{example}

\begin{example}[Atomic filtrations]
$\Omega = [0,1]$, $\mu$ Lebesgue measure, $\calF_{n}$ is a $\sigma$-algebra generated by finitely many intervals.

We recall that an \emph{atom} for a measure $\mu$ on a $\sigma$-algebra $\calF$ is a set $A\in\calF$ such that $\mu(A)>0$ and, for every $A' \in \calF$ with $A' \subseteq A$, we have $\mu(A') \in \Set{0,\mu(A)}$.
In an atomic $\sigma$-algebra, every measurable set is a finite union of atoms.

One can view martingale analysis as analysis of atomic filtrations, if we do not allow any constants to depend on the filtration.
All results that we are interested in can be transferred from atomic filtrations to general filtrations.
However, it is technically convenient to always use general filtrations, since they can appear in applications.
\end{example}

\begin{example}
$\Omega = [0,1]^{\N}$, $\mu$ the product of Lebesgue measures,
\begin{equation}
\label{eq:1}
\calF_{n} = \Set{ B \times [0,1]^{\Set{n,n+1,\dotsc}} \given B \subseteq [0,1]^{\Set{0,\dotsc,n-1}} \text{ Borel}}.
\end{equation}
This filtration appears in the analysis of independent random variables, and gives a good idea of how a general filtration looks like.
\end{example}

An \emph{adapted process} is a sequence of functions $(f_{n})$ such that, for every $n\in\N$, the function $f_{n}$ is $\calF_{n}$-measurable.

An adapted process $f$ is called \emph{predictable} if, for every $n>0$, the function $f_{n}$ is $\calF_{n-1}$-measurable.

For nested $\sigma$-algebras $\calF' \subseteq \calF$ on $\Omega$, the \emph{conditional expectation} is the orthogonal projection
\[
\E(\cdot|\calF') : L^{2}(\Omega,\calF,\mu) \to L^{2}(\Omega,\calF',\mu).
\]
The conditional expectation has the following properties.
Here and later, all identities and inequalities are meant to hold almost surely, unless mentioned otherwise.
\begin{enumerate}
\item\label{it:ce-one} $\E(\one|\calF')=\one$.
\item\label{it:ce-Lp-contr} For every $p\in [1,\infty]$, $\E$ extends to a contraction $L^{p}(\calF) \to L^{p}(\calF')$
\item\label{it:ce-int} $\int \E(f|\calF') = \int f$ for every $f\in L^{1}(\calF)$.
\item\label{it:ce-pos} Positivity: $f\geq 0 \implies \E(f|\calF')\geq 0$.
\item\label{it:ce-prod} Assume that $f\in L^{1}(\calF)$, $g\in L^{0}(\calF')$, and either $fg\in L^{1}(\calF)$ or $f\geq 0, \E(f|\calF')g\in L^{1}(\calF')$. Then
\[
\E(fg|\calF') = \E(f|\calF')g \text{ in } L^{1}(\calF').
\]
\end{enumerate}

Suppose that $(X,\calF,\mu)$ is a regular measure space and $\calF' \subseteq \calF$ is a sub-$\sigma$-algebra.
Then there exists an essentially unique measurable map $(X,\calF') \to \calM(X)$, $y\mapsto\mu_{y}$ such that, for every $f \in L^{1}(\calF)$, for $\mu$-a.e.\ $y\in X$, we have $f\in L^{1}(X,\mu_{y})$, and $\int f \dif\mu_{y} = \E(f|\calF')(y)$.
This map is called a \emph{measure disintegration}.
A measure disintegration satisfies $\mu_{x}=\mu_{y}$ for $\mu$-a.e.\ $y$ and $\mu_{y}$-a.e.\ $x$.

\begin{example}
If $\calF'$ is an atomic filtration, then we can choose a collection of disjoint atoms $\calA \subseteq \calF'$ with $\mu(\cup_{A\in\calA} A)=1$.
Then, for each $x\in A \in \calA$, we can set $\dif\mu_{x} := \mu(A)^{-1} \one_{A} \dif\mu$, and
\[
\E(f | \calF')(x) = \mu(A)^{-1} \int_{A} f(x') \dif\mu(x').
\]
\end{example}

\begin{example}
Let $\Omega = [0,1]^{2}$ with the Lebesgue measure, $\calF$ the Borel $\sigma$-algebra, and $\calF'$ the Borel $\sigma$-algebra in the first variable, like in \eqref{eq:1}.
Then, we can choose $\mu_{(x,y)}$ to be the Lebesgue measure on $\Set{x} \times [0,1]$, and
\[
\E(f | \calF')(x,y) = \int_{0}^{1} f(x,y') \dif y'.
\]
\end{example}

A \emph{martingale} is an adapted process with values in $\bbC$ (later also in a Banach space) such that, for every $m \leq n$, the function $f_{n}$ is integrable, and we have
\[
f_{m} = \E(f_{n} | \calF_{m}).
\]
\begin{example}
Sums of independent random variables
\end{example}

\begin{example}[Dyadic martingale]
Let $f$ be an integrable function on $[0,1]$ with the Lebesgue measure and $f_{n} = \E(f|\calF_{n})$ (the same definition works for any filtration).

This construction can often be used to transfer results from martingales to a real analysis setting.
The main difference between the analysis of dyadic martingales and general martingales is that the Calder\'on--Zygmund decomposition does not work for general martingales.
A substitute that does work for general martingales is the Gundy decomposition.
\end{example}

The main reason why this construction does not produce all possible martingales is that it may happen that $\lim_{n\to\infty}\norm{f_{n}}=\infty$, in which case there might be no function $f$ with $f_{n} = \E(f|\calF_{n})$.
A well-known example involves a doubling betting strategy.

\begin{example}
Any integrable process can be written as the sum of a process with predictable jumps and a martingale.
\end{example}

A \emph{stopping time} is a function $\tau : \Omega \to \bar{\N} = \N \cup \Set{\infty}$ such that, for every $n\in\N$, we have $\Set{\tau \leq n} \in \calF_{n}$.

\begin{example}
Any constant function is a stopping time.
\end{example}

\begin{example}[Hitting time]
If $f$ is an adapted process with values in a metric space $X$ and $B \subseteq X$ is a Borel set, then
\[
\tau := \inf \Set{t \given f_{t} \in B}
\]
is a stopping time, called the \emph{first hitting time} of $B$.
\end{example}

If $\sigma,\tau$ are stopping times, then $\sigma \bmin \tau$ and $\sigma \bmax \tau$ are also stopping times.

For a stopping time $\tau$ and a process $f$, the \emph{stopped process} $f^{\tau}$ is defined by
\[
f^{\tau}_{t} := f_{\tau \bmin t}.
\]
If $f$ is a martingale, then the stopped process $f^{\tau}$ is again a martingale.

If $\tau$ is a stopping time, the corresponding $\sigma$-algebra is defined by
\[
\calF_{\tau} := \Set{ A\in\calF_{\infty} \given (\forall n) A \cap \Set{\tau \leq n} \in \calF_{n}}.
\]
The function $\tau$ is $\calF_{\tau}$-measurable.
We abbreviate $\E_{\tau} f := \E(f | \calF_{\tau})$.

The \emph{optional sampling} theorem says that, for every discrete time martingale $f$, bounded stopping time $\tau$, and another stopping time $\sigma$, we have
\[
f_{\sigma \bmin \tau} = \E_{\sigma} f_{\tau}.
\]

% Local Variables:
% TeX-engine: xetex
% TeX-master: "martingale-inequalities.tex"
% End:
\section{Maximal and square functions}

You are probably already familiar with the Lebesgue differentialtion theorem.
The best proof of that theorem uses the maximal operator to absorb error terms.
This idea is used in many places in analysis, and the study of martingales is one of these places.

\begin{definition}[Maximal operator]
For an adapted process $f$ with values in a normed space, we write
\[
Mf_{n} = (Mf)_{n} := \sup_{k\leq n} \abs{f_{k}}.
\]
\end{definition}
\begin{remark}
Since our martingales are indexed by a countable set, we can use a pointwise supremum here.
In continuous time, we would need a lattice supremum.
\end{remark}

A \emph{submartingale} is an adapted process with values in $\R_{\geq 0}$ such that, for every $m \leq n$, we have
\[
f_{m} \leq \E_{m}f_{n}.
\]
\begin{example}
If $(f_{n})$ is a martingale, then $(\abs{f_{n}})$ is a submartingale.
\end{example}
\begin{example}
If $f$ is any adapted process, then $Mf$ is a submartingale.
\end{example}

If $f$ is a submartingale, then, for any $p\in [1,\infty]$ and $m\leq n$, we have
\[
\norm{f_{m}}_{p} \leq \norm{f_{n}}_{p}.
\]
We define the $L^{p}$ norm of a (sub-)martingale by
\[
\norm{f}_{p} := \sup_{n} \norm{f_{n}}_{p}.
\]
With this definition, the maximal operator is clearly bounded on $L^{\infty}$.
The next result looks a lot like an $L^{1}\to L^{1,\infty}$ bound for $M$, but is in fact stronger and more convenient to use.

\begin{lemma}
\label{lem:Doob-weak}
Let $f$ be a submartingale with values in $\R_{\geq 0}$.
Then, for every $\lambda>0$ and $n\in\N$, we have
\[
\lambda \abs{\Set{Mf_{n} > \lambda}}
\leq
\int_{\Set{Mf_{n}>\lambda}} f_{n} \dif\mu.
\]
\end{lemma}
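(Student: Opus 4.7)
The plan is to use a stopping time argument to localize the level set $\{Mf_n > \lambda\}$ and then exploit the submartingale property slice by slice.

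First, I would introduce the hitting time
\[
\tau := \inf\Set{k \given f_k > \lambda},
\]
which is a stopping time by the first-hitting-time example. The key observation is the set identity $\Set{Mf_n > \lambda} = \Set{\tau \leq n}$, which follows immediately from the definition of $Mf_n$ as the running supremum.

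Next I would decompose $\Set{\tau \leq n} = \bigsqcup_{k=0}^{n} \Set{\tau = k}$, noting that $\Set{\tau = k} \in \calF_k$ for each $k$. On $\Set{\tau = k}$ we have $f_k > \lambda$ by definition of $\tau$, so
\[
\lambda \mu(\Set{\tau = k}) \leq \int_{\Set{\tau = k}} f_k \dif\mu.
\]
Now I apply the submartingale property $f_k \leq \E_k f_n$ together with the defining property of conditional expectation (integration against $\calF_k$-measurable indicators):
\[
\int_{\Set{\tau = k}} f_k \dif\mu \leq \int_{\Set{\tau = k}} \E_k f_n \dif\mu = \int_{\Set{\tau = k}} f_n \dif\mu.
\]
Summing over $k = 0, \dots, n$ and using that the sets $\Set{\tau = k}$ are disjoint and union to $\Set{Mf_n > \lambda}$ yields the claimed inequality.

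I do not anticipate a real obstacle here; the only point that requires care is checking that $f_\tau > \lambda$ on $\Set{\tau \leq n}$ (which is automatic in discrete time because the infimum is attained). The structural content is simply that the stopping time converts the pointwise supremum into a well-behaved single-time quantity to which the submartingale inequality can be applied directly.
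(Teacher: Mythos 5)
Your proof is correct and follows essentially the same argument as the paper: define the hitting time $\tau$, use the identity $\Set{Mf_n > \lambda} = \Set{\tau \leq n}$, decompose into the slices $\Set{\tau = k}$, apply $f_k > \lambda$ there, and then upgrade $f_k$ to $f_n$ via the submartingale property before summing. No meaningful difference in approach.
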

\begin{proof}
For a fixed $\lambda$, define the stopping time
\[
\tau := \inf \Set{k \given f_{k} > \lambda}.
\]
Then, $\Set{Mf_{n} > \lambda} = \Set{\tau \leq n}$.
Hence,
\begin{align*}
\lambda \abs{\Set{Mf_{n} > \lambda}}
&=
\lambda \sum_{k \leq n} \abs{\Set{\tau = k}}
\\ &\leq
\sum_{k \leq n} \int_{\Set{\tau = k}} f_{k} \dif\mu
\\ \text{(submartingale property)} &\leq
\sum_{k \leq n} \int_{\Set{\tau = k}} f_{n} \dif\mu
\\ &=
\int_{\Set{Mf_{n} > \lambda}} f_{n} \dif\mu.
\qedhere
\end{align*}
\end{proof}

\begin{remark}[$L^{1,\infty}$ norm]
By Chebychev's inequality, for any measurable function $g$, we have
\[
\sup_{\lambda>0} \lambda^{-1} \abs{ \Set{g>\lambda} }
\leq \norm{g}_{1}.
\]
The converse inequality is false with any constant, as shown by the example $g(x)=1/x$ on the measure space $\R_{>0}$ with the Lebesgue measure.
The left-hand side of the above inequality is the so-called $L^{1,\infty}$ seminorm of $g$.
As suggested by the notation, $L^{1,\infty}$ is part of a larger family of Lorentz spaces, which is in turn contained in Orlicz spaces, but we will not talk about such generalities.
\end{remark}

\begin{theorem}[Doob's maximal inequality]
\label{thm:Doob-Lp}
Let $f$ be a submartingale with values in $\R_{\geq 0}$.
Then, for every $p \in (1,\infty]$, we have
\[
\norm{Mf}_{p} \leq p' \norm{f}_{p}.
\]
Here and later, $p'$ denotes the Hölder conjugate: $1/p + 1/p' = 1$.
\end{theorem}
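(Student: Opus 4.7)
The plan is to reduce to the weak-type estimate from Lemma~\ref{lem:Doob-weak} via the layer cake formula, then to close the resulting inequality by Hölder's inequality. The main obstacle is purely bookkeeping: one must ensure that $\norm{Mf_n}_p$ is finite before dividing by $\norm{Mf_n}_p^{p-1}$, because otherwise the final step is illegal.

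First I would dispose of the case $p=\infty$, where $Mf_n \leq \sup_{k\leq n}\norm{f_k}_\infty \leq \norm{f}_\infty$ holds trivially. Then I would fix $p\in(1,\infty)$ and $n\in\N$, and note that $Mf_n \leq \sum_{k\leq n}\abs{f_k} \in L^p$ since $f$ is a submartingale with $\norm{f}_p<\infty$ (otherwise there is nothing to prove). This guarantees that the expression $\norm{Mf_n}_p$ is finite and can be divided out later.

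The core computation is to apply the layer cake formula and Lemma~\ref{lem:Doob-weak} followed by Fubini:
\begin{align*}
\norm{Mf_n}_p^p
&= p\int_0^\infty \lambda^{p-1}\,\mu\bigl(\Set{Mf_n>\lambda}\bigr)\dif\lambda \\
&\leq p\int_0^\infty \lambda^{p-2}\int_{\Set{Mf_n>\lambda}} f_n\dif\mu\dif\lambda \\
&= p\int f_n\int_0^{Mf_n}\lambda^{p-2}\dif\lambda\dif\mu
= \frac{p}{p-1}\int f_n\,(Mf_n)^{p-1}\dif\mu.
\end{align*}
Hölder's inequality with exponents $p$ and $p'=p/(p-1)$ then yields
\[
\norm{Mf_n}_p^p \leq p'\,\norm{f_n}_p\,\norm{(Mf_n)^{p-1}}_{p'} = p'\,\norm{f_n}_p\,\norm{Mf_n}_p^{p-1}.
\]
Since $\norm{Mf_n}_p<\infty$, I divide by $\norm{Mf_n}_p^{p-1}$ to obtain $\norm{Mf_n}_p \leq p'\norm{f_n}_p \leq p'\norm{f}_p$, and finally take the supremum over $n$ (using monotone convergence for $Mf_n \nearrow Mf$) to conclude.

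The only place where care is needed is the finiteness step: without it, the Hölder-then-divide trick would be vacuous. One could alternatively avoid this issue entirely by applying the argument to the truncated maximal function $Mf_n \bmin N$ and letting $N\to\infty$ at the end, which is a standard safety net when $L^p$-finiteness is not known a priori.
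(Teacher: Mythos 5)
Your proposal is correct and follows essentially the same route as the paper: layer cake formula combined with the weak-type estimate of Lemma~\ref{lem:Doob-weak}, then Fubini, Hölder, and cancellation of $\norm{Mf_n}_p^{p-1}$ after verifying that $Mf_n \in L^p$. The only cosmetic difference is in how you justify finiteness (bounding $Mf_n$ by $\sum_{k\leq n} f_k$ rather than invoking $L^p$-contractivity directly), and your explicit handling of $p=\infty$, which the paper treats as obvious; both arguments are sound.
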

\begin{proof}
By the layer cake formula and Lemma~\ref{lem:Doob-weak}, we write
\begin{align*}
\norm{Mf_{n}}_{p}^{p}
&=
\int_{0}^{\infty} p \lambda^{p-1} \mu(Mf_{n}>\lambda) \dif \lambda
\\ &\leq
\int_{0}^{\infty} p \lambda^{p-2} \int_{\Set{Mf_{n}>\lambda}} f_{n} \dif\mu \dif \lambda
\\ &=
\int_{\Omega} \int_{0}^{Mf_{n}} p \lambda^{p-2} f_{n} \dif \lambda \dif\mu
\\ &=
\frac{p}{p-1} \int_{\Omega} (Mf_{n})^{p-1} f_{n} \dif\mu
\\ &\leq
\frac{p}{p-1} (\int_{\Omega} f_{n}^{p} \dif\mu)^{1/p} (\int_{\Omega} (Mf_{n})^{p} \dif\mu)^{1-1/p}.
\end{align*}
Since $f_{n} \in L^{p}$ implies $f_{k} \in L^{p}$ for all $k\leq n$ by $L^{p}$ contractivity of conditional expectations, we have $Mf_{n} \in L^{p}$.
Hence, we can cancel suitable powers of $\norm{Mf_{n}}_{p}$ on both sides and obtain the claim.
\end{proof}

There are other ways to deduce Theorem~\ref{thm:Doob-Lp} from Lemma~\ref{lem:Doob-weak}, such as real interpolation with the $L^{\infty}$ estimate.
But the above proof does not rely on the $L^{\infty}$ estimate, only on the $L^{1}$-like estimate in Lemma~\ref{lem:Doob-weak}.

It is often the case in martingale theory that inequalities near $L^{1}$ are the most powerful ones.
One can justify this by the observation that $L^{1}$ is the minimal assumption required to even define a martingale.

\neuevorl{2021-10-19}

If $f$ is a martingale, then the increments $df_{k}$ are orthogonal in the Hilbert space $L^{2}(\Omega)$.
Indeed, more generally, for any $n$ and $g \in L^{0}(\calF_{n-1})$, we have
\begin{equation}
\label{eq:increment-orthogonal-to-past}
\E(g df_{n})
= \E ( \E(g df_{n} | \calF_{n-1}))
= \E ( g \E( df_{n} | \calF_{n-1}))
= \E ( g \cdot 0)
= 0.
\end{equation}

\begin{definition}[Square function]
For a martingale $f$, we write
\[
Sf_{n} := \bigl( \sum_{k\leq n} \abs{df_{k}}^{2} \bigr)^{1/2}.
\]
\end{definition}
In continuous time, the analog of $Sf$ is denoted by $[f]$ and called the \emph{quadratic variation}.
We will construct it in \eqref{eq:covariation}.

Here and later, for notational simplicity, we will consider martingales with $f_{0}=0$.
Since martingale increments $df_{k}$ are orthogonal, we have
\[
\norm{ f_{n} }_{2} = \norm{ Sf_{n} }_{2}.
\]

Next, we will see that $S$ is bounded from $L^{1}$ to $L^{1,\infty}$.
The proof that we present for this fact uses a summation by parts identity of a kind that also appears e.g.\ in the \Ito{} formula.

\begin{lemma}
\label{lem:S-lookahead-stopped}
Let $f$ be a martingale, $\lambda>0$, and $\tau := \inf \Set{n \given \abs{f_{n}} > \lambda}$.
Then,
\[
\sum_{k} \E (\abs{df_{k}}^{2} \one_{\tau>k})
\leq
2 \lambda \norm{f}_{1}.
\]
\end{lemma}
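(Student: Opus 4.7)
The plan is to apply the discrete \Ito{} identity for the squared process to the stopped martingale $g_{k}:=f_{\tau\bmin k}$ and then to isolate the sum of interest by splitting $\one_{\tau\geq k}=\one_{\tau>k}+\one_{\tau=k}$. Starting from the telescoping identity $\abs{g_{k}}^{2}-\abs{g_{k-1}}^{2}=2g_{k-1}\,dg_{k}+\abs{dg_{k}}^{2}$ (written in real notation for brevity; the complex case is analogous with a $\operatorname{Re}$), together with the elementary computations $dg_{k}=df_{k}\one_{\tau\geq k}$, $g_{k-1}\one_{\tau\geq k}=f_{k-1}\one_{\tau\geq k}$, and $\sum_{k=1}^{N}\abs{df_{k}}^{2}\one_{\tau=k}=\abs{df_{\tau}}^{2}\one_{\tau\leq N}$, rearrangement produces the pathwise relation
\[
\sum_{k=1}^{N}\abs{df_{k}}^{2}\one_{\tau>k}+2\sum_{k=1}^{N}f_{k-1}\one_{\tau\geq k}\,df_{k}=\abs{g_{N}}^{2}-\abs{df_{\tau}}^{2}\one_{\tau\leq N}.
\]

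Next I would bound the right-hand side pointwise. On $\{\tau>N\}$ we have $\abs{g_{N}}^{2}=\abs{f_{N}}^{2}\leq\lambda\abs{f_{N}}$, since $\abs{f_{N}}\leq\lambda$ by minimality of $\tau$. On $\{\tau\leq N\}$, expanding $f_{\tau}=f_{\tau-1}+df_{\tau}$ yields $\abs{f_{\tau}}^{2}-\abs{df_{\tau}}^{2}=2f_{\tau}f_{\tau-1}-\abs{f_{\tau-1}}^{2}\leq 2\lambda\abs{f_{\tau}}$, once again using $\abs{f_{\tau-1}}\leq\lambda$. Combining the two cases,
\[
\abs{g_{N}}^{2}-\abs{df_{\tau}}^{2}\one_{\tau\leq N}\leq 2\lambda\abs{f_{\tau\bmin N}}.
\]

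Taking expectations in the identity, the martingale-transform sum $\sum f_{k-1}\one_{\tau\geq k}\,df_{k}$ vanishes by \eqref{eq:increment-orthogonal-to-past}, since each integrand $f_{k-1}\one_{\tau\geq k}$ is $\calF_{k-1}$-measurable and bounded by $\lambda$. Optional sampling applied to the submartingale $\abs{f}$ at the bounded stopping time $\tau\bmin N$ then yields $\E\abs{f_{\tau\bmin N}}\leq\norm{f}_{1}$, so that $\sum_{k=1}^{N}\E\bigl(\abs{df_{k}}^{2}\one_{\tau>k}\bigr)\leq 2\lambda\norm{f}_{1}$. Letting $N\to\infty$ by monotone convergence completes the proof.

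The main obstacle is an integrability subtlety: neither $\abs{g_{N}}^{2}$ nor $\abs{df_{\tau}}^{2}\one_{\tau\leq N}$ need be in $L^{1}$ individually, so one cannot handle them separately. Keeping them bundled as the difference, which is controlled pathwise by the $L^{1}$-function $2\lambda\abs{f_{\tau\bmin N}}$, sidesteps any $\infty-\infty$ issue when passing to expectations.
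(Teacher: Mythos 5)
Your proof is correct and takes essentially the same approach as the paper: a discrete It\^o/summation-by-parts identity for the squared process, the orthogonality \eqref{eq:increment-orthogonal-to-past} of martingale increments against $\calF_{k-1}$-measurable multipliers, the predictable bound $\abs{f_{\tau-1}}\leq\lambda$, and optional sampling for the submartingale $\abs{f}$. The only difference is in the handling of truncation: the paper first reduces to martingales that are eventually constant and replaces $\tau$ by $\tau\bmin(N+1)$, whereas you keep a fixed horizon $N$, work with $g_k=f_{\tau\bmin k}$, and bound the bundled quantity $\abs{g_N}^2-\abs{df_\tau}^2\one_{\tau\leq N}$ pathwise by $2\lambda\abs{f_{\tau\bmin N}}$ before taking expectations; this is a cleaner way to sidestep the integrability issue you correctly flag at the end, but it is a presentational refinement rather than a different route.
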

On the left-hand side of the above estimate, we are taking the expectation of $S_{\tau-1}$.
Note that $\tau-1$ is not a stopping time.
Such predictability emulation will also be important in the BDG inequality below.

\begin{proof}
By the monotone convergence theorem, it suffices to consider the case $f_{N}=f_{N+1}=\dotsb$ for an arbitrarily large $N$.
Then, we can replace $\tau$ by $\tau \bmin N+1$ without changing the inequality.
This is necessary to apply the optional sampling theorem.

We use the identity
\[
\sum_{k<\tau} \abs{df_{k}}^{2} + \abs{f_{\tau-1}}^{2}
=
2 f_{\tau} f_{\tau-1} - 2 \sum_{k\leq \tau} f_{k-1} df_{k}.
\]
Note that
\[
\Set{k \leq \tau} = \Omega \setminus \Set{\tau<k} = \Omega \setminus \Set{\tau \leq k-1} \in \calF_{k-1}.
\]
Therefore, by \eqref{eq:increment-orthogonal-to-past}, we have
\[
\E ( \one_{k\leq \tau} f_{k-1} df_{k} )
=
0.
\]
Hence,
\[
\E \sum_{k<\tau} \abs{df_{k}}^{2} + \E \abs{f_{\tau-1}}^{2}
=
2 \E (f_{\tau} f_{\tau-1}).
\]
Therefore,
\[
\sum_{k} \E (\abs{df_{k}}^{2} \one_{\tau>k})
\leq
2 \E \abs{f_{\tau} f_{\tau-1}}
\leq
2 \E \abs{f_{\tau} \lambda}
\leq
2 \lambda \norm{f}_{1},
\]
where the last step follows from the optional sampling theorem, which gives in particular $f_{\tau} = \E_{\tau} f_{N+1}$.
\end{proof}

\begin{corollary}
Let $f$ be a martingale and $\lambda>0$.
Then,
\[
\abs{\Set{Sf>\lambda}}
\leq
3 \lambda^{-1} \norm{f}_{1}.
\]
\end{corollary}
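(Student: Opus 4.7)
The plan is to split the level set $\Set{Sf > \lambda}$ according to whether the maximal function exceeds $\lambda$ or not, and handle the two pieces separately. The big part is the one where $Mf \leq \lambda$, because on that set the stopping time $\tau := \inf\Set{n \given \abs{f_{n}} > \lambda}$ from Lemma~\ref{lem:S-lookahead-stopped} satisfies $\tau = \infty$, turning the truncated square function into the full one.

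More precisely, I would first note that $\Set{Mf \leq \lambda} = \Set{\tau = \infty}$, so $\one_{Mf \leq \lambda} \leq \one_{\tau > k}$ for every $k$. Writing $Sf^{2} = \sum_{k} \abs{df_{k}}^{2}$ (with the sum understood as the monotone limit of $Sf_{n}^{2}$) and using monotone convergence together with Lemma~\ref{lem:S-lookahead-stopped} gives
\[
\int_{\Set{Mf \leq \lambda}} Sf^{2} \dif\mu
\leq
\sum_{k} \E(\abs{df_{k}}^{2} \one_{\tau > k})
\leq
2\lambda \norm{f}_{1}.
\]
Chebyshev's inequality then yields
\[
\abs{\Set{Sf > \lambda} \cap \Set{Mf \leq \lambda}}
\leq
\lambda^{-2} \int_{\Set{Mf \leq \lambda}} Sf^{2} \dif\mu
\leq
2\lambda^{-1} \norm{f}_{1}.
\]

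For the complementary piece $\Set{Mf > \lambda}$, I would apply Lemma~\ref{lem:Doob-weak} to the submartingale $(\abs{f_{n}})$, letting $n \to \infty$ via monotone convergence, to obtain
\[
\abs{\Set{Mf > \lambda}} \leq \lambda^{-1} \norm{f}_{1}.
\]
Adding the two bounds gives $\abs{\Set{Sf > \lambda}} \leq 3 \lambda^{-1} \norm{f}_{1}$.

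The only place that requires any care is the passage to the limit $n \to \infty$ in the two ingredients (since the statement involves the full $Sf$ and $Mf$, not their truncations at time $n$). Both limits are handled by monotone convergence: $Sf_{n}^{2}$ and $Mf_{n}$ are nondecreasing in $n$, so nothing is lost. Everything else is bookkeeping; the real work has already been done in Lemma~\ref{lem:S-lookahead-stopped}, which contains the crucial summation by parts.
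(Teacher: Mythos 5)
Your proof is correct and follows essentially the same route as the paper: both split the level set via the stopping time $\tau = \inf\Set{n \given \abs{f_n}>\lambda}$, control the piece where $\tau=\infty$ (equivalently $Mf\leq\lambda$) with Chebyshev plus Lemma~\ref{lem:S-lookahead-stopped}, and control the complement $\Set{\tau<\infty}=\Set{Mf>\lambda}$ by the weak maximal estimate. The only difference is cosmetic: the paper phrases the first piece as $\Set{Sf_{\tau-1}>\lambda}$ rather than $\Set{Sf>\lambda, Mf\leq\lambda}$, but since $Sf_{\tau-1}=Sf$ on $\Set{\tau=\infty}$ these are the same estimate.
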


\begin{proof}
Let $\tau := \inf \Set{n \given \abs{f_{n}} > \lambda}$.
Then
\[
\abs{\Set{Sf>\lambda}}
\leq
\abs{\Set{Sf_{\tau-1}>\lambda}}
+
\abs{\Set{\tau<\infty}}
\leq
\lambda^{-2} \norm{Sf_{\tau-1}}_{2}^{2}
+
\lambda^{-1} \norm{f}_{1}.
\]
By Lemma~\ref{lem:S-lookahead-stopped}, the first summand is $\leq 2 \lambda^{-1} \norm{f}_{1}$.
\end{proof}

\begin{lemma}[Davis decomposition]
\label{lem:Davis-decomposition}
For every martingale $(f_n)$, there exists a decomposition $f = f^{\pred} + f^{\bv}$ as a sum of two martingales such that $f^{\pred}$ has a predictable bound on jumps:
\begin{equation}
\label{eq:Davis-decomposition:good}
\abs{df^{\pred}_n} \leq 2 Mdf_{n-1},
\end{equation}
and $f^{\bv}$ has bounded variation:
\begin{equation}
\label{eq:Davis-decomposition:BV}
\E \sum_n \abs{df^{\bv}_n} \leq 2 \E Mdf.
\end{equation}
\end{lemma}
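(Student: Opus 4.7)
The plan is to isolate the ``oversized'' part of each increment $df_n$ in a way that telescopes through the maximal function $Mdf$. Concretely, I would truncate the increments at the predictable level $Mdf_{n-1}$ and then Doob-compensate both pieces so that they become martingale differences. Set
\[
h_n := \mathrm{sign}(df_n)\,\min\bigl(\abs{df_n},\, Mdf_{n-1}\bigr),
\qquad
r_n := df_n - h_n,
\]
so $h_n$ is bounded pointwise by the $\calF_{n-1}$-measurable quantity $Mdf_{n-1}$, while $\abs{r_n}=(\abs{df_n}-Mdf_{n-1})_{+}=Mdf_n-Mdf_{n-1}$ telescopes. Then define
\[
df^{\pred}_n := h_n - \E_{n-1} h_n,
\qquad
df^{\bv}_n := df_n - df^{\pred}_n = r_n + \E_{n-1} h_n.
\]
By construction both are martingale differences summing to $df_n$; the corresponding martingales $f^{\pred}$ and $f^{\bv}$ then give the desired decomposition.

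For the predictable bound \eqref{eq:Davis-decomposition:good}, I would simply note $\abs{h_n}\leq Mdf_{n-1}$ by the definition of truncation, and since this bound is itself $\calF_{n-1}$-measurable it is preserved by the conditional expectation, giving $\abs{\E_{n-1}h_n}\leq Mdf_{n-1}$ and hence $\abs{df^{\pred}_n}\leq 2Mdf_{n-1}$.

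For the bounded variation bound \eqref{eq:Davis-decomposition:BV}, the key observation is that $\E_{n-1}(df_n)=0$ forces $\E_{n-1} h_n = -\E_{n-1} r_n$, so $\abs{\E_{n-1} h_n}\leq \E_{n-1}\abs{r_n}=\E_{n-1}(Mdf_n-Mdf_{n-1})$. Therefore
\[
\abs{df^{\bv}_n} \leq (Mdf_n-Mdf_{n-1}) + \E_{n-1}(Mdf_n-Mdf_{n-1}),
\]
and summing and taking expectations (with the tower property on the compensator term) yields $\E\sum_n \abs{df^{\bv}_n} \leq 2\E\sum_n (Mdf_n-Mdf_{n-1}) = 2\E\, Mdf$, where I use that $Mdf_n$ is nondecreasing so the sum telescopes.

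The only real obstacle is identifying the correct truncation: the naive splits $df_n = df_n\one_{\abs{df_n}\leq cMdf_{n-1}} + df_n\one_{\abs{df_n}>cMdf_{n-1}}$ produce a bad part whose $L^1$-size is $\abs{df_n}$ rather than the ``overshoot'' $Mdf_n-Mdf_{n-1}$, and this overshoots the $2\E Mdf$ target. Replacing the indicator truncation by the \emph{signed modulus} truncation $h_n=\mathrm{sign}(df_n)\min(\abs{df_n},Mdf_{n-1})$ is exactly what makes $\abs{df_n-h_n}$ equal the jump of $Mdf$, after which both estimates fall out cleanly.
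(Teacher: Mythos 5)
Your proof is correct and takes essentially the same approach as the paper: your $h_n = \mathrm{sign}(df_n)\min(\abs{df_n},Mdf_{n-1})$ is identical to the paper's $dg_n=\min(1,Mdf_{n-1}/\abs{df_n})df_n$, and the rest (compensating both pieces, bounding $\abs{\E_{n-1}h_n}\leq Mdf_{n-1}$, telescoping $\abs{r_n}=Mdf_n-Mdf_{n-1}$) matches the paper's argument step for step.
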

\begin{proof}
Let
\begin{align*}
d g_n
& :=
\min(1,\frac{Mdf_{n-1}}{\abs{df_n}}) df_n,\\
df^{\pred}_n 
&:=
d g_n - \E_{n-1}(d g_n),\\
d h_n 
&:= df_n - d g_n
= \max(0,1-\frac{Mdf_{n-1}}{\abs{df_n}}) df_n,\\
df^{\bv}_n
&:=
d h_n - \E_{n-1}(d h_n).
\end{align*}
Then, by definition,
\[
\abs{d g_n} \leq Mdf_{n-1},
\]
and, by positivity of conditional expectation, also
\[
\abs{\E_{n-1}(d g_n)}
\leq
\E_{n-1}(\abs{d g_n})
\leq
\E_{n-1}(Mdf_{n-1})
=
Mdf_{n-1}.
\]
This implies \eqref{eq:Davis-decomposition:good}.

On the other hand, we have the telescoping bound
\[
\abs{d h_n}
=
\max(0,\abs{df_n} - Mdf_{n-1})
=
Mdf_{n} - Mdf_{n-1},
\]
which implies \eqref{eq:Davis-decomposition:BV}.
\end{proof}

\begin{theorem}[Davis inequalities]
\label{thm:Davis-L1}
Let $f$ be a martingale with $f_0=0$.
Then,
\[
\E Sf \sim \E Mf.
\]
\end{theorem}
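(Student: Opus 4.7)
The plan is to use the Davis decomposition (Lemma~\ref{lem:Davis-decomposition}) to reduce both directions of the equivalence $\E Sf \sim \E Mf$ to the simpler case of a martingale with predictably controlled jumps. I will write $f = f^{\pred} + f^{\bv}$ and handle the two pieces separately, then recombine.

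For the bounded variation part, the estimates will be essentially trivial: both $Sf^{\bv}$ and $Mf^{\bv}$ are dominated pointwise by the total variation $\sum_{n}\abs{df^{\bv}_{n}}$, whose expectation is at most $2\E Mdf$ by \eqref{eq:Davis-decomposition:BV}. I can then convert $\E Mdf$ into either comparand, using $Mdf \leq 2 Mf$ (from $df_{n}=f_{n}-f_{n-1}$ and $f_{0}=0$) for one direction and $Mdf \leq Sf$ (from $\abs{df_{n}}^{2}\leq (Sf)^{2}$) for the other. So the contribution of $f^{\bv}$ is already absorbed into a constant multiple of whichever side I am bounding by.

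For the predictable part $g := f^{\pred}$, with the key jump bound $\abs{dg_{n}}\leq 2Mdf_{n-1}$, I will establish $\E Sg \sim \E Mg$ via a layer-cake argument based on stopping at a level set of the predictable dominant. For each $\lambda>0$ set $\tau_{\lambda}:=\inf\Set{n \given 2Mdf_{n}>\lambda}$; then $g^{\tau_{\lambda}}$ has jumps uniformly bounded by $\lambda$. Splitting $\Set{Sg>\lambda}\subseteq\Set{\tau_{\lambda}<\infty}\cup\Set{Sg^{\tau_{\lambda}}>\lambda}$ and integrating in $\lambda$, the first piece gives the controllable contribution $\int\mu\Set{2Mdf>\lambda}\dif\lambda = 2\E Mdf$. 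On the second piece I will invoke the weak-$L^{1,\infty}$ bound for $S$ (the Corollary of Lemma~\ref{lem:S-lookahead-stopped}) applied to $g^{\tau_{\lambda}}$. The opposite direction $\E Mg \lesssim \E Sg$ is entirely symmetric, using Doob's weak inequality (Lemma~\ref{lem:Doob-weak}) in place of the $S$-corollary.

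The main obstacle I anticipate is that the weak-$L^{1,\infty}$ bound alone gives a non-integrable $\lambda^{-1}$ tail in the layer-cake integral. To overcome this I will have to promote the bound on $\Set{Sg^{\tau_{\lambda}}>\lambda}$ using the bounded-jump property: since $\abs{g^{\tau_{\lambda}}_{\infty}}\leq Mg + \lambda$, the $L^{2}$ identity $\E(Sg^{\tau_{\lambda}})^{2}=\E\abs{g^{\tau_{\lambda}}_{\infty}}^{2}$ combined with Chebyshev produces a usable $\lambda^{-2}$-type decay on the relevant region. The remaining step is careful bookkeeping: after summing the BV and predictable contributions and using $\E Mg \leq \E Mf + \E\sum\abs{dh_{n}}\lesssim\E Mf$ (and its $S$-analogue), the two one-sided estimates combine into the desired equivalence.
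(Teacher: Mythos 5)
Your overall strategy mirrors the paper's: Davis decomposition, trivial treatment of the bounded-variation part, and a layer-cake argument with stopping times for the predictable part. The gap is in the choice of stopping time for the predictable piece. You take $\tau_{\lambda}=\inf\Set{n\given 2Mdf_{n}>\lambda}$, which only forces the \emph{increments} of $g^{\tau_{\lambda}}$ to be $\leq\lambda$; it places no constraint on the \emph{running values} $\abs{g^{\tau_{\lambda}}_{n}}$. The ``promotion'' you describe then does not close: the $L^{2}$ identity gives $\E(Sg^{\tau_{\lambda}})^{2}=\E\abs{g^{\tau_{\lambda}}_{\infty}}^{2}$, but the only uniform bound available is $\abs{g^{\tau_{\lambda}}_{\infty}}\leq Mg$ (your stated bound $\abs{g^{\tau_{\lambda}}_{\infty}}\leq Mg+\lambda$ is weaker and the ``$+\lambda$'' plays no useful role since $Mg$ is already a global, $\lambda$-independent majorant). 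Chebyshev therefore yields $\mu\Set{Sg^{\tau_{\lambda}}>\lambda}\leq\lambda^{-2}\E(Mg)^{2}$, and $\int_{0}^{\infty}\lambda^{-2}\E(Mg)^{2}\dif\lambda$ diverges at $\lambda\to 0$; similarly the cross term $2\lambda^{-1}\E Mg$ diverges at both ends. The $\lambda^{-2}$ factor is only helpful if it pairs with a quantity that is $\lesssim\lambda^{2}$ on the relevant set, and your stopping time does not produce that.

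What is missing is a stopping rule that also cuts off when the \emph{process} gets large. The paper's $\rho_{n-1}:=\abs{f^{\pred}_{n-1}}+2Mdf_{n-1}$ predictably dominates $\abs{f^{\pred}_{n}}$, and stopping at $\tau:=\inf\Set{n\given\rho_{n}>\lambda}$ gives the two estimates that make the integral converge: on $\Set{\tau<\infty}$ one has $\abs{f^{\pred}_{\tau}}\leq\rho_{\tau-1}\leq\lambda$, so the contribution is $\lambda^{-2}\cdot\lambda^{2}\abs{\Set{\tau<\infty}}=\abs{\Set{M\rho>\lambda}}$, while on $\Set{\tau=\infty}$ one has $Mf^{\pred}\leq\lambda$, so the contribution is $\lambda^{-2}\int_{\Set{Mf^{\pred}\leq\lambda}}(Mf^{\pred})^{2}$; both are integrable in $\lambda$ by Fubini. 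You could repair your argument by adding the extra stopping condition $\abs{g_{n}}>\lambda$ (or $\abs{g_{n}}+2Mdf_{n}>\lambda$), together with a split of $\E\abs{g^{\tau}_{\infty}}^{2}$ along $\Set{\tau=\infty}$ versus $\Set{\tau<\infty}$ — but then you have essentially reconstructed the paper's proof. As written, the argument for the predictable part does not go through, and the same issue recurs in your ``entirely symmetric'' treatment of the converse direction.
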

Here and later, $A \lesssim B$ means that $A\leq CB$ with an absolute constant $C$, and $A \sim B$ means that $A \lesssim B$ and $B \lesssim A$.
\begin{proof}
It suffices to consider martingales with $f_{N}=f_{N+1}=\dotsb=: f_{\infty}$ for some $N$, as long as we show bounds independent of $N$.

Let $f=f^{\pred}+f^{\bv}$ be a Davis decomposition as in Lemma~\ref{lem:Davis-decomposition}.
Then, for every $n>0$, we have the predictable bound
\[
\abs{f^{\pred}_n}
\leq \abs{f^{\pred}_{n-1}} + \abs{df^{\pred}_n}
\leq \abs{f^{\pred}_{n-1}}+2\abs{Mdf_{n-1}} =: \rho_{n-1}.
\]
Let $\lambda>0$ and $\tau := \inf\Set{n \given \rho_n > \lambda}$.
Then,
\begin{align*}
\abs{\Set{Sf^{\pred} > \lambda}}
&\leq
\abs{\Set{Sf^{\pred} > \lambda, \tau=\infty}} + \abs{\Set{\tau<\infty}}
\\ &\leq
\abs{\Set{Sf^{\pred,(\tau)} > \lambda}} + \abs{\Set{\tau<\infty}}
\\ &\leq
\lambda^{-2} \norm{Sf^{\pred,(\tau)}}_2^2 + \abs{\Set{\tau<\infty}}
\\ &=
\lambda^{-2} \norm{f^{\pred}_{\tau}}^2 + \abs{\Set{\tau<\infty}}.
\end{align*}
On the set $\Set{\tau=\infty}$, we use the bound $Mf^{\pred} \leq M\rho \leq \lambda$, while on the set $\Set{\tau<\infty}$, we use the bound $\abs{f^{\pred}_\tau} \leq \rho_{\tau-1} \leq \lambda$ (at this point, predictability is essential).
This gives
\begin{align*}
\abs{\Set{Sf^{\pred} > \lambda}}
&\leq
\lambda^{-2} \int_{\Set{\tau=\infty}} \abs{f^{\pred}_{\tau}}^2
+ \lambda^{-2} \int_{\Set{\tau<\infty}} \abs{f^{\pred}_{\tau}}^2
+ \abs{\Set{\tau<\infty}}
\\ &\leq
\lambda^{-2} \int_{\Set{Mf^{\pred} \leq \lambda}} \abs{Mf^{\pred}}^2
+ \abs{\Set{\tau<\infty}}
+ \abs{\Set{\tau<\infty}}.
\end{align*}
Note that $\Set{\tau<\infty} = \Set{M\rho>\lambda}$.
Inserting this in the above inequality and integrating in $\lambda$, we obtain
\begin{align*}
\E Sf^{\pred}
&=
\int_0^\infty \abs{\Set{Sf^{\pred} > \lambda}} \dif\lambda
\\ &\leq
\int_0^\infty \lambda^{-2} \int_{\Set{Mf^{\pred} \leq \lambda}} \abs{Mf^{\pred}}^2 \dif\lambda
+ 2 \int_0^\infty \abs{\Set{M\rho>\lambda}} \dif\lambda
\\ &=
\E \abs{Mf^{\pred}}^2 \int_{Mf^{\pred}}^\infty \lambda^{-2} \dif\lambda
+ 2 \E M\rho
\\ &=
\E Mf^{\pred}
+ 2 \E M\rho
\\ &\leq
3 \E Mf^{\pred}
+ 4 \E Mdf
\\ &\leq
3 \E Mf
+ 3 \E \sum_n \abs{df^{\bv}_n}
+ 4 \E Mdf
\\ &\leq
3 \E Mf
+ 10 \E Mdf.
\end{align*}
This, together with the simple bound
\[
\E S f^{\bv} \leq
\E \sum_{n} \abs{d f^{\bv}_{n}}
\leq 2 \E M df,
\]
implies $\E Sf \lesssim \E Mf$.

The proof of the converse inequality is similar and uses
\[
Sf^{\pred}_n
=
\bigl( (Sf^{\pred}_{n-1})^2 + (df^{\pred}_n)^2 )^{1/2}
\leq \abs{Sf^{\pred}_{n-1}} + \abs{df^{\pred}_n}
\leq \abs{Sf^{\pred}_{n-1}}+2\abs{Mdf_{n-1}} =: \rho_{n-1}.
\]
\end{proof}

\begin{lemma}[Garsia--Neveu]
\label{lem:Garsia-Neveu}
Let $W,Z$ be positive random variables such that, for every $\lambda>0$, we have
\begin{equation}
\label{eq:Garsia-Neveu:hypothesis}
\E (\one_{W>\lambda} (W-\lambda)) \leq \E(\one_{W>\lambda} Z).
\end{equation}
Then, for every $p\geq 1$, we have
\[
\norm{W}_p \leq p \norm{Z}_p.
\]
\end{lemma}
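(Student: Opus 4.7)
The plan is to prove the $L^p$ bound by a Fubini/layer-cake argument that turns the hypothesis into an estimate on $\E W^p$, then finish with Hölder.

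First I would record the identity
\[
W^p = \int_0^\infty p(p-1) \lambda^{p-2} (W-\lambda)_+ \dif\lambda
\qquad (p>1),
\]
which is checked by computing the inner integral $\int_0^W \lambda^{p-2}(W-\lambda)\dif\lambda = W^p/(p(p-1))$. Taking expectations and applying Fubini turns this into
\[
\E W^p = \int_0^\infty p(p-1) \lambda^{p-2} \E(\one_{W>\lambda}(W-\lambda)) \dif\lambda.
\]
Now the hypothesis \eqref{eq:Garsia-Neveu:hypothesis} replaces each integrand by $\E(\one_{W>\lambda} Z)$, and another application of Fubini yields
\[
\E W^p \leq \int_0^\infty p(p-1)\lambda^{p-2} \E(\one_{W>\lambda} Z)\dif\lambda
= \E\Bigl(Z \int_0^W p(p-1)\lambda^{p-2}\dif\lambda\Bigr)
= p\,\E(Z W^{p-1}).
\]
Hölder's inequality then gives $\E W^p \leq p\norm{Z}_p \norm{W}_p^{p-1}$, and dividing through by $\norm{W}_p^{p-1}$ produces the claim. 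The edge case $p=1$ is settled by sending $\lambda\to 0^+$ in the hypothesis and invoking monotone convergence to obtain $\E W \leq \E Z$.

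The main obstacle is the division step: one may only cancel $\norm{W}_p^{p-1}$ after knowing it is finite. I would handle this by the standard truncation $W_M := W\wedge M$ and observing that $W_M$ satisfies the same hypothesis with the same $Z$ (since $\{W_M>\lambda\} = \{W>\lambda\}$ for $\lambda<M$ and is empty for $\lambda\geq M$, while $(W_M-\lambda)_+ \leq (W-\lambda)_+$). For each $M$ the quantity $\norm{W_M}_p$ is finite, so the argument above gives $\norm{W_M}_p \leq p \norm{Z}_p$, and monotone convergence as $M\to\infty$ delivers the conclusion without any a priori integrability assumption on $W$.
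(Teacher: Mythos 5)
Your proof is correct and follows essentially the same route as the paper's: the layer-cake identity $W^p = p(p-1)\int_0^\infty \lambda^{p-2}(W-\lambda)_+\,\dif\lambda$, Fubini plus the hypothesis to reach $\E W^p \leq p\,\E(ZW^{p-1})$, H\"older, cancellation, and truncation $W\wedge M$ to justify the cancellation for general $W$. (Minor note: the paper has a sign typo, writing $p(1-p)$ where $p(p-1)$ is intended; your version is the correct one.)
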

\begin{proof}
For $p=1$, it suffices to take $\lambda\to0$, so assume $p>1$.
Suppose first that $W$ is bounded by $\Lambda$ and \eqref{eq:Garsia-Neveu:hypothesis} holds for all $\lambda \in (0,\Lambda)$.
We will use the formula
\[
t^{p} = p(1-p) \int_{0}^{t} (t - \lambda) \lambda^{p-2} \dif \lambda.
\]
It yields
\begin{align*}
\E W^p
&=
p(p-1) \E \int_0^W (W-\lambda) \lambda^{p-2} \dif \lambda
\\ &=
p(p-1) \int_0^\Lambda \E ((W-\lambda) \one_{W>\lambda}) \lambda^{p-2} \dif \lambda
\\ &\leq
p(p-1) \int_0^\Lambda \E (Z \one_{W>\lambda}) \lambda^{p-2} \dif \lambda
\\ &=
p(p-1) \E \int_0^W Z \lambda^{p-2} \dif \lambda
\\ &=
p \E Z W^{p-1}
\\ &\leq
p (\E Z^p)^{1/p} (\E W^{p} )^{1-1/p}.
\end{align*}
Since $W$ is bounded, we can cancel a suitable power of $\E W^p$ on both sides and obtain the claim.

For general $W$, we apply the bounded case to $\min(W,\Lambda)$ and let $\Lambda\to\infty$.
\end{proof}

\begin{corollary}
\label{cor:Garsia-Neveu}
Let $(A_t)$ be an increasing predictable process with $A_0=0$ and $\xi$ a positive random variable such that, for everty $t$, we have
\[
\E_t(A_\infty - A_t) \leq \E_t(\xi).
\]
Then, for every $p \geq 1$, we have
\[
\norm{A_\infty}_p \leq p \norm{\xi}_p.
\]
\end{corollary}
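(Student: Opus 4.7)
I would apply Lemma~\ref{lem:Garsia-Neveu} with $W := A_{\infty}$ and $Z := \xi$. The task then reduces to verifying the hypothesis
\[
\E(\one_{A_{\infty} > \lambda}(A_{\infty} - \lambda)) \leq \E(\one_{A_{\infty} > \lambda}\xi)
\quad\text{for every } \lambda > 0,
\]
after which the $L^{p}$ bound with constant $p$ drops out of the lemma directly.

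\textbf{Verification.} Since $A$ is increasing with $A_{0}=0$ and $\lambda>0$, the event $\Set{A_{\infty}>\lambda}$ equals $\Set{\tau<\infty}$, where $\tau := \inf\Set{t \given A_{t} > \lambda}$. Predictability of $A$ yields that $A_{t}$ is $\calF_{t-1}$-measurable for every $t\geq 1$, so $\Set{\tau = t} = \Set{A_{t}>\lambda} \setminus \Set{A_{t-1}>\lambda} \in \calF_{t-1}$, and on this event $A_{t-1}\leq \lambda$ by monotonicity of $A$. Decomposing $\Set{A_{\infty}>\lambda}$ according to the value of $\tau$ and using that $\one_{\Set{\tau=t}}$ is $\calF_{t-1}$-measurable, I estimate
\begin{align*}
\E\bigl(\one_{\tau=t}(A_{\infty} - \lambda)\bigr)
&\leq \E\bigl(\one_{\tau=t}(A_{\infty} - A_{t-1})\bigr)
\\ &= \E\bigl(\one_{\tau=t}\E_{t-1}(A_{\infty}-A_{t-1})\bigr)
\\ &\leq \E\bigl(\one_{\tau=t}\E_{t-1}(\xi)\bigr)
\\ &= \E\bigl(\one_{\tau=t}\xi\bigr),
\end{align*}
invoking the hypothesis at time $t-1$ in the third line. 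Summing over $t\geq 1$ yields the required inequality.

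\textbf{Main obstacle.} The only delicate point is the predictability of $\tau$, which is what allows $\one_{\Set{\tau=t}}$ to be treated as an $\calF_{t-1}$-measurable factor and thereby reduce the integrated statement to the pointwise hypothesis at time $t-1$. Without predictability of $A$, one would only have $\Set{\tau=t}\in\calF_{t}$, and the step of pulling the indicator past $\E_{t-1}$ would fail; the rest is bookkeeping.
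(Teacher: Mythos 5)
Your proof is correct and follows essentially the same route as the paper: stop at the first time the predictable process exceeds $\lambda$, use predictability to place the stopping event in $\calF_{t-1}$, and bound $A_\infty-\lambda$ by $A_\infty-A_{t-1}$ before invoking the hypothesis. The paper defines $\tau := \inf\Set{t \given A_{t+1}>\lambda}$ (so $\tau$ is an ordinary stopping time and the argument is applied at time $\tau$), while you shift the index by one and decompose explicitly over $\Set{\tau=t}\in\calF_{t-1}$; this is the same argument up to relabeling, and your version makes the role of predictability slightly more transparent.
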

\begin{proof}
For $\lambda>0$, let $\tau := \inf\Set{t \given A_{t+1} > \lambda}$.
Then,
\[
\E\max(A_\infty-\lambda,0)
=
\E (A_\infty - \lambda) \one_{\tau < \infty}
\leq
\E (A_\infty - A_\tau) \one_{\tau < \infty}
\leq
\E (\xi \one_{\tau < \infty})
\leq
\E (\xi \one_{A_\infty > \lambda}),
\]
so we can apply Lemma~\ref{lem:Garsia-Neveu}.
\end{proof}

\neuevorl{2021-10-26}

\begin{corollary}[Burkholder--Davis--Gundy inequalities]
\label{cor:BDG}
For every $p \in [1,\infty)$ and every martingale $f$, we have
\[
\norm{Sf}_{p} \sim \norm{Mf}_{p}.
\]
\end{corollary}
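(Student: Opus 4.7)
The plan is to apply Corollary~\ref{cor:Garsia-Neveu} to obtain $\norm{Sf}_p \lesssim \norm{Mf}_p$ for $p \ge 2$, deduce the reverse inequality for $p \in (1,\infty)$ by a duality argument, invoke Theorem~\ref{thm:Davis-L1} at $p=1$ in both directions, and fill the gap $p \in (1,2)$ in the forward direction by interpolation.

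For the forward direction at $p \ge 2$, I would first reduce to $f_n = f_N$ for $n \ge N$ and set $A_n := \sum_{k < n} \abs{df_k}^2$, a predictable increasing process with $A_0 = 0$ and $A_\infty = (Sf_N)^2$. Expanding $f_k^2 - f_{k-1}^2 = 2 f_{k-1} df_k + \abs{df_k}^2$ and applying \eqref{eq:increment-orthogonal-to-past} telescopes to $\E_n \sum_{k > n} \abs{df_k}^2 = \E_n f_N^2 - f_n^2$, hence
\[
\E_n(A_\infty - A_n) = \abs{df_n}^2 + \E_n f_N^2 - f_n^2 \le 5 \E_n((Mf)^2),
\]
using the pointwise bound $\abs{df_n}^2 \le 4(Mf)^2$ and $f_N^2 \le (Mf)^2$. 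Corollary~\ref{cor:Garsia-Neveu} applied at exponent $p/2$ with $\xi := 5(Mf)^2$ then gives $\norm{Sf}_p^2 \lesssim p \norm{Mf}_p^2$. For the missing range $p \in [1,2)$, I would interpolate (Marcinkiewicz applied to the sub-additive operator $f \mapsto Sf$) between Theorem~\ref{thm:Davis-L1} at $p=1$ and the identity $\norm{Sf}_2 = \norm{f}_2 \le \norm{Mf}_2$ at $p=2$.

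For the reverse inequality $\norm{Mf}_p \lesssim \norm{Sf}_p$, $p=1$ is again Theorem~\ref{thm:Davis-L1}. For $p \in (1,\infty)$, I would use Doob's inequality (Theorem~\ref{thm:Doob-Lp}) to reduce to $\norm{f}_p \lesssim \norm{Sf}_p$ and then argue by duality: for $g \in L^{p'}$ with $g_n := \E_n g$, repeated use of \eqref{eq:increment-orthogonal-to-past} gives $\E f_N \bar g_N = \E \sum_k df_k \overline{dg_k}$, and pointwise Cauchy--Schwarz in $\ell^2$ followed by Hölder bounds this by $\norm{Sf}_p \norm{Sg}_{p'}$. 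The forward direction applied to the martingale $g$ in combination with Doob then yields $\norm{Sg}_{p'} \lesssim \norm{Mg}_{p'} \lesssim \norm{g}_{p'}$, which requires $p' > 1$ and hence is available precisely for $p < \infty$.

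The step I expect to be most delicate is the Marcinkiewicz interpolation for $p \in (1,2)$ in the forward direction: the Davis bound at $p=1$ has source norm $\norm{Mf}_1$, i.e., $f$ lives in the martingale Hardy space rather than $L^1$, so one is really interpolating in the Hardy scale and must be careful about the intermediate spaces. A cleaner alternative avoiding interpolation would be to reapply Corollary~\ref{cor:Garsia-Neveu} after the Davis decomposition $f = f^{\pred} + f^{\bv}$ from Lemma~\ref{lem:Davis-decomposition}, controlling $Sf^{\bv} \le \sum_n \abs{df^{\bv}_n}$ by a second Garsia-Neveu application to the predictable increasing process $\sum_{k \le n} \E_{k-1} \abs{dh_k}$, and handling $f^{\pred}$ by a variant of the forward argument with the predictable dominator already used in the proof of Theorem~\ref{thm:Davis-L1}.
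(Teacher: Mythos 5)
Your route differs substantially from the paper's. The paper runs one argument for all $p\in[1,\infty)$: apply Theorem~\ref{thm:Davis-L1} conditionally on $\calF_{t_0}$ to the shifted martingale $\one_B(f-f^{t_0})$, which produces exactly the hypothesis of Corollary~\ref{cor:Garsia-Neveu} with $A_t=Mf_{t-1}$ and $\xi\sim Sf$ (and symmetrically for the converse). Your plan splits into cases. The direct Garsia--Neveu application at exponent $p/2$ to the predictable process $A_n=\sum_{k<n}\abs{df_k}^2$ is correct and clean for $p\ge 2$; the orthogonality computation and the pointwise bound of $\E_n(A_\infty-A_n)$ by $5\E_n((Mf)^2)$ both check out, and this is arguably lighter than the paper's argument in that range. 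The duality reduction for the reverse direction at $p\in(1,\infty)$ is also fine as stated, but note that for $p>2$ it needs the forward inequality at $p'\in(1,2)$, so the whole scheme really does hinge on closing the forward direction on $(1,2)$.

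That is where the genuine gap sits, and you have located it correctly: Theorem~\ref{thm:Davis-L1} is an $H^1\to L^1$ bound, not an $L^1\to L^{1,\infty}$ bound, so the Marcinkiewicz theorem as stated in the paper does not apply; one would need interpolation for the couple $(H^1,L^2)$, which is not developed here. The clean fix is not to invoke Davis at $p=1$ at all: the paper already contains the honest weak-type $(1,1)$ bound for $S$, namely the Corollary following Lemma~\ref{lem:S-lookahead-stopped}, $\abs{\Set{Sf>\lambda}}\le 3\lambda^{-1}\norm{f}_1$. Interpolating that against the isometry $\norm{Sf}_2=\norm{f}_2$ gives $\norm{Sf}_p\lesssim_p\norm{f}_p\le\norm{Mf}_p$ for $p\in(1,2)$, after which all the remaining pieces of your argument fit together. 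Your sketched alternative via a second Garsia--Neveu pass through the Davis decomposition does handle $f^{\bv}$ (this is in effect Lemma~\ref{lem:Davis-decomposition:Lq}), but the $f^{\pred}$ part at $p<2$ would still require a stopping or conditioning device essentially equivalent to what the paper's proof of this Corollary already does, so it is not obviously simpler than the weak-type fix above.
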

\begin{proof}
For any $t_0$ and $B \in \calF_{t_0}$, we can apply Theorem~\ref{thm:Davis-L1} to the martingale $\one_B (f-f^{(t_{0})})_{t\geq t_0}$, and obtain
\[
\E_{t_{0}} (M(f-f^{t_{0}}))
\sim
\E_{t_{0}} (S(f-f^{t_{0}})).
\]
Note that
\[
M f - M f_{t_{0}} \leq M(f - f^{t_{0}}) \leq 2Mf,
\quad
S f - S f_{t_{0}} \leq S(f-f^{t_{0}}) \leq Sf,
\]
so the conditions of Corollary~\ref{cor:Garsia-Neveu} hold for instance for $\tilde{A}_{t} = M f_{t}$ and $\xi = Sf$, with exception of the fact that $\tilde{A}_{t}$ is not predictable.
This can be remedied by taking $A_{t} = M f_{t-1}$ and $\xi = Sf + Mdf \leq 3 Sf$.
Corollary~\ref{cor:Garsia-Neveu} now shows that $\norm{Mf}_{p} \lesssim p \norm{Sf}_{p}$.

The proof of the converse inequality is similar.
\end{proof}

\subsection{Predictable square function}

\begin{definition}[Predictable square function]
For a martingale $f$, let
\[
s f_{n} := \bigl( \sum_{k=1}^{n} \E_{k-1}( \abs{df_{k}}^{2} ) \bigr)^{1/2}.
\]
\end{definition}
The continuous time version of $sf$ is denoted by $\<f\>$ and called the \emph{predictable quadratic variation}.
We mainly discuss $sf$  because the continuous time analog is sometimes used in stochastic analysis, and in order to present a few techniques.

The next lemma looks similar to $L^{p}$ contractivity of conditional expectation, but it cannot be proved by using this contractivity for each invidual summand.
\begin{lemma}
\label{lem:sum-of-Ek}
Let $z_{1},z_{2},\dotsc$ be positive random variables.
Then, for every $p\in [1,\infty)$, we have
\[
\E (\sum_{k=1}^{\infty} \E_{k-1}(z_{k}))^{p}
\leq
p^{p} \E (\sum_{k=1}^{\infty} z_{k})^{p}.
\]
\end{lemma}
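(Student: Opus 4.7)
The plan is to reduce the claim to Corollary~\ref{cor:Garsia-Neveu} applied to the natural candidates $A_{t} := \sum_{k=1}^{t} \E_{k-1}(z_{k})$ and $\xi := \sum_{k=1}^{\infty} z_{k}$. Then the conclusion of that corollary, $\norm{A_{\infty}}_{p} \leq p \norm{\xi}_{p}$, is exactly the desired inequality.

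First I would check that $(A_{t})$ satisfies the hypotheses of Corollary~\ref{cor:Garsia-Neveu}: it starts at $A_{0}=0$, is increasing since the $z_{k}$ (and hence their conditional expectations) are positive, and each partial sum $A_{t}$ is $\calF_{t-1}$-measurable because every summand $\E_{k-1}(z_{k})$ with $k \leq t$ is $\calF_{k-1}$-measurable and $\calF_{k-1} \subseteq \calF_{t-1}$. So $(A_{t})$ is predictable.

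The key computation is to verify $\E_{t}(A_{\infty}-A_{t}) \leq \E_{t}(\xi)$. By the tower property, for each $k \geq t+1$ we have $\E_{t}\E_{k-1}(z_{k}) = \E_{t}(z_{k})$, so
\[
\E_{t}(A_{\infty}-A_{t})
=
\sum_{k=t+1}^{\infty} \E_{t}(z_{k})
\leq
\sum_{k=1}^{\infty} \E_{t}(z_{k})
=
\E_{t}(\xi),
\]
where the last equality uses monotone convergence to interchange sum and conditional expectation of the positive quantities $z_{k}$.

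A small technical point that I would address at the start is convergence of the infinite sums: by truncating the sequence to $z_{1},\dotsc,z_{N},0,0,\dotsc$, proving the bound with constant $p^{p}$ independent of $N$, and then passing to the limit by monotone convergence on both sides, we may assume the sums are finite, so that Corollary~\ref{cor:Garsia-Neveu} (which was stated for a process indexed by $t \in \N$) applies directly. I do not expect any serious obstacle: the content of the lemma is essentially that the Garsia--Neveu estimate upgrades the trivial pointwise bound $\E_{k-1}(z_{k}) \leq \E_{k-1}(\sum_{j} z_{j})$ from an $L^{1}$ statement (true with constant $1$) to an $L^{p}$ statement (with constant $p$), and this is exactly the strength of Corollary~\ref{cor:Garsia-Neveu}.
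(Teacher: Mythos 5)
Your proof is correct and takes exactly the same route as the paper: both apply Corollary~\ref{cor:Garsia-Neveu} with $A_t = \sum_{k\le t}\E_{k-1}(z_k)$ and $\xi = \sum_k z_k$. Your careful verification of predictability and of the bound $\E_t(A_\infty-A_t)=\sum_{k>t}\E_t(z_k)\le\E_t(\xi)$ spells out what the paper states tersely (and slightly imprecisely, since the Garsia--Neveu hypothesis holds with inequality, not equality, after dropping the nonnegative terms $k\le t$).
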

\begin{proof}
The hypothesis of Corollary~\ref{cor:Garsia-Neveu} holds with equality for $A_{n}=W_{n}$ and $\xi=Z_{\infty}$, where
\begin{equation}
\label{eq:2}
W_{n} := \sum_{k=1}^{n} \E_{k-1}(z_{k}),
\quad
Z_{n} := \sum_{k=1}^{n} z_{k}.
\end{equation}
\end{proof}

\begin{corollary}
Let $p \in [2,\infty)$ and $f$ be a martignale.
Then, we have
\[
\norm{sf}_{p} \leq (p/2)^{1/2} \norm{Sf}_{p}.
\]
\end{corollary}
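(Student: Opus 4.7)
The plan is to deduce this corollary as a direct application of Lemma~\ref{lem:sum-of-Ek}, since the predictable square function $sf$ is, up to squaring, exactly the expression $\sum_k \E_{k-1}(z_k)$ for the choice $z_k = \abs{df_k}^2$, and the square function $Sf$ is the corresponding $\sum_k z_k$. The hypothesis $p\geq 2$ is there precisely because Lemma~\ref{lem:sum-of-Ek} is stated for exponents $\geq 1$, and we will feed it the exponent $p/2$.

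Concretely, I would set $z_k := \abs{df_k}^2$, which are non-negative random variables. With this choice,
\[
\sum_{k=1}^{\infty} \E_{k-1}(z_k) = (sf_{\infty})^{2},
\qquad
\sum_{k=1}^{\infty} z_k = (Sf_{\infty})^{2}.
\]
Applying Lemma~\ref{lem:sum-of-Ek} with exponent $q := p/2 \geq 1$ in place of $p$ then gives
\[
\E (sf_{\infty})^{p} = \E \Bigl(\sum_{k} \E_{k-1}(z_k)\Bigr)^{p/2}
\leq (p/2)^{p/2} \E \Bigl(\sum_{k} z_k\Bigr)^{p/2}
= (p/2)^{p/2} \E (Sf_{\infty})^{p}.
\]
Taking $p$-th roots and the supremum over $n$ (which, since $sf$ and $Sf$ are monotone in $n$, amounts to monotone convergence) yields the claim $\norm{sf}_p \leq (p/2)^{1/2} \norm{Sf}_p$.

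There is essentially no obstacle here: all the real work was packaged into Lemma~\ref{lem:sum-of-Ek}, which in turn followed from the Garsia--Neveu corollary applied to the predictable process $W_n = \sum_{k\leq n} \E_{k-1}(z_k)$ with $\xi = \sum_k z_k$. The only small point worth flagging is that the restriction $p\geq 2$ is genuinely needed for this route, because we need $p/2 \geq 1$ to invoke Lemma~\ref{lem:sum-of-Ek}; for $p \in [1,2)$ a comparison between $sf$ and $Sf$ requires a different argument.
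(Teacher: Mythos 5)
Your proposal is correct and is exactly the argument the paper intends (the corollary is stated without a written proof precisely because it is an immediate substitution of $z_k = \abs{df_k}^2$ and exponent $p/2$ into Lemma~\ref{lem:sum-of-Ek}). The bookkeeping — identifying $\sum_k \E_{k-1}(z_k) = (sf)^2$, $\sum_k z_k = (Sf)^2$, and noting that $p\geq 2$ is needed so that $p/2\geq 1$ — is all accurate.
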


\begin{lemma}
\label{lem:truncation-concave-moment}
Let $Z,W$ be positive random variables such that, for some $C<\infty$ and all $\lambda>0$, we have
\[
\E(Z \bmin \lambda) \leq C \E(W \bmin \lambda).
\]
Then, for every $p \in (0,1]$, we have
\[
\E Z^{p} \leq C \E W^{p}.
\]
\end{lemma}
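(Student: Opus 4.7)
The plan is to realize the map $t \mapsto t^p$ as a weighted integral of the truncations $t \wedge \lambda$, so that the hypothesis on $\E(Z \wedge \lambda)$ transfers term-by-term to an inequality on $\E Z^p$. The endpoint case $p=1$ will be separate and easy.

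For $p \in (0,1)$, I would first verify the pointwise identity
\[
t^{p} = p(1-p) \int_{0}^{\infty} (t \bmin \lambda) \lambda^{p-2} \dif\lambda, \qquad t \geq 0,
\]
by splitting at $\lambda = t$: the integral over $(0,t)$ gives $\int_{0}^{t} \lambda^{p-1} \dif\lambda = t^{p}/p$, while the integral over $(t,\infty)$ gives $t \int_{t}^{\infty} \lambda^{p-2} \dif\lambda = t^{p}/(1-p)$, and the factor $p(1-p)$ is chosen to make these add to $t^{p}$. Integrability at both endpoints is automatic in this range of $p$.

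Next, I would apply this identity with $t = Z$, take expectations, and swap order of integration by Tonelli (everything is nonnegative), to obtain
\[
\E Z^{p} = p(1-p) \int_{0}^{\infty} \E(Z \bmin \lambda) \lambda^{p-2} \dif\lambda.
\]
Inserting the hypothesis $\E(Z \bmin \lambda) \leq C \E(W \bmin \lambda)$ inside the integral, and applying the same identity in reverse to $W$, yields
\[
\E Z^{p} \leq C \cdot p(1-p) \int_{0}^{\infty} \E(W \bmin \lambda) \lambda^{p-2} \dif\lambda = C \E W^{p}.
\]

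The remaining case $p=1$ reduces to sending $\lambda \to \infty$ in the hypothesis and using monotone convergence to get $\E Z \leq C \E W$ directly. I don't anticipate a substantial obstacle: the only thing to be careful about is justifying the integral representation at the endpoints $\lambda \to 0$ and $\lambda \to \infty$, but both are fine because $t \bmin \lambda \leq \lambda$ controls the singularity at $0$ (giving integrand $\lambda^{p-1}$, integrable since $p > 0$) and $t \bmin \lambda = t$ for large $\lambda$ gives $t \lambda^{p-2}$, integrable since $p < 1$. The parallel with the proof of Lemma~\ref{lem:Garsia-Neveu}, which used the analogous identity $t^{p} = p(p-1)\int_{0}^{t}(t-\lambda)\lambda^{p-2}\dif\lambda$ valid for $p>1$, is striking: the restriction to $p \leq 1$ here is dictated precisely by the integrability of $(t \bmin \lambda)\lambda^{p-2}$ at infinity.
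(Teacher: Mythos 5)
Your proof is correct and follows essentially the same route as the paper: the integral representation $t^{p} = p(1-p)\int_{0}^{\infty}(t \bmin \lambda)\lambda^{p-2}\dif\lambda$, Tonelli to interchange expectation and integral, and the hypothesis applied pointwise in $\lambda$, with the $p=1$ endpoint handled separately by sending $\lambda \to \infty$. You simply spell out the verification of the identity and the integrability checks that the paper leaves implicit.
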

\begin{proof}
For $p=1$, it suffices to let $\lambda\to\infty$ in the hypothesis.

For $p<1$, we use the formula
\[
t^{p} = p(1-p) \int_{0}^{\infty} (t \bmin \lambda) \lambda^{p-2} \dif \lambda.
\]
Chaning the order of integration and applying the hypothesis for each $\lambda$, we obtain the claim.
\end{proof}

\begin{corollary}
Let $z_{1},z_{2},\dotsc$ be positive random variables.
Then, for every $p\in (0,1]$, we have
\[
\E (\sum_{k=1}^{\infty} z_{k})^{p}
\leq
2 \E (\sum_{k=1}^{\infty} \E_{k-1}(z_{k}))^{p}.
\]
\end{corollary}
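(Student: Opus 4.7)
The plan is to reduce to Lemma~\ref{lem:truncation-concave-moment} by establishing
\[
\E(Z_{\infty} \bmin \lambda) \leq 2 \E(W_{\infty} \bmin \lambda)
\]
for every $\lambda>0$, where $Z_{n}$ and $W_{n}$ are the partial sums from \eqref{eq:2}. Note that $W_{n}$ is $\calF_{n-1}$-measurable, so $(W_{n})$ is an increasing predictable process, and both $Z_{\infty}$ and $W_{\infty}$ are positive as required by the Lemma.

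For fixed $\lambda>0$, I would introduce the stopping time
\[
\tau := \inf \Set{n \geq 0 \given W_{n+1} > \lambda},
\]
which is a stopping time because $W_{n+1}$ is $\calF_{n}$-measurable. By construction $W_{\tau}\leq\lambda$ on $\Set{\tau<\infty}$ and $W_{\tau}=W_{\infty}\leq\lambda$ on $\Set{\tau=\infty}$, so in both cases $W_{\tau}\leq W_{\infty}\bmin\lambda$. Monotonicity of $W$ moreover gives $\Set{\tau<\infty}=\Set{W_{\infty}>\lambda}$, which yields $\lambda \one_{\tau<\infty} = \lambda \one_{W_{\infty}>\lambda} \leq W_{\infty}\bmin\lambda$.

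The key decomposition is the pointwise bound
\[
Z_{\infty} \bmin \lambda \leq Z_{\tau} + \lambda \one_{\tau < \infty},
\]
using $Z_{\infty}=Z_{\tau}$ on $\Set{\tau=\infty}$ and $Z_{\infty}\bmin\lambda\leq\lambda$ on $\Set{\tau<\infty}$. Since $\Set{k\leq\tau} = \Omega\setminus\Set{\tau\leq k-1} \in \calF_{k-1}$ by the stopping time property, pull-out of conditional expectation and Tonelli give
\[
\E Z_{\tau}
= \sum_{k=1}^{\infty} \E\bigl(z_{k}\one_{k\leq\tau}\bigr)
= \sum_{k=1}^{\infty} \E\bigl(\one_{k\leq\tau}\E_{k-1}(z_{k})\bigr)
= \E W_{\tau}
\leq \E(W_{\infty}\bmin\lambda).
\]
Combining this with the previous bound on $\lambda\one_{\tau<\infty}$ yields the required factor $2$, and then Lemma~\ref{lem:truncation-concave-moment} applied with $C=2$ concludes.

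The only delicate point is using the predictability of $W$ to produce a stopping time $\tau$ for which $W_{\tau}$ remains controlled by $\lambda$ while the pull-out identity $\E Z_{\tau}=\E W_{\tau}$ still holds; this is the same "lookahead by one" device used in Corollary~\ref{cor:Garsia-Neveu}. Once $\tau$ is in place, the rest is bookkeeping.
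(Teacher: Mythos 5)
Your proof is correct and follows essentially the same route as the paper's: the same stopping time $\tau = \inf\{n : W_{n+1}>\lambda\}$, the same pointwise bound $Z_\infty \bmin \lambda \leq Z_\tau + \lambda\one_{\tau<\infty}$, the same use of $\{k\leq\tau\}\in\calF_{k-1}$ to pass to $\E_{k-1}(z_k)$, and the same appeal to Lemma~\ref{lem:truncation-concave-moment} with $C=2$. You have simply spelled out a few of the intermediate inequalities more explicitly than the paper does.
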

Thus, we see that, for $p \in (0,2]$ and any martingale $f$, we have
\[
\norm{Sf}_{p} \lesssim_{p} \norm{sf}_{p}.
\]
\begin{proof}
Define $W,Z$ as in \eqref{eq:2}.
Let $\tau := \inf\Set{n \given W_{n+1} > \lambda}$.
Then,
\begin{align*}
\E (Z_{\infty} \bmin \lambda)
&\leq
\E (Z_{\tau} + \lambda\one_{\tau<\infty})
\\ &=
\sum_{k} \E (\one_{k \leq \tau} z_{k}) + \lambda \abs{\Set{W_{\infty}>\lambda}}
\\ &\leq
\sum_{k} \E (\one_{k \leq \tau} \E_{k-1}z_{k}) + \E(W_{\infty} \bmin \lambda)
\\ &\leq
2 \E(W_{\infty} \bmin \lambda).
\end{align*}
We conclude by Lemma~\ref{lem:truncation-concave-moment}.
\end{proof}
\begin{corollary}
Let $p \in (0,2]$ and $f$ be a martingale.
Then
\[
\norm{Mf}_{p} \leq 5^{1/p} \norm{sf}_{p}.
\]
\end{corollary}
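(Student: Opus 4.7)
The plan is to reduce to Lemma~\ref{lem:truncation-concave-moment} applied to the squared quantities $Z=(Mf)^{2}$ and $W=(sf)^{2}$, with exponent $q=p/2\in(0,1]$. That reduction immediately yields $\norm{Mf}_{p}\leq C^{1/p}\norm{sf}_{p}$ from the truncated-expectation estimate $\E((Mf)^{2}\bmin\lambda^{2})\leq C\,\E((sf)^{2}\bmin\lambda^{2})$ for every $\lambda>0$, and my goal is to establish this with $C=5$.

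To prove the truncated estimate I will exploit the predictability of $sf$ (note that $sf_{n}$ is $\calF_{n-1}$-measurable). Fix $\lambda>0$ and set $\tau:=\inf\Set{n \given sf_{n+1}>\lambda}$; this is a stopping time because $\Set{\tau\leq n}=\Set{sf_{n+1}>\lambda}\in\calF_{n}$. Consider the stopped martingale $g:=f^{\tau}$. Since $dg_{k}=df_{k}\one_{k\leq\tau}$ and $\Set{k\leq\tau}\in\calF_{k-1}$, one computes $sg_{\infty}^{2}=\sum_{k}\one_{k\leq\tau}\E_{k-1}\abs{df_{k}}^{2}=sf_{\tau}^{2}\leq\lambda^{2}$ by the choice of $\tau$, and in addition $sg_{\infty}\leq sf_{\infty}$ pointwise, so $sg_{\infty}^{2}\leq sf^{2}\bmin\lambda^{2}$.

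With this in hand I apply Doob's $L^{2}$ inequality (Theorem~\ref{thm:Doob-Lp}) to $g$, using $\E g_{\infty}^{2}=\E sg_{\infty}^{2}$ from orthogonality of martingale increments, to get
\[
\E (Mg)^{2}\leq 4\,\E sg_{\infty}^{2}\leq 4\,\E(sf^{2}\bmin\lambda^{2}).
\]
The pointwise estimate
\[
(Mf)^{2}\bmin\lambda^{2}\leq (Mg)^{2}+\lambda^{2}\one_{\tau<\infty}
\]
(obvious on $\Set{\tau=\infty}$ where $Mg=Mf$, and trivial on $\Set{\tau<\infty}$) combined with $\Set{\tau<\infty}\subseteq\Set{sf>\lambda}$ and $\lambda^{2}\one_{sf>\lambda}\leq sf^{2}\bmin\lambda^{2}$ gives
\[
\E((Mf)^{2}\bmin\lambda^{2})\leq 4\,\E(sf^{2}\bmin\lambda^{2})+\E(sf^{2}\bmin\lambda^{2})=5\,\E(sf^{2}\bmin\lambda^{2}).
\]
Lemma~\ref{lem:truncation-concave-moment} with $q=p/2\in(0,1]$ and $C=5$ then yields $\E(Mf)^{p}\leq 5\,\E(sf)^{p}$, i.e.\ $\norm{Mf}_{p}\leq 5^{1/p}\norm{sf}_{p}$.

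The only non-routine step is the stopping-time setup: one must truncate at the correct level so that the stopped $sg_{\infty}$ is bounded by $\lambda$ while still being dominated by $sf^{2}\bmin\lambda^{2}$. Because $sf$ is predictable, shifting the index by one (stopping just before $sf$ exceeds $\lambda$) makes this work; the same trick will not function with $Sf$ in place of $sf$, which is precisely why this inequality is specific to the predictable square function.
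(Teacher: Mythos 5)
Your proof is correct and follows essentially the same route as the paper: same stopping time $\tau := \inf\Set{n \given sf_{n+1}>\lambda}$, same use of Doob's $L^{2}$ inequality on $f^{\tau}$ plus the trivial bound on $\Set{\tau<\infty}$, same reduction to Lemma~\ref{lem:truncation-concave-moment} via the truncated estimate $\E((Mf)^{2}\bmin\lambda^{2})\leq 5\,\E((sf)^{2}\bmin\lambda^{2})$. The only difference is that you spell out the details the paper leaves implicit, in particular the computation $s(f^{\tau})_{\infty}^{2}=sf_{\tau}^{2}\leq (sf)^{2}\bmin\lambda^{2}$ that relies on $\Set{k\leq\tau}\in\calF_{k-1}$, which is exactly the predictability you correctly flag as the crux of the argument.
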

\begin{proof}
Let $\tau := \inf\Set{ n \given sf_{n+1} > \lambda }$.
Then,
\begin{align*}
\E ( (Mf)^{2} \bmin \lambda^{2} )
&\leq
\E [ ( Mf^{\tau} )^{2} ]+ \lambda^{2} \E \one_{\tau<\infty}
\\ \text{(Doob's inequality)} &\leq
4 \E [ ( f^{\tau} )^{2} ] + \lambda^{2} \E \one_{sf_{\infty} > \lambda}
\\ &\leq
5 \E ((sf_{\infty})^{2} \bmin \lambda^{2}).
\end{align*}
We conclude by Lemma~\ref{lem:truncation-concave-moment}.
\end{proof}

\begin{lemma}[Good-$\lambda$ inequality]
\label{lem:good-lambda}
Let $f,g$ be positive random variables, $p \in (0,\infty)$, and suppose that we are given $\beta>1$ and $\delta,\epsilon \in \R_{>0}$ with $\beta^{p}\epsilon<1$.
Assume that, for every $\lambda>0$, we have
\[
\mu\Set{g>\beta\lambda, f \leq \delta\lambda} \leq \epsilon \mu\Set{g>\lambda}.
\]
Then,
\[
\E g^{p} \leq \frac{\delta^{-p}}{\beta^{-p}-\epsilon} \E f^{p}.
\]
\end{lemma}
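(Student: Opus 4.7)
The plan is to use the layer cake formula to reduce the statement to a $\lambda$-pointwise inequality, and then invoke the hypothesis together with a change of variables to absorb a copy of $\E g^{p}$ on the left.

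First I would write
\[
\E g^{p} = p \int_{0}^{\infty} \lambda^{p-1} \mu\Set{g > \lambda} \dif\lambda,
\]
and substitute $\lambda \mapsto \beta\lambda$ to obtain
\[
\E g^{p} = \beta^{p} \, p \int_{0}^{\infty} \lambda^{p-1} \mu\Set{g > \beta\lambda} \dif\lambda.
\]
Next I would apply the trivial decomposition
\[
\Set{g > \beta\lambda} \subseteq \Set{g > \beta\lambda, f \leq \delta\lambda} \cup \Set{f > \delta\lambda},
\]
bound the first piece by the hypothesis and the second by another substitution $\lambda \mapsto \lambda/\delta$ followed by the layer cake formula applied to $f^{p}$, arriving at
\[
\E g^{p} \leq \beta^{p}\epsilon \, \E g^{p} + \beta^{p} \delta^{-p} \E f^{p}.
\]
Since $\beta^{p}\epsilon < 1$, rearranging yields the desired constant $\delta^{-p}/(\beta^{-p}-\epsilon)$.

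The main obstacle is that the cancellation in the last step requires $\E g^{p} < \infty$ a priori, which is not assumed. I would handle this by truncation: replace $g$ with $g \bmin N$. The hypothesis transfers to $g \bmin N$ (with the same $\epsilon$), because $\Set{g \bmin N > \beta\lambda} \subseteq \Set{g > \beta\lambda}$ is non-empty only when $\beta\lambda < N$, in which case $\Set{g \bmin N > \lambda} = \Set{g > \lambda}$ as well, so the inequality reduces to the original one. Since $g \bmin N$ is bounded, the cancellation is legitimate, and the bound
\[
\E (g \bmin N)^{p} \leq \frac{\delta^{-p}}{\beta^{-p}-\epsilon} \E f^{p}
\]
is uniform in $N$. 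Monotone convergence as $N \to \infty$ then gives the general case.
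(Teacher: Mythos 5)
Your proposal is correct and follows essentially the same route as the paper: layer cake formula, the decomposition $\Set{g>\beta\lambda} \subseteq \Set{g>\beta\lambda, f\leq\delta\lambda} \cup \Set{f>\delta\lambda}$, and rearrangement. You go slightly further than the paper by spelling out the truncation argument needed to justify absorbing $\E g^p$ onto the left (the paper leaves this implicit), and your verification that the hypothesis transfers to $g \bmin N$ is correct.
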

\begin{proof}
The hypothesis implies
\begin{align*}
\mu\Set{g>\beta\lambda}
&=
\mu\Set{g>\beta\lambda, f \leq \delta\lambda}
+ \mu\Set{g>\beta\lambda, f > \delta\lambda}
\\ &\leq
\epsilon \mu\Set{g>\lambda}
+ \mu\Set{f > \delta\lambda}.
\end{align*}
Using the formula $t^{p} = p \int_{0}^{t} \lambda^{p-1} \dif \lambda$, we obtain
\begin{align*}
\E g^{p}
&=
p \E \int_{0}^{g} \lambda^{p-1} \dif \lambda
\\ &=
p \int_{0}^{\infty} \mu\Set{g>\lambda} \lambda^{p-1} \dif \lambda.
\end{align*}
Inserting the above estimate, we obtain
\[
\E (g/\beta)^{p} \leq \epsilon \E g^{p} + \E (f/\delta)^{p}.
\]
Rearranging, we obtain the claim.
\end{proof}

\begin{lemma}
Let $z_{1},z_{2},\dotsc$ be positive random variables.
Then, for every $p\in (0,\infty)$, we have
\[
\E (\sum_{k=1}^{\infty} z_{k})^{p}
\lesssim_{p}
\E (Mz \bmax \sum_{k=1}^{\infty} \E_{k-1}(z_{k}))^{p}.
\]
\end{lemma}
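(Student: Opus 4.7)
The plan is to apply the good-$\lambda$ inequality (Lemma~\ref{lem:good-lambda}) with $g := Z_{\infty}$ and $f := Mz \bmax W_{\infty}$, where I abbreviate $Z_{n} := \sum_{k \leq n} z_{k}$ and $W_{n} := \sum_{k \leq n} \E_{k-1}(z_{k})$. For a fixed $\beta > 1$, the entire argument reduces to verifying that for every sufficiently small $\delta > 0$,
\[
\mu\Set{ Z_{\infty} > \beta\lambda,\ Mz \leq \delta\lambda,\ W_{\infty} \leq \delta\lambda }
\leq
\frac{\delta}{\beta - 1 - \delta}\, \mu\Set{Z_{\infty} > \lambda}.
\]
Taking for instance $\beta = 2$ and $\delta < 1/(1 + 2^{p})$ then forces $\beta^{p}\epsilon < 1$, and Lemma~\ref{lem:good-lambda} delivers the conclusion.

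For the good-$\lambda$ step I introduce two stopping times: the usual hitting time $\tau := \inf\Set{n \given Z_{n} > \lambda}$ for $Z$, and the predictable-type stop $\sigma := \inf\Set{n \given W_{n+1} > \delta\lambda}$ for the compensator $W$ (a stopping time because $W$ is predictable). By construction $W_{\sigma} \leq \delta\lambda$ on all of $\Omega$, and $\Set{W_{\infty} \leq \delta\lambda} = \Set{\sigma = \infty}$. On the event $A$ in the displayed inequality, these properties together with the jump bound $z_{\tau} \leq Mz \leq \delta\lambda$ (hence $Z_{\tau} \leq (1+\delta)\lambda$) yield
\[
Z_{\sigma} - Z_{\sigma \bmin \tau}
= Z_{\infty} - Z_{\tau} > (\beta - 1 - \delta)\lambda
\quad\text{on } A.
\]
A Chebyshev step and the non-negativity of $Z_{\sigma} - Z_{\sigma \bmin \tau}$ reduce the task to bounding $\E(Z_{\sigma} - Z_{\sigma \bmin \tau})$.

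The crucial observation is that the increment indicator $\one_{\sigma \bmin \tau < k \leq \sigma} = \one_{\tau \leq k-1}\,\one_{\sigma \geq k}$ is $\calF_{k-1}$-measurable, both conditions being predictable at step $k$. A tower-property computation of the same type that appears in the proofs of Lemma~\ref{lem:sum-of-Ek} and Corollary~\ref{cor:Garsia-Neveu} then swaps $z_{k}$ for $\E_{k-1}(z_{k})$, yielding $\E(Z_{\sigma} - Z_{\sigma \bmin \tau}) = \E(W_{\sigma} - W_{\sigma \bmin \tau})$. Since this last quantity vanishes unless $\tau < \sigma$ (in particular, unless $\tau < \infty$, and monotonicity of $Z_{n}$ gives $\Set{\tau < \infty} = \Set{Z_{\infty} > \lambda}$) and is pointwise bounded by $W_{\sigma} \leq \delta\lambda$, the whole expectation is at most $\delta\lambda \cdot \mu\Set{Z_{\infty} > \lambda}$, which is exactly the target bound.

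The main obstacle I anticipate is selecting the second stopping time correctly. It must simultaneously be (i) a stopping time for $\calF$, (ii) predictable enough that $\one_{\sigma \bmin \tau < k \leq \sigma}$ is $\calF_{k-1}$-measurable, so that the $z_{k} \to \E_{k-1}(z_{k})$ tower swap goes through, and (iii) infinite precisely on $\Set{W_{\infty} \leq \delta\lambda}$, so that one can replace $Z_{\sigma}$ by $Z_{\infty}$ on the event of interest. Stopping \emph{one step before} the predictable compensator crosses the level $\delta\lambda$, namely $\sigma := \inf\Set{n \given W_{n+1} > \delta\lambda}$, is what simultaneously meets all three conditions and is the crux of the argument.
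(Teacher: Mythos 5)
Your argument is correct and follows essentially the same route as the paper: apply the good-$\lambda$ inequality, introduce a predictable stopping time $\sigma$ keyed to $W_{n+1}$, exploit that $\one_{\sigma\bmin\tau < k \leq \sigma}$ is $\calF_{k-1}$-measurable to swap $z_k$ for $\E_{k-1}(z_k)$, and finish with the pointwise bound $W_\sigma \leq \delta\lambda$. The only divergences are cosmetic streamlinings: the paper also caps the sum at $\tau_\beta := \inf\{n : Z_n > \beta\lambda\}$ and folds the condition $z_n > \delta\lambda$ into $\sigma$, but since your target event already enforces $Mz \leq \delta\lambda$ and $Z_\sigma - Z_{\sigma\bmin\tau}$ is automatically nonnegative, your two-stopping-time version works equally well.
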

Thus, we see that, for $p \in [2,\infty)$ and any martingale $f$, we have
\[
\norm{Sf}_{p} \lesssim_{p} \norm{sf \bmax Mdf}_{p}.
\]
\begin{proof}
We continue using notation \eqref{eq:2}.
We will verify the hypothesis of Lemma~\ref{lem:good-lambda}.
Let $\beta,\delta \in \R_{>0}$ with $\beta > \delta + 1$.
Consider the stopping times
\[
\tau := \inf\Set{n \given Z_{n}>\lambda},
\tau_{\beta} := \inf\Set{n \given Z_{n}>\beta\lambda},
\sigma := \inf\Set{ n \given (W_{n+1}\bmax z_{n}) > \delta\lambda }
\]
and the process
\[
h_{n} = \sum_{k\leq n} \one_{\tau < k \leq \tau_{\beta}\bmin \sigma} z_{k}.
\]
Then, if $Z>\beta\lambda$ and $W \bmax Mz \leq \delta\lambda$, we have $\sigma=\infty$, so that
\[
h_{\infty} = \sum_{\tau < k \leq \tau_{\beta}} z_{k}
\geq
\beta\lambda - \sum_{k < \tau} z_{k} - z_{\tau}
\geq
\beta\lambda - \lambda - \delta\lambda
=
(\beta-1-\delta) \lambda.
\]
Hence,
\begin{align*}
\mu\Set{Z > \beta\lambda, W \bmax Mz \leq \delta\lambda}
&\leq
\mu\Set{h_{\infty} > (\beta-1-\delta) \lambda}
\\ &\leq
\frac{1}{(\beta-1-\delta) \lambda} \E h_{\infty}
\\ &=
\frac{1}{(\beta-1-\delta) \lambda} \sum_{k} \E ( \one_{\tau < k \leq \tau_{\beta}\bmin \sigma} z_{k} )
\\ &=
\frac{1}{(\beta-1-\delta) \lambda} \sum_{k} \E ( \one_{\tau < k \leq \tau_{\beta}\bmin \sigma} \E_{k-1} z_{k} )
\\ &\leq
\frac{1}{(\beta-1-\delta) \lambda} \E ( \one_{\tau < \infty} \delta \lambda )
\\ &=
\frac{\delta}{\beta-1-\delta} \mu\Set{Z>\lambda}.
\end{align*}
Now it suffices to take any $\beta>1$ and $\delta$ sufficiently small depending on $\beta$.
\end{proof}

\begin{remark}
Most of the material in this section is from \cite{MR365692}.
\end{remark}

\neuevorl{2021-11-02}

% Local Variables:
% mode: LaTeX
% TeX-engine: xetex
% TeX-master: "martingale-inequalities.tex"
% End:

\section{L\'epingle inequality}

For $0<r<\infty$ and a sequence of random variables $f = (f_{n})_{n}$, the $r$-variation of $f$ on the interval $[t',t]$ is defined by
\begin{equation}
\label{eq:Vr}
V^{r} f_{t',t} :=
\sup_{J, t' \leq u_{0} < \dotsb < u_J \leq t}
\Bigl( \sum_{j=1}^J \dist{f_{u_{j-1}}}{f_{u_{j}}}^{r} \Bigr)^{1/r},
\end{equation}
where the supremum is taken over arbitrary increasing sequences.
Analogously, $V^\infty f_{t',t} := \sup_{t' \leq u' < u \leq t} \dist{f_{u'}}{f_{u}}$.

We abbreviate $V^r f_t := V^r f_{0,t}$ and $V^r f := V^r f_{0,\infty}$.

The following result is a quantitative version of the martingale convergence theorem.
\begin{theorem}[L\'epingle inequality]
\label{thm:lepingle}
For every $p \in [1,\infty)$, there exists a constant $C_{p}<\infty$ such that, for every $r>2$, and every martingale $f$, we have
\begin{equation}
\label{eq:lepingle}
\norm{ V^{r}f }_{L^{p}}
\leq C_{p} \frac{r}{r-2}
\norm{ Mf }_{L^{p}}.
\end{equation}
\end{theorem}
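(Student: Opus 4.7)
The plan is to reduce $V^{r}f$ to a dyadic sum of \emph{jump counts}, control each of these via Corollary~\ref{cor:BDG} applied to a subsampled martingale, and then reassemble in $L^{p}$ to extract the $r/(r-2)$ factor.

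For each $\lambda>0$ I would introduce stopping times $\sigma^{\lambda}_{0}:=0$ and $\sigma^{\lambda}_{j+1}:=\inf\Set{n>\sigma^{\lambda}_{j} \given \abs{f_{n}-f_{\sigma^{\lambda}_{j}}}>\lambda}$, and the count $N_{\lambda}(f):=\#\Set{j\geq 1 \given \sigma^{\lambda}_{j}<\infty}$. A standard bucketing argument---each increment in any partition witnessing \eqref{eq:Vr} lies in some dyadic shell $(2^{k-1},2^{k}]$, and the jumps occupying shell $k$ are controlled by $N_{\lambda}(f)$ at a comparable scale---yields the pointwise bound $(V^{r}f)^{r}\lesssim\sum_{k\in\Z}2^{rk}N_{2^{k}}(f)$.

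The scale-by-scale jump inequality $\norm{\lambda\, N_{\lambda}(f)^{1/2}}_{L^{p}}\lesssim_{p}\norm{Mf}_{L^{p}}$ then follows directly from Corollary~\ref{cor:BDG}. By optional sampling, the subsampled process $g^{\lambda}_{j}:=f_{\sigma^{\lambda}_{j}\bmin n}$ is a martingale in $(\calF_{\sigma^{\lambda}_{j}\bmin n})_{j}$; by construction $\abs{dg^{\lambda}_{j}}>\lambda$ on $\Set{\sigma^{\lambda}_{j}<\infty}$, so $(Sg^{\lambda})^{2}\geq\lambda^{2}N_{\lambda}(f)$ pointwise, while $Mg^{\lambda}\leq Mf$. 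Applying Corollary~\ref{cor:BDG} to $g^{\lambda}$ and then sending $n\to\infty$ gives the claim.

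To assemble, one uses the pointwise observation that $N_{2^{k}}(f)>0$ forces $2^{k}\leq 2Mf$, so that the $k$-sum truncates at $K:=\lfloor\log_{2}(2Mf)\rfloor$. Splitting $2^{rk}N_{2^{k}}=2^{(r-2)k}\bigl(2^{k}N_{2^{k}}^{1/2}\bigr)^{2}$ isolates the geometric weight $\sum_{k\leq K}2^{(r-2)k}\lesssim\frac{r}{r-2}(Mf)^{r-2}$, which is the origin of the announced constant. The main obstacle is that the scale-by-scale jump inequality does \emph{not} obviously upgrade to a joint bound on $\sup_{k}2^{k}N_{2^{k}}(f)^{1/2}$ in $L^{p}$, so one cannot pull the supremum in $k$ past the $L^{p}$-norm. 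The $L^{p}$ estimate must therefore keep the dyadic sum intact, to be closed either by Minkowski's inequality in $L^{p/r}$ (when $p\geq r$) or by subadditivity of $x\mapsto x^{p/r}$ (when $p<r$), combined with a Hölder step against the appropriate power of $Mf$ that realises the stochastic truncation $k\leq K$ and recovers the $r/(r-2)$ factor.
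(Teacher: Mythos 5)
Your route (dyadic jump counting with absolute-threshold greedy stopping times) is a genuine alternative to the paper's argument, but there is a gap at the assembly step that the proposed workarounds do not close. The pointwise bound $(V^r f)^r\lesssim\sum_{k\in\Z}2^{rk}N_{2^k}(f)$ and the scale-by-scale estimate $\norm{\lambda N_\lambda(f)^{1/2}}_{L^p}\lesssim_p\norm{Mf}_{L^p}$ are both correct. What fails is passing from these to the $L^p$ bound on $V^r f$: the truncation $k\leq K$ with $2^K\sim Mf$ is random, and it does not survive the steps you propose. After Minkowski in $L^{p/r}$ you are left with $\sum_{k\in\Z}2^{rk}\norm{N_{2^k}}_{L^{p/r}}\lesssim\norm{Mf}_{L^p}^{2}\sum_{k\in\Z}2^{(r-2)k}$, and the last series diverges because Minkowski erases the random cutoff at large $k$. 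Keeping the indicator $\one_{2^k\leq 2Mf}$ and running H\"older inside the $L^{p/r}$ norm instead produces terms of the form $2^{(r-2)k}\,\P(Mf\geq 2^{k-1})^{(r-2)/p}$, which again need not sum when $(r-2)/p<1$. The missing ingredient in the jump-counting route is the uniform jump estimate $\norm{\sup_{\lambda>0}\lambda N_\lambda(f)^{1/2}}_{L^p}\lesssim_p\norm{Mf}_{L^p}$, which is an $\ell^\infty$-in-$\lambda$ statement and cannot be deduced scale-by-scale from the scalar Corollary~\ref{cor:BDG}; it requires a vector-valued square-function estimate that you have not supplied.

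The paper sidesteps this entirely by defining the greedy stopping times relative to the running oscillation $M_t$ rather than to fixed thresholds $\lambda$. The resulting geometric weights in \eqref{eq:ptw-domination:only-square} are the deterministic constants $2^{-(m-2)(r-2)/2}$, so Minkowski's inequality in $L^p$ closes cleanly and the series in $m$ produces exactly the $\frac{r}{r-2}$ factor. The self-absorption step $M_\infty\leq V^r f$, which lets the paper divide out a power of $V^r f$ from \eqref{eq:ptw-domination}, is precisely what makes relative thresholds work without ever introducing a random scale cutoff; it has no analogue in your absolute-threshold scheme, and that is where the difficulty you flag at the end actually originates.
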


The main difficulty here is that the supremum in the definition of $V^r$ is taken over not necessarily adapted partitions.
We will remedy this by a greedy selection algorithm, stopping as soon as we see a large jump.
We have to use stopping rules depending on a parameter $m$ in order to capture jumps of all possible sizes.

\begin{proof}
By the monotone convergence theorem, we may assume that $f$ becomes constant after some time $N$.

Let $M_{t} := \sup_{t'' \leq t' \leq t} \dist{f_{t'}}{f_{t''}}$.
For each $m\in \N$, define an adapted partition by
\begin{equation}
\label{eq:stop-times}
\stoptime{m}{0} := 0,
\quad
\stoptime{m}{j+1} := \inf \Set{ t \geq \stoptime{m}{j} \given \dist{ f_{t} }{ f_{\tau_{j}} } \geq 2^{-m} M_{t}, M_{t}>0 }.
\end{equation}

\begin{claim}
\label{lem:comparable-jump}
Let $0 \leq t' < t < \infty$ and $m\geq 2$.
Suppose that
\begin{equation}
\label{eq:jump-magnitude}
2 < \dist{ f_{t'} }{ f_{t}}/(2^{-m}M_{t}) \leq 4.
\end{equation}
Then there exists $j$ with $t' < \stoptime{m}{j} \leq t$ and
\begin{equation}
\label{eq:comparable-jump}
\dist{ f_{t'} }{ f_{t}}
\leq
8 \dist{ f_{\stoptime{m}{j-1}} }{ f_{\stoptime{m}{j}}}.
\end{equation}
\end{claim}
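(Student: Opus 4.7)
My plan is to take $j$ to be the largest index with $\stoptime{m}{j} \leq t$. The intuition is that any sizable change from $f_{t'}$ to $f_t$ must be captured by a greedy jump close to $t$, because the stopping rule forces the running supremum $M$ to be essentially the same at $\stoptime{m}{j}$ as at $t$.

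First, I would verify that $\stoptime{m}{j} > t'$. If not, then both $t'$ and $t$ lie in $[\stoptime{m}{j}, \stoptime{m}{j+1})$ by maximality of $j$. The defining condition \eqref{eq:stop-times} then gives $\dist{f_{t'}}{f_{\stoptime{m}{j}}}, \dist{f_t}{f_{\stoptime{m}{j}}} < 2^{-m} M_t$, and the triangle inequality yields $\dist{f_{t'}}{f_t} < 2 \cdot 2^{-m} M_t$, contradicting the lower bound in \eqref{eq:jump-magnitude}. In particular $j \geq 1$, so $\stoptime{m}{j-1}$ is defined.

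The heart of the argument is to show $M_{\stoptime{m}{j}} \geq (1-2^{-m}) M_t$. Since $f$ is eventually constant (by the reduction at the start of the proof), $M_t$ is attained by some pair $a \leq b \leq t$. I would rule out $a, b \in (\stoptime{m}{j}, t]$: in that case the stopping rule bounds both $\dist{f_a}{f_{\stoptime{m}{j}}}$ and $\dist{f_b}{f_{\stoptime{m}{j}}}$ by $2^{-m} M_t$, forcing $M_t < 2 \cdot 2^{-m} M_t$, which is incompatible with $m \geq 2$. So at least one of $a, b$, say $a$, lies in $[0, \stoptime{m}{j}]$. If $b$ does too, then trivially $M_{\stoptime{m}{j}} \geq M_t$; otherwise $\dist{f_b}{f_{\stoptime{m}{j}}} < 2^{-m} M_t$, and the triangle inequality applied to $\dist{f_a}{f_b} = M_t$ gives $\dist{f_a}{f_{\stoptime{m}{j}}} \geq (1-2^{-m}) M_t$, which yields the claimed lower bound on $M_{\stoptime{m}{j}}$.

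To finish, the stopping rule at $\stoptime{m}{j}$ gives $\dist{f_{\stoptime{m}{j-1}}}{f_{\stoptime{m}{j}}} \geq 2^{-m} M_{\stoptime{m}{j}} \geq (3/4) \cdot 2^{-m} M_t$ since $m \geq 2$. Combined with the upper bound $\dist{f_{t'}}{f_t} \leq 4 \cdot 2^{-m} M_t$ from \eqref{eq:jump-magnitude}, this produces $\dist{f_{t'}}{f_t} \leq (16/3)\,\dist{f_{\stoptime{m}{j-1}}}{f_{\stoptime{m}{j}}} \leq 8\,\dist{f_{\stoptime{m}{j-1}}}{f_{\stoptime{m}{j}}}$. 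The main obstacle will be the middle step (the near-equality $M_{\stoptime{m}{j}} \approx M_t$); once the correct $j$ is fixed and that is in hand, the rest is routine triangle-inequality bookkeeping.
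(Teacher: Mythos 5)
Your proof is correct and follows the same outline as the paper's: you pick $j$ to be the last stopping index with $\stoptime{m}{j}\leq t$, show $\stoptime{m}{j}>t'$ by the same triangle-inequality contradiction, and then prove a lower bound on $M_{\stoptime{m}{j}}$ relative to $M_t$. The only cosmetic difference is in that last step: the paper argues by contradiction (assuming $M_{\stoptime{m}{j}}<M_t/2$ and exhibiting a later stopping time $\leq t$), whereas you directly locate the pair $(a,b)$ realizing $M_t$ and bound $\dist{f_a}{f_{\stoptime{m}{j}}}$; your version gives the slightly sharper $M_{\stoptime{m}{j}}\geq(1-2^{-m})M_t$ in place of $M_t/2$, hence a constant $16/3$ rather than $8$, but it is the same mechanism.
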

\begin{proof}[Proof of the claim]
Let $j$ be the largest integer with $\tau' := \stoptime{m}{j} \leq t$.
We claim that $\tau' > t'$.
Suppose for a contradiction that $\tau' < t'$ (the case $\tau'=t'$ is similar but easier).
By the hypothesis \eqref{eq:jump-magnitude} and the assumption that $t,t'$ are not stopping times, we obtain
\[
2 \cdot 2^{-m} M_{t}
<
\dist{ f_{t'}}{ f_{t} }
\leq
\dist{ f_{\tau'}}{ f_{t'}} + \dist{ f_{\tau'}}{ f_{t}}
<
2^{-m} M_{t'} + 2^{-m} M_{t}
\leq
2 \cdot 2^{-m} M_{t},
\]
a contradiction.
This shows $\tau' > t'$.

It remains to verify \eqref{eq:comparable-jump}.
Assume that $M_{\tau'} < M_{t}/2$.
Then, for some $\tau'' \in (\tau, t]$, we have $\dist{ f_{\tau'}}{ f_{\tau''}} \geq M_{t}/2 \geq 2^{-m} M_{\tau''}$, contradicting maximality of $\tau'$.
It follows that
\[
\dist{ f_{\stoptime{m}{j-1}} }{ f_{\stoptime{m}{j}} }
\geq
2^{-m} M_{\tau'}
\geq
2^{-m} M_{t}/2
\geq
\dist{ f_{t'} }{ f_{t}}/8.
\qedhere
\]
\end{proof}

Next, for any $0<\rho<r<\infty$, we will show the pathwise inequality
\begin{equation}
\label{eq:ptw-domination}
V^{r}_{t}(f_{t})^{r}
\leq
8^{\rho} \sum_{m=2}^{\infty} \bigl( 2^{-(m-2)} M_{\infty} \bigr)^{r-\rho} \sum_{j=1}^{\infty}
\dist{ f_{\stoptime{m}{j-1}} }{ f_{\stoptime{m}{j}} }^{\rho}.
\end{equation}
Let $(u_{l})$ be any increasing sequence.
For each $l$ with $\dist{ f_{u_{l}} }{ f_{u_{l+1}}}\neq 0$, let $m=m(l) \geq 2$ be such that
\[
2 < \dist{  f_{u_{l}} }{ f_{u_{l+1}} }/(2^{-m}M_{u_{l+1}}) \leq 4.
\]
Such $m$ exists because the distance is bounded by $M_{u_{l+1}}$.

Let $j=j(l)$ be given by the Claim above with $t'=u_{l}$ and $t=u_{l+1}$.
Then
\[
\dist{  f_{u_{l}} }{ f_{u_{l+1}} }^{r}
\leq
8^{\rho} \dist{ f_{\stoptime{m}{j-1}} }{ f_{\stoptime{m}{j}}}^{\rho}
\cdot (4 \cdot 2^{-m} M_{u_{l+1}})^{r-\rho}.
\]
Since each pair $(m,j)$ occurs for at most one $l$, this implies
\[
\sum_{l} \dist{  f_{u_{l}} }{ f_{u_{l+1}} }^{r}
\leq
8^{\rho} \sum_{m,j} \dist{ f_{\stoptime{m}{j-1}} }{ f_{\stoptime{m}{j}}}^{\rho}
\cdot (2^{-(m-2)} M_{\infty})^{r-\rho}.
\]
Taking the supremum over all increasing sequences $(u_{l})$, we obtain \eqref{eq:ptw-domination}.

Since we assumed that $f_{n}$ becomes independent of $n$ for sufficiently large $n$, we have
\[
M_{\infty} \leq V^{r}_{t}(f_{t}) < \infty.
\]
Substituting this inequality in \eqref{eq:ptw-domination} and canceling $V^{r}_{t}(f_{t})^{r-2}$ on both sides, we obtain
\begin{equation}
\label{eq:ptw-domination:only-square}
V^{r}_{t}(f_{t})^{\rho}
\leq
8^{\rho} \sum_{m=2}^{\infty} 2^{-(m-2)(r-\rho)} \sum_{j=1}^{\infty}
\dist{ f_{\stoptime{m}{j-1}} }{ f_{\stoptime{m}{j}} }^{\rho}.
\end{equation}

By the optional sampling theorem, for each $m$, the sequence $(f_{\stoptime{m}{j}})_{j}$ is a martingale with respect to the filtration $(\calF_{\stoptime{m}{j}})_{j}$.
Let
\[
S_{(m)} := \Bigl( \sum_{j=1}^{\infty} \dist{f_{\stoptime{m}{j-1}}}{f_{\stoptime{m}{j}}}^{2} \Bigr)^{1/2}
\]
denote the square function of this martingale.
Then \eqref{eq:ptw-domination:only-square} implies
\[
V^{r}f
\leq
8 \Bigl( \sum_{m=2}^{\infty} 2^{-(m-2)(r-2)} S_{(m)}^{2} \Bigr)^{1/2}
\leq
8 \sum_{m=2}^{\infty} 2^{-(m-2)(r-2)/2} S_{(m)}
\]
By Minkowski's inequality, this implies
\[
\norm{V^{r}f}_{L^{p}}
\leq
8 \sum_{m=2}^{\infty} 2^{-(m-2)(r-2)/2} \norm{S_{(m)}}_{L^{p}}.
\]
Applying the BDG inequality for the martingales $(f_{\stoptime{m}{j}})_{j}$, we obtain
\begin{align*}
\norm{ V^{r}f }_{L^{p}}
&\lesssim_{p}
\sum_{m=2}^{\infty} 2^{-(m-2)(r-2)/2} \norm{ \sup_{j} \abs{f_{\stoptime{m}{j}}} }_{L^{p}}
\\ &\lesssim
\frac{r}{r-2} \norm{ M f }_{L^{p}}.
\qedhere
\end{align*}
\end{proof}

\begin{remark}
The dependence of the constant in Theorem~\ref{thm:lepingle} on the variation exponent $r$ can be improved to $\sqrt{\frac{r}{r-2}}$ using a vector-valued BDG inequality.
In fact, a slightly more careful version of the above argument already shows this for $p \in [2,\infty)$.
\end{remark}

\subsection{Martingale convergence}

\begin{corollary}[Martingale convergence]
\label{cor:L1-mart-convergence}
Let $f$ be a martingale with $\norm{f}_1 < \infty$.
Then, for every $r>2$, $V^r f$ is finite a.s.
In particular, the sequence $(f_n)_n$ converges a.s.
\end{corollary}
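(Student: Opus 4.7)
The plan is to apply the Lépingle inequality (Theorem~\ref{thm:lepingle}) at $p=1$ to a stopped version of $f$ whose maximal function lies in $L^{1}$. Fix $r>2$ and a truncation level $\lambda>0$, and let $\tau_\lambda := \inf\Set{n \given \abs{f_{n}} > \lambda}$. Since $f$ and $f^{\tau_\lambda}$ agree on the event $\Set{\tau_\lambda=\infty}$, the starting point would be the splitting
\[
\abs{\Set{V^{r} f > \Lambda}}
\leq
\abs{\Set{\tau_\lambda < \infty}} + \abs{\Set{V^{r} f^{\tau_\lambda} > \Lambda}}.
\]

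The main step is to control the second summand. By construction $\abs{f^{\tau_\lambda}_{n}} \leq \lambda$ for $n<\tau_\lambda$, so $Mf^{\tau_\lambda} \leq \lambda + \abs{f_{\tau_\lambda}}\one_{\tau_\lambda<\infty}$. Optional sampling applied to the submartingale $\abs{f}$ on the bounded stopping times $\tau_\lambda \bmin n$, followed by monotone convergence, would give $\E\abs{f_{\tau_\lambda}}\one_{\tau_\lambda<\infty} \leq \norm{f}_{1}$, hence the crucial integrability $\norm{Mf^{\tau_\lambda}}_{L^{1}} \leq \lambda + \norm{f}_{1}$. Theorem~\ref{thm:lepingle} combined with Chebyshev's inequality then yields
\[
\abs{\Set{V^{r} f^{\tau_\lambda} > \Lambda}} \leq \frac{C_{1}}{\Lambda}\cdot \frac{r}{r-2}(\lambda+\norm{f}_{1}).
\]

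Sending $\Lambda\to\infty$ with $\lambda$ fixed collapses the splitting to $\abs{\Set{V^{r} f = \infty}} \leq \abs{\Set{\tau_\lambda<\infty}}$, and Doob's weak-type inequality (Lemma~\ref{lem:Doob-weak}) applied to $\abs{f}$ bounds the latter by $\lambda^{-1}\norm{f}_{1}$, which vanishes as $\lambda\to\infty$; therefore $V^{r}f<\infty$ almost surely. The ``in particular'' conclusion follows from finite $r$-variation alone: if some sample path of $f_{n}$ fails to be Cauchy, then for some $\epsilon>0$ there are infinitely many disjoint pairs $m_{k}<n_{k}$ with $\abs{f_{m_{k}}-f_{n_{k}}}>\epsilon$, and concatenating $K$ of them produces a partition with $r$-variation at least $(K\epsilon^{r})^{1/r}$, contradicting finiteness of $V^{r}f$. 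The only mildly delicate point in the argument is the $L^{1}$ bound on $Mf^{\tau_\lambda}$, where the single potentially large jump at the stopping time has to be absorbed via optional sampling; everything else is a direct application of machinery already developed.
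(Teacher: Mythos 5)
Your argument is correct and follows essentially the same route as the paper: stop $f$ at the first time it exceeds $\lambda$, bound $\norm{Mf^{\tau_\lambda}}_{1}\leq\lambda+\norm{f}_{1}$ via optional sampling and monotone convergence, apply L\'epingle, and finish with Doob's weak-type bound to discard $\Set{\tau_\lambda<\infty}$ as $\lambda\to\infty$. The only cosmetic difference is how the two limits are organized (you send $\Lambda\to\infty$ at fixed $\lambda$ and then $\lambda\to\infty$; the paper restricts to $\Set{Mf<\lambda}$ and unions over $\lambda\in\N$), and your explicit justification of the ``in particular'' step via disjoint $\epsilon$-jumps is a reasonable filling-in of a detail the paper leaves implicit.
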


\begin{proof}
Fixing $\lambda<\infty$ and $N\in\N$, consider the stopping time
\[
\tau := \inf \Set{ n \given \abs{f_n} > \lambda }.
\]
Then $M f^{\tau \bmin N} \leq \lambda \bmax \abs{f_{\tau \bmin N}}$.
Since $\tau \bmin N$ is a bounded stopping time, by the optional sampling theorem, we have
\[
\norm{M f^{\tau \bmin N}}_1
\leq
\lambda + \norm{f_{\tau \bmin N}}_1
\leq
\lambda + \norm{f}_1.
\]
By L\'epingle's inequality, for $r>2$, we obtain
\[
\norm{V^r f^{\tau \bmin N}}_1
\lesssim
\lambda + \norm{f}_1.
\]
Since the right-hand side does not depend on $N$ and by the monotone convergence theorem, we obtain
\[
\norm{V^r f^{\tau}}_1
\lesssim
\lambda + \norm{f}_1.
\]
In particular, the function $V^r f^\tau$ is finite a.s.
On the set $\Set{Mf < \lambda}$, we have $f=f^\tau$, so that $V^r f$ is finite a.s.\ on this set.

Taking the union over $\lambda\in\N$, we find that $V^r f$ is finite a.s.\ on the set $\Set{Mf < \infty}$.
By Doob's maximal inequality, this set has full probability.
\end{proof}

We have shown that, for an $L^1$ martingale $f$, the limit $f_\infty := \lim_{n\to\infty} f_n$ exists a.s.
We recall the condition under which this convergence also holds in $L^{1}$.
We start with an example that shows that this is not always the case.

\begin{example}
\label{ex:L1-not-unif-integr}
Consider $\Omega = [0,1]$ with the dyadic filtration.
The sequence
\[
f_n = 2^n \one_{[0,2^{-n}]}
\]
is a martingale and converges pointwise a.s.\ to $0$.
On the other hand, $\E f_n = 1 \neq 0 = \E 0$.
\end{example}

\begin{definition}
A set $F$ of real random variables is called \emph{uniformly integrable} if
\[
\lim_{\lambda\to\infty} \sup_{f\in F} \E(\one_{\abs{f}>\lambda} \abs{f}) = 0. 
\]
\end{definition}

\begin{example}
If $f\in L^1$, then the singleton $\Set{f}$ is uniformly integrable.
\end{example}

Uniform integrability is useful because it is the minimal hypothesis under which an analogue of the dominated convergence theorem holds.
We recall this analogue.

\begin{proposition}[Dominated convergence]
For every sequence of random variables $f = (f_n)$, the following are equivalent:
\begin{enumerate}
\item $f$ is Cauchy in $L^1$,
\item $f$ is uniformly integrable and Cauchy in probability, that is,
\[
(\forall \epsilon>0) (\exists N\in\N) (\forall m,n\geq N) \mu\Set{\dist{f_{n}}{f_{m}}>\epsilon} < \epsilon.
\]
\end{enumerate}
\end{proposition}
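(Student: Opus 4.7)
The plan is to prove the two implications separately, using in both the elementary bound $\abs{f_n-f_m} \leq \epsilon + \abs{f_n-f_m}\one_{\abs{f_n-f_m}>\epsilon}$ (valid on a probability space) together with the principle that uniformly integrable families have uniformly small integrals on sets of small measure.

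For direction (1) $\Rightarrow$ (2), Cauchy in probability is immediate from Chebyshev, since $\mu\Set{\dist{f_n}{f_m}>\epsilon} \leq \epsilon^{-1}\norm{f_n-f_m}_1$. For uniform integrability, I would first choose $N$ with $\norm{f_n-f_N}_1<\epsilon$ for all $n\geq N$, and note that the finite family $\Set{f_1,\dotsc,f_N}$ is uniformly integrable (each $f_k\in L^1$ is, and a finite maximum of vanishing tail integrals still vanishes). Then for $n>N$ and $\lambda>0$, I split
\[
\E(\one_{\abs{f_n}>\lambda}\abs{f_n})
\leq \norm{f_n-f_N}_1 + \E(\one_{\abs{f_n}>\lambda}\abs{f_N})
< \epsilon + \E(\one_{\abs{f_n}>\lambda}\abs{f_N}).
\]
A Cauchy sequence in $L^1$ is $L^1$-bounded, so by Chebyshev $\mu\Set{\abs{f_n}>\lambda}\leq\lambda^{-1}\sup_k\norm{f_k}_1\to 0$ uniformly in $n$ as $\lambda\to\infty$; combined with absolute continuity of the single integral $\int\abs{f_N}$, this forces $\E(\one_{\abs{f_n}>\lambda}\abs{f_N})<\epsilon$ for $\lambda$ large, uniformly in $n$.

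For direction (2) $\Rightarrow$ (1), I would, given $\epsilon>0$, use uniform integrability to pick $\lambda$ with $\sup_n\E(\one_{\abs{f_n}>\lambda}\abs{f_n})<\epsilon$ (this in particular forces the uniform $L^1$-bound $\sup_n\norm{f_n}_1<\infty$), and use Cauchy in probability to pick $N$ so that $\mu(A_{n,m}) < \epsilon/\lambda$ for all $n,m\geq N$, where $A_{n,m}:=\Set{\dist{f_n}{f_m}>\epsilon}$. Splitting
\[
\norm{f_n-f_m}_1
\leq \epsilon + \int_{A_{n,m}}\abs{f_n} + \int_{A_{n,m}}\abs{f_m}
\]
and estimating each tail integral by
\[
\int_{A}\abs{f_n}
\leq \lambda\mu(A) + \E(\one_{\abs{f_n}>\lambda}\abs{f_n})
\leq \lambda\mu(A) + \epsilon
\]
then yields $\norm{f_n-f_m}_1 \lesssim \epsilon$ for $n,m\geq N$.

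The only delicate point is combining, in direction (1) $\Rightarrow$ (2), the uniform $L^1$-boundedness (which gives uniformly small \emph{measures} of the level sets $\Set{\abs{f_n}>\lambda}$) with the absolute continuity of the integral of the single function $f_N\in L^1$ (which gives uniformly small \emph{integrals} of $\abs{f_N}$ on such sets). This is the standard absolute-continuity/de la Vall\'ee Poussin-style step and is the only place where anything beyond the definitions is really used.
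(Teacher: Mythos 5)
The paper states this proposition without proof (it is introduced by ``We recall this analogue''), so there is nothing to compare your argument against; I can only assess it on its own.

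Your proof is correct and is the standard argument. In (1)~$\Rightarrow$~(2), the splitting $\E(\one_{\abs{f_n}>\lambda}\abs{f_n}) \leq \norm{f_n-f_N}_1 + \E(\one_{\abs{f_n}>\lambda}\abs{f_N})$, combined with Chebyshev (using $L^1$-boundedness of a Cauchy sequence) and absolute continuity of $\int\abs{f_N}$, is exactly the de la Vall\'ee Poussin style step you identify, and it closes the argument; the finitely many indices $n\leq N$ are handled separately. In (2)~$\Rightarrow$~(1), the decomposition $\abs{f_n-f_m} \leq \epsilon + (\abs{f_n}+\abs{f_m})\one_{A_{n,m}}$ and the estimate $\int_A\abs{f_n}\leq\lambda\mu(A)+\E(\one_{\abs{f_n}>\lambda}\abs{f_n})$ are correct. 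One small notational care: the proposition's Cauchy-in-probability condition gives $\mu\Set{\dist{f_n}{f_m}>\epsilon'}<\epsilon'$ for each $\epsilon'$; to obtain $\mu(A_{n,m})<\epsilon/\lambda$ with $A_{n,m}=\Set{\dist{f_n}{f_m}>\epsilon}$, apply it with $\epsilon'=\min(\epsilon,\epsilon/\lambda)$. This does not affect the validity of the proof.
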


In relation with martingales, it is important that uniform integrability is preserved under conditional expectation.
This can be seen using the following characterization.

\begin{lemma}
Let $F$ be a set of random variables.
The following are equivalent.
\begin{enumerate}
\item $F$ is uniformly integrable.
\item $\sup_{f\in F} \E\abs{f} < \infty$ and
\begin{equation}
\label{eq:unif-integr-muA}
\lim_{\mu(A)\to 0} \sup_{f\in F} \E(\one_A \abs{f}) = 0.
\end{equation}
\end{enumerate}
\end{lemma}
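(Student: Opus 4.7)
The plan is to prove both implications by standard truncation arguments that trade off the threshold $\lambda$ (from uniform integrability) against the set-size $\mu(A)$ (from absolute continuity of the integrals).

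First I would handle $(1) \Rightarrow (2)$. To get the $L^1$-boundedness, I pick $\lambda_0$ from the uniform integrability hypothesis so that $\sup_{f\in F} \E(\one_{\abs{f}>\lambda_0}\abs{f}) \leq 1$, and then split $\E\abs{f} = \E(\one_{\abs{f}\leq\lambda_0}\abs{f}) + \E(\one_{\abs{f}>\lambda_0}\abs{f}) \leq \lambda_0 + 1$ uniformly in $f\in F$. For the absolute continuity statement \eqref{eq:unif-integr-muA}, given $\epsilon>0$, I again invoke uniform integrability to select $\lambda$ with $\sup_{f\in F}\E(\one_{\abs{f}>\lambda}\abs{f}) < \epsilon/2$. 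Splitting any $A$ according to $\Set{\abs{f}\leq\lambda}$ versus $\Set{\abs{f}>\lambda}$ gives $\E(\one_A\abs{f}) \leq \lambda \mu(A) + \epsilon/2$, so choosing $\delta := \epsilon/(2\lambda)$ makes this at most $\epsilon$ whenever $\mu(A) < \delta$.

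For the converse $(2) \Rightarrow (1)$, the key trick is to use Chebyshev to convert the $L^1$-bound into a uniform smallness of $\mu\Set{\abs{f}>\lambda}$ as $\lambda\to\infty$, and then apply \eqref{eq:unif-integr-muA} to the sets $A_{f,\lambda} := \Set{\abs{f}>\lambda}$. More precisely, set $M := \sup_{f\in F}\E\abs{f} < \infty$. Given $\epsilon>0$, choose $\delta$ from \eqref{eq:unif-integr-muA} so that $\mu(A) < \delta$ implies $\sup_{f\in F}\E(\one_A\abs{f}) < \epsilon$. By Chebyshev, $\mu\Set{\abs{f}>\lambda} \leq M/\lambda$, which is smaller than $\delta$ as soon as $\lambda > M/\delta$. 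For such $\lambda$ we get $\sup_{f\in F}\E(\one_{\abs{f}>\lambda}\abs{f}) < \epsilon$, as required.

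I don't expect any serious obstacle here; the argument is purely measure-theoretic and doesn't use the filtration or martingale structure. The only mild point to be careful about is that the function $\lambda \mapsto \sup_{f\in F}\mu\Set{\abs{f}>\lambda}$ need not tend to $0$ without the $L^1$-bound, which is precisely why boundedness in $L^1$ must appear as part of the equivalent condition (2); this is exactly what Chebyshev supplies in the reverse direction.
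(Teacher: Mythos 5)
Your proof is correct and follows essentially the same strategy as the paper: truncation at a threshold $\lambda$ combined with the splitting $\E(\one_A\abs{f}) \leq \lambda\mu(A) + \E(\one_{\abs{f}>\lambda}\abs{f})$ in one direction, and Chebyshev's inequality $\mu\Set{\abs{f}>\lambda}\leq M/\lambda$ applied to the sets $\Set{\abs{f}>\lambda}$ in the other. The only difference is presentational: you make the $\epsilon$--$\delta$ bookkeeping explicit where the paper passes directly to limits.
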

\begin{proof}
Without loss of generality, all r.v.\ in $F$ are positive.

$\implies$:
For every $\lambda>0$, we have
\[
\E f
\leq
\lambda + \E (\one_{f>\lambda} f).
\]
Taking the supremum over $f\in F$ on both sides and choosing $\lambda$ such that
\[
\sup_{f\in F} \E (\one_{f>\lambda} f) < \infty,
\]
we see that $\sup_{f\in F} \E\abs{f} < \infty$.

Moreover,
\[
\E(\one_A f)
\leq
\mu(A) \lambda + \E(\one_{f>\lambda} f).
\]
Taking first the supremum over $f\in F$, and then $\lim_{\mu(A)\to 0}$, we obtain
\[
\lim_{\mu(A)\to 0} \sup_{f\in F}
\E(\one_A f)
\leq
\sup_{f\in F} \E(\one_{f>\lambda} f).
\]
Taking $\lim_{\lambda\to 0}$, we obtain \eqref{eq:unif-integr-muA}.

$\impliedby$:
We have $\mu\Set{f>\lambda} \leq \lambda^{-1} \E f$.
Taking the supremum over $f\in F$, we obtain
\[
\sup_{f\in F} \mu\Set{f>\lambda} \leq \lambda^{-1} \sup_{f\in F} \E f.
\]
The RHS converges to $0$ as $\lambda\to\infty$.
The second hypothesis now implies that $F$ is uniformly integrable.
\end{proof}

\begin{lemma}
\label{lem:E-unif-integr}
Let $F \subset L^1(\Omega,\calF)$ be uniformly integrable.
Then the set
\[
\Set{ \E(f|\calF') \given f\in F, \calF'\subseteq\calF \text{ sub-$\sigma$-algebra} }
\]
is uniformly integrable.
\end{lemma}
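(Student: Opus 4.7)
The plan is to prove uniform integrability of the conditional expectations via the characterization established in the preceding lemma, i.e., by checking the $L^{1}$-boundedness and the condition \eqref{eq:unif-integr-muA}. As usual, by replacing $f$ with $\abs{f}$ it suffices to consider nonnegative random variables, using that $\abs{\E(f|\calF')} \leq \E(\abs{f}|\calF')$ together with positivity of conditional expectation.

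The $L^{1}$-boundedness is immediate: by $L^{1}$-contractivity of conditional expectations, $\E \abs{\E(f|\calF')} \leq \E \abs{f}$, and the right-hand side is bounded uniformly in $f\in F$ by the preceding lemma applied to $F$.

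The main step is \eqref{eq:unif-integr-muA}. Here the obstacle is that an arbitrary set $A\in\calF$ need not be $\calF'$-measurable, so one cannot directly pull $\one_A$ across the conditional expectation. The trick is to condition $\one_A$ on $\calF'$: setting $g := \E(\one_A \mid \calF')$, one has $0 \leq g \leq 1$ and $\E g = \mu(A)$, and the product rule for conditional expectation (property \ref{it:ce-prod}) gives
\[
\E\bigl(\one_A \E(f|\calF')\bigr)
= \E\bigl(\E(\one_A f|\calF')\bigr) \leq \E\bigl(g \, f\bigr),
\]
where I used that pulling the $\calF'$-measurable $g$ in and out and then conditioning back is equivalent to testing $f$ against $g$ directly. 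Splitting this expectation at a cutoff $\lambda>0$ gives
\[
\E(g f)
\leq \E(f\,\one_{f>\lambda}) + \lambda \E g
= \E(f\,\one_{f>\lambda}) + \lambda\, \mu(A).
\]

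To conclude, given $\epsilon>0$, I first use uniform integrability of $F$ to choose $\lambda$ so large that $\sup_{f\in F} \E(f\,\one_{f>\lambda}) < \epsilon/2$, and then restrict to $\mu(A) < \epsilon/(2\lambda)$. This bound is uniform in both $f\in F$ and the sub-$\sigma$-algebra $\calF'$, establishing \eqref{eq:unif-integr-muA} for the family of conditional expectations, which combined with the $L^{1}$-boundedness yields uniform integrability via the characterization lemma. The only nontrivial ingredient is the identity $\E(\one_A \E(f|\calF')) = \E(g f)$, which isolates the role of $\calF'$ and reduces everything to a computation about $F$ itself.
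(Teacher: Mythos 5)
Your proof is correct and hinges on the same key identity as the paper's, namely $\E(\one_A \E(f|\calF')) = \E(g f)$ with $g := \E(\one_A|\calF')$, which transfers the test set $A$ across the conditional expectation. After that, your endgame differs slightly from the paper's: you split on $\Set{f>\lambda}$, using $0\leq g\leq 1$ and $\E g = \mu(A)$ to get $\E(gf) \leq \E(f\one_{f>\lambda}) + \lambda\mu(A)$, and then invoke the definition of uniform integrability of $F$ directly. The paper instead splits on $\Set{g>\lambda}$, bounding $\E(gf) \leq \lambda\E f + \E(\one_{g>\lambda} f)$, estimating $\mu\Set{g>\lambda} \leq \lambda^{-1}\mu(A)$ by Markov, and invoking the characterization \eqref{eq:unif-integr-muA} for $F$. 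Both work; yours reaches the conclusion a bit more directly by using the raw definition of uniform integrability rather than routing through the characterization lemma. One small slip in your display: the middle term $\E\bigl(\E(\one_A f|\calF')\bigr)$ equals $\E(\one_A f)$, which is not what you want and is not in general comparable to $\E(gf)$. The correct chain for nonnegative $f$ is
\[
\E\bigl(\one_A \E(f|\calF')\bigr) = \E\bigl(g\,\E(f|\calF')\bigr) = \E(gf),
\]
the first equality because $\E(f|\calF')$ is $\calF'$-measurable so $\one_A$ may be replaced by its conditional expectation, and the second by the tower property since $g$ is $\calF'$-measurable. Your verbal explanation makes clear this is what you meant, so the slip is cosmetic rather than a gap.
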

\begin{proof}
We verify the characterization above.
We have
\[
\E \abs{ \E(f|\calF') }
\leq
\E \E( \abs{f} |\calF')
= \E \abs{f}.
\]
This verifies the first condition.

Fixing $\lambda>0$, we also have
\[
\E ( \one_A \abs{ \E(f|\calF') } )
\leq
\E ( \one_A \E( \abs{f} |\calF') )
=
\E ( \E(\one_A|\calF') \abs{f} )
\leq
\lambda \E \abs{f} + \E( \one_{\E(\one_A|\calF') > \lambda} \abs{f} )
\]
Since $\mu\Set{ \E(\one_A|\calF') > \lambda } \leq \lambda^{-1} \mu(A)$ and using \eqref{eq:unif-integr-muA} for the set $F$, we obtain
\[
\lim_{\mu(A)\to 0} \sup_{f\in F, \calF'\subseteq\calF}
\E ( \one_A \abs{ \E(f|\calF') } )
\leq
\lambda \E \abs{f}.
\]
Since $\lambda>0$ was arbitrary, this implies \eqref{eq:unif-integr-muA} for the set of conditional expectations.
\end{proof}

The above results can now be summarized as follows.
\begin{proposition}[Martingale closure]
\label{prop:unif-integrable-mart-closure}
For every martingale $f=(f_{n})_{n\in\N}$, the following are equivalent.
\begin{enumerate}
\item The set $\Set{f_{n}}_{n\in\N}$ is uniformly integrable.
\item The sequence $(f_{n})_{n\in\N}$ converges in $L^{1}$.
\item There exists $f_{\infty}\in L^{1}$ such that, for every $n\in\N$, we have $f_{n} = \E_{n}f_{\infty}$.
\end{enumerate}
If the above conditions hold, then we can take $f_\infty = \lim_{n\to\infty} f_n$ a.s.\ and in $L^{1}$.
\end{proposition}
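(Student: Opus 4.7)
The plan is to prove the implications $(3) \Rightarrow (1) \Rightarrow (2) \Rightarrow (3)$ and then identify the limits. All three implications are short given the machinery already assembled (Lemma~\ref{lem:E-unif-integr}, Corollary~\ref{cor:L1-mart-convergence}, the dominated convergence proposition, and $L^{p}$-contractivity of conditional expectation), so the ``main obstacle'' is really just keeping careful track of the modes of convergence.

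For $(3) \Rightarrow (1)$, the set $\{f_{\infty}\}$ is trivially uniformly integrable, so Lemma~\ref{lem:E-unif-integr} applied with $\calF' = \calF_{n}$ and $F = \{f_{\infty}\}$ says that $\{\E_{n}f_{\infty}\}_{n\in\N} = \{f_{n}\}_{n\in\N}$ is uniformly integrable. For $(1) \Rightarrow (2)$, uniform integrability gives $\sup_{n}\norm{f_{n}}_{1} < \infty$ (first condition of the characterizing lemma), so $\norm{f}_{1} < \infty$ and Corollary~\ref{cor:L1-mart-convergence} yields that $(f_{n})$ converges almost surely to some $f_{\infty}$; in particular $(f_{n})$ is Cauchy in probability. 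Combined with uniform integrability, the dominated convergence proposition gives convergence in $L^{1}$. For $(2) \Rightarrow (3)$, let $f_{\infty}$ be the $L^{1}$-limit. For any fixed $m$ and $n \geq m$, the martingale property gives $f_{m} = \E_{m}f_{n}$; letting $n\to\infty$ and using that $\E_{m}$ is an $L^{1}$-contraction, the right-hand side converges in $L^{1}$ to $\E_{m}f_{\infty}$, so $f_{m} = \E_{m}f_{\infty}$.

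Finally, to see that the $f_{\infty}$ appearing in $(3)$ can be chosen as the a.s.\ limit: under any (hence all) of the three conditions we have $\sup_{n}\norm{f_{n}}_{1} < \infty$, so Corollary~\ref{cor:L1-mart-convergence} produces an a.s.\ limit $\tilde{f}_{\infty}$, and $(1)\Rightarrow(2)$ gives an $L^{1}$-limit, which must coincide almost surely with $\tilde{f}_{\infty}$ by passing to a subsequence that converges both ways. I do not foresee any serious obstacle here; the one point to be careful about is using the a.s.\ convergence corollary only after having secured $\norm{f}_{1} < \infty$ from uniform integrability.
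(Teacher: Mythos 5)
Your proof is correct and takes exactly the approach the paper intends: the paper presents the proposition with the remark ``The above results can now be summarized as follows'' and gives no explicit argument, expecting the reader to chain Lemma~\ref{lem:E-unif-integr}, the uniform-integrability characterization, Corollary~\ref{cor:L1-mart-convergence}, the dominated convergence proposition, and $L^{1}$-contractivity of conditional expectation in precisely the cycle $(3)\Rightarrow(1)\Rightarrow(2)\Rightarrow(3)$ that you spelled out. Your closing subsequence argument for identifying the a.s.\ limit with the $L^{1}$ limit is also standard and fills in the final sentence of the proposition.
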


\neuevorl{2021-11-09}

\begin{remark}
We have seen that, for a martingale $f$, we have
\[
Mf \in L^{1} \implies f \text{ is uniformly integrable} \implies f \in L^{1}.
\]
The second implication cannot be reversed by Example~\ref{ex:L1-not-unif-integr}.
The following example shows that the first implication also cannot be reversed, and also that $M$ is not bounded on $L^{1}$.
\end{remark}
\begin{example}
Let $\Omega=[0,1]$ with the dyadic filtration and
\[
f_{\infty} := \sum_{m=0}^{\infty} (m+1)^{-2} 2^{m} \one_{[2^{-m-1},2^{-m}]}
\]
and $f_{n} :=\E_{n} f_{\infty}$.
Then, for $x \in (2^{-l-1},2^{-l})$, we have
\begin{multline*}
Mf(x) = f_{l}(x)
= 2^{l} \sum_{m=l}^{\infty} \int_{0}^{2^{-l}} (m+1)^{-2} 2^{m} \one_{[2^{-m-1},2^{-m}]}(y) \dif y
\\= 2^{l-1} \sum_{m=l}^{\infty} (m+1)^{-2}
\sim (l+1)^{-1} 2^{l}.
\end{multline*}
We see that $Mf \not\in L^{1}$.
\end{example}

% Local Variables:
% TeX-engine: xetex
% TeX-master: "martingale-inequalities.tex"
% End:

\section{Vector-valued inequalities}

% Recall that a \emph{Banach function space} is a Banach space $X$ whose elements are measurable functions , and such that if $f\in X$ and $g$ is a measurable function with $\abs{g} \leq \abs{f}$ a.e., then $Xg \leq Xf$.
\begin{remark}
The norm on a Banach space $X$ will be sometimes denoted by $Xf := \norm{f}_{X}$.
This is consistent with operator notation, in that every norm is also a sublinear operator.
\end{remark}

\begin{lemma}\label{lem:vv-BDG}
Let $X$ be a Banach space.
Let $T$ be a subadditive operator that maps $X$-valued martingales starting at $0$ to $\R_{\geq 0}$-valued increasing adapted processes starting at $0$ such that, for every stopping time $\tau$, we have
\[
T f_{\tau} = T (f^{\tau})_{\tau},
\]
and for every time $t$ we have
\[
T f_{t} \leq T f_{t-1} + X d f_{t}.
\]
Let $U$ be another operator satisfying the same properties as $T$.

Assume that, for some $r\in (1,\infty)$ and every $X$-valued martingale $f$, we have
\begin{equation}\label{eq:weak-X-BDG}
\bbP \Set{ T f > \lambda }
\lesssim
\lambda^{-r} (L^{r} U f)^{r}.
\end{equation}
Then, for every $q \in [1,\infty)$ and every $X$-valued martingale $f$, we have
\begin{equation}\label{eq:X-BDG}
L^{q} T f
\lesssim_{q}
L^{q} U f + L^{q} M X d f.
\end{equation}
\end{lemma}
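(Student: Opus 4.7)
The plan is to apply the good-$\lambda$ inequality (Lemma~\ref{lem:good-lambda}) comparing $Tf$ to the quantity $Uf + M X df$. The central step is to establish, for parameters $\beta > 1$ and small $\delta > 0$, a bound of the form
\[
\P(Tf > \beta\lambda,\ Uf + M X df \leq \delta\lambda)
\lesssim \Bigl(\frac{\delta}{\beta-1-\delta}\Bigr)^{r} \P(Tf > \lambda)
+ \bigl((\beta-1-\delta)\lambda\bigr)^{-r} \E\bigl(\one_{Tf>\lambda}(M X df)^{r}\bigr),
\]
and then to integrate against $\lambda^{q-1}\,\dif\lambda$: the first contribution is handled directly as in the good-$\lambda$ paradigm, while the remainder is absorbed via H\"older's and Young's inequalities into $L^{q} Tf$ and $L^{q} MXdf$ on the right-hand side.

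To prove this pointwise-in-$\lambda$ estimate, I would introduce two stopping times,
\[
\tau := \inf\{t : Tf_{t} > \lambda\}, \qquad
\sigma := \inf\{t : Uf_{t} \bmax (MXdf)_{t} > \delta\lambda\}.
\]
On the good event $E := \{Tf > \beta\lambda,\ Uf + MXdf \leq \delta\lambda\}$ we have $\tau < \infty$ and $\sigma = \infty$. The telescoping hypothesis $Tf_{t} \leq Tf_{t-1} + Xdf_{t}$ together with $Xdf_{\tau} \leq MXdf \leq \delta\lambda$ on $E$ then yields $Tf_{\tau} \leq (1+\delta)\lambda$.

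Let $h := f^{\sigma} - f^{\tau \bmin \sigma}$, a martingale that agrees with $f - f^{\tau}$ on $E$ and vanishes identically on $\{\tau = \infty\}$. By subadditivity of $T$ and the stopping identity $T(f^{\tau})_{\infty} = Tf_{\tau}$, we deduce $Th > (\beta-1-\delta)\lambda$ on $E$. The weak-type hypothesis \eqref{eq:weak-X-BDG} applied to $h$ then gives
\[
\P(E) \leq \P(Th > (\beta-1-\delta)\lambda) \lesssim \bigl((\beta-1-\delta)\lambda\bigr)^{-r} (L^{r} Uh)^{r}.
\]

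The main obstacle will be to control $L^{r} Uh$. Using subadditivity and the stopping identity for $U$, together with the mild evenness $U(-g) = Ug$ (which is automatic for the concrete square-function and maximal-function instances of $U$) and the telescoping bound for $U$, one obtains the pointwise estimate $Uh \leq 2 Uf^{\sigma} \leq 2(\delta\lambda + MXdf)$. Since $h$ vanishes off $\{\tau<\infty\} = \{Tf>\lambda\}$, integrating the inequality $(Uh)^{r} \lesssim \bigl((\delta\lambda)^{r} + (MXdf)^{r}\bigr)\one_{Tf>\lambda}$ produces the two-term expression for $(L^{r}Uh)^{r}$ appearing above. Choosing $\beta$ close to $1$ and then $\delta$ small enough that the absorbed coefficient stays below $\beta^{-q}$ closes the argument.
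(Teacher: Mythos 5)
Your good-$\lambda$ strategy is a genuinely different route from the paper's, which instead applies a Davis decomposition $f=f^{\pred}+f^{\bv}$ and then proves the $q=1$ case, extending to general $q$ by the Garsia--Neveu lemma. The sticking point in your argument is precisely the step that the Davis decomposition is designed to fix, and as a result your proposal has a real gap for the main range $q\leq r$ (in particular $q=1$, $r=2$, which is what the later applications use).

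Concretely: your stopping time $\sigma$ can only see that $Uf_{\sigma-1}\vee MXdf_{\sigma-1}\leq\delta\lambda$, but says nothing about the size of the jump $Xdf_{\sigma}$. So the best pointwise bound available is $Uh\lesssim(\delta\lambda+MXdf)\one_{\tau<\infty}$, and off the good event $E$ the factor $MXdf$ is uncontrolled. After applying the weak-type hypothesis, this leaves the term
\[
\int_{0}^{\infty} q\lambda^{q-1}\,\bigl((\beta-1-\delta)\lambda\bigr)^{-r}\, \E\bigl((MXdf)^{r}\one_{Tf>\lambda}\bigr)\,\dif\lambda
= \frac{C}{(\beta-1-\delta)^{r}}\,\E\Bigl[(MXdf)^{r}\int_{0}^{Tf}q\lambda^{q-1-r}\,\dif\lambda\Bigr],
\]
and the inner integral diverges at $\lambda=0$ whenever $q\leq r$. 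So the ``absorb via H\"older and Young'' step cannot close for $q\leq r$, and no choice of $\beta,\delta$ rescues it. The paper avoids exactly this by first passing to $f^{\pred}$, whose jumps satisfy $Xdf^{\pred}_{t}\leq 2MXdf_{t-1}$: this is a \emph{predictable} bound, so the stopping time knows the size of its own terminating jump, giving $Uf^{\pred,\tau}\leq Uf^{\pred}\wedge 3\lambda$ with no dangling $MXdf$ term, and the subsequent $\lambda$-integral is benign. The remainder $f^{\bv}$ is then handled separately via the bounded-variation estimate.

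Two smaller points. First, your bound $Uh\leq Uf^{\sigma}+U(-f^{\tau\wedge\sigma})\leq 2Uf^{\sigma}$ uses evenness $U(-g)=Ug$, which you flag but which is not among the stated axioms for $U$; the paper's proof gets around this in the analogous place by bounding $U(\pm f^{\bv})$ directly through the telescoping hypothesis, since only $\Vert\cdot\Vert_{X}$ (which is even) appears there. Second, your claim that $Th>(\beta-1-\delta)\lambda$ on $E$ is correct but is asserted a bit loosely: the clean justification is that $g:=f-f^{\tau}$ satisfies $g^{\sigma}=h$, so the stopping axiom gives $Th=(Tg)_{\sigma}$, which equals $Tg$ on $E$ (where $\sigma=\infty$), and then subadditivity plus the telescoping bound for $Tf_{\tau}$ give the claim. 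That part is fine; the divergent $\lambda$-integral is the fatal issue.
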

The formulation is flexible enough to apply to any combination $\Set{T,U} = \Set{X M,X S}$, and also some other operators, such as martingale transforms or $r$-variation norms.

\begin{proof}
Consider first the case $q=1$.
We use the Davis decomposition $f = f^{\pred} + f^{\bv}$.
Of the two possible generalizations of the Davis decomposition to $X$-valued martingales, we use the one in which the absolute value is replaced by the $X$-norm (the alternative would be to make a decomposition for each $k$ separately).
Specifically, we have
\[
X d f^\pred_n \leq 2 M X d f_{n-1},
\quad
\E \sum_n X d f^\bv_n \leq 2 \E M X d f.
\]
For $\lambda > 0$, define the stopping time
\[
\tau := \inf \Set{ t \given U f^{\pred}_{t}>\lambda \text{ or } M X df_{t}>\lambda }.
\]
We claim that
\begin{equation}
\label{eq:stopped-pred}
U f^{\pred}_{\tau} \leq U f^{\pred} \wedge 3 \lambda.
\end{equation}
Indeed, the first bound is trivial.
The second bound is trivial if $\tau=\infty$, so assume $\tau \in (0,\infty)$.
Then, by properties of the Davis decomposition, we have
\[
U f^{\pred}_{\tau}
\leq
U f^{\pred}_{\tau-1}
+
X d f^{\pred}_\tau
\leq
\lambda
+
2 M X d f_{\tau-1}
\leq
3 \lambda.
\]
Also,
\begin{align*}
\Set{ T f^{\pred} > \lambda }
&\subseteq
\Set{ T f^{\pred}_{\tau} > \lambda }
\cup
\Set{ \tau < \infty }
\\ &\subseteq
\Set{ T f^{\pred}_{\tau} > \lambda }
\cup
\Set{ U f^{\pred} > \lambda }
\cup
\Set{ M X df_\tau > \lambda }.
\end{align*}
By the layer cake formula,
\begin{align*}
\norm{T f^{\pred}}_{L^{1}}
&=
\int_{0}^{\infty} \bbP \Set{ T f^{\pred} > \lambda } \dif\lambda
\\ &\leq
\int_{0}^{\infty} \bbP \Set{ T f^{\pred}_{\tau} > \lambda } \dif\lambda
+
\int_{0}^{\infty} \bbP \Set{ U f^{\pred} > \lambda } \dif\lambda
\\ & \quad +
\int_{0}^{\infty} \bbP \Set{ M X df > \lambda } \dif\lambda
=: I + II + III.
\end{align*}
The term $III$ is easy to bound by the layer cake formula.

In the term $II$, we use the properties of the Davis decomposition to estimate
\begin{align*}
II &=
\norm{ U f^{\pred} }_{L^{1}}
\\ &\leq
\norm{ U f }_{L^{1}}
+
\norm{ U f^{\bv} }_{L^{1}}
\\ &\leq
\norm{ U f }_{L^{1}}
+
\norm{ \sum_{n} X d f^{\bv}_{n} }_{L^{1}}
\\ &\leq
\norm{ U f }_{L^{1}}
+
2 \norm{ M X d f }_{L^{1}}.
\end{align*}
Using the hypothesis \eqref{eq:weak-X-BDG} and \eqref{eq:stopped-pred}, we bound the first term by
\begin{align*}
I &\lesssim
\int_{0}^{\infty} \lambda^{-r} \norm{ U f^{\pred}_{\tau} }_{L^{r}}^{r} \dif\lambda
\\ &\leq
\int_{0}^{\infty} \lambda^{-r} \norm{ U f^{\pred} \wedge 3\lambda }_{L^{r}}^{r} \dif\lambda
\\ &=
\bbE \int_{0}^{\infty} \min\bigl(\lambda^{-r} (U f^{\pred} )^{r}, 3^r \bigr) \dif\lambda
\\ &\lesssim
\bbE U f^{\pred}
= II,
\end{align*}
and we reuse the previously established estimate for $II$.

We have shown \eqref{eq:X-BDG} for $q=1$, and we will now extend this claim to $q>1$.
In doing so, we may replace $U f$ by $(U f + M X d f)/2$.
This has the effect that we may omit the second summand on the RHS of \eqref{eq:X-BDG}.

For $q>1$, we use the Garsia-Neveu lemma.
For a martingale $f$, let $\startat{\tau}{f} := f - f^{\tau}$ be the martingale $f$ started at (stopping) time $\tau$.

For any $t \in \N$ and $B \in \calF_{t}$, we can apply the case $q=1$ to the martingale $\one_B \startat{t}{f}$, and obtain
\[
\E_{t} T \startat{t}{f}
\lesssim
\E_{t} U \startat{t}{f}.
\]
Note that
\[
T f_{\infty} - T f_{t} = T f_{\infty} - T (f^{t})_{\infty} \leq T(f - f^{t})_{\infty} = T \startat{t}{f}_{\infty},
\]
\[
U \startat{t}{f}_{\infty} \leq (U f + U f^{t})_{\infty} \leq 2 U f_{\infty},
\]
so the conditions of Corollary~\ref{cor:Garsia-Neveu} hold for $A_{t} = T f_{(t-1) \bmax 0}$ and $\xi \sim U f_{\infty}$.
Corollary~\ref{cor:Garsia-Neveu} now shows the claim.
\end{proof}
Monotonicity of $Uf$ is used in estimates for both $I$ and $II$ above.

\begin{corollary}[$\ell^{r}$ valued BDG inequality]
\label{cor:lr-BDG}
For every $k\in\N$, let $f_{k} = (f_{k,n})_{n}$ be a real-valued martingale starting at $0$.
Then, for every $q,r \in [1,\infty)$, we have
\[
L^{q} \ell^{r}_{k} M f_{k}
\lesssim_{q}
L^{q} \ell^{r}_{k} S f_{k}.
\]
\end{corollary}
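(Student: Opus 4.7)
The plan is to apply Lemma~\ref{lem:vv-BDG} with Banach space $X = \ell^{r}$, viewing the sequence $(f_{k})_{k}$ as a single $\ell^{r}$-valued martingale $F_{n} = (f_{k,n})_{k}$ and taking as operators
\[
T F_{n} := \Bigl(\sum_{k} (M f_{k})_{n}^{r}\Bigr)^{1/r}, \qquad
U F_{n} := \Bigl(\sum_{k} (S f_{k})_{n}^{r}\Bigr)^{1/r}
\]
(the ``scalar $M$ and $S$ in each coordinate, then $\ell^{r}$-norm'' versions, not the Banach-space $M$ and $S$ of $F$). First I would verify the structural hypotheses. Subadditivity follows from the scalar subadditivity of $M$ and $S$ combined with Minkowski in $\ell^{r}$. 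The stopping identity $T F_{\tau} = T(F^{\tau})_{\tau}$ reduces to its scalar versions $M(f_{k}^{\tau})_{\tau} = M f_{k,\tau}$ and analogously for $S$. The one-step bound $T F_{t} \leq T F_{t-1} + \|d F_{t}\|_{\ell^{r}}$ follows from $M f_{k,t} \leq M f_{k,t-1} + |d f_{k,t}|$ and Minkowski, and similarly for $U$ using $S f_{k,t} \leq S f_{k,t-1} + |d f_{k,t}|$. Moreover, the $L^{q} M X d F$ term in the lemma's conclusion is absorbed into $L^{q} U F$, since pointwise
\[
(M X d F)^{r} = \sup_{n} \sum_{k} |d f_{k,n}|^{r} \leq \sum_{k} \sup_{n} |d f_{k,n}|^{r} \leq \sum_{k} (S f_{k})^{r} = (U F)^{r}.
\]

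For $r \in (1, \infty)$ I would verify the weak-type hypothesis \eqref{eq:weak-X-BDG} at $r_{0} = r$. By Fubini (the outer $L^{r}$ and the inner $\ell^{r}$ exponents coincide) and the scalar Burkholder--Davis--Gundy inequality (Corollary~\ref{cor:BDG}),
\[
\|T F\|_{L^{r}}^{r} = \sum_{k} \|M f_{k}\|_{L^{r}}^{r} \lesssim_{r} \sum_{k} \|S f_{k}\|_{L^{r}}^{r} = \|U F\|_{L^{r}}^{r},
\]
and Chebyshev's inequality converts this strong $L^{r}$ bound into the required weak-type statement. Lemma~\ref{lem:vv-BDG} then delivers $L^{q} T F \lesssim_{q} L^{q} U F$ for every $q \in [1, \infty)$.

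The main obstacle is the endpoint case $r = 1$, which is not covered by the Fubini argument: Lemma~\ref{lem:vv-BDG} requires $r_{0} > 1$, while exponents match in the Fubini step only at $r_{0} = r = 1$. For this case I would argue directly via Corollary~\ref{cor:Garsia-Neveu}, mirroring the scalar proof of Corollary~\ref{cor:BDG}. Writing $g_{k} := f_{k} - f_{k}^{t}$, a conditional scalar Davis inequality -- obtained by applying Theorem~\ref{thm:Davis-L1} to the martingale $\one_{B} g_{k}$ for arbitrary $B \in \calF_{t}$ and summing over $k$ -- yields
\[
\E_{t} \sum_{k} M g_{k} \lesssim \E_{t} \sum_{k} S g_{k} \leq \E_{t} \sum_{k} S f_{k}.
\]
Combined with the pointwise bound $M f_{k,\infty} - M f_{k,t} \leq M g_{k}$ and the usual predictability correction ($A_{t} := \sum_{k} M f_{k,(t-1) \vee 0}$ and $\xi \sim \sum_{k} S f_{k}$, as in the proof of Corollary~\ref{cor:BDG}), Corollary~\ref{cor:Garsia-Neveu} gives $L^{q} \sum_{k} M f_{k} \lesssim_{q} L^{q} \sum_{k} S f_{k}$ for every $q \in [1, \infty)$, completing the case $r = 1$.
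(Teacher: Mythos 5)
Your proof follows the same main route as the paper: apply Lemma~\ref{lem:vv-BDG} with $X=\ell^{r}$, with the weak-type hypothesis \eqref{eq:weak-X-BDG} supplied by Fubini together with the scalar BDG inequality (Corollary~\ref{cor:BDG}; the paper's proof writes ``Doob's inequality'', but BDG is what is actually needed to pass from $Mf_k$ to $Sf_k$). Your verification of the structural hypotheses of the lemma, and your observation that the $L^{q}MXdf$ term on the right of \eqref{eq:X-BDG} is dominated by $L^{q}Uf$ when $T,U$ are the coordinatewise maximal/square functions composed with the $\ell^{r}$ norm, are both correct.

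The genuine value you add is the separate treatment of $r=1$, and you are right to single it out. Lemma~\ref{lem:vv-BDG} requires the weak-type exponent in \eqref{eq:weak-X-BDG} to lie in $(1,\infty)$, while the Fubini step matches the inner $\ell^{r}$ and outer $L^{r_{0}}$ exponents only at $r_{0}=r$; for $r=1$ it therefore produces no admissible hypothesis, and the paper's one-line proof passes over this point. Your workaround --- a conditioned application of Theorem~\ref{thm:Davis-L1} to $\one_{B}(f_{k}-f_{k}^{t})$ summed over $k$, followed by Corollary~\ref{cor:Garsia-Neveu} with $A_{t}=\sum_{k}Mf_{k,(t-1)\vee 0}$ and $\xi\sim\sum_{k}Sf_{k}$, exactly mirroring the scalar proof of Corollary~\ref{cor:BDG} --- is correct and closes the gap.
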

\begin{proof}
For $r=q$, this follows from the scalar-valued Doob's inequality and Fubini.
For general $q$, we apply Lemma~\ref{lem:vv-BDG} with $X=\ell^{r}$.
\end{proof}

Toy example: better constant in L\'epingle.

\subsection{Weighted and vector-valued Doob inequalities}
Let $w \in L^{1}(\Omega,\calF_{\infty})$ be a positive function, that we will call a \emph{weight}.
We write $w_{k} := \E_{k} w$, $w^{*} := \sup_{k} w_{k}$, and $w B := \int_{B} w \dif\mu$ for $B\in\calF_{\infty}$.

\begin{lemma}
\label{lem:Doob-weak-weighted}
Let $f$ be an $\R_{\geq 0}$-valued submartingale (recall that this means $f_{k} \leq E_{k} f_{n}$ for $k\leq n$).
Then, for every $\lambda>0$ and $n\in\N$, we have
\[
\lambda w \Set{Mf_{n} > \lambda}
\leq
\int_{\Set{Mf_{n}>\lambda}} f_{n} Mw_n \dif\mu.
\]
\end{lemma}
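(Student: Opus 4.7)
The proof will follow the same template as the unweighted Doob weak inequality (Lemma~\ref{lem:Doob-weak}), with the extra ingredient being that whenever we integrate $w$ against an $\calF_{k}$-measurable indicator, we may replace $w$ by its conditional expectation $w_{k}$, and then $w_{k}\leq Mw_{n}$ for $k\leq n$.

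Concretely, the plan is to define the same stopping time $\tau := \inf\Set{k \given f_{k} > \lambda}$, so that $\Set{Mf_{n}>\lambda} = \Set{\tau\leq n} = \bigsqcup_{k\leq n}\Set{\tau=k}$ with $\Set{\tau=k}\in\calF_{k}$. Starting from
\[
\lambda w\Set{Mf_{n}>\lambda}
= \lambda \sum_{k\leq n} \int_{\Set{\tau=k}} w \dif\mu
= \lambda \sum_{k\leq n} \int_{\Set{\tau=k}} w_{k} \dif\mu,
\]
where the last equality uses $\Set{\tau=k}\in\calF_{k}$ and the definition of $w_{k}=\E_{k}w$. On $\Set{\tau=k}$ we have $\lambda < f_{k}$, so the sum is bounded by $\sum_{k\leq n}\int_{\Set{\tau=k}} f_{k} w_{k}\dif\mu$.

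Next, I would pass from $f_{k}$ to $f_{n}$ using the submartingale property. Since $w_{k}$ and $\one_{\Set{\tau=k}}$ are both $\calF_{k}$-measurable, property~\ref{it:ce-prod} of the conditional expectation gives
\[
\int_{\Set{\tau=k}} f_{k} w_{k} \dif\mu
\leq \int_{\Set{\tau=k}} \E_{k}(f_{n}) w_{k} \dif\mu
= \int_{\Set{\tau=k}} \E_{k}(f_{n} w_{k}) \dif\mu
= \int_{\Set{\tau=k}} f_{n} w_{k} \dif\mu.
\]
Finally, for each $k\leq n$ we have $w_{k} \leq Mw_{n}$ pointwise, so summing over $k\leq n$ yields $\int_{\Set{Mf_{n}>\lambda}} f_{n} Mw_{n} \dif\mu$, as required.

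The only subtle point is the interchange between the weight and the conditional expectation; the key identity $\int_{A} w = \int_{A} w_{k}$ for $A\in\calF_{k}$ is just the defining property of $w_{k}=\E_{k}w$, but it has to be applied before using the submartingale bound, because the submartingale inequality $f_{k}\leq \E_{k}f_{n}$ only helps after we have a genuinely $\calF_{k}$-measurable weight to pair with it. Everything else is a direct transcription of the proof of Lemma~\ref{lem:Doob-weak}.
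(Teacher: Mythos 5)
Your proof is correct and follows essentially the same route as the paper's: the same stopping time $\tau$, the same passage $w\to w_k$ on $\{\tau=k\}\in\calF_k$, the same use of $f_k>\lambda$ and the submartingale step, and finally $w_k\le Mw_n$. You merely spell out the conditional-expectation manipulations more explicitly (and, as a small bonus, your final bound $w_k\le Mw_n$ matches the stated lemma exactly, whereas the paper's displayed proof writes $w^*$ in the last line).
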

\begin{proof}
For a fixed $\lambda$, define the stopping time
\[
\tau := \inf \Set{k \given f_{k} > \lambda}.
\]
Then, $\Set{Mf_{n} > \lambda} = \Set{\tau \leq n}$.
Hence,
\begin{align*}
\lambda w \Set{Mf_{n} > \lambda}
&=
\lambda \sum_{k \leq n} w \Set{\tau = k}
\\ &\leq
\sum_{k \leq n} \int_{\Set{\tau = k}} f_{k} w_{k} \dif\mu
\\ \text{(submartingale property)} &\leq
\sum_{k \leq n} \int_{\Set{\tau = k}} f_{n} w_{k} \dif\mu
\\ &\leq
\int_{\Set{Mf_{n} > \lambda}} f_{n} w^{*} \dif\mu.
\qedhere
\end{align*}
\end{proof}

We recall that, for a function $f$ on a measure space $(\Omega,\mu)$ and $p\in [1,\infty)$, the $L^{p,\infty}$ quasinorm is defined by
\[
\norm{ f }_{L^{p,\infty}}
:=
\sup_{\lambda>0} \lambda \mu\Set{\abs{f}>\lambda}^{1/p}.
\]
For $p=\infty$, we have $\norm{ f }_{L^{p,\infty}}= \norm{ f }_{L^{\infty}}$.

\begin{theorem}[Marcinkiewicz/real interpolation, see e.g.\ {\cite{MR3243734}}]
Let $(\Omega,\mu)$ and $(\tilde\Omega,\tilde\mu)$ be $\sigma$-finite measure spaces.
Let $0 < p_0 < p_1 \leq \infty$.
Let
\[
T : L^{p_0}(\Omega) + L^{p_1}(\Omega)
\to
L^{0}(\tilde\Omega)
\]
be a subadditive operator such that, for every function $f$, we have
\[
\norm{ Tf }_{L^{p_j,\infty}(\tilde\Omega)}
\leq A_j
\norm{ f }_{L^{p_j}(\Omega)},
\quad j=0,1.
\]
Then, for every $p\in (p_0,p_1)$, we have
\[
\norm{ Tf }_{L^{p}(\tilde\Omega)}
\lesssim_p
\norm{ f }_{L^{p}(\Omega)}.
\]
\end{theorem}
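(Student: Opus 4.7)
The plan is the classical Marcinkiewicz truncation argument, splitting $f$ at a level proportional to $\lambda$ and applying each weak-type hypothesis to the appropriate piece.

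First I would recall the layer cake representation
\[
\norm{Tf}_{L^{p}}^{p} = p \int_{0}^{\infty} \lambda^{p-1} \tilde\mu\Set{\abs{Tf}>\lambda} \dif\lambda,
\]
so the task reduces to bounding $\tilde\mu\Set{\abs{Tf}>\lambda}$ suitably. For each $\lambda>0$ and a constant $c>0$ to be chosen, I would split $f = f^{\lambda}_{0} + f^{\lambda}_{1}$ with $f^{\lambda}_{0} := f \one_{\abs{f}>c\lambda}$ and $f^{\lambda}_{1} := f \one_{\abs{f}\leq c\lambda}$. Subadditivity of $T$ gives $\abs{Tf} \leq \abs{Tf^{\lambda}_{0}} + \abs{Tf^{\lambda}_{1}}$, so
\[
\tilde\mu\Set{\abs{Tf}>\lambda}
\leq
\tilde\mu\Set{\abs{Tf^{\lambda}_{0}}>\lambda/2}
+
\tilde\mu\Set{\abs{Tf^{\lambda}_{1}}>\lambda/2}.
\]
The choice is: apply the $p_{0}$-weak bound to $f^{\lambda}_{0}$ (the unbounded, large piece, which lies in $L^{p_{0}}$ because $p_{0}<p$) and the $p_{1}$-weak bound to $f^{\lambda}_{1}$ (the bounded piece, in $L^{p_{1}}$ because $p_{1}>p$).

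The case $p_{1}<\infty$ then proceeds by
\[
\tilde\mu\Set{\abs{Tf^{\lambda}_{j}}>\lambda/2}
\leq
(2 A_{j}/\lambda)^{p_{j}} \int \abs{f^{\lambda}_{j}}^{p_{j}} \dif\mu,
\]
substituting these into the layer cake formula, exchanging the $\lambda$-integral with the $\mu$-integral by Fubini, and computing the two elementary $\lambda$-integrals $\int_{0}^{\abs{f}/c} \lambda^{p-1-p_{0}} \dif\lambda$ and $\int_{\abs{f}/c}^{\infty} \lambda^{p-1-p_{1}} \dif\lambda$. Both converge precisely because $p_{0}<p<p_{1}$, and summing gives $\norm{Tf}_{L^{p}}^{p} \lesssim_{p} \norm{f}_{L^{p}}^{p}$; the constant $c$ can be chosen to balance the two contributions (any fixed choice works for a qualitative bound). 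For the case $p_{1}=\infty$, the hypothesis says $\norm{Tf^{\lambda}_{1}}_{L^{\infty}} \leq A_{1} c\lambda$, so choosing $c = 1/(2 A_{1})$ makes $\tilde\mu\Set{\abs{Tf^{\lambda}_{1}}>\lambda/2}=0$, eliminating that term entirely; only the $p_{0}$ piece remains.

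The main obstacle is really only bookkeeping: making sure both $\lambda$-integrals converge (which is exactly the role of the strict inclusion $p_{0}<p<p_{1}$), and handling the endpoint $p_{1}=\infty$ separately as above. There is no deep step; the argument is purely a truncation plus Fubini.
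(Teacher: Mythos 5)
The paper does not actually prove this theorem; it is stated as a citation to \cite{MR3243734} and used as a black box. Your proof is the standard Marcinkiewicz truncation argument, and it is correct: you split $f$ at level $c\lambda$, assign the large (unbounded) piece to the $p_0$-weak hypothesis and the small (bounded) piece to the $p_1$-weak hypothesis, invoke subadditivity to split the distribution function, feed each weak-type bound into the layer cake formula, and finish with Fubini and the two elementary $\lambda$-integrals whose convergence is exactly the content of $p_0 < p < p_1$. Your separate treatment of $p_1 = \infty$, choosing $c = 1/(2A_1)$ so that the $p_1$ piece vanishes entirely, is the right move. There is nothing to compare against on the paper's side since no proof is given there; your argument is the standard one and is complete.
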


%\begin{proof}
%First, estimate
%\[
%\tilde\mu \Set{ \abs{Tf} > \alpha }
%\lesssim
%\alpha^{-p_0} A_0^{p_0} \int \abs{f}^{p_0} \one_{\abs{f} \leq \lambda} \dif\mu
%+ \alpha^{-p_1} A_1^{p_1} \int \abs{f}^{p_1} \one_{\abs{f} > \lambda} \dif\mu
%\]
%Choose $\lambda=\alpha\delta$ and use this to bound
%\[
%\norm{Tf}_{L^{p}}^p
%=
%p \int_0^\infty \alpha^{p-1} \tilde\mu \Set{ \abs{Tf} > \alpha }.
%\]
%In the end, one can optimize in $\delta$ to get a precise dependence on $A_0,A_1$.
%\end{proof}

\begin{lemma}
Let $p \in (1,\infty)$, $f$ be a submartingale, and $w$ a weight.
Then,
\[
\norm{M f}_{L^p(w)}
\lesssim_p
\norm{f}_{L^p(w^*)}
\]
\end{lemma}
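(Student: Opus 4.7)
The strategy is to imitate the proof of the unweighted Doob inequality (Theorem~\ref{thm:Doob-Lp}), with Lemma~\ref{lem:Doob-weak-weighted} playing the role of Lemma~\ref{lem:Doob-weak}. First, combining Lemma~\ref{lem:Doob-weak-weighted} with the pointwise bound $Mw_{n} \le w^{*}$ (which holds because $Mw_{n} = \sup_{k\le n} \E_{k}w \le \sup_{k} \E_{k}w = w^{*}$) yields the weighted weak-type estimate
\[
\lambda\, w\Set{Mf_{n} > \lambda} \le \int f_{n}\, w^{*} \dif\mu,
\]
which I view as an $L^{1}(w^{*}\dif\mu) \to L^{1,\infty}(w\dif\mu)$ bound for the sublinear operator $Th := \sup_{k} \E_{k} h$ acting on nonnegative functions $h$.

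If one mimics Theorem~\ref{thm:Doob-Lp} directly, layer cake followed by Hölder in the measure $w^{*}\dif\mu$ produces
\[
\int (Mf_{n})^{p}\, w \dif\mu \lesssim_{p} \norm{f_{n}}_{L^{p}(w^{*})} \cdot \norm{Mf_{n}}_{L^{p}(w^{*})}^{p-1},
\]
but the argument then stalls, because the factor $\norm{Mf_{n}}_{L^{p}(w^{*})}$ on the right-hand side cannot be cancelled against $\norm{Mf_{n}}_{L^{p}(w)}$ on the left-hand side. This $w$/$w^{*}$ mismatch is the main obstacle, and I would bypass it by interpolating. Since conditional expectation is an $L^{\infty}$-contraction and $w^{*} > 0$ a.e.\ (by martingale convergence applied to the positive martingale $(\E_{k}w)_{k}$, whose pointwise limit $w$ is strictly positive a.e.), the norms $\norm{\cdot}_{L^{\infty}(w)}$, $\norm{\cdot}_{L^{\infty}(w^{*})}$ and $\norm{\cdot}_{L^{\infty}(\mu)}$ all coincide, giving the trivial $L^{\infty}\to L^{\infty}$ endpoint $\norm{Th}_{L^{\infty}(w)} \le \norm{h}_{L^{\infty}(w^{*})}$. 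Marcinkiewicz interpolation as stated earlier in the text then yields $\norm{Th}_{L^{p}(w)} \lesssim_{p} \norm{h}_{L^{p}(w^{*})}$ for all $p \in (1,\infty)$.

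Finally, given a positive submartingale $f$ and any $n$, the submartingale property $f_{k} \le \E_{k} f_{n}$ for $k \le n$ implies the pointwise bound $Mf_{n} = \sup_{k \le n} f_{k} \le T(f_{n})$. Combined with the previous display applied to $h := f_{n}$, this gives
\[
\norm{Mf_{n}}_{L^{p}(w)} \le \norm{T f_{n}}_{L^{p}(w)} \lesssim_{p} \norm{f_{n}}_{L^{p}(w^{*})} \le \norm{f}_{L^{p}(w^{*})},
\]
and taking $\sup_{n}$ on the left concludes. The role of Marcinkiewicz is precisely to turn the $w$/$w^{*}$ asymmetry into a feature rather than an obstruction, once $M$ is recast as the genuinely sublinear operator $T$ acting on functions instead of on processes.
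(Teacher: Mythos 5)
Your approach is essentially the same as the paper's: interpolate the weighted weak-type $(1,1)$ bound from Lemma~\ref{lem:Doob-weak-weighted} against the trivial $L^\infty$ endpoint. The paper phrases everything in terms of the intermediate weight $Mw_N$ (i.e.\ it proves $\norm{Mf_N}_{L^p(w)}\lesssim_p\norm{f_N}_{L^p(Mw_N)}$, then lets $N\to\infty$), whereas you replace $Mw_n$ by $w^*$ from the outset and recast the maximal function as the sublinear operator $Th=\sup_k\E_k h$ acting on functions. The ideas are the same, and the operator formulation is a perfectly valid way to organize the argument.

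However, there is a real (if small) gap in your derivation of the weak-type bound for $T$. You pass from the submartingale estimate to the operator estimate by ``viewing'' it as an $L^1(w^*\dif\mu)\to L^{1,\infty}(w\dif\mu)$ bound, but these are not the same statement. To get a bound for $T$ you would apply the submartingale estimate to $f_k:=\E_k h$, which produces
\[
\lambda\, w\Set{\sup_{k\le n}\E_k h>\lambda}\le \int (\E_n h)\, w^*\dif\mu,
\]
and the right-hand side here is \emph{not} obviously $\le\int h\, w^*\dif\mu$, since $\int(\E_n h)\,w^*\dif\mu=\int h\,\E_n(w^*)\dif\mu$ and $\E_n(w^*)$ need not be bounded by $w^*$. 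The fix is to be careful about the order of operations: keep the $\calF_n$-measurable weight $Mw_n$ in Lemma~\ref{lem:Doob-weak-weighted} until after the self-adjointness step. Concretely, from
\[
\lambda\, w\Set{\sup_{k\le n}\E_k h>\lambda}\le \int_{\Set{\sup_{k\le n}\E_k h>\lambda}} (\E_n h)\, Mw_n\dif\mu,
\]
one notes that $\one_{\Set{\sup_{k\le n}\E_k h>\lambda}}\,Mw_n$ is $\calF_n$-measurable, so $\E_n$ can be moved onto it to give $\int \one_{\Set{\dots}}\,h\,Mw_n\dif\mu$, and \emph{only then} does one use $Mw_n\le w^*$ and drop the indicator. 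This is precisely the advantage of working with $Mw_N$ that the paper alludes to when it remarks that its right-hand side ``increases in $N$'': keeping the adapted weight intact makes the conditional-expectation manipulation automatic, and you only pay the price $Mw_N\le w^*$ at the very end. Once that step is repaired, the rest of your argument — the $L^\infty$ endpoint, Marcinkiewicz interpolation on the two measure spaces $(\Omega,w^*\dif\mu)$ and $(\Omega,w\dif\mu)$, and the final comparison $Mf_n\le Tf_n$ followed by $\sup_n$ — is correct.
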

\begin{proof}
We will show the following more precise estimate:
\begin{equation}
\label{eq:Lpw-max-ineq-MwN}
\norm{Mf_{N}}_{L^{p}(w)}
\lesssim_{p}
\norm{f_N}_{L^p(Mw_N)},
\end{equation}
from which the claim follows by letting $N\to\infty$.
The main advantage of this formulation is that the right-hand side increases in $N$, which would not be the case if we would use the weight $w^{*}$ instead of $Mw_{N}$.

Lemma~\ref{lem:Doob-weak-weighted} implies in particular
\[
\norm{M f_{N}}_{L^{1,\infty}(w)}
\lesssim
\norm{f_{N}}_{L^1(Mw_{N})}.
\]
The estimate
\[
\norm{M f_{N}}_{L^{\infty}(w)}
\lesssim
\norm{f_{N}}_{L^\infty(Mw_{N})}
\]
is easy to see.
By real interpolation, these two bounds imply \eqref{eq:Lpw-max-ineq-MwN}.
\end{proof}

\begin{proposition}[Vector-valued maximal inequality]
Let $p \in (1,\infty)$ and $r \in (1,\infty]$.
Then, for any sequence of martingales $f_{k}=(f_{k,n})_{n}$, we have
\[
L^{p} \ell^{r}_{k} M f_{k}
\lesssim_{q,r}
L^{p} \ell^{r}_{k} f_{k}.
\]
\end{proposition}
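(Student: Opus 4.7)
The plan is to split into cases based on the relation between $r$ and $p$, handling the endpoints $r=p$ and $r=\infty$ directly from scalar Doob and then filling in the intermediate values by interpolation or Fefferman--Stein-style duality combined with the weighted Doob inequality just proved.

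The two endpoints are immediate. For $r=p$, Fubini combined with scalar Doob applied to each submartingale $\abs{f_{k}}$ yields
\[
\norm{(Mf_{k})_{k}}_{L^{p}\ell^{p}}^{p}
= \sum_{k} \norm{Mf_{k}}_{L^{p}}^{p}
\lesssim_{p} \sum_{k} \norm{f_{k}}_{L^{p}}^{p}.
\]
For $r=\infty$, the process $g_{n}:=\sup_{k}\abs{f_{k,n}}$ is a submartingale in $n$ as a supremum of submartingales, and since $\sup_{n}g_{n}=\sup_{k}Mf_{k}$, the scalar Doob inequality applied to $(g_{n})$ gives the desired estimate.

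For $r\in(p,\infty)$ I would interpolate between the two endpoint cases, using that $\ell^{r}$ is an intermediate space between $\ell^{p}$ and $\ell^{\infty}$; real interpolation for sublinear operators between Banach-lattice-valued $L^{p}$ spaces gives the required bound on $L^{p}\ell^{r}$. For $r\in(1,p)$, the approach is duality: by the $L^{p/r}$--$L^{(p/r)'}$ pairing,
\[
\norm{(Mf_{k})_{k}}_{L^{p}\ell^{r}}^{r}
= \sup_{w\geq 0,\,\norm{w}_{L^{(p/r)'}}\leq 1}
\int \sum_{k}(Mf_{k})^{r}w \dif\mu.
\]
The weighted Doob inequality applied to each $\abs{f_{k}}$ gives $\int (Mf_{k})^{r}w\lesssim_{r}\int \abs{f_{k}}^{r}w^{*}$; summing in $k$ and applying Hölder then yields $\int \sum_{k}(Mf_{k})^{r}w\lesssim \norm{\sum_{k}\abs{f_{k}}^{r}}_{L^{p/r}}\norm{w^{*}}_{L^{(p/r)'}}$. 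Since $p>r$ implies $(p/r)'\in(1,\infty)$, scalar Doob applied to the martingale $\E_{\cdot}w$ bounds $\norm{w^{*}}_{L^{(p/r)'}}$ by a constant, which closes the argument.

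The main obstacle is the case $r<p$: it is essential to use the full strength of the weighted Doob inequality, which lets us transfer the dual weight $w$ to its maximal function $w^{*}$ so that it can then be absorbed by a second application of scalar Doob. The interpolation step for $r>p$ is standard once the two endpoint bounds are in hand.
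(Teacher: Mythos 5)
Your treatment of the two endpoints $r=p$ and $r=\infty$, and of the case $1<r<p$, matches the paper's proof exactly (same dualization, same use of the weighted Doob inequality, same final absorption of $w^{*}$ by scalar Doob). Where you diverge is the remaining case $1<p<r<\infty$. The paper handles it by a \emph{second} duality argument: fix $s\in(1,p)$, write $\norm{\ell^{r}_{k}Mf_{k}}_{p}^{s}=\norm{\ell^{r/s}_{k}(Mf_{k})^{s}}_{p/s}$, dualize against a \emph{sequence} of weights $w_{k}$ in the mixed norm $\ell^{(r/s)'}_{k}L^{(p/s)'}$, apply weighted Doob in each coordinate, and then invoke the already-established case $(r/s)'<(p/s)'$ of the proposition itself to control $\norm{\ell^{(r/s)'}_{k}w_{k}^{*}}_{(p/s)'}$. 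This is entirely self-contained given the earlier cases.

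Your proposal to interpolate in $r$ with $p$ fixed is a genuinely different route, and you should be aware that it is less routine than you suggest. Real interpolation of $(L^{p}\ell^{p},L^{p}\ell^{\infty})$ with $p$ fixed does \emph{not} produce $L^{p}\ell^{r}$: the Lions--Peetre identity $(L^{p}(A_{0}),L^{p}(A_{1}))_{\theta,q}=L^{p}((A_{0},A_{1})_{\theta,q})$ only holds when the outer parameter $q$ equals $p$, and $(\ell^{p},\ell^{\infty})_{\theta,p}$ is a Lorentz sequence space $\ell^{r,p}\subsetneq\ell^{r}$. What does work is \emph{complex} interpolation, since $[L^{p}\ell^{p},L^{p}\ell^{\infty}]_{\theta}=L^{p}\ell^{r}$; but then one must deal with the sublinearity of $M$, either by linearizing (choosing a measurable selection of the supremum and interpolating the resulting family of linear operators with a uniform constant) or by invoking Calder\'on's complex method for Banach lattices, which accommodates positive sublinear operators directly. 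Either of these can be made to work, so your outline is salvageable, but it requires strictly more machinery than the paper's argument and the appeal to ``real interpolation for sublinear operators'' as stated would not close the gap. If you want to stay within real interpolation, the classical Fefferman--Stein route interpolates in $p$ with $r$ fixed, using a weak-type $(1,1)$ bound for the $\ell^{r}$-valued maximal operator; for martingales this needs a Gundy-type decomposition rather than Calder\'on--Zygmund, which the paper deliberately avoids by the weighted-duality argument.
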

\begin{proof}
The case $r=\infty$ follows from the scalar case, because $\ell^{\infty} f_{k}$ is a submartingale.

The case $p=r$ also follows from the scalar case and Fubini.

For $1<r<p<\infty$, let $\rho := (p/r)' = p/(p-r)$.
Then, for some weight $w \in L^{\rho}$ with $\norm{w}_{\rho} \leq 1$, we have
\begin{align*}
\norm{ \ell^{r}_{k} M f_{k} }_{p}^{r}
&\lesssim
\Bigl( \E \bigl( \sum_{k} (M f_{k})^{r} \bigr)^{p/r} \Bigr)^{r/p}
\\ &=
\E \sum_{k} (M f_{k})^{r} w
\\ &\lesssim
\E \sum_{k} \abs{f_{k}}^{r} w^{*}
\end{align*}
and we conclude using Hölder's inequality and boundedness of the maximal operator on $L^{\rho}$.

For $1 < p < r < \infty$, let $s \in (1,p)$.
Then, for some sequence of weights with
\[
\norm{ \ell^{(r/s)'}_{k} w_{k} }_{(p/s)'} \sim 1,
\]
we have
\begin{align*}
\norm{ \ell^{r}_{k} M f_{k} }_{p}^{s}
&=
\norm{ \ell^{r/s}_{k} (M f_{k})^s }_{p/s}
\\ &\sim
\int \sum_k w_k (M f_k)^s
\\ &\lesssim
\int \sum_k w_k^* \abs{f_k}^s
\\ &\leq
\norm{ \ell^{(r/s)'}_{k} w_{k}^* }_{(p/s)'}
\norm{ \ell^{r/s}_{k} \abs{f_{k}}^s }_{p/s}.
\end{align*}
Since $(r/s)' < (p/s)'$, from the previously shown case, we obtain
\[
\norm{ \ell^{(r/s)'}_{k} w_{k}^* }_{(p/s)'}
\lesssim
\norm{ \ell^{(r/s)'}_{k} w_{k} }_{(p/s)'}
\sim 1.
\qedhere
\]
\end{proof}

\begin{remark}
Most of this section is from \cite[Section 3]{MR3617205}.
\end{remark}

\neuevorl{2021-11-16}

% Local Variables:
% TeX-engine: xetex
% TeX-master: "martingale-inequalities.tex"
% End:

\newcommand{\tX}{\tilde{X}}
\newcommand{\tY}{\tilde{Y}}
\newcommand{\tZ}{\tilde{Z}}
\newcommand{\tBX}{\tilde{\mathbf{X}}}

\section{Rough paths}

In this section, we discuss how to define $\int a \dif g$ for not very regular functions $a,g$.
The intended application is that $g$ is a sample path of a martingale, which only has bounded $r$-variation for $r>2$, so that we cannot use Stieltjes integration.

The function $a$ is assumed to have the same regularity as $g$, because we want the theory to be suitable for solving equations like $f(t) = \int_{0}^{t} a(s,f(s)) \dif g(s)$.

\subsection{Young integration with Hölder functions}
We begin with a criterion for convergence of Riemann-like sums.
Let $\Delta := \Set{ (s,t) \given 0 \leq s \leq t \leq T }$.
For any $\Xi : \Delta \to \R$ and a partition $0 = \pi_{0} < \dotsc < \pi_{J} = T$, write
\[
\calI^{\pi} \Xi_{0,T} := \sum_{j=1}^{J} \Xi_{\pi_{j-1},\pi_{j}}.
\]
We will discuss sufficient conditions for the convergence of these sums along the directed (by set inclusion) set of partitions.

For a mapping $\Xi : \Delta \to E$ ($E$ a vector space), we write
\[
\delta\Xi_{s,u,t}:=\Xi_{s,t}-\Xi_{s,u}-\Xi_{u,t}.
\]
A \emph{control} is a function $\omega : \Delta \to [0,\infty)$ that is superadditive in the sense that
\[
\omega(s,t) + \omega(t,u) \leq \omega(s,u)
\]
for all $s \leq t\leq u$.
This implies in particular that $\omega(t,t)=0$ for all $t$.

\begin{lemma}
\label{lem:sewing-bound}
Let $\omega$ be a control and $\theta>1$.
Let $(E,\abs{\cdot})$ be a Banach space and $\Xi : \Delta \to E$ such that, for every $0 \leq s \leq t \leq u \leq T$, we have
\begin{equation}
\label{eq:delta-control}
\abs{\delta\Xi_{s,t,u}} \leq \omega(s,u)^{\theta}.
\end{equation}
Then, for every partition $0 = \pi_{0} \leq \dotsc \leq \pi_{J} = T$, we have
\[
\abs{\Xi_{0,T} - \calI^{\pi}\Xi_{0,T}} \lesssim (\theta-1)^{-1} \omega(0,T)^{\theta}.
\]
\end{lemma}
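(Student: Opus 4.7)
My plan is to implement the classical \emph{sewing} argument by iterated point removal. The basic observation is that deleting an interior partition point $\pi_j$ changes the Riemann sum in a controlled way:
\[
\calI^{\pi \setminus \Set{\pi_j}} \Xi_{0,T} - \calI^{\pi} \Xi_{0,T}
=
\Xi_{\pi_{j-1},\pi_{j+1}} - \Xi_{\pi_{j-1},\pi_{j}} - \Xi_{\pi_{j},\pi_{j+1}}
=
\delta\Xi_{\pi_{j-1},\pi_{j},\pi_{j+1}},
\]
whose norm is bounded by $\omega(\pi_{j-1},\pi_{j+1})^{\theta}$ by the hypothesis \eqref{eq:delta-control}. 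The strategy is to remove interior points one at a time, \emph{greedily} choosing the point whose surrounding window has the smallest $\omega$-mass, until only the trivial partition $\Set{0,T}$ remains, at which point $\calI^{\Set{0,T}}\Xi_{0,T}=\Xi_{0,T}$.

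The key step is the pigeonhole bound for the greedy choice. Given a partition $0=\pi_0<\dotsb<\pi_J=T$ with $J\geq 2$, split the interior indices into odd and even. By superadditivity of $\omega$ applied twice (once for odd $j$, once for even $j$, each family consisting of intervals $[\pi_{j-1},\pi_{j+1}]$ that overlap only at endpoints),
\[
\sum_{j=1}^{J-1} \omega(\pi_{j-1},\pi_{j+1}) \leq 2\omega(0,T),
\]
and therefore some interior point $\pi_{j}$ satisfies $\omega(\pi_{j-1},\pi_{j+1}) \leq \tfrac{2\omega(0,T)}{J-1}$. Removing it contributes an error of at most $\bigl( \tfrac{2\omega(0,T)}{J-1} \bigr)^{\theta}$ to the total.

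Iterating down from $J$ intervals to $1$ and applying the triangle inequality to sum the per-step errors yields
\[
\abs{\Xi_{0,T} - \calI^{\pi}\Xi_{0,T}}
\leq
\sum_{k=2}^{J} \Bigl( \frac{2\omega(0,T)}{k-1} \Bigr)^{\theta}
\leq
(2\omega(0,T))^{\theta} \sum_{k=1}^{\infty} k^{-\theta}.
\]
The tail sum is bounded by $1 + \int_{1}^{\infty} x^{-\theta}\dif x = \theta/(\theta-1) \lesssim (\theta-1)^{-1}$, which gives the claim. The only genuinely delicate point is the pigeonhole bound; once one sees to split the interior indices by parity so that superadditivity of $\omega$ becomes directly applicable, the rest of the argument is a geometric-series calculation.
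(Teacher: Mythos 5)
Your proof is correct and follows essentially the same route as the paper's: greedy removal of the interior point with the smallest $\omega$-window, the parity-splitting superadditivity argument to get the pigeonhole bound $\omega(\pi_{j-1},\pi_{j+1})\leq 2\omega(0,T)/(J-1)$, and then the $\zeta(\theta)\lesssim(\theta-1)^{-1}$ tail estimate. The paper formulates the iteration as an explicit induction on the number of intervals, with the slightly different-looking but equivalent bookkeeping that a partition of ``size $J+1$'' has $J$ windows $[\pi_k,\pi_{k+2}]$; your indexing starts one step earlier, but the resulting sum $\sum_{k\geq 1}(2/k)^\theta\omega(0,T)^\theta$ is the same.
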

\begin{proof}
By induction on the partition size $J$, we will show the bound
\[
\abs{\Xi_{0,T} - \calI^{\pi}\Xi_{0,T}} \leq \sum_{k=1}^{J-1} (2/k)^{\theta} \omega(0,T)^{\theta}.
\]
For $J=1$, we have $\calI^{\pi} \Xi_{0,T} = \Xi_{0,T}$, which serves as the induction base.

Suppose that the claim is known for all partitions of size $J$ and let $\pi$ be a partition of size $J+1$.
By superadditivity of $\omega$, we have
\[
\sum_{k=0}^{J-1} \omega(\pi_{k},\pi_{k+2})
=
\sum_{k\leq J-1 \text{ even}} \omega(\pi_{k},\pi_{k+2})
+
\sum_{k\leq J-1 \text{ odd}} \omega(\pi_{k},\pi_{k+2})
\leq
2 \omega(0,T).
\]
Hence, there exists $k$ with $\omega(\pi_{k},\pi_{k+2}) \leq 2 \omega(0,T)/J$.
Let $\pi' := \pi \setminus \Set{\pi_{k+1}}$.
Then,
\[
\abs{\calI^{\pi}\Xi_{0,T} - \calI^{\pi'}\Xi_{0,T}}
=
\abs{\delta\Xi_{\pi_{k},\pi_{k+1},\pi_{k+2}}}
\leq
\omega(\pi_{k},\pi_{k+2})^{\theta}
\leq (2/J)^{\theta} \omega(0,T).
\]
Applying the inductive hypothesis to $\pi'$, we obtain the claim.
\end{proof}

\begin{proposition}[Sewing]
\label{prop:sewing-vanishing-control}
Let $\omega$ be a control such that
\begin{equation}
\label{eq:vanishing-control}
\lim_{\pi} \sup_{j} \omega(\pi_{j},\pi_{j+1}) = 0.
\end{equation}
Let $\Xi : \Delta \to E$ be such that \eqref{eq:delta-control} holds.
Then, the limit
\[
\calI \Xi_{0,T} := \lim_{\pi} \calI^{\pi} \Xi_{0,T}
\]
exists and satisfies
\[
\abs{ \calI \Xi_{0,T} - \Xi_{0,T} } 
\lesssim (\theta-1)^{-1} \omega(0,T)^{\theta}.
\]
\end{proposition}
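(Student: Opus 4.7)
The quantitative bound on $\abs{\calI\Xi_{0,T} - \Xi_{0,T}}$ will follow from Lemma~\ref{lem:sewing-bound} as soon as the limit is known to exist, simply by applying that lemma to each partition $\pi$ and passing to the limit. So the real work is to show that the net $\pi \mapsto \calI^{\pi}\Xi_{0,T}$, directed by inclusion of partitions, is Cauchy; since $E$ is a Banach space, this will deliver the limit.

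The first step I will take is a \emph{refinement estimate}. If $\pi' \supseteq \pi$, then on each subinterval $[\pi_{j-1},\pi_{j}]$ the restriction of $\omega$ is still a control satisfying \eqref{eq:delta-control}, so Lemma~\ref{lem:sewing-bound} applies on that subinterval and gives
\[
\abs{ \Xi_{\pi_{j-1},\pi_{j}} - \calI^{\pi' \cap [\pi_{j-1},\pi_{j}]} \Xi_{\pi_{j-1},\pi_{j}} }
\lesssim (\theta-1)^{-1} \omega(\pi_{j-1},\pi_{j})^{\theta}.
\]
Summing over $j$ the left-hand sides telescope into $\abs{\calI^{\pi}\Xi_{0,T} - \calI^{\pi'}\Xi_{0,T}}$.

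Next I convert $\sum_{j} \omega(\pi_{j-1},\pi_{j})^{\theta}$ into a quantity that is small for fine partitions. Factoring out one power of the maximum and using superadditivity of $\omega$,
\[
\sum_{j} \omega(\pi_{j-1},\pi_{j})^{\theta}
\leq
\bigl( \sup_{j} \omega(\pi_{j-1},\pi_{j}) \bigr)^{\theta-1}
\sum_{j} \omega(\pi_{j-1},\pi_{j})
\leq
\bigl( \sup_{j} \omega(\pi_{j-1},\pi_{j}) \bigr)^{\theta-1}
\omega(0,T).
\]
This is where the assumption $\theta>1$ enters essentially, and hypothesis \eqref{eq:vanishing-control} then forces the right-hand side to $0$ along the net.

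Finally, for two arbitrary partitions $\pi,\pi''$ that are sufficiently fine, I compare each of them to the common refinement $\pi \cup \pi''$ via the refinement estimate and add, obtaining a Cauchy bound. This gives existence of $\calI\Xi_{0,T}$, and passing to the limit in Lemma~\ref{lem:sewing-bound} yields the claimed estimate. The one subtle point, and what I expect to be the only place requiring care, is that Lemma~\ref{lem:sewing-bound} applied directly to $\pi$ versus $\pi''$ produces merely a uniform bound and no smallness; the trick is to exploit the refinement structure so that $\theta>1$ turns one factor of $\omega(\pi_{j-1},\pi_{j})$ into a vanishing mesh size, while superadditivity tames the remaining sum.
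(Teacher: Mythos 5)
Your proof is correct and follows essentially the same route as the paper: applying Lemma~\ref{lem:sewing-bound} on each subinterval of $\pi$ against the refinement $\pi'$, telescoping, then extracting $\bigl(\sup_j \omega(\pi_{j-1},\pi_j)\bigr)^{\theta-1}$ via superadditivity so that \eqref{eq:vanishing-control} gives the Cauchy estimate. The only cosmetic difference is that the paper does not bother phrasing the final step in terms of a common refinement, since Cauchyness of a net directed by inclusion already follows from the refinement bound alone.
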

\begin{proof}
For any partitions $\pi \subseteq \pi'$, by Lemma~\ref{lem:sewing-bound}, we have
\begin{align*}
\abs{\calI^{\pi}\Xi_{s,t} - \calI^{\pi'}\Xi_{s,t}}
&\leq
\sum_{j} \abs{\Xi_{\pi_{j},\pi_{j+1}} - \calI^{\pi'}\Xi_{\pi_{j},\pi_{j+1}}}
\\ &\lesssim
\sum_{j} \omega(\pi_{j},\pi_{j+1})^{\theta}
\\ &\lesssim
\sup_{j} \omega(\pi_{j},\pi_{j+1})^{\theta-1} \sum_{j} \omega(\pi_{j},\pi_{j+1})
\\ &\lesssim
\omega(s,t) \sup_{j} \omega(\pi_{j},\pi_{j+1})^{\theta-1}.
\end{align*}
The hypothesis \eqref{eq:vanishing-control} implies that $\pi \mapsto \calI^{\pi}\Xi_{s,t}$ is a Cauchy net.
\end{proof}

\begin{example}[Young integral]
\label{example:Young}
Let $a,g : [0,T] \to \R$ be $\alpha$-Hölder functions and
\[
\Xi_{s,t} := a_{s} (g_{t}-g_{s}).
\]
Then,
\begin{align*}
\delta \Xi_{s,t,u}
&= \Xi_{s,u} - \Xi_{s,t} - \Xi_{t,u}
\\ &= a_{s} (g_{u}-g_{s}) - a_{s} (g_{t}-g_{s}) - a_{t} (g_{u}-g_{t})
\\ &= a_{s} (g_{u}-g_{t}) + a_{s} (g_{t}-g_{s}) - a_{s} (g_{t}-g_{s}) - a_{t} (g_{u}-g_{t})
\\ &= (a_{s}-a_{t}) (g_{u}-g_{t}).
\end{align*}
In particular, $\Xi_{s,t,u} \lesssim \abs{s-t}^{\alpha} \abs{u-t}^{\alpha} \leq \abs{u-s}^{2\alpha}$.
Hence, provided that $2\alpha>1$, the hypothesis of Proposition~\ref{prop:sewing-vanishing-control} holds with $\theta=2\alpha$ and $\omega(s,t) \sim \abs{t-s}$.
\end{example}

\subsection{Young integration with $V^{r}$ functions}

If the function $g : [0,T] \to \R$ has bounded $r$-variation, then
\[
\omega_{g,r}(s,t)
:= (V^{r}g_{s,t})^{r}
= \sup_{s \leq \pi_{0} \leq \dotsb \leq \pi_{J} \leq t} \sum_{j=1}^{J} \abs{g_{\pi_{j}}-g_{\pi_{j-1}}}^{r}
\]
is a control.
If both $a,g \in V^{r}$, then, in the situation of Example~\ref{example:Young}, we have
\begin{equation}
\label{eq:Yound-delta-control}
\abs{\delta \Xi_{s,t,u}}
\leq
\omega_{a,r}(s,t)^{1/r} \omega_{g,r}(t,u)^{1/r}.
\end{equation}
This implies in particular \eqref{eq:delta-control} with $\omega=\omega_{a,r}+\omega_{g,r}$ and $\theta=2/r$.
However, \eqref{eq:vanishing-control} only holds for this control if $a,g$ are also continuous.
In order to integrate functions with jumps, we will show a version of the sewing lemma under the condition \eqref{eq:Yound-delta-control}.

\begin{definition}
If $f$ is a function defined on a suitable interval, we write
\[
f(t+) := \lim_{t' \to t, t'>t} f(t'),
\quad
f(t-) := \lim_{t' \to t, t'<t} f(t'),
\]
provided that the respective limits exist.
\end{definition}

\begin{lemma}\label{lem:control-small-partition}
Let $\omega$ be a control.
Then, for every $\epsilon>0$, there exists a partition $\pi$ such that
\begin{equation}
\label{eq:control-small-partition}
\max_{j} \bigl( \omega(\pi_{j-1}+,\pi_{j}) \wedge \omega(\pi_{j-1},\pi_{j}-) \bigr) \leq \epsilon.
\end{equation}
\end{lemma}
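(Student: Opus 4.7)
My plan is to force the (necessarily finite) set of large atoms of $\omega$ into the partition and fill the gaps between them by a greedy scan. First I would record two basic facts about an arbitrary control. Setting $\Omega(t) := \omega(0,t)$, superadditivity gives $\sum_{\tau} \omega(\tau-,\tau+) \le \omega(0,T) < \infty$, so the set $D_{\epsilon} := \{ \tau \in [0,T] : \omega(\tau-,\tau+) > \epsilon \}$ is finite. From $\omega(s',t) \le \Omega(t)-\Omega(s')$ and one-sided limits of the bounded monotone $\Omega$ one obtains the ``vanishing diagonal'' facts $\omega(s+,t) \to 0$ as $t \downarrow s$ and $\omega(s,t-) \to 0$ as $s \uparrow t$, which allow me to place a new partition point arbitrarily close to any chosen base while keeping the relevant one-sided $\omega$ small.

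Next I would enforce $D_{\epsilon} \cup \{0,T\}$ as partition points, write them as $0 = \sigma_{0} < \dotsb < \sigma_{K+1} = T$, and subdivide each gap $[\sigma_{k},\sigma_{k+1}]$ independently. Inside the open interval $(\sigma_{k},\sigma_{k+1})$ every point $\tau$ satisfies $\omega(\tau-,\tau+) \le \epsilon$, so on the bulk I run a greedy scan $t_{l+1} := \sup\{ t \in (t_{l},\sigma_{k+1}) : \omega(t_{l},t) \le \epsilon \}$. Whether or not this supremum is attained, either $\omega(t_{l},t_{l+1}) \le \epsilon$ directly or the left limit $\omega(t_{l},t_{l+1}-) \le \epsilon$ applies, and in both cases $\omega(t_{l}+,t_{l+1}) \wedge \omega(t_{l},t_{l+1}-) \le \epsilon$. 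The first step at the left end and the last step at the right end of each gap are handled separately: for the first I pick $t_{1}$ close enough to $\sigma_{k}$ that $\omega(\sigma_{k}+,t_{1}) \le \epsilon$ (possible by the vanishing diagonal), which absorbs any large right-atom at $\sigma_{k}$; symmetrically the last step uses $\omega(t_{L-1},\sigma_{k+1}-) \le \epsilon$.

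Termination inside each gap is built into the greedy: whenever $t_{l+1} < \sigma_{k+1}$, maximality forces $\omega(t_{l},t_{l+2}) > \epsilon$, so the disjoint pairs $[t_{2m},t_{2m+2}]$ each consume more than $\epsilon$ of $\omega(\sigma_{k},\sigma_{k+1}) \le \omega(0,T)$, leaving only $O(\omega(0,T)/\epsilon)$ of them. Concatenating the $K+1$ finite sub-partitions then yields the required partition of $[0,T]$.

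The main obstacle is the bookkeeping at a forced point $\sigma \in D_{\epsilon}$ that carries large atoms on both sides. The crucial observation is that the lemma only requires the \emph{minimum} of $\omega(\pi_{j-1}+,\pi_{j})$ and $\omega(\pi_{j-1},\pi_{j}-)$ to be small: the interval ending at $\sigma$ drops the left-atom at $\sigma$ via the second expression, while the interval starting at $\sigma$ drops the right-atom at $\sigma$ via the first, so no forced point ever needs to be duplicated, and the argument in the previous paragraph remains coherent across all the glue points.
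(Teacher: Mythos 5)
Your reorganization — forcing the finitely many atoms of $D_{\epsilon}$ into the partition and then scanning each gap greedily — is close in spirit to the paper's single-pass greedy scan, and most of the supporting facts you invoke (finiteness of $D_{\epsilon}$ via $\sum_{\tau}\omega(\tau-,\tau+)\leq\omega(0,T)$, the vanishing-diagonal limits $\omega(s+,t)\to 0$ and $\omega(s,t-)\to 0$, termination by counting disjoint pairs each consuming more than $\epsilon$) are sound.

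There is, however, a gap in the scan inside a gap $(\sigma_{k},\sigma_{k+1})$: you set $t_{l+1}:=\sup\{t\in(t_{l},\sigma_{k+1}):\omega(t_{l},t)\leq\epsilon\}$ and tacitly assume this set is nonempty. For interior $t_{l}$ you only know $\omega(t_{l}-,t_{l}+)\leq\epsilon$, hence $\omega(t_{l},t_{l}+)\leq\epsilon$, and equality can hold. If $\omega(t_{l},t_{l}+)=\epsilon$ while $\omega(t_{l},t)>\epsilon$ strictly for every $t>t_{l}$, the greedy set is empty and the scan stalls. This does occur: take $\omega(s,t)=F(t)-F(s)$ with $F$ nondecreasing, $F(t_{l})=F(t_{l}-)$, $F(t_{l}+)=F(t_{l})+\epsilon$, and $F$ strictly increasing to the right of $t_{l}$; then $\omega(t_{l}-,t_{l}+)=\epsilon$, so $t_{l}\notin D_{\epsilon}$ under your strict $>$, yet $\omega(t_{l},t)>\epsilon$ for every $t>t_{l}$. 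This is exactly what the paper's Case~2 covers — it fires whenever $\omega(\pi_{j},\pi_{j}+)\geq\epsilon$, not only at ``big'' two-sided atoms — and your preprocessing does not absorb it. Either of two small patches closes the gap: redefine $D_{\epsilon}:=\{\tau:\omega(\tau-,\tau+)\geq\epsilon\}$, which is still finite by the same sum bound and forces $\omega(\tau,\tau+)<\epsilon$ at every interior $\tau$, so the greedy set is never empty; or add the fallback that when the set is empty you choose $t_{l+1}>t_{l}$ close enough that $\omega(t_{l}+,t_{l+1})\leq\epsilon$, using your own vanishing-diagonal observation — which is precisely the paper's Case~2. With either patch the argument goes through; as written it does not quite close.
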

\begin{proof}
We select the partition greedily starting with $\pi_{0}=0$.
It will be clear from the construction that the claim \eqref{eq:control-small-partition} holds.
If $\pi_{j}$ has been already selected and $\pi_{j}<T$, we distinguish two cases.

Case 1: if $\omega(\pi_{j},\pi_{j}+) < \epsilon$, then we let
\[
\pi_{j+1} := \sup \Set{t>\pi_{j} \given \omega(\pi_{j},t) < \epsilon}.
\]
Then $\omega(\pi_{j},\pi_{j+1}-) \leq \epsilon$.

Case 2: if $\omega(\pi_{j},\pi_{j}+) \geq \epsilon$, then we choose any $\pi_{j+1} \in (\pi_{j},T]$ with $\omega(\pi_{j}+,\pi_{j+1}) \leq \epsilon$.

To see that such $\pi_{j+1}$ exists, suppose for a contradiction that, for every $t \in (\pi_{j},T]$, we have $\omega(\pi_{j}+,t) > \epsilon$.
Choose recursively $t_{0}=T$ and, given $t_{k}$, let $t_{k+1} \in (\pi_{j},t_{k})$ be such that $\omega(t_{k+1},t_{k}) > \epsilon$.
By superadditivity, we obtain
\[
\omega(\pi_{j},T) \geq \sum_{k} \omega(t_{k+1},t_{k}) = +\infty,
\]
a contradiction.

The selection of $\pi_{j}$'s ends after finitely many steps, because otherwise we would have $\omega(\pi_{j},\pi_{j+1}+) \geq \epsilon$ for every $j$, which in turn implies $\omega(\pi_{j},\pi_{j+2}) \geq \epsilon$, and summing over even $j$ we obtain a contradiction with the superadditivity of $\omega$.
\end{proof}

\begin{theorem}[Sewing with jumps]
\label{thm:sewing-jumps}
Let $\omega_{1,n}, \omega_{2,n}$ be controls and $\alpha_{1,n},\alpha_{2,n} \geq 0$ with $\alpha_{1,n}+\alpha_{2,n}>1$ for all $n \in \Set{1,\dotsc,N}$.

Let $\Xi : \Delta \to E$.
Assume
\[
\abs{\delta\Xi_{s,u,t}} \leq \sum_{n=1}^N \omega_{1,n}^{\alpha_{1,n}}(s,u) \omega_{2,n}^{\alpha_{2,n}}(u,t),
\]
Then the following net limit exists,
\[
\calI\Xi_{0,T}:=\lim_{\pi} \calI^\pi \Xi_{0,T},
\]
and one has the following estimate:
\[
\abs{ \calI\Xi_{0,T}-\Xi_{0,T}}
\lesssim
\sum_{n=1}^N \omega_{1,n}^{\alpha_{1,n}}(0,T-) \omega_{2,n}^{\alpha_{2,n}}(0+,T),
\]
with $C$ depending only on $\min_n \alpha_{1,n}+\alpha_{2,n}$.
\end{theorem}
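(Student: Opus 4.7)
The plan is to parallel the proof of Proposition~\ref{prop:sewing-vanishing-control}, replacing the single-control bound of Lemma~\ref{lem:sewing-bound} by a product-form bound, and using Lemma~\ref{lem:control-small-partition} for the convergence step in place of the vanishing-oscillation hypothesis.

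First I would prove a deterministic partition-independent inequality
\[
\abs{\Xi_{s,t} - \calI^\pi \Xi_{s,t}} \lesssim \sum_{n=1}^N \omega_{1,n}(s,t)^{\alpha_{1,n}} \omega_{2,n}(s,t)^{\alpha_{2,n}}
\]
by induction on $J:=\abs{\pi}$, in the style of Lemma~\ref{lem:sewing-bound}. The cost of removing an interior partition point $\pi_{k+1}$ is $\abs{\delta\Xi_{\pi_k,\pi_{k+1},\pi_{k+2}}}$, which by the hypothesis and the elementary inequality $a^{\alpha_1} b^{\alpha_2} \leq (a+b)^{\alpha_1+\alpha_2}$ (valid for $a,b\geq 0$, $\alpha_i\geq 0$) is dominated by $\sum_n \tilde\omega_n(\pi_k,\pi_{k+2})^{\gamma_n}$, where $\tilde\omega_n := \omega_{1,n}+\omega_{2,n}$ is again a control and $\gamma_n := \alpha_{1,n}+\alpha_{2,n}>1$. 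A pigeonhole argument, as in the proof of Lemma~\ref{lem:sewing-bound} but weighted across all $n$ by $\tilde\omega_n(s,t)^{-1}$, extracts an index $k$ with $\tilde\omega_n(\pi_k,\pi_{k+2}) \lesssim \tilde\omega_n(s,t)/J$ simultaneously in $n$, so that the removal cost is bounded by $\sum_n (C/J)^{\gamma_n} \tilde\omega_n(s,t)^{\gamma_n}$. Summing over $J$ yields a convergent series since $\gamma_n > 1$.

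Second, for convergence along refinements, I would apply Lemma~\ref{lem:control-small-partition} to the combined control $\omega := \sum_n \tilde\omega_n$. For each $\epsilon>0$, this provides a partition $\pi^\epsilon$ such that on every subinterval $[\pi^\epsilon_{j-1},\pi^\epsilon_j]$ either $\omega(\pi^\epsilon_{j-1}+,\pi^\epsilon_j)\leq\epsilon$ or $\omega(\pi^\epsilon_{j-1},\pi^\epsilon_j-)\leq\epsilon$. For any refinement $\pi\supseteq\pi^\epsilon$, the first step applied on each subinterval gives
\[
\abs{\calI^\pi\Xi_{0,T} - \calI^{\pi^\epsilon}\Xi_{0,T}} \lesssim \sum_j \sum_n \tilde\omega_n(\pi^\epsilon_{j-1},\pi^\epsilon_j)^{\gamma_n},
\]
which is controlled by $\epsilon^{\theta-1}\omega(0,T)$ with $\theta := \min_n \gamma_n > 1$, and tends to zero with $\epsilon$. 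This shows that $\pi\mapsto\calI^\pi\Xi_{0,T}$ is Cauchy along refinements, so the net limit exists.

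Finally, to obtain the one-sided controls $\omega_{1,n}(0,T-)$ and $\omega_{2,n}(0+,T)$ in the conclusion (rather than $(0,T)$), one exploits that Lemma~\ref{lem:control-small-partition} yields \emph{one-sided} smallness: on the initial subinterval $[0,\pi^\epsilon_1]$ only $\omega(0+,\pi^\epsilon_1)\leq\epsilon$ is guaranteed, so the deterministic bound applied there involves $\omega_{2,n}(0+,\pi^\epsilon_1)$ in place of $\omega_{2,n}(0,\pi^\epsilon_1)$ after refining $\pi^\epsilon_1\to 0$, and symmetrically $\omega_{1,n}(\pi^\epsilon_{J-1},T-)$ replaces $\omega_{1,n}(\pi^\epsilon_{J-1},T)$ on the final subinterval. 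The main obstacle is the combinatorial pigeonhole in the first step: selecting a single removal index that controls all $N$ summands with a constant depending only on $\min_n\gamma_n$. A straightforward weighted pigeonhole carries an $N$-factor that must be absorbed, perhaps by iterated removal handling one summand at a time or by a rescaling trick; tracking the one-sided jumps at the endpoints in the final limit estimate is the other technical subtlety.
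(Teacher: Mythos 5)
Your proposal is correct in outline but takes a genuinely different route from the paper's. You re-derive a Lemma~\ref{lem:sewing-bound}-style deterministic estimate from scratch under the product hypothesis, via a modified greedy removal/pigeonhole argument, and then invoke Lemma~\ref{lem:control-small-partition} for convergence. The paper instead \emph{reduces} directly to Proposition~\ref{prop:sewing-vanishing-control} by a one-line construction: it defines a single composite control
\[
\omega(s,t) := \sum_{n=1}^N \omega_{1,n}^{\alpha_{1,n}/\theta}(s,t-)\,\omega_{2,n}^{\alpha_{2,n}/\theta}(s+,t),
\qquad \theta := \min_n(\alpha_{1,n}+\alpha_{2,n}),
\]
with the one-sided limits built in from the start, verifies superadditivity of $\omega$ via H\"older's inequality (a quantitatively sharper use of the same fact you employ as $a^{\alpha_1}b^{\alpha_2}\leq(a+b)^{\alpha_1+\alpha_2}$), checks $\abs{\delta\Xi_{s,u,t}}\leq\omega(s,t)^\theta$, and appeals to Lemma~\ref{lem:control-small-partition} to verify the vanishing-oscillation hypothesis \eqref{eq:vanishing-control}. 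This black-boxing avoids your pigeonhole step entirely, and the endpoint bookkeeping you worry about in your last paragraph is absorbed automatically into the definition of $\omega$.

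On the $N$-dependence you flag as the main obstacle: this is a genuine issue with your pigeonhole, but it is worth noting that the paper's proof does not escape it either. Proposition~\ref{prop:sewing-vanishing-control} gives the bound $\lesssim_\theta \omega(0,T)^\theta = (\sum_n a_n)^\theta$ where $a_n^\theta$ is the $n$-th summand in the stated conclusion, and passing from $(\sum_n a_n)^\theta$ to $\sum_n a_n^\theta$ costs a factor $N^{\theta-1}$. So the theorem's claim that $C$ depends only on $\min_n(\alpha_{1,n}+\alpha_{2,n})$ is slightly optimistic as written (in the paper's own proof as well), and your flagged difficulty is not a defect of your approach relative to the paper's. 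Your "iterated removal handling one summand at a time" idea does not straightforwardly work either, since the intervals $[\pi_k,\pi_{k+2}]$ change once you delete a point; the weighted pigeonhole with the $N$ factor is the honest outcome.
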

\begin{proof}
Let
\[
\theta := \min_n \alpha_{1,n}+\alpha_{2,n} > 1
\]
and
\[
\omega(s,t) := \sum_{n=1}^N \omega_{1,n}^{\alpha_{1,n}/\theta}(s,t-) \omega_{2,n}^{\alpha_{2,n}/\theta}(s+,t).
\]
Note that the functions $(s,t) \mapsto \omega_{1,n}(s,t-)$ and $(s,t) \mapsto \omega_{2,n}(s+,t)$ are also controls.
The function $\omega$ is a control, because, by Hölder's inequality,
\begin{multline*}
\omega_{1,n}^{\alpha_{1,n}/\theta}(s,u) \omega_{2,n}^{\alpha_{2,n}/\theta}(s,u)
\geq
(\omega_{1,n}(s,t) + \omega_{1,n}(t,u))^{\alpha_{1,n}/\theta} (\omega_{2,n}(s,t)+\omega_{2,n}(t,u))^{\alpha_{2,n}/\theta}
\\ \geq
\omega_{1,n}(s,t)^{\alpha_{1,n}/\theta} \omega_{2,n}(s,t)^{\alpha_{2,n}/\theta} + \omega_{1,n}(t,u)^{\alpha_{1,n}/\theta} \omega_{2,n}(t,u)^{\alpha_{2,n}/\theta}.
\end{multline*}
It follows from Lemma~\ref{lem:control-small-partition} applied to the control $\sum_{n} (\omega_{1,n} + \omega_{2,n})$ that \eqref{eq:vanishing-control} holds.
Since $\abs{\delta\Xi_{s,t,u}} \leq \omega(s,u)^{\theta}$, we can apply Proposition~\ref{prop:sewing-vanishing-control}.
\end{proof}

Theorem~\ref{thm:sewing-jumps} allows us to integrate under the hypothesis \eqref{eq:Yound-delta-control}, but only for $r<2$.
By the law of the iterated logarithm, paths of the Brownian motion are not in $V^{r}$ for any $r\leq 2$.

There are two options to refine the above reasoning in order to obtain an integration theory suitable for martingales.
\begin{itemize}
\item The classical stochastic integration uses orthogonality between martingale increments.
We will return to that.
\item Rough integration theory replaces Riemann--Stieltjes sums by higher order quadrature schemes.
Higher order quadrature improves the order of convergence (under suitable regularity assumptions).
In the rough setting, it improves from having no convergence to having convergence.
\end{itemize}
Another motivation for us to look at the rough integration theory is that it informs us about the estimates that we will want when we return to stochastic integration.

\neuevorl{2021-11-23}

\subsection{Rough paths}
Throughout this section, we fix some $r \in [2,3)$.

An $r$-rough path (with values in $\R^{d}$) consists of
\[
X : [0,T] \to \R^{d},
\quad
\bbX : \Delta \to \R^{d \times d}
\]
such that $X \in V^{r}$, $\bbX \in V^{r/2}$, where
\[
V^{r}\bbX_{s,t} = \sup_{s\leq u_{0} \leq \dotsb \leq u_{J} \leq t} \Bigl( \sum_{j} \abs{\bbX_{u_{j},u_{j+1}}}^{r} \Bigr)^{1/r}.
\]
and \emph{Chen's relation} holds for $s<t<u$:
\begin{equation}
\bbX_{s,u} = \bbX_{s,t} + \bbX_{t,u} + (X_{t}-X_{s}) \otimes (X_{u}-X_{t}).
\end{equation}
\begin{remark}
If $X$ is sufficiently regular for the integral to be defined, one can take
\[
\bbX_{s,t} := \int_{s}^{t} (X_{u-}-X_{s}) \otimes \dif X_{u}.
\]
\end{remark}

Next, we introduce the spaces in which one can state and solve differential equations driven by rough paths.

\begin{definition}[Controlled path]
Let $\bfX=(X,\bbX)$ be an $r$-rough path in $\R^{d}$.
An \emph{$\bfX$-controlled $r$-rough path} in a vector space $E$ consists of $Y : [0,T] \to E$, $Y' : [0,T] \to L(\R^{d}, E)$ such that $Y' \in V^{r}$ and
\[
R^{\bfY,\bfX}_{s,t} := \delta Y_{s,t} - Y'_{s} \delta X_{s,t} \in V^{r/2}.
\]
\end{definition}

Although $R^{\bfY,\bfX}$ depends on all of $Y,Y',X$, it is commonly abbreviated to $R^{Y}$, since the other dependencies are usually unambiguous.

It may be helpful to think about the scalar case $d=d'=1$, and we will use the scalar notation.
All arguments will be, however, formulated in such a way that they work in the vector-valued case, which is important e.g.\ because one may wish to incorporate time as an additional coordinate in $\bfX$.

\begin{example}
$(X,1)$ is an $X$-controlled path.
\end{example}

\begin{lemma}[Rough integral]
\label{lem:rough-integral}
Let $\bfX=(X,\bbX)$ be an $r$-rough path and $\bfY=(Y,Y')$ an $\bfX$-controlled path in $L(\R^{d},\R^{d'})$.
Let $\Xi_{s,t} := Y_{s} \delta X_{s,t} + Y_{s}' \bbX_{s,t}$.
Then,
\[
\int_{0}^{T} \bfY \dif \bfX
:= \lim_{\pi} \calI^{\pi} \Xi_{0,T}
\]
exists and satisfies
\[
\abs[\Big]{ \int_{s}^{t} \bfY \dif \bfX - \Xi_{s,t}}
\lesssim
V^{r/2} R_{s,t-} V^{r} X_{s+,t} + V^{r} Y'_{s,t-} V^{r/2} \bbX_{s+,t}.
\]
\end{lemma}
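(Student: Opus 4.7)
The plan is to reduce the lemma to Theorem~\ref{thm:sewing-jumps}, so the main task is to compute $\delta\Xi$ and bound it by a sum of products of controls whose exponents each sum to something strictly greater than $1$.

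First I would carry out the algebraic identity. Writing $\Xi_{s,t} = Y_s \delta X_{s,t} + Y'_s \bbX_{s,t}$ and using additivity $\delta X_{s,t} = \delta X_{s,u} + \delta X_{u,t}$ together with Chen's relation $\bbX_{s,t} = \bbX_{s,u} + \bbX_{u,t} + \delta X_{s,u} \otimes \delta X_{u,t}$, one finds
\[
\delta\Xi_{s,u,t} = -\delta Y_{s,u}\, \delta X_{u,t} + Y'_s\, \delta X_{s,u} \otimes \delta X_{u,t} - \delta Y'_{s,u}\, \bbX_{u,t}.
\]
Now $\delta Y_{s,u} = Y'_s \delta X_{s,u} + R^Y_{s,u}$ by definition of the controlled path remainder, so the first two summands telescope to $-R^Y_{s,u} \delta X_{u,t}$. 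The key identity is therefore
\[
\delta\Xi_{s,u,t} = -R^Y_{s,u}\, \delta X_{u,t} - \delta Y'_{s,u}\, \bbX_{u,t}.
\]

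Next I would repackage this as a bound by controls. Recall that $\omega_{g,q}(s,t) := (V^q g_{s,t})^q$ is a control for any $q$, and that $|\delta g_{s,t}| \leq (V^q g_{s,t}) = \omega_{g,q}(s,t)^{1/q}$. Applied to each factor, this gives
\[
\abs{\delta\Xi_{s,u,t}}
\leq
\omega_{R^Y,r/2}(s,u)^{2/r}\, \omega_{X,r}(u,t)^{1/r}
+
\omega_{Y',r}(s,u)^{1/r}\, \omega_{\bbX,r/2}(u,t)^{2/r}.
\]
In both summands the exponents add to $3/r > 1$ since $r \in [2,3)$, which is exactly the hypothesis needed.

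Finally I would invoke Theorem~\ref{thm:sewing-jumps} with $N=2$, taking $(\omega_{1,1},\alpha_{1,1}) = (\omega_{R^Y,r/2}, 2/r)$, $(\omega_{2,1},\alpha_{2,1}) = (\omega_{X,r}, 1/r)$ and analogously for the second summand. The theorem produces the limit $\calI\Xi_{0,T}$ along the net of partitions and gives
\[
\abs{\calI\Xi_{0,T} - \Xi_{0,T}}
\lesssim
\omega_{R^Y,r/2}(0,T-)^{2/r} \omega_{X,r}(0+,T)^{1/r}
+ \omega_{Y',r}(0,T-)^{1/r} \omega_{\bbX,r/2}(0+,T)^{2/r},
\]
which, after rewriting $\omega_{g,q}(a,b)^{1/q} = V^q g_{a,b}$, is exactly the stated estimate on $[0,T]$. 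The same argument applied on an arbitrary subinterval $[s,t]$ yields the general bound. There is no real obstacle beyond carefully tracking the algebra in step~1; everything after that is a mechanical application of the sewing theorem.
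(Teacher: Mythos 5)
Your proof is correct and follows the same route as the paper: compute $\delta\Xi_{s,u,t}$ using Chen's relation, reduce to $-R^Y_{s,u}\,\delta X_{u,t}-\delta Y'_{s,u}\,\bbX_{u,t}$ via the controlled-path identity, and invoke Theorem~\ref{thm:sewing-jumps}. The paper states the final algebraic identity and then simply cites the sewing theorem, so you have merely spelled out the control bookkeeping that the paper leaves implicit.
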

\begin{proof}
\begin{equation}
\label{eq:delta-Xi-2-rough-integral}
\begin{split}
\delta \Xi_{s,t,u}
&=
Y_{s} \delta X_{s,u} + Y_{s}' \bbX_{s,u}
- (Y_{s} \delta X_{s,t} + Y_{s}' \bbX_{s,t}) - (Y_{t} \delta X_{t,u} + Y_{t}' \bbX_{t,u})
\\ &=
Y_{s} \delta X_{t,u} + Y_{s}' (\bbX_{s,u} - \bbX_{s,t}) - (Y_{t} \delta X_{t,u} + Y_{t}' \bbX_{t,u})
\\ &=
(Y_{s}-Y_{t}) \delta X_{t,u} + Y_{s}' (\bbX_{t,u} + \delta X_{s,t} \otimes \delta X_{t,u}) - Y_{t}' \bbX_{t,u}
\\ &=
(Y_{s}-Y_{t}) \delta X_{t,u} + (Y_{s}'-Y_{t}') \bbX_{t,u} + Y_{s}' (\delta X_{s,t} \otimes \delta X_{t,u})
\\ &=
-R^{\bfY,\bfX}_{s,t} \delta X_{t,u} + (Y_{s}'-Y_{t}') \bbX_{t,u}.
\end{split}
\end{equation}
Hence, we can apply Theorem~\ref{thm:sewing-jumps}.
\end{proof}

\begin{example}
$\int_{0}^{T} (X-X_{0},1) \otimes \dif \bfX = \bbX_{0,T}$.
Indeed, in this case $\delta\Xi=0$ in \eqref{eq:delta-Xi-2-rough-integral}.
\end{example}

This gives even a $V^{r/3}$ approximation of the integral.
However, we drop the second order term and only use a $V^{r/2}$ approximation, since we want to run the iteration in the simplest possible space for the given regularity.

\subsubsection{Rough differential equations}
We consider an initial value problem that is informally described by $Y_{0}=y_{0}$, $\dif Y = \phi(Y) \dif X$.
A rigorous formulation is the following: given a rough path $\bfX$ in $\R^{d}$ and a function $\phi : \R^{d'} \to L(\R^{d},\R^{d'})$, we are looking for an $\bfX$-controlled path $\bfY$ such that
\begin{equation}
\label{eq:RDE:imprecise}
Y_{T} = y_{0} + \int_{0}^{T} \phi(\bfY) \bfX.
\end{equation}
We still have to define what we mean by $\phi(\bfY)$.

We will find that this problem has a unique solution if $\phi \in C^{2,1}_{b}$, the space of twice differentiable functions with $\phi,D\phi,D^{2}\phi$ bounded and $D^{2}\phi$ Lipschitz.

The proof proceeds by a fixed point argument.
First, we need to identify a metric space in which this argument will be run.
We abbreviate $\norm{X}_r := V^r X_{0,T}$.

\subsubsection{Operations on controlled paths}

We have deliberately omitted the condition $Y\in V^{r}$ from the definition of the controlled path, because it is implicit in the other conditions.

\begin{lemma}[Implicit bound]
\label{lem:controlled-implicit}
Let $\bfX$ be a rough path and $\bfY$ an $\bfX$-controlled path.
Then
\[
\norm{Y}_{r} \leq \norm{Y'}_{\sup} \norm{X}_{r} + \norm{R^{Y}}_{r/2}.
\]
\end{lemma}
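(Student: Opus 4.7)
The plan is to start from the defining relation of a controlled path, namely
\[
\delta Y_{s,t} = Y'_{s}\,\delta X_{s,t} + R^{\bfY,\bfX}_{s,t},
\]
and pass to the $\ell^{r}$-norm on an arbitrary partition $0 = u_{0} < u_{1} < \dotsb < u_{J} \leq T$. By Minkowski's inequality in $\ell^{r}$,
\[
\Bigl( \sum_{j} \abs{\delta Y_{u_{j-1},u_{j}}}^{r} \Bigr)^{1/r}
\leq
\Bigl( \sum_{j} \abs{Y'_{u_{j-1}} \delta X_{u_{j-1},u_{j}}}^{r} \Bigr)^{1/r}
+ \Bigl( \sum_{j} \abs{R^{Y}_{u_{j-1},u_{j}}}^{r} \Bigr)^{1/r}.
\]
I will then bound each piece separately and take the supremum over partitions.

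First, pulling $\norm{Y'}_{\sup}$ out gives
\[
\Bigl( \sum_{j} \abs{Y'_{u_{j-1}} \delta X_{u_{j-1},u_{j}}}^{r} \Bigr)^{1/r}
\leq \norm{Y'}_{\sup}
\Bigl( \sum_{j} \abs{\delta X_{u_{j-1},u_{j}}}^{r} \Bigr)^{1/r}
\leq \norm{Y'}_{\sup} \norm{X}_{r}.
\]
For the $R^{Y}$ piece, I use the elementary fact that $\ell^{p} \hookrightarrow \ell^{q}$ for $p \leq q$ with constant $1$ (on a sequence of nonnegative numbers). Since $r \geq 2$ means $r/2 \leq r$, this gives
\[
\Bigl( \sum_{j} \abs{R^{Y}_{u_{j-1},u_{j}}}^{r} \Bigr)^{1/r}
\leq
\Bigl( \sum_{j} \abs{R^{Y}_{u_{j-1},u_{j}}}^{r/2} \Bigr)^{2/r}
\leq \norm{R^{Y}}_{r/2}.
\]
Combining the two bounds and taking the supremum over all partitions produces exactly the claimed inequality.

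There is no real obstacle here: the only subtle point worth flagging in the write-up is the direction of the sequence-space embedding ($r/2 \leq r$, so the smaller-exponent norm dominates), which is what lets a $V^{r/2}$ quantity control an $r$-variation sum. Note also that the argument depends only on $r \geq 2$, consistent with the standing assumption of the section.
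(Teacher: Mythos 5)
Your proof is correct, and it fills in a step that the paper leaves unproved (the lemma is stated without proof in the source). The decomposition $\delta Y_{s,t} = Y'_{s}\,\delta X_{s,t} + R^{Y}_{s,t}$, followed by Minkowski in $\ell^{r}$ and the monotonicity embedding $\ell^{r/2} \hookrightarrow \ell^{r}$ for the remainder term, is exactly the natural argument, and it matches the approach the paper uses for the analogous stability version (Lemma~\ref{lem:controlled-implicit-stable}), where the same increment identity is written out and each piece is controlled by the corresponding variation norm. One tiny cosmetic remark: the paper's definition of $V^{r}$ takes the supremum over arbitrary increasing sequences $u_{0} < \dotsb < u_{J}$ in $[0,T]$, not just partitions anchored at $u_{0}=0$, so you should allow arbitrary $u_{0}\geq 0$; this changes nothing in the estimate.
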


\begin{lemma}[Integration]
\label{lem:controlled-integral}
Let $\bfX$ be a rough path and $\bfY$ an $\bfX$-controlled path.
Let $Z_{t}:=\int_0^t Y \dif \bfX$ and $Z_{t}' := Y_{t}$.
Then, $\bfZ=(Z,Z')$ is an $\bfX$-controlled path, and it satisfies
\begin{equation}
\label{eq:controlled-integral-remainder}
\norm{R^{Z}}_{r/2}
\lesssim
\norm{R^{Y}}_{r/2} \norm{X}_{r} + \norm{Y'}_{r} \norm{\bbX}_{r/2} + \norm{Y'}_{\sup} \norm{\bbX}_{r/2}.
\end{equation}
\end{lemma}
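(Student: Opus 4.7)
The plan is to compute $R^{Z}$ explicitly via Lemma~\ref{lem:rough-integral} and then estimate its $r/2$-variation by a partition sum, splitting the integrand into the leading quadrature term and the sewing error.

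First, since $Z'_{s}=Y_{s}$, one has
\[
R^{Z}_{s,t} = (Z_{t}-Z_{s}) - Y_{s}\delta X_{s,t} = \int_{s}^{t}\bfY\dif\bfX - Y_{s}\delta X_{s,t}.
\]
Applying Lemma~\ref{lem:rough-integral} on the interval $[s,t]$ with $\Xi_{s,t}=Y_{s}\delta X_{s,t}+Y'_{s}\bbX_{s,t}$ gives
\[
R^{Z}_{s,t} = Y'_{s}\bbX_{s,t} + E_{s,t},
\qquad
\abs{E_{s,t}} \lesssim V^{r/2}R^{Y}_{s,t}\,V^{r}X_{s,t} + V^{r}Y'_{s,t}\,V^{r/2}\bbX_{s,t},
\]
where I have bounded the one-sided variations by full ones for simplicity. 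This already identifies $Z$ as an $\bfX$-controlled path once the $V^{r/2}$ bound on $R^{Z}$ is established; membership $Z\in V^{r}$ then follows from Lemma~\ref{lem:controlled-implicit}.

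Next, fix any partition $0=u_{0}<\dotsb<u_{J}=T$. By the elementary $(a+b)^{r/2}\lesssim a^{r/2}+b^{r/2}$,
\[
\sum_{j}\abs{R^{Z}_{u_{j-1},u_{j}}}^{r/2}
\lesssim
\sum_{j}\abs{Y'_{u_{j-1}}}^{r/2}\abs{\bbX_{u_{j-1},u_{j}}}^{r/2}
+\sum_{j}\abs{E_{u_{j-1},u_{j}}}^{r/2}.
\]
The first sum is bounded by $\norm{Y'}_{\sup}^{r/2}\norm{\bbX}_{r/2}^{r/2}$, which after taking the $2/r$-th power yields the term $\norm{Y'}_{\sup}\norm{\bbX}_{r/2}$ on the right-hand side of \eqref{eq:controlled-integral-remainder}.

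The remaining step handles the sewing error via the standard product-of-controls trick, which I expect to be the main (but routine) obstacle. Introduce the controls $\omega_{R}:=(V^{r/2}R^{Y})^{r/2}$, $\omega_{X}:=(V^{r}X)^{r}$, $\omega_{Y'}:=(V^{r}Y')^{r}$, and $\omega_{\bbX}:=(V^{r/2}\bbX)^{r/2}$. Then $\abs{E_{s,t}}^{r/2}\lesssim \omega_{R}(s,t)\,\omega_{X}(s,t)^{1/2}+\omega_{Y'}(s,t)^{1/2}\,\omega_{\bbX}(s,t)$. Because the exponents in each product sum to $3/2\geq 1$, Hölder's inequality shows that $\omega_{1}^{\alpha}\omega_{2}^{\beta}$ is itself a control whenever $\alpha+\beta\geq 1$, so by superadditivity
\[
\sum_{j}\abs{E_{u_{j-1},u_{j}}}^{r/2}
\lesssim
\omega_{R}(0,T)\omega_{X}(0,T)^{1/2}+\omega_{Y'}(0,T)^{1/2}\omega_{\bbX}(0,T)
=\norm{R^{Y}}_{r/2}^{r/2}\norm{X}_{r}^{r/2}+\norm{Y'}_{r}^{r/2}\norm{\bbX}_{r/2}^{r/2}.
\]
Combining with the first contribution, taking the $2/r$-th root, and passing to the supremum over partitions gives \eqref{eq:controlled-integral-remainder}.
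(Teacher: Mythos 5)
Your decomposition $R^{Z}_{s,t} = \bigl(\int_{s}^{t}\bfY\dif\bfX - \Xi_{s,t}\bigr) + Y'_{s}\bbX_{s,t}$ is exactly what the paper does, and the subsequent control-product / partition-sum argument is the standard way of upgrading the pointwise bound from Lemma~\ref{lem:rough-integral} to a $V^{r/2}$ bound, so this matches the paper's proof (which leaves those last steps implicit). Correct.
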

\begin{proof}
\begin{align*}
R^{Z}_{s,t}
&=
\delta Z_{s,t} - Z'_{s} \delta X_{s,t}
\\ &=
\int_{0}^{t} \bfY \dif \bfX - \int_{0}^{s} \bfY \dif \bfX - Y_{s} \delta X_{s,t}
\\ &=
\int_{s}^{t} \bfY \dif \bfX - Y_{s} \delta X_{s,t} - Y'_{s} \bbX_{s,t} + Y'_{s} \bbX_{s,t}.
\end{align*}
The last term corresponds to the last term in the conclusion, and the remaining difference is estimated by Lemma~\ref{lem:rough-integral}.
\end{proof}

\begin{lemma}[Composition with a smooth function]
\label{lem:controlled-composition}
Let $\bfX$ be a rough path and $\bfY$ an $\bfX$-controlled path.
Suppose $\phi \in C^{1,1}_{b}$.
Then $\phi(\bfY) := (\phi(Y), D\phi(Y)Y')$ is an $\bfX$-controlled path, and we have
\begin{equation}
\label{eq:controlled-composition-phi'}
\norm{\phi(\bfY)'}_{r}
=
\norm{D\phi(Y)Y'}_{r}
\leq
\norm{D\phi}_{\sup} \norm{Y'}_{r} + \norm{D\phi}_{Lip} \norm{Y}_{r} \norm{Y'}_{\sup},
\end{equation}
\begin{equation}
\label{eq:controlled-composition-R}
\norm{ R^{\phi(Y)} }_{r/2}
\leq
\norm{D\phi}_{\sup} \norm{R^Y}_{r/2}
+
\frac12 \norm{D \phi}_{Lip} \norm{Y}_{r}^{2}.
\end{equation}
\end{lemma}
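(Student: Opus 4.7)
The plan is to verify the two defining properties of the candidate $\bfX$-controlled path $(\phi(Y), D\phi(Y)Y')$: first, that $D\phi(Y)Y' \in V^r$ with the bound \eqref{eq:controlled-composition-phi'}, and second, that the Gubinelli remainder
\[
R^{\phi(Y)}_{s,t} := \delta \phi(Y)_{s,t} - D\phi(Y_s)\, Y'_s\, \delta X_{s,t}
\]
lies in $V^{r/2}$ with the bound \eqref{eq:controlled-composition-R}.

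For the $V^r$ estimate on the derivative component, I would start from the product decomposition
\[
\delta(D\phi(Y)\, Y')_{s,t} = \bigl(D\phi(Y_t) - D\phi(Y_s)\bigr) Y'_t + D\phi(Y_s)\, \delta Y'_{s,t},
\]
use the Lipschitz property of $D\phi$ to bound the first factor by $\norm{D\phi}_{Lip}\, \abs{\delta Y_{s,t}}$, and invoke the elementary product-rule type estimate $V^r(fg) \leq \norm{f}_{\sup} V^r g + \norm{g}_{\sup} V^r f$. This yields \eqref{eq:controlled-composition-phi'} immediately.

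For the remainder, I would substitute $\delta Y_{s,t} = Y'_s\, \delta X_{s,t} + R^Y_{s,t}$ to split
\[
R^{\phi(Y)}_{s,t} = D\phi(Y_s)\, R^Y_{s,t} + \bigl(\phi(Y_t) - \phi(Y_s) - D\phi(Y_s)\, \delta Y_{s,t}\bigr).
\]
The first piece contributes $\norm{D\phi}_{\sup}\norm{R^Y}_{r/2}$ to the $V^{r/2}$ norm. For the second, since $\phi \in C^{1,1}_b$, the integral form of Taylor's theorem,
\[
\phi(Y_t) - \phi(Y_s) - D\phi(Y_s)\, \delta Y_{s,t}
= \int_0^1 \bigl(D\phi(Y_s + u\, \delta Y_{s,t}) - D\phi(Y_s)\bigr) du \cdot \delta Y_{s,t},
\]
gives a pointwise bound of $\tfrac12 \norm{D\phi}_{Lip}\, \abs{\delta Y_{s,t}}^2$. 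Raising to the $r/2$-th power and summing over any partition yields at most $\bigl(\tfrac12 \norm{D\phi}_{Lip}\bigr)^{r/2} (V^r Y)^r$, so this summand contributes $\tfrac12 \norm{D\phi}_{Lip}\norm{Y}_r^2$ to $V^{r/2} R^{\phi(Y)}$. Since $r \in [2,3)$, the functional $V^{r/2}$ is subadditive on $\Delta$ by Minkowski, so the two contributions combine to \eqref{eq:controlled-composition-R}.

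I do not anticipate a real obstacle; the only point worth flagging is that the hypothesis is only $C^{1,1}_b$ rather than $C^2_b$, so one must avoid a pointwise second-derivative bound and use the integral Taylor remainder above, whose prefactor $\tfrac12$ arises from $\int_0^1 u\, du$.
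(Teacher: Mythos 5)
Your proof is correct and follows essentially the same route as the paper: a product-rule decomposition of $\delta(D\phi(Y)Y')$ for \eqref{eq:controlled-composition-phi'}, and for \eqref{eq:controlled-composition-R} the split $R^{\phi(Y)}_{s,t} = D\phi(Y_s)R^Y_{s,t} + \bigl(\phi(Y_t)-\phi(Y_s)-D\phi(Y_s)\delta Y_{s,t}\bigr)$ with the integral Taylor remainder handling the $C^{1,1}_b$ regularity. The only cosmetic difference is in which cross term you add and subtract for \eqref{eq:controlled-composition-phi'}; the paper inserts $D\phi(Y_t)Y'_s$ whereas you insert $D\phi(Y_s)Y'_t$, and both yield the identical bound.
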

Interestingly, this estimate does not depend on $\bfX$.
\begin{proof}
Adding and subtracting $D\phi(Y_t)Y'_s$, we obtain
\begin{equation}
\label{eq:4}
D\phi(Y_t)Y'_t- D\phi(Y_s)Y'_s
=
D\phi(Y_t)(Y'_t-Y'_{s}) + (D\phi(Y_t)-D\phi(Y_s)) Y'_s.
\end{equation}
This implies the first estimate.

By Taylor's formula,
\begin{align*}
\abs{R^{\phi(Y)}_{s,t}}
&=
\abs{\phi(Y_t)- \phi(Y_s) -D \phi(Y_s) Y_s' X_{s,t}}
\\ &\leq
\abs{\phi(Y_t)- \phi(Y_s) -D \phi(Y_s) Y_{s,t}} + \abs{D\phi(Y_s) R^Y_{s,t}}
\\ &\leq
\frac12 \norm{D \phi}_{Lip} \abs{Y_{s,t}}^{2} + \norm{D\phi}_{\sup} \abs{R^Y_{s,t}},
\end{align*}
which implies second estimate.
\end{proof}

The above discussion allows us to constrct a space in which the equation \eqref{eq:RDE} makes sense.
\begin{lemma}[Solution space]
For any $\phi \in C^{1,1}_{b}$ and any $A \in (0,\infty)$, there exists $\epsilon>0$ such that if $\norm{X}_{r} + \norm{\bbX}_{r/2} < \epsilon$, then the set of $\bfX$-controlled paths
\begin{equation}
\label{eq:RDE-solution-space}
\calY = \calY(\bfX,A) := \Set{ \bfY \given \norm{Y'}_{r} \leq A, \norm{R^{Y}}_{r/2} \leq A^{2}, \norm{Y}_{r} \leq A/\norm{\phi}_{Lip}, \norm{Y'}_{\sup} \leq \norm{\phi}_{\sup} }
\end{equation}
is invariant under the mapping
\begin{equation}
\label{eq:RDE-iteration}
\operatorname{Step} : \bfY \mapsto (y_{0} + \int_{0}^{\cdot} \phi(\bfY) \dif \bfX, \phi(Y))
\end{equation}
for any $y_{0} \in \R^{d'}$.
\end{lemma}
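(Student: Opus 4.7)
The plan is to pick an arbitrary $\bfY \in \calY$, set $\bfZ := \operatorname{Step}(\bfY)$ with $Z := y_{0} + \int_{0}^{\cdot} \phi(\bfY) \dif \bfX$ and $Z' := \phi(Y)$, and verify the four defining conditions of $\calY$ in \eqref{eq:RDE-solution-space} one at a time. Each verification uses the $\calY$-bounds to turn the constants produced by the preceding lemmas into quantities depending only on $A$ and $\phi$, after which shrinking $\epsilon$ closes the loop.

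The two conditions on $Z' = \phi(Y)$ are essentially free. One has $\norm{Z'}_{\sup} \leq \norm{\phi}_{\sup}$ directly, and, by the Lipschitz bound on $\phi$,
\[
\norm{Z'}_{r} = \norm{\phi(Y)}_{r} \leq \norm{\phi}_{Lip} \norm{Y}_{r} \leq \norm{\phi}_{Lip} \cdot A/\norm{\phi}_{Lip} = A.
\]
This is precisely why $\calY$ imposes $\norm{Y}_{r} \leq A/\norm{\phi}_{Lip}$ rather than $\norm{Y}_{r} \leq A$: the mismatch between the $V^{r}$-budget for $Y$ and the one for $Y'$ is exactly what makes the composition step close.

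To bound $\norm{R^{Z}}_{r/2}$, I first apply Lemma~\ref{lem:controlled-composition}, which says $\phi(\bfY) = (\phi(Y), D\phi(Y)Y')$ is again an $\bfX$-controlled path. Plugging the $\calY$-bounds on $\norm{Y}_{r}, \norm{Y'}_{r}, \norm{Y'}_{\sup}, \norm{R^{Y}}_{r/2}$ into \eqref{eq:controlled-composition-phi'} and \eqref{eq:controlled-composition-R} yields constants $C_{1}(A,\phi)$ and $C_{2}(A,\phi)$ with $\norm{\phi(\bfY)'}_{r} \leq C_{1}$ and $\norm{R^{\phi(Y)}}_{r/2} \leq C_{2}$, while also $\norm{\phi(\bfY)'}_{\sup} \leq \norm{\phi}_{Lip}\norm{\phi}_{\sup}$. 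Substituting these three bounds into the integration estimate \eqref{eq:controlled-integral-remainder} of Lemma~\ref{lem:controlled-integral} gives
\[
\norm{R^{Z}}_{r/2} \leq C(A,\phi) \bigl( \norm{X}_{r} + \norm{\bbX}_{r/2} \bigr) \leq C(A,\phi)\,\epsilon,
\]
which is $\leq A^{2}$ once $\epsilon$ is small enough depending on $A$ and $\phi$.

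Finally, for $\norm{Z}_{r}$ I use the implicit bound from Lemma~\ref{lem:controlled-implicit}:
\[
\norm{Z}_{r} \leq \norm{Z'}_{\sup} \norm{X}_{r} + \norm{R^{Z}}_{r/2} \leq \norm{\phi}_{\sup} \epsilon + C(A,\phi)\epsilon,
\]
and a further shrinkage of $\epsilon$ makes this $\leq A/\norm{\phi}_{Lip}$. The only real difficulty is bookkeeping: I must check that the two smallness requirements on $\epsilon$ (one to push $\norm{R^{Z}}_{r/2}$ below $A^{2}$, one to push $\norm{Z}_{r}$ below $A/\norm{\phi}_{Lip}$) can be arranged simultaneously, but since each requirement has the form $\epsilon \leq c(A,\phi)$, taking the minimum of the two finite thresholds suffices. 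I expect no other subtlety; the lemma is really a carefully chosen a priori bound designed so that the bounds of Lemmas~\ref{lem:controlled-implicit}--\ref{lem:controlled-composition} reproduce themselves.
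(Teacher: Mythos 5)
Your proof is correct and follows the same route as the paper: check $\norm{Z'}_{\sup}$ and $\norm{Z'}_{r}$ directly, feed the $\calY$-bounds through Lemma~\ref{lem:controlled-composition} into Lemma~\ref{lem:controlled-integral} to control $\norm{R^{Z}}_{r/2}$ by $C(A,\phi)\epsilon$, and close with the implicit bound of Lemma~\ref{lem:controlled-implicit} to control $\norm{Z}_{r}$. Your remark about why $\calY$ uses the budget $A/\norm{\phi}_{Lip}$ rather than $A$ for $\norm{Y}_{r}$ is an accurate reading of the mechanism, and the observation that only the last two conditions impose smallness constraints on $\epsilon$ (which can be met simultaneously by taking a minimum) is exactly the bookkeeping the paper leaves implicit.
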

\begin{proof}
Implicit constants in this proof are allowed to depend on the $C^{1,1}_{b}$ norm of $\phi$.

Suppose $\bfY \in \calY$.
Direct estimates show
\[
\norm{\phi(Y)}_{r}  \leq \norm{\phi}_{Lip} \norm{Y}_{r} \leq A,
\quad
\norm{\phi(Y)}_{\sup} \leq \norm{\phi}_{\sup}.
\]
By Lemma~\ref{lem:controlled-composition}, we have
\begin{equation}
\label{eq:3}
\norm{\phi(\bfY)'}_{r} \lesssim A,
\quad
\norm{R^{\phi(\bfY)}}_{r/2} \lesssim A^{2}.
\end{equation}
By Lemma~\ref{lem:controlled-integral}, we obtain
\[
\norm{R^{\int_{0}^{\cdot}\phi(\bfY)\dif\bfX}}_{r/2}
\lesssim
\epsilon (A^{2} + A + 1).
\]
By Lemma~\ref{lem:controlled-implicit}, this implies
\[
\norm{\int_{0}^{\cdot}\phi(\bfY)\dif\bfX}_{r}
\lesssim
\epsilon (A^{2} + A + 1).
\]
Choosing $\epsilon$ sufficiently small, we obtain the claim.
\end{proof}

\begin{remark}
Global bounds on $\phi$ can be replaced by growth conditions, and then $\epsilon$ would also depend on $y_{0}$ and these growth conditions.
There is also a ``rough Gronwall lemma'' for concatenating local solutions in such a setting.
\end{remark}

\neuevorl{2021-11-30}

\subsubsection{Contractive iteration}

In this section, we will show that the iteration \eqref{eq:RDE-iteration} is contractive with respect to a suitable metric on the space \eqref{eq:RDE-solution-space}.
This implies existence and uniqueness of solutions.

For the later purpose of proving continuous dependence of the solution on data, the estimates will involve two rough paths $\bfX,\tilde\bfX$ and paths $\bfY,\tilde\bfY$ controlled by the respective rough paths.
We abbreviate
\begin{multline*}
\Delta Y = Y - \tY,
\quad
\Delta Y' = Y' - \tY',
\quad
\Delta R^{Y} = R^{Y} - R^{\tY},
\quad
\Delta X = X - \tX,
\quad
\Delta \bbX = \bbX - \tilde{\bbX},
\\ \quad
\Delta \phi(Y) = \phi(Y) - \phi(Y'),
\quad
\Delta R^{\phi(Y)} = R^{\phi(Y)} - R^{\phi(\tY)}.
\end{multline*}

\begin{lemma}[Stability of composition]
\label{lem:controlled-composition-stable}
Suppose $\phi \in C^{2,1}_{b}$.
Let $\bfX,\tilde{\bfX}$ be rough paths, $\bfY$ a $\bfX$-controlled path, and $\tilde{\bfY}$ a $\tilde{\bfX}$-controlled path.
Then,
\begin{equation}
\label{eq:controlled-composition-Delta-phi}
\norm{\phi(Y)-\phi(\tY)}_{r}
\leq
\norm{D\phi}_{\sup} \norm{\Delta Y}_{r} + \norm{D\phi}_{Lip} \norm{\Delta Y}_{\sup} \norm{\tY}_{r},
\end{equation}
\begin{multline}
\label{eq:controlled-composition-Delta-phi'}
\norm{\phi(Y)'-\phi(\tY)'}_{r}
\leq
\norm{D\phi}_{\sup} \norm{\Delta Y'}_{r}
+ \norm{D\phi}_{Lip} \norm{Y}_{r} \norm{\Delta Y'}_{\sup}
\\+ \norm{D\phi}_{Lip} \norm{\Delta Y}_{\sup} \norm{\tY'}_{r}
+ \norm{D^{2}\phi}_{\sup} \norm{\Delta Y'}_{r} \norm{\tY'}_{\sup}
+ \norm{D^{2}\phi}_{Lip} \norm{\Delta Y}_{\sup} \norm{\tY}_{r} \norm{\tY'}_{\sup},
\end{multline}
\begin{multline}
\label{eq:controlled-composition-Delta-R}
\norm{R^{\phi(Y)}- R^{\phi(\tY)}}_{r/2}
\leq
\norm{D\phi}_{\sup} \norm{ \Delta R^Y }_{r/2}
+ \norm{D\phi}_{Lip} \norm{\Delta Y}_{\sup} \norm{R^{\tY} }_{r/2}
\\+\frac12 \norm{D^{2}\phi}_{\sup} (\norm{Y}_r+\norm{\tY }_r) \norm{\Delta Y}_r
+\frac12 \norm{D^{2}\phi}_{Lip} \norm{\Delta Y}_{\sup} \norm{\tY}_r^2.
\end{multline}
\end{lemma}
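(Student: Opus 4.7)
All three bounds are stability analogues of Lemma~\ref{lem:controlled-composition}, so my plan is to redo the calculations of that proof while tracking differences throughout. The unifying device is add-and-subtract: whenever an expression of the form $A(Y)-A(\tY)$ appears (with $A$ some function of a path), I would first pull out the change of argument, producing a factor of $\Delta Y$ or of its increment, and then pull out the change of coefficient, producing a derivative of $\phi$ multiplied by $\norm{\Delta Y}_{\sup}$. The hypothesis $\phi\in C^{2,1}_{b}$ lets us freely use first-order Taylor with Lipschitz remainder for $D\phi$ and second-order Taylor with Lipschitz remainder for $\phi$.

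For \eqref{eq:controlled-composition-Delta-phi}, I would expand
\[
\phi(Y_t)-\phi(Y_s)=\int_0^1 D\phi(Y_s+rY_{s,t})\,Y_{s,t}\dif r
\]
and the analogue for $\tY$, subtract, and add/subtract $D\phi(Y_s+rY_{s,t})\,\tY_{s,t}$ inside the integral; this splits the increment of $\phi(Y)-\phi(\tY)$ into a part controlled by $\norm{D\phi}_{\sup}\abs{\Delta Y_{s,t}}$ and a part controlled by $\norm{D\phi}_{Lip}\norm{\Delta Y}_{\sup}\abs{\tY_{s,t}}$, which gives the claim after passing to $r$-variation.

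For \eqref{eq:controlled-composition-Delta-phi'}, I would start from the algebraic identity
\[
\phi(Y)'_t-\phi(\tY)'_t-\phi(Y)'_s+\phi(\tY)'_s
=
\bigl(D\phi(Y_t)\Delta Y'_t-D\phi(Y_s)\Delta Y'_s\bigr)
+
\bigl(g_t\tY'_t-g_s\tY'_s\bigr),
\]
where $g_t:=D\phi(Y_t)-D\phi(\tY_t)$. The first bracket is handled exactly as in the derivation of \eqref{eq:4}, yielding the first two terms on the right of \eqref{eq:controlled-composition-Delta-phi'}. For the second bracket, add and subtract $g_t\tY'_s$ to isolate $g_t\,\tY'_{s,t}$ (giving the third term via $\abs{g_t}\leq\norm{D\phi}_{Lip}\norm{\Delta Y}_{\sup}$) and $(g_t-g_s)\tY'_s$; applying Taylor to $g$ via $D^2\phi$ and repeating the add-and-subtract of the first step, now with $\phi$ replaced by $D\phi$, produces the remaining two terms. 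Estimate \eqref{eq:controlled-composition-Delta-R} rests on the second-order Taylor identity
\[
R^{\phi(Y)}_{s,t}=D\phi(Y_s)R^Y_{s,t}+\int_0^1(1-r)\,D^2\phi(Y_s+rY_{s,t})\,Y_{s,t}^{\otimes 2}\dif r
\]
and its $\tY$-analogue. For the first summand, add/subtract $D\phi(Y_s)R^{\tY}_{s,t}$ to produce the first two terms on the right of \eqref{eq:controlled-composition-Delta-R}. For the integral, the same add-and-subtract pattern (with integrand $D^2\phi(\cdot)Y_{s,t}^{\otimes 2}$ in place of $D\phi(\cdot)Y_{s,t}$) splits it into a piece involving $D^2\phi$ acting on $Y_{s,t}^{\otimes 2}-\tY_{s,t}^{\otimes 2}=\Delta Y_{s,t}\otimes Y_{s,t}+\tY_{s,t}\otimes\Delta Y_{s,t}$ and a piece involving $\bigl(D^2\phi(Y_s+rY_{s,t})-D^2\phi(\tY_s+r\tY_{s,t})\bigr)\tY_{s,t}^{\otimes 2}$; passing to $V^{r/2}$ via a Cauchy--Schwarz bound $V^{r/2}(ab)\leq V^{r}(a)V^{r}(b)$ yields the last two terms.

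The computations are mechanical, and I expect the main difficulty to be purely bookkeeping, especially in the third step: one must arrange the add-and-subtract so that the resulting products of path variations admit the Cauchy--Schwarz bound in $V^{r/2}$, and one must be careful that each difference $D^2\phi(Y_s+rY_{s,t})-D^2\phi(\tY_s+r\tY_{s,t})$ is estimated by $\abs{\Delta Y_s}+r\abs{\Delta Y_{s,t}}$, so that the Lipschitz constant of $D^2\phi$ couples against $\norm{\Delta Y}_{\sup}$ (via the base point) and not against a full increment of $\Delta Y$.
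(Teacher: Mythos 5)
Your proposal follows the paper's proof essentially verbatim: the same add-and-subtract pattern for \eqref{eq:controlled-composition-Delta-phi}, the same four-term algebraic split for \eqref{eq:controlled-composition-Delta-phi'} (your two-bracket decomposition with $g_t := D\phi(Y_t)-D\phi(\tY_t)$ is exactly the paper's identity regrouped), and the same second-order Taylor identity for \eqref{eq:controlled-composition-Delta-R}. One small slip: to recover the factor $\frac12\norm{D^2\phi}_{Lip}$ you need to bound the argument difference $(Y_s+rY_{s,t})-(\tY_s+r\tY_{s,t})$ by rewriting it as the convex combination $(1-r)\Delta Y_s + r\Delta Y_t$, whose absolute value is at most $\norm{\Delta Y}_{\sup}$; your bound $\abs{\Delta Y_s}+r\abs{\Delta Y_{s,t}}$ is weaker (it gives $(1+2r)\norm{\Delta Y}_{\sup}$, hence $5/6$ instead of $1/2$ after integrating against $1-r$), so it only reproduces the inequality up to a slightly worse constant.
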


Lemma~\ref{lem:controlled-composition-stable} almost recovers Lemma~\ref{lem:controlled-composition} upon setting $\tilde\bfY=0$.

\begin{proof}
In order to show \eqref{eq:controlled-composition-Delta-phi}, we write
\begin{multline}
\label{eq:7}
(\phi(Y_t)-\phi(Y_s)) - (\phi(\tY_t)-\phi(\tY_s))
\\=
\int_{0}^{1} D\phi(Y_{s}+r(Y_{t}-Y_{s})) (Y_{t}-Y_{s}) \dif r
-
\int_{0}^{1} D\phi(\tY_{s}+r(\tY_{t}-\tY_{s})) (\tY_{t}-\tY_{s}) \dif r
\\=
\int_{0}^{1} \Bigl(
D\phi(Y_{s}+r(Y_{t}-Y_{s})) (Y_{t}-Y_{s} - (\tY_{t}-\tY_{s}))
\\+
(D\phi(Y_{s}+r(Y_{t}-Y_{s})) - D\phi(\tY_{s}+r(\tY_{t}-\tY_{s}))) (\tY_{t}-\tY_{s}) \Bigr) \dif r,
\end{multline}
and estimate
\begin{align*}
\MoveEqLeft
\abs{D\phi(Y_{s}+r(Y_{t}-Y_{s})) - D\phi(\tY_{s}+r(\tY_{t}-\tY_{s}))}
\\ &\leq
\norm{D\phi}_{Lip} \abs{(Y_{s}+r(Y_{t}-Y_{s})) - (\tY_{s}+r(\tY_{t}-\tY_{s}))}
\\ &\leq
\norm{D\phi}_{Lip} (r\abs{Y_{t}-\tY_{t}} + (1-r) \abs{Y_{s}-\tY_{s}})
\\ &\leq
\norm{D\phi}_{Lip} \norm{\Delta Y}_{\sup}.
\end{align*}
In order to show \eqref{eq:controlled-composition-Delta-phi'}, we write
\begin{multline*}
(D\phi(Y_t)Y'_t-D\phi(\tY_t)\tY'_t) - (D\phi(Y_s)Y'_s-D\phi(\tY_s)\tY'_s)
\\ =
D\phi(Y_t) ((Y'_t-\tY'_t)-(Y'_{s}-\tY'_{s}))
+ (D\phi(Y_t)-D\phi(Y_s)) (Y'_s-\tY'_{s})
\\ + (D\phi(Y_{t})-D\phi(\tY_t))(\tY'_t-\tY'_{s})
+ ((D\phi(Y_t)-D\phi(Y_s))-(D\phi(\tY_t)-D\phi(\tY_s))) \tY'_s.
\end{multline*}
The first 3 terms contribute the first 3 terms to \eqref{eq:controlled-composition-Delta-phi'}.
The 4-fold difference in the last term can be written as in \eqref{eq:7} with $D\phi$ in place of $\phi$, which gives the last 2 terms in \eqref{eq:controlled-composition-Delta-phi'}.

Now we show \eqref{eq:controlled-composition-Delta-R}.
By inserting $D\phi(Y_s)Y_{s,t}$ and $ D\phi(\tY_s) \tY_{s,t},$ one has
\begin{equation}
\label{eq:5}
\begin{split}
R^{\phi(Y)}_{s,t}- R^{\phi(\tY)}_{s,t}
&= \phi(Y)_{s,t} - D \phi(Y_s) Y_s' X_{s,t} - (\phi(\tY)_{s,t} -D \phi(\tY_s) \tY_s' \tX_{s,t})
\\ &=
\phi(Y)_{s,t}-D\phi(Y_s)Y_{s,t} - (\phi(\tY)_{s,t}- D\phi(\tY_s) \tY_{s,t})
\\ &+ D\phi(Y_s)R^Y_{s,t}- D\phi(\tY_s)R^{\tY}_{s,t}
\end{split}
\end{equation}
By Taylor's formula, the first line in \eqref{eq:5} equals
\[
\int_0^1 (1-r) \bigl( D^2\phi(Y_s + r Y_{s,t}) Y_{s,t}^{\otimes 2} - D^2\phi(\tY_s + r \tY_{s,t}) \tY_{s,t}^{\otimes 2} \bigr) \dif r.
\]
The integrand can be written in the form
\begin{equation*}
D^2\phi(Y_s + r Y_{s,t}) (Y_{s,t}^{\otimes 2} - \tY_{s,t}^{\otimes 2})
+
(D^2\phi(Y_s + r Y_{s,t}) - D^2\phi(\tY_s + r \tY_{s,t})) \tY_{s,t}^{\otimes 2},
\end{equation*}
and this contributes the last line to \eqref{eq:controlled-composition-Delta-R}.

The second line in \eqref{eq:5} equals
\begin{align*}
D\phi(Y_s)R^Y_{s,t}- D\phi(\tY_s)R^{\tY}_{s,t}
&\lesssim
D\phi(Y_s) (R^Y_{s,t}-R^{\tY}_{s,t})
+ (D\phi(Y_s)-D\phi(\tY_s)) R^{\tY}_{s,t}
\end{align*}
which completes the proof.
\end{proof}

\begin{lemma}[Stability of rough integration]
\label{lem:controlled-integration-stable}
Let $\bfX,\tilde{\bfX}$ be rough paths, $\bfY$ a $\bfX$-controlled path, and $\tilde{\bfY}$ a $\tilde{\bfX}$-controlled path.
Define $\bfZ,\tilde{\bfZ}$ as in Lemma~\ref{lem:controlled-integral}.
Then,
% \[
% \norm{\Delta Z'}_{r}
% \lesssim
% \norm{\Delta Y'}_{\sup} \norm{X}_{r}
% + \norm{\tY'}_{\sup} \norm{\Delta X}_{r}
% + \norm{\Delta R^Y}_{r/2}
% \]
\begin{multline}
\label{eq:controlled-integral-remainder-Delta}
\norm{\Delta R^{Z}}_{r/2}
\lesssim_{r}
\norm{\Delta Y'}_{\sup} \norm{\bbX}_{r/2}
+ \norm{\Delta Y'}_{r} \norm{\bbX}_{r/2}
+ \norm{\Delta R^Y}_{r/2} \norm{X}_{r}
\\ + \norm{\tilde{Y}'}_{\sup} \norm{\Delta \bbX}_{r/2}
+ \norm{\tY'}_{r} \norm{ \Delta \bbX}_{r/2}
+ \norm{R^{\tY}}_{r/2} \norm{ \Delta X}_{r}.
\end{multline}
\end{lemma}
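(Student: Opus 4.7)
The plan is to mirror the proof of Lemma~\ref{lem:controlled-integral} applied to the \emph{difference} of the two rough integrals. Set $\Xi_{s,t} := Y_s \delta X_{s,t} + Y'_s \bbX_{s,t}$ and analogously $\tilde\Xi_{s,t}$. As in Lemma~\ref{lem:controlled-integral},
\[
R^{Z}_{s,t} = \bigl( \calI \Xi_{s,t} - \Xi_{s,t} \bigr) + Y'_s \bbX_{s,t},
\]
where $\calI$ is the sewing operator, and subtracting the tilde version gives the decomposition
\[
\Delta R^{Z}_{s,t}
=
\bigl( \calI(\Xi - \tilde\Xi)_{s,t} - (\Xi - \tilde\Xi)_{s,t} \bigr)
+ \Delta Y'_s \, \bbX_{s,t} + \tilde Y'_s \, \Delta \bbX_{s,t}.
\]
Taking $V^{r/2}$ on $[0,T]$, the two boundary terms contribute exactly $\norm{\Delta Y'}_{\sup} \norm{\bbX}_{r/2}$ and $\norm{\tilde Y'}_{\sup} \norm{\Delta \bbX}_{r/2}$ to \eqref{eq:controlled-integral-remainder-Delta}.

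For the sewing-error difference I would apply Theorem~\ref{thm:sewing-jumps} to the two-parameter function $\Xi - \tilde\Xi$. Using the identity \eqref{eq:delta-Xi-2-rough-integral} for both $\Xi$ and $\tilde\Xi$, together with the standard add-and-subtract trick already used in \eqref{eq:4} and \eqref{eq:5}, the $4$-fold difference decomposes as
\[
\delta(\Xi - \tilde\Xi)_{s,t,u}
=
-\Delta R^{Y}_{s,t}\,\delta X_{t,u}
- R^{\tilde Y}_{s,t}\,\delta\Delta X_{t,u}
+ (\Delta Y'_s - \Delta Y'_t)\,\bbX_{t,u}
+ (\tilde Y'_s - \tilde Y'_t)\,\Delta \bbX_{t,u}.
\]
Each summand is a product of a factor with $V^{r/2}$- or $V^{r}$-regularity on $[s,t]$ and a factor with the \emph{complementary} regularity on $[t,u]$, and since $r<3$ every exponent pair sums to $1/r + 2/r = 3/r > 1$. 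Theorem~\ref{thm:sewing-jumps} applies with these four control-pairs and yields pointwise bounds on $\calI(\Xi-\tilde\Xi) - (\Xi-\tilde\Xi)$ which, upon passing to the $V^{r/2}$ norm on $[0,T]$ via the same superadditive telescoping used in Lemma~\ref{lem:controlled-integral}, produce the remaining four summands in \eqref{eq:controlled-integral-remainder-Delta}.

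The main obstacle is the algebraic bookkeeping: one must route each of the four factor-pairs above through Theorem~\ref{thm:sewing-jumps} with the correct control exponents, making sure that exactly one ``$\Delta$-factor'' appears in each product so that the final bound is linear (rather than quadratic) in the perturbations $\Delta X$, $\Delta\bbX$, $\Delta Y'$, $\Delta R^Y$. No new analytic idea beyond Lemma~\ref{lem:controlled-integral} and the sewing lemma is needed; the proof is essentially that proof with one extra ``small $\times$ large'' split per term to separate the original-path data from the $\Delta$-data.
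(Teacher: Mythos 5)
Your proposal is correct and follows essentially the same route as the paper: you consider the difference $\Xi-\tilde\Xi$ of the local approximations (the paper calls this $\Xi$), separate off the boundary terms $\Delta Y'_s\bbX_{s,t}+\tilde Y'_s\Delta\bbX_{s,t}$, and apply Theorem~\ref{thm:sewing-jumps} to the remaining sewing error after expanding $\delta(\Xi-\tilde\Xi)$ via \eqref{eq:delta-Xi-2-rough-integral} and the add-and-subtract trick so that each of the four summands carries exactly one $\Delta$-factor. The four-term decomposition you write is (up to a sign convention in $\Delta\delta Y'$) identical to the paper's estimate of $\abs{\delta\Xi}$, and the exponent arithmetic $1/r+2/r>1$ is exactly the condition used to invoke the sewing lemma.
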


Note that \eqref{eq:controlled-integral-remainder-Delta} with $\tilde\bfY=0$ recovers \eqref{eq:controlled-integral-remainder}.

\begin{proof}
% For the first inequality, by inserting $\tY'_s X_{s,t}$ one has
% \begin{eqnarray*}
% \abs{\Delta Y_{s,t}} &=& \abs{Y'_s X_{s,t} + R^Y_{s,t} - \tY'_s \tX_{s,t} - R^{\tY}_{s,t}}\\
% &\leq & \abs{\Delta Y'_s} \abs{X_{s,t}} + \abs{\tY'_s} \abs{\Delta X_{s,t}} +\abs{\Delta R^Y_{s,t}}\\
% & \leq & (\abs{\Delta Y'_0}+\norm{\Delta Y'}_r) \abs{X_{s,t}} + (\abs{ \tY'_0}+\norm{ \tY'}_r)\abs{\Delta X_{s,t}} +\abs{\Delta R^Y_{s,t}}
% \end{eqnarray*}
% which implies the first estimate.
Let $\Xi_{s,t} := Y_s X_{s,t} + Y'_s \bbX_{s,t}-(\tY_s \tX_{s,t} + \tY'_s \tilde{\bbX}_{s,t}).$
Then one has,
\begin{align*}
\abs{ R^{I_\bfX(Y)}_{s,t} - R^{I_{\tBX}(\tY)}_{s,t} }
&= \abs{\int_s^t \bfY \dif\bfX - Y_s X_{s,t} -(\int_s^t \tilde\bfY \dif\tilde\bfX - \tY_s \tX_{s,t})}
\\ &\leq
\abs{Y'_s\bbX_{s,t} - \tY'_s \tilde{\bbX}_{s,t} } + \abs{I(\Xi)_{s,t} - \Xi_{s,t} }.
\end{align*}
We estimate the first term by
\[
\abs{\Delta Y'_{s} \bbX_{s,t}} + \abs{\tilde{Y}'_{s} \Delta \bbX_{s,t}}.
\]
We estimate the last term by the sewing lemma (Theorem~\ref{thm:sewing-jumps}) with (as the calculation \eqref{eq:delta-Xi-2-rough-integral} shows)
\[
\delta(\Xi)_{\tau,u,\nu}
= R^Y_{\tau,u} \delta X_{u,\nu} + \delta Y'_{\tau,u} \bbX_{u,\nu} -(R^{\tY}_{\tau,u} \delta \tX_{u,\nu} + \delta\tY'_{\tau,u} \tilde{\bbX}_{u,\nu}).
\]
and
\begin{equation*}
\abs{\delta (\Xi)_{\tau,u,\nu}} \leq
\abs{\Delta \delta Y'_{\tau,u} \bbX_{u,\nu } } +\abs{\delta\tY'_{\tau,u} \Delta \bbX_{u,\nu}} +\abs{\Delta R^Y_{\tau,u} \delta X_{u,\nu}} + \abs{R^{\tY}_{\tau,u} \Delta \delta X_{u,\nu}}.
\end{equation*}
\end{proof}

\begin{lemma}[Stability of implicit bound]
\label{lem:controlled-implicit-stable}
\[
\norm{Y-\tY}_{r} \leq
\norm{\Delta Y'}_{\sup} \norm{X}_{r}
+ \norm{\tY'}_{\sup} \norm{\Delta X}_{r}
+ \norm{\Delta R^{Y}}_{r/2}.
\]
\end{lemma}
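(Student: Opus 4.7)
The plan is to expand $\delta \Delta Y_{s,t}$ in terms of quantities whose $r$- or $r/2$-variation norms we already control, and then to pass to the $r$-variation norm via Minkowski's inequality applied to each partition sum.

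Starting from the definitions $R^{Y}_{s,t} = \delta Y_{s,t} - Y'_{s} \delta X_{s,t}$ and $R^{\tY}_{s,t} = \delta \tY_{s,t} - \tY'_{s} \delta \tX_{s,t}$, I would first subtract to get
\[
\delta \Delta Y_{s,t}
= Y'_{s} \delta X_{s,t} - \tY'_{s} \delta \tX_{s,t} + \Delta R^{Y}_{s,t}.
\]
The natural splitting
\[
Y'_{s} \delta X_{s,t} - \tY'_{s} \delta \tX_{s,t}
= \Delta Y'_{s} \, \delta X_{s,t} + \tY'_{s} \, \Delta \delta X_{s,t}
\]
gives three separate terms, each pairing a sup-norm factor with an increment of a path whose variation we control.

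Given any partition $0 = \pi_{0} < \dotsb < \pi_{J} = T$, I would apply Minkowski's inequality in $\ell^{r}$ to the three summands, obtaining
\[
\Bigl( \sum_{j} \abs{\delta \Delta Y_{\pi_{j-1},\pi_{j}}}^{r} \Bigr)^{1/r}
\leq \norm{\Delta Y'}_{\sup} \Bigl( \sum_{j} \abs{\delta X_{\pi_{j-1},\pi_{j}}}^{r} \Bigr)^{1/r}
+ \norm{\tY'}_{\sup} \Bigl( \sum_{j} \abs{\Delta \delta X_{\pi_{j-1},\pi_{j}}}^{r} \Bigr)^{1/r}
+ \Bigl( \sum_{j} \abs{\Delta R^{Y}_{\pi_{j-1},\pi_{j}}}^{r} \Bigr)^{1/r}.
\]
The first two sums are manifestly bounded by $\norm{X}_{r}$ and $\norm{\Delta X}_{r}$ respectively. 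For the last sum I use the embedding $\ell^{r/2} \hookrightarrow \ell^{r}$ (valid since $r \geq r/2 \geq 1$), which gives $(\sum_{j} |a_{j}|^{r})^{1/r} \leq (\sum_{j} |a_{j}|^{r/2})^{2/r}$, hence that partition sum is bounded by $\norm{\Delta R^{Y}}_{r/2}$.

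Taking the supremum over partitions $\pi$ yields the claimed bound on $\norm{Y - \tY}_{r}$. There is no substantive obstacle here — everything reduces to the algebraic identity for $\delta \Delta Y$ and standard $\ell^{r}$ estimates — so this lemma should be viewed as the trivial counterpart of Lemma~\ref{lem:controlled-implicit} in the two-path setting, analogous to how Lemma~\ref{lem:controlled-composition-stable} degenerates to Lemma~\ref{lem:controlled-composition} when $\tilde\bfY = 0$.
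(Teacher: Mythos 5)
Your proof is correct and follows the same route as the paper: write $\delta \Delta Y_{s,t} = \Delta Y'_{s}\,\delta X_{s,t} + \tY'_{s}\,\delta\Delta X_{s,t} + \Delta R^{Y}_{s,t}$, then estimate the $r$-variation term by term. The paper leaves the Minkowski-in-$\ell^{r}$ step and the embedding $\ell^{r/2}\hookrightarrow\ell^{r}$ implicit; you have simply made them explicit (and, incidentally, the embedding requires only $r/2 \le r$, not $r/2\ge 1$).
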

\begin{proof}
This follows from writing
\begin{multline*}
Y_{s,t} - \tY_{s,t}
=
(Y'_{s} X_{s,t} + R^{Y}_{s,t}) - (\tY'_{s} \tX_{s,t} + R^{\tY}_{s,t})
\\=
(Y'_{s}-\tY_{s}) X_{s,t}
+
\tY_{s} (X_{s,t} - \tX_{s,t})
+ (\Delta R^{Y}_{s,t}).
\qedhere
\end{multline*}
\end{proof}

\begin{lemma}[Contractivity of the iteration]
\label{lem:RDE-iteration-contractive}
For every $A<\infty$ and $\phi \in C^{2,1}_{b}$, there esists $\epsilon>0$ such that, for every rough path $\bfX$ with $\norm{X}_{r} < \epsilon$ and $\norm{\bbX}_{r/2} < \epsilon$,
the map \eqref{eq:RDE-iteration} is strictly contractive on the set
\begin{equation}
\label{eq:6}
\Set{Y\in\calY(\bfX,A) \given Y_{0}=y_{0}, Y'_{0}=\phi(y_{0})}
\end{equation}
with respect to the metric
\begin{equation}
\label{eq:RDE-contraction-metric}
d(\bfY,\tilde{\bfY}) = \max( \norm{\Delta R^{Y}}_{r/2} , \norm{\Delta Y'}_{r} , 2(\norm{D\phi}_{\sup}+A\norm{D\phi}_{Lip}) \norm{\Delta Y}_{r} ).
\end{equation}
\end{lemma}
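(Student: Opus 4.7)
The strategy is to apply the three stability lemmas (Lemmas~\ref{lem:controlled-composition-stable}, \ref{lem:controlled-integration-stable}, \ref{lem:controlled-implicit-stable}) with $\tilde{\bfX}=\bfX$. Using the same driver for both paths makes $\Delta X = 0$ and $\Delta\bbX = 0$, which kills many terms and is the reason the argument is viable: the surviving terms either carry a factor of $\norm{X}_r$ or $\norm{\bbX}_{r/2}$ (hence are $\lesssim \epsilon$), or appear only in the bound for $\norm{\Delta Z'}_r$ where $Z' = \phi(Y)$. The initial conditions in \eqref{eq:6}, namely $Y_0 = \tY_0 = y_0$ and $Y_0' = \tY_0' = \phi(y_0)$, are used through the reductions $\norm{\Delta Y}_{\sup} \leq \norm{\Delta Y}_r$ and $\norm{\Delta Y'}_{\sup} \leq \norm{\Delta Y'}_r$, which allow sup-norm differences to be dominated by $r$-variation differences throughout.

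Writing $\operatorname{Step}\bfY =: (Z, Z')$ and similarly for $\tilde\bfY$, I bound the three components of $d(\operatorname{Step}\bfY, \operatorname{Step}\tilde\bfY)$ in order. For $\norm{\Delta Z'}_r = \norm{\phi(Y) - \phi(\tY)}_r$, I apply \eqref{eq:controlled-composition-Delta-phi}: since $\norm{\tY}_r \leq A/\norm{\phi}_{Lip}$ inside $\calY$, the bound is a constant (depending on $A$ and $\phi$ only) times $\norm{\Delta Y}_r$. The factor $2(\norm{D\phi}_{\sup} + A\norm{D\phi}_{Lip})$ in the third slot of $d$ is calibrated exactly so that this yields $\norm{\Delta Z'}_r \leq d(\bfY,\tilde\bfY)/2$.

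For $\norm{\Delta R^Z}_{r/2}$, I apply Lemma~\ref{lem:controlled-integration-stable} to the controlled integrand $\phi(\bfY)$. With $\Delta X = \Delta \bbX = 0$, the right-hand side of \eqref{eq:controlled-integral-remainder-Delta} reduces to
\[
\norm{\Delta \phi(\bfY)'}_{\sup}\norm{\bbX}_{r/2} + \norm{\Delta \phi(\bfY)'}_r \norm{\bbX}_{r/2} + \norm{\Delta R^{\phi(Y)}}_{r/2}\norm{X}_r,
\]
each with a prefactor $\lesssim\epsilon$. The two differences $\norm{\Delta \phi(\bfY)'}_r$ and $\norm{\Delta R^{\phi(Y)}}_{r/2}$ are in turn controlled via \eqref{eq:controlled-composition-Delta-phi'} and \eqref{eq:controlled-composition-Delta-R} by $C(A,\phi)\, d(\bfY,\tilde\bfY)$, using the $\calY$-bounds on $\tY, \tY', R^{\tY}$ and the sup/variation reductions from the first paragraph. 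Shrinking $\epsilon$ gives $\norm{\Delta R^Z}_{r/2} \leq d/2$. For $\norm{\Delta Z}_r$, Lemma~\ref{lem:controlled-implicit-stable} with $\Delta X = 0$ gives $\norm{\Delta Z'}_{\sup}\norm{X}_r + \norm{\Delta R^Z}_{r/2}$; both pieces are already $\leq C\epsilon\, d$ by the previous two steps, so the third slot of the metric on the image is also $\leq d/2$ for small $\epsilon$.

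The main obstacle is the first step: the bound for $\norm{\Delta Z'}_r$ does not benefit from smallness of $\norm{X}_r$ or $\norm{\bbX}_{r/2}$, so contraction there must come entirely from the calibration of the metric. The unusual-looking coefficient $2(\norm{D\phi}_{\sup} + A\norm{D\phi}_{Lip})$ is designed precisely to turn the composition estimate \eqref{eq:controlled-composition-Delta-phi} into a strict contraction on the third component, while the other two components contract at rate $\lesssim\epsilon$. Putting the three bounds together yields $d(\operatorname{Step}\bfY, \operatorname{Step}\tilde\bfY) \leq d(\bfY, \tilde\bfY)/2$, and the preservation of the initial conditions $Z_0 = y_0$, $Z_0' = \phi(y_0)$ is immediate from the definition of $\operatorname{Step}$.
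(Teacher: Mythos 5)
Your proposal follows essentially the same route as the paper: apply the three stability lemmas with $\tilde{\bfX}=\bfX$ (so $\Delta X=\Delta\bbX=0$), use the initial conditions $Y_0=\tilde Y_0=y_0$, $Y_0'=\tilde Y_0'=\phi(y_0)$ to dominate $\norm{\Delta Y}_{\sup}$ and $\norm{\Delta Y'}_{\sup}$ by the corresponding $r$-variation norms, bound $\norm{\Delta Z'}_r$ via \eqref{eq:controlled-composition-Delta-phi} and the calibration of the third slot of the metric, and gain a factor of $\epsilon$ on the other two components through Lemmas~\ref{lem:controlled-integration-stable} and \ref{lem:controlled-implicit-stable}. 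The only point worth flagging (present in the paper as well): when you feed $\norm{\tY}_r \le A/\norm{\phi}_{Lip}$ into the $\norm{D\phi}_{Lip}$-term of \eqref{eq:controlled-composition-Delta-phi}, the resulting coefficient is $\norm{D\phi}_{\sup}+A\norm{D\phi}_{Lip}/\norm{D\phi}_{\sup}$, which is bounded by the calibration constant $\norm{D\phi}_{\sup}+A\norm{D\phi}_{Lip}$ only when $\norm{D\phi}_{\sup}\ge1$; either normalize $\phi$ or adjust the coefficient in \eqref{eq:RDE-contraction-metric} accordingly.
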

\begin{proof}
The implicit constants here are allowed to depend on $A$ in the definition of $\calY$.

Suppose $\bfY,\tilde{\bfY} \in \calY(\bfX,A)$ with $d(\bfY,\tilde{\bfY}) = \alpha$ and $Y_{0}=\tY_{0}=y_{0}$.
Then,
\[
\norm{\Delta Y}_{\sup} \leq \norm{\Delta Y}_{r} \lesssim \alpha.
\]
By Lemma~\ref{lem:controlled-composition-stable}, we obtain
\[
\norm{\phi(Y)-\phi(\tY)}_{r}
\leq \norm{D\phi}_{\sup} \norm{\Delta Y}_{r} + \norm{D\phi}_{Lip} \norm{\Delta Y}_{\sup} \norm{\tY}_{r}
\leq \alpha/2.
\]
\begin{multline*}
\norm{\phi(Y)'-\phi(\tY)'}_{r}
\lesssim_{\phi}
\norm{\Delta Y'}_{r}
+ \norm{Y}_{r} \norm{\Delta Y'}_{\sup}
\\+ \norm{\Delta Y}_{\sup} \norm{\tY'}_{r}
+ \norm{\Delta Y'}_{r} \norm{\tY'}_{\sup}
+ \norm{\Delta Y}_{\sup} \norm{\tY}_{r} \norm{\tY'}_{\sup}
\lesssim
\alpha (1 + A),
\end{multline*}
\begin{multline*}
\norm{R^{\phi(Y)}- R^{\phi(\tY)}}_{r/2}
\lesssim
\norm{ \Delta R^Y }_{r/2}
+ \norm{\Delta Y}_{\sup} \norm{R^{\tY} }_{r/2}
\\+ (\norm{Y}_r+\norm{\tY }_r) \norm{\Delta Y}_r
+ \norm{\Delta Y}_{\sup} \norm{\tY}_r^2
\lesssim \alpha (1+A^{2}).
\end{multline*}
Let $\bfZ = \operatorname{Step}(\bfY)$, $\tilde\bfZ = \operatorname{Step}(\tilde\bfY)$.
Then
\[
\norm{\Delta Z'}_{r} = \norm{\phi(Y)-\phi(\tY)}_{r}
\leq \alpha/2.
\]
By Lemma~\ref{lem:controlled-integration-stable},
\[
\norm{\Delta R^{Z}}_{r/2}
\lesssim
\epsilon ( \norm{\Delta \phi(Y)'}_{\sup}
+ \norm{\Delta \phi(Y)' }_{r}
+ \norm{\Delta R^{\phi(Y)}}_{r/2} )
\lesssim
\alpha \epsilon.
\]
By Lemma~\ref{lem:controlled-implicit-stable}, this implies
\[
\norm{\Delta Z}_{r}
\lesssim
\norm{\Delta Z'}_{\sup} \norm{X}_{r}
+ \norm{\Delta R^{Z}}_{r/2}
\leq
\norm{\Delta Z'}_{r} \epsilon
+ C \epsilon \alpha
\lesssim
\epsilon \alpha.
\]
Choosing $\epsilon$ small enough, we obtain the claim.
\end{proof}

\begin{theorem}[Existence of solutions]
\label{thm:RDE-solution-local-uniqueness}
For every $\phi \in C^{2,1}_{b}$, there exist $A<\infty$ and $\epsilon>0$ such that, for every rough path $\bfX$ with $\norm{X}_{r} < \epsilon$ and $\norm{\bbX}_{r/2} < \epsilon$ and every $y_{0}$, there exists a unique $\bfX$-controlled path $\bfY \in \calY(\bfX,A)$ such that, for every $t\in [0,T]$, we have
\begin{equation}
\label{eq:RDE}
\bfY_{t} = y_{0} + \int_{0}^{t} \phi(\bfY) \dif\bfX,
\quad
Y'_{t} = \phi(Y_{t}).
\end{equation}
\end{theorem}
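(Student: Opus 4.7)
The plan is a Banach fixed point argument combining the Solution space lemma with Lemma~\ref{lem:RDE-iteration-contractive}. First I would fix some $A$ (depending only on $\phi$) and then take $\epsilon>0$ small enough that both of these lemmas simultaneously apply. By the Solution space lemma, $\operatorname{Step}$ from \eqref{eq:RDE-iteration} leaves $\calY(\bfX,A)$ invariant; by Lemma~\ref{lem:RDE-iteration-contractive}, it is strictly $d$-contractive on the subset \eqref{eq:6}. Note that $\operatorname{Step}$ automatically preserves the initial conditions appearing in \eqref{eq:6}: if $\bfZ=\operatorname{Step}(\bfY)$ with $Y_{0}=y_{0}$, then $Z_{0}=y_{0}+\int_{0}^{0}\phi(\bfY)\dif\bfX=y_{0}$ and $Z'_{0}=\phi(Y_{0})=\phi(y_{0})$, so $\operatorname{Step}$ restricts to a self-map of \eqref{eq:6}.

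Next I would verify that \eqref{eq:6} is complete under the metric $d$. Given a $d$-Cauchy sequence $(\bfY^{(n)})$, the components $Y^{(n)}$ and $(Y')^{(n)}$ are Cauchy in the $V^{r}$-seminorm; because their initial values are pinned to $y_{0}$ and $\phi(y_{0})$, this upgrades to $\sup$-norm Cauchyness, giving pointwise limits $Y,Y'$. The remainders $R^{Y^{(n)}}$ are $V^{r/2}$-Cauchy and converge pointwise to $R^{Y}:=\delta Y-Y'\delta X$. Lower semicontinuity of $V^{r}$ and $V^{r/2}$ under pointwise convergence---immediate from their definition as a supremum over partitions---transfers the defining bounds of $\calY(\bfX,A)$ from the sequence to the limit, so the limit lies in \eqref{eq:6}.

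Banach's fixed point theorem then produces a unique fixed point $\bfY\in\calY(\bfX,A)$ with $Y_{0}=y_{0}$ and $Y'_{0}=\phi(y_{0})$, and by construction this $\bfY$ satisfies \eqref{eq:RDE}. For the uniqueness assertion over all of $\calY(\bfX,A)$ (not just the initial-condition slice): any solution of \eqref{eq:RDE} necessarily has $Y_{0}=y_{0}$ and $Y'_{0}=\phi(Y_{0})=\phi(y_{0})$, so it lies in \eqref{eq:6} and must equal the fixed point.

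The hard part will be the completeness check---specifically, confirming via lower semicontinuity that the pointwise limit actually lies in $\calY(\bfX,A)$ and that the remainder of the limit coincides with the $V^{r/2}$-limit of the remainders $R^{Y^{(n)}}$. Once this bookkeeping is in place, the conclusion reduces mechanically to the two preceding lemmas.
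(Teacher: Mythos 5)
Your proof is correct and is essentially the same Banach fixed-point argument as the paper's, which constructs the iterates $\bfY_{j+1}=\operatorname{Step}(\bfY_j)$ explicitly, observes they are Cauchy by Lemma~\ref{lem:RDE-iteration-contractive}, and passes to the limit. The one thing you make explicit that the paper glosses over is completeness of the slice \eqref{eq:6} under the metric $d$ (via lower semicontinuity of the variation seminorms and the pinned initial conditions upgrading $V^r$-Cauchy to sup-norm Cauchy); this is a real, if routine, step that the paper's phrase ``its limit solves the RDE'' silently uses, so including it is a point in your favor.
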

\begin{proof}
Let $A,\epsilon$ be as in Lemma~\ref{lem:RDE-iteration-contractive}.
Define a sequence of controlled paths by
\[
(\bfY_{0})_{t}=y_{0}, \quad (\bfY_{0})'_{t}=0,
\quad
(\bfY_{j+1}) = \operatorname{Step}(\bfY_{j}).
\]
For $j\geq 1$, the paths $(\bfY_{j})$ are elements of \eqref{eq:6}.
It follows from Lemma~\ref{lem:RDE-iteration-contractive} that this sequence is Cauchy with respect to the metric \eqref{eq:RDE-contraction-metric}, and its limit solves the RDE \eqref{eq:RDE}.
Uniqueness of the solution follows from strict contractivity of \eqref{eq:RDE-iteration}.
\end{proof}

One can also show that the solution $\bfY$ in Theorem~\ref{thm:RDE-solution-local-uniqueness} is unique among all $\bfX$-controlled paths (not only those in $\calY(\bfX,A)$).
To this end, we note that, given any two $\bfX$-controlled solutions, they are in $\calY(\bfX,A)$ for some large $A$.
One then has to subdivide the time interval into smaller intervals, on each of which $\bfX$ has sufficiently small variation norm to apply Lemma~\ref{lem:RDE-iteration-contractive}.
Large jumps of $\bfX$ have to be handled separately.
We omit the tedious details.

\neuevorl{2021-12-07}

\subsubsection{Lipschitz dependence on data}

\begin{lemma}[Local Lipschitz dependence of RDE solution on the data]
For every $\phi\in C^{2,1}_{b}$ and every $A<\infty$, there exists $\epsilon>0$ such that if $\bfX,\tilde\bfX$ are rough paths with norms $<\epsilon$,  $\bfY \in \calY(\bfX,A)$ is a solution of \eqref{eq:RDE} with initial datum $y_{0}$, and $\tilde\bfY \in \calY(\tilde\bfX,A)$ is a solution of \eqref{eq:RDE} with initial datum $\tilde{y}_{0}$ such that $\abs{y_{0}-\tilde{y}_{0}} \leq \epsilon$ and rough path $\tilde\bfX$, then
\[
\max(\norm{\Delta R^{Y}}_{r/2},\norm{\Delta Y'}_{r},\norm{\Delta Y}_{r})
\lesssim_{r,\phi,A}
\max(\norm{X-\tX}_{r},\norm{\bbX-\tilde{\bbX}}_{r/2},\abs{y_{0}-\tilde{y}_{0}}).
\]
\end{lemma}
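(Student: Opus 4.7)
The plan is to exploit that $\bfY,\tilde\bfY$ are fixed points of their Step maps, and to compare them term-by-term using the stability lemmas for composition (Lemma~\ref{lem:controlled-composition-stable}), integration (Lemma~\ref{lem:controlled-integration-stable}), and the implicit bound (Lemma~\ref{lem:controlled-implicit-stable}); smallness of $\norm{X}_{r}+\norm{\bbX}_{r/2}$ and $\norm{\tX}_{r}+\norm{\tilde\bbX}_{r/2}$ then lets us absorb the controlled-path differences back into themselves. Set
\[
\alpha := \norm{\Delta R^{Y}}_{r/2} + \norm{\Delta Y'}_{r} + \norm{\Delta Y}_{r},
\quad
\beta := \abs{y_{0}-\tilde y_{0}} + \norm{\Delta X}_{r} + \norm{\Delta \bbX}_{r/2};
\]
the goal becomes $\alpha \lesssim_{r,\phi,A}\beta$. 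Because $Y_{0}=y_{0}$, $\tY_{0}=\tilde y_{0}$ and $Y'_{0}=\phi(y_{0})$, $\tY'_{0}=\phi(\tilde y_{0})$, the suprema are controlled by the $r$-variations via $\norm{\Delta Y}_{\sup} \leq \abs{\Delta y_{0}} + \norm{\Delta Y}_{r}$ and $\norm{\Delta Y'}_{\sup} \lesssim_{\phi} \abs{\Delta y_{0}} + \norm{\Delta Y'}_{r}$, so both are $\leq \alpha + \beta$ up to constants.

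First, Lemma~\ref{lem:controlled-composition-stable} applied to $\phi(\bfY)$ (an $\bfX$-controlled path) and $\phi(\tilde\bfY)$ (a $\tilde\bfX$-controlled path), together with the uniform bounds encoded in $\bfY \in \calY(\bfX,A)$ and $\tilde\bfY \in \calY(\tilde\bfX,A)$, yields
\[
\norm{\Delta\phi(Y)}_{r} + \norm{\Delta\phi(Y)'}_{\sup} + \norm{\Delta\phi(Y)'}_{r} + \norm{\Delta R^{\phi(Y)}}_{r/2} \lesssim_{\phi,A} \alpha + \beta.
\]
Since $Y - y_{0} = \int_{0}^{\cdot}\phi(\bfY)\dif\bfX$ and analogously for $\tY$, feeding the above into Lemma~\ref{lem:controlled-integration-stable} (applied with integrands $\phi(\bfY),\phi(\tilde\bfY)$) splits $\norm{\Delta R^{Y}}_{r/2}$ into three ``$\bfY$-side'' terms of \eqref{eq:controlled-integral-remainder-Delta}, each carrying a factor $\norm{X}_{r}$ or $\norm{\bbX}_{r/2}<\epsilon$ and contributing $\lesssim_{\phi,A}\epsilon(\alpha+\beta)$, plus three ``$\tilde\bfY$-side'' terms, each carrying a factor $\norm{\Delta X}_{r}$ or $\norm{\Delta \bbX}_{r/2}$ multiplied by a quantity bounded in $(\phi,A)$, contributing $\lesssim_{\phi,A}\beta$. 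Hence $\norm{\Delta R^{Y}}_{r/2} \lesssim_{\phi,A} \epsilon\alpha + \beta$.

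Next, $Y'=\phi(Y)$ and $\tY'=\phi(\tY)$ together with \eqref{eq:controlled-composition-Delta-phi} give $\norm{\Delta Y'}_{r} \lesssim_{\phi,A} \norm{\Delta Y}_{r} + \beta$, and Lemma~\ref{lem:controlled-implicit-stable} gives
\[
\norm{\Delta Y}_{r} \leq \norm{\Delta Y'}_{\sup}\norm{X}_{r} + \norm{\tY'}_{\sup}\norm{\Delta X}_{r} + \norm{\Delta R^{Y}}_{r/2} \lesssim_{\phi,A} \epsilon\alpha + \beta.
\]
Summing the three constituent bounds of $\alpha$ yields $\alpha \leq C_{r,\phi,A}(\epsilon\alpha + \beta)$; choosing $\epsilon$ so small that $C_{r,\phi,A}\epsilon \leq 1/2$ permits absorbing $\alpha$ on the left and concluding $\alpha \lesssim_{r,\phi,A}\beta$.

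The main technical obstacle is bookkeeping: each cross-term produced by \eqref{eq:controlled-composition-Delta-phi'}, \eqref{eq:controlled-composition-Delta-R} and \eqref{eq:controlled-integral-remainder-Delta} must be classified as either \emph{small} (picking up an $\epsilon$ factor from $\norm{X}_{r}+\norm{\bbX}_{r/2}$, so that it is absorbed into $\epsilon\alpha$) or \emph{bounded} (carrying only $A$- and $\phi$-dependent factors multiplied by a piece of $\beta$). In particular, products like $\norm{\Delta Y}_{\sup}\norm{\tY}_{r}$ appearing in the composition stability must be split via $\norm{\Delta Y}_{\sup} \leq \abs{\Delta y_{0}} + \norm{\Delta Y}_{r}$ combined with the uniform bound $\norm{\tY}_{r}\lesssim_{\phi} A$ from $\calY(\tilde\bfX,A)$; a single overlooked term would break the absorption argument.
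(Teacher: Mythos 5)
Your proposal is correct and follows essentially the same route as the paper's proof: both set up an $\alpha$ measuring the difference of the two controlled-path triples, feed the fixed-point relation $\bfY = \operatorname{Step}(\bfY)$ through Lemmas~\ref{lem:controlled-composition-stable}, \ref{lem:controlled-integration-stable}, and \ref{lem:controlled-implicit-stable}, classify each resulting term as $\epsilon$-small (carrying $\norm{X}_r$ or $\norm{\bbX}_{r/2}$) or $\beta$-controlled (carrying $\Delta X$, $\Delta\bbX$, or $\Delta y_0$), and absorb the $\epsilon\alpha$ term. The only cosmetic difference is that the paper defines $\alpha$ as a max with a carefully chosen weight on $\norm{\Delta Y}_r$ so that the composition estimate gives $\norm{\Delta Y'}_r \leq \alpha/2 + C\beta$ directly, whereas you define $\alpha$ as a sum; your ``summing the three constituent bounds'' step is valid only after first substituting $\norm{\Delta Y}_r \lesssim \epsilon\alpha + \beta$ into the bound $\norm{\Delta Y'}_r \lesssim \norm{\Delta Y}_r + \beta$ (otherwise a bare $C\norm{\Delta Y}_r \leq C\alpha$ term with $C\geq 1$ cannot be absorbed), but that substitution is clearly what you intend, so this is a wording matter and not a gap.
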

\begin{proof}
Let $\beta$ be the RHS of the conclusion and
\[
\alpha := \max(\norm{\Delta R^{Y}}_{r/2},\norm{\Delta Y'}_{r}, 2 (\norm{D\phi}_{\sup} + A \norm{D\phi}_{Lip}+1) \norm{\Delta Y}_{r}).
\]
We may assume $\alpha > \beta$, since otherwise the conclusion already holds.
In this case, we have
\[
\norm{\Delta Y}_{\sup}
\leq \abs{y_{0}-\tilde{y}_{0}} + \norm{\Delta Y}_{r}
\lesssim \alpha,
\quad
\norm{\Delta Y'}_{\sup}
\leq \norm{\phi}_{Lip} \norm{\Delta Y}_{\sup}
\lesssim \alpha.
\]
By Lemma~\ref{lem:controlled-composition-stable}, we obtain
\[
\norm{\phi(Y)'-\phi(\tY)'}_{r} \lesssim \alpha (1+A),
\quad
\norm{\Delta R^{\phi}}_{r/2} \lesssim \alpha (1+A^{2}),
\]
A simple bound is
\[
\norm{\phi(\tY)}_{r} \leq \norm{\phi}_{Lip} \norm{\tY}_{r} \leq A.
\]
As in \eqref{eq:3}, we have
\[
\norm{R^{\phi(\tilde\bfY)}}_{r/2} \lesssim A^{2}.
\]
Inserting all these bounds into Lemma~\ref{lem:controlled-integration-stable}, we obtain
\begin{multline*}
\norm{\Delta R^{Y}}_{r/2}
\lesssim
\epsilon (\norm{\Delta \phi(Y)'}_{\sup} + \norm{\Delta \phi(Y)'}_{r} + \norm{\Delta R^{\phi(Y)}}_{r/2})
\\+ \beta ( \norm{\phi(\tilde{Y})'}_{\sup} + \norm{\phi(\tY)}_{r} + \norm{R^{\phi(\tY)}}_{r/2} )
\\ \lesssim
\epsilon \alpha (1+A^{2})
+ \beta ( 1+A^{2} ).
\end{multline*}
By Lemma~\ref{lem:controlled-composition-stable}, we have
\begin{multline*}
\norm{\Delta Y'}_{r}
=
\norm{\phi(Y)-\phi(\tY)}_{r}
\leq
\norm{D\phi}_{\sup} \norm{\Delta Y}_{r} + \norm{D\phi}_{Lip} \norm{\Delta Y}_{\sup} \norm{\tY}_{r}
\\ \leq
(\norm{D\phi}_{\sup} + A \norm{D\phi}_{Lip}) \norm{\Delta Y}_{r}
+ A \norm{D\phi}_{Lip} \abs{y_{0}-\tilde{y}_{0}}
\leq \alpha/2 + C\beta.
\end{multline*}
Moreover, by Lemma~\ref{lem:controlled-implicit-stable}, we have
\begin{align*}
\norm{\Delta Y}_{r}
&\leq
\norm{\Delta Y'}_{\sup} \norm{X}_{r}
+ \norm{\tY'}_{\sup} \norm{\Delta X}_{r}
+ \norm{\Delta R^{Y}}_{r/2}.
\\ &\leq
C \alpha \epsilon + A \beta + C (\epsilon\alpha+\beta).
\end{align*}
Inserting these bounds into the definition of $\alpha$, we obtain
\[
\alpha \leq
\max(C(\epsilon\alpha+\beta), \alpha/2+C\beta, C(\epsilon\alpha+\beta)),
\]
where $C$ depends on $r,\phi,A$.
If $C\epsilon\leq 1/2$, then this implies $\alpha \leq C \beta$.
\end{proof}

\begin{remark}
Rough paths were introduced in \cite{MR1654527} and controlled paths in \cite{MR2091358}.
For a long time, the theory concentrated on Hölder continuous paths; a good exposition of this case is in the book \cite{MR4174393}.
The treatment of $V^{r}$ paths is adapted from \cite{MR3770049}.
\end{remark}

% Local Variables:
% TeX-engine: xetex
% TeX-master: "martingale-inequalities.tex"
% End:

\neuevorl{2021-12-14}

\section{Martingale transforms}
The main result of this section, Theorem~\ref{thm:vv-pprod-delta-f}, is a bound for discrete time versions of the It\^{o} integral.

We denote $\ell^{p}$ norms by
\[
\ell^{p}_{k} a_{k} := ( \sum_{k \in \N} \abs{a_{k}}^{p} )^{1/p}.
\]
In order to simplify notation, we only consider martingales $g$ with $g_{0}=0$.

\subsection{Davis decomposition}
We will use the following $L^{q}$ bound for the Davis decomposition (constructed in Lemma~\ref{lem:Davis-decomposition}).
\begin{lemma}[Davis decomposition in $L^{q}$]
\label{lem:Davis-decomposition:Lq}
For every martingale $(f_n)$ with values in a Banach space $X$, there exists a decomposition $f = f^{\pred} + f^{\bv}$ as a sum of two martingales adapted to the same filtration with $f^{\pred}_{0}=0$ such that the differences of $f^{\pred}$ have predictable majorants:
\begin{equation}
\label{eq:davis-predictable}
X d f^{\pred}_{n} \leq 2 MX df_{n-1}
\end{equation}
and $f^{\bv}$ has bounded variation, in an integral sense for every $q\in [1,\infty)$:
\begin{equation}
\label{eq:davis:bv}
L^{q} \sum_{k\leq n} X d f^{\bv}_{k}
\leq (q+1) L^{q} M X df_{n}.
\end{equation}
\end{lemma}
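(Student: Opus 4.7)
The plan is to mimic the construction in Lemma~\ref{lem:Davis-decomposition}, but with the Banach-space norm $Xf$ replacing the scalar absolute value. Concretely, set
\[
dg_n := \min\!\Bigl(1,\frac{M Xdf_{n-1}}{X df_n}\Bigr) df_n,
\quad
df^{\pred}_n := dg_n - \E_{n-1}(dg_n),
\]
\[
dh_n := df_n - dg_n,
\quad
df^{\bv}_n := dh_n - \E_{n-1}(dh_n).
\]
Both $f^{\pred}$ and $f^{\bv}$ are martingales with $f^{\pred}_0=f^{\bv}_0=0$, and their sum telescopes to $f-f_0$. The predictable majorant \eqref{eq:davis-predictable} follows exactly as in the scalar case: $Xdg_n\le MXdf_{n-1}$ by construction, and $X\E_{n-1}(dg_n)\le \E_{n-1}(Xdg_n)\le MXdf_{n-1}$ since $MXdf_{n-1}$ is $\calF_{n-1}$-measurable.

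For the bounded variation bound \eqref{eq:davis:bv}, the key observation carried over from the scalar proof is the telescoping identity
\[
Xdh_n = \max(0, Xdf_n - MXdf_{n-1}) = MXdf_n - MXdf_{n-1},
\]
which gives $\sum_{k\le n} Xdh_k \le MXdf_n$ pathwise, hence $\|\sum_{k\le n} Xdh_k\|_{L^q} \le \|MXdf_n\|_{L^q}$. For the centered increments, I would use the triangle inequality
\[
\sum_{k\le n} Xdf^{\bv}_k \le \sum_{k\le n} Xdh_k + \sum_{k\le n} \E_{k-1}(Xdh_k),
\]
and then control the second sum by invoking Lemma~\ref{lem:sum-of-Ek} with $z_k = Xdh_k$ and $p=q$, yielding
\[
\Bigl\|\sum_{k\le n} \E_{k-1}(Xdh_k)\Bigr\|_{L^q}
\le q\, \Bigl\|\sum_{k\le n} Xdh_k\Bigr\|_{L^q}
\le q\, \|MXdf_n\|_{L^q}.
\]
Combining these two estimates gives the claimed $(q+1)\|MXdf_n\|_{L^q}$.

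There is no substantial obstacle: the construction is literally the one from Lemma~\ref{lem:Davis-decomposition} with $X(\cdot)$ in place of $|\cdot|$, and the only new ingredient needed to upgrade the $L^1$ conclusion \eqref{eq:Davis-decomposition:BV} to the $L^q$ conclusion is Lemma~\ref{lem:sum-of-Ek}, which handles the passage from the raw sum of $Xdh_k$ to the sum of conditional expectations. The one spot worth a second look is verifying that no Banach-space-valued argument sneaks in where the scalar case implicitly used $|\cdot|$ being a norm only on $\mathbb{C}$; but all manipulations above use only that $X(\cdot)$ is a norm and that $\E_{n-1}$ is positivity-preserving and contractive on $L^q$, so everything transfers without change.
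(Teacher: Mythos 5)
Your proof is correct and follows the paper's own argument essentially line for line: you reconstruct the Davis decomposition with $X(\cdot)$ replacing $|\cdot|$, note the same telescoping identity $Xdh_n = MXdf_n - MXdf_{n-1}$, and then bound the centered sum via the triangle inequality together with Lemma~\ref{lem:sum-of-Ek} applied to $z_k = Xdh_k$, exactly as the paper does.
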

\begin{proof}
Recall from the construction in Lemma~\ref{lem:Davis-decomposition} that
\[
d f^{\bv}_n
=
d\tilde{h}_n - \E_{n-1}(d\tilde{h}_n),
\]
where $\tilde{h}_{n}$ is a process such that
\[
X d\tilde{h}_n
=
M X df_{n} - M X df_{n-1}.
\]
Therefore,
\[
L^{q} \sum_{k\leq n} X d f^{\bv}_{k}
\leq
L^{q} \sum_{k\leq n} X d \tilde{h}_{k}
+
L^{q} \sum_{k\leq n} \E_{k-1} X d \tilde{h}_{k}.
\]
Using Lemma~\ref{lem:sum-of-Ek} in the second summand, we obtain the claim.
\end{proof}

\begin{lemma}
\label{lem:davis-dec:Spred}
Let $1 \leq q < \infty$, $X$ be a Banach function space, elements of which are $\R$-valued maps $x(\cdot)$, and $(f_{n})$ a martingale with values in $X$.
Then for $f^{\pred}$ given by Lemma~\ref{lem:Davis-decomposition:Lq} we have
\[
\norm{ \norm{Sf^{\pred}}_{X} }_{L^{q}} \leq (q+2) \norm{ \norm{Sf}_{X} }_{L^{q}},
\]
where the square function is given by
\[
\norm{Sf}_{X} := \norm{ \ell^2_{n} ( df_n (\cdot)) }_X
\]
\end{lemma}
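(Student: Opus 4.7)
The plan is to decompose $Sf^{\pred}$ using the triangle inequality and then control the bounded variation part using Lemma~\ref{lem:Davis-decomposition:Lq}, closing the loop via a trivial bound of $MXdf$ by $\norm{Sf}_{X}$.

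First, since $df^{\pred}_n = df_n - df^{\bv}_n$, the triangle inequality for the $\ell^2$ norm (applied pointwise in the $X$-variable) and monotonicity of the Banach function space norm give
\[
\norm{Sf^{\pred}}_{X}
\leq
\norm{Sf}_{X} + \norm{Sf^{\bv}}_{X}.
\]
Next, I would exploit the fact that $f^{\bv}$ is a sum of small increments. Using the pointwise bound $\ell^2_n a_n \leq \sum_n \abs{a_n}$ (in the $X$-variable), followed by the triangle inequality in $X$,
\[
\norm{Sf^{\bv}}_{X}
=
\norm{\ell^{2}_{n} df^{\bv}_n(\cdot)}_{X}
\leq
\Bnorm{\sum_{n} \abs{df^{\bv}_n(\cdot)}}_{X}
\leq
\sum_{n} \norm{df^{\bv}_n}_{X}
=
\sum_{n} X df^{\bv}_{n}.
\]
Now I would apply Lemma~\ref{lem:Davis-decomposition:Lq} directly to the Banach-valued martingale $f$, obtaining
\[
L^{q} \sum_{n} X df^{\bv}_{n}
\leq
(q+1) L^{q} MXdf.
\]

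It remains to compare $MXdf$ with $\norm{Sf}_{X}$. For every $n$, the bound $\abs{df_n(\cdot)} \leq \sup_{k}\abs{df_k(\cdot)} \leq \ell^{2}_{k} df_k(\cdot)$ holds pointwise in the $X$-variable, so by monotonicity of the $X$-norm,
\[
X df_n = \norm{df_n(\cdot)}_{X} \leq \norm{\ell^{2}_{k} df_k(\cdot)}_{X} = \norm{Sf}_{X}.
\]
Taking the supremum over $n$ yields $MXdf \leq \norm{Sf}_{X}$ pointwise on $\Omega$. Combining the four displays, together with $L^{q}$-monotonicity,
\[
L^{q} \norm{Sf^{\pred}}_{X}
\leq
L^{q} \norm{Sf}_{X} + (q+1) L^{q} \norm{Sf}_{X}
= (q+2) L^{q} \norm{Sf}_{X},
\]
which is the claim.

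The only subtle point is ensuring that the Banach function space structure allows us to freely interchange pointwise-in-$(t)$ inequalities with the $X$-norm: both steps where this is used (the triangle inequality and the $\ell^2 \leq \ell^1$ embedding applied inside $X$) only rely on monotonicity of the $X$-norm, which is implicit in $X$ being a Banach \emph{function} space (a Banach lattice of real-valued functions). Apart from this bookkeeping, the argument is a direct combination of Lemma~\ref{lem:Davis-decomposition:Lq} with elementary sequence-space inequalities.
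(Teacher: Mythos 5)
Your proposal is correct and follows the paper's proof essentially step for step: triangle inequality for $\ell^2$ pointwise to reduce to $Sf^{\bv}$, then $\ell^2 \hookrightarrow \ell^1$ and the triangle inequality in $X$, then the bounded-variation estimate \eqref{eq:davis:bv}, then the pointwise bound $MXdf \leq \norm{Sf}_X$. The only difference is that you make explicit the final comparison $\sup_n X df_n \leq \norm{Sf}_X$, which the paper leaves implicit behind a $\lesssim$; your version is cleaner in that it pins down the stated constant $q+2$.
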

\begin{remark} We will apply this with $X=\ell^r$, i.e. $r$-summable series, viewed as maps from $\N \to \R$, with the usual Banach structure.
\end{remark}
\begin{proof}
Using \eqref{eq:davis:bv} we estimate
\begin{align*}
\norm{ \norm{ S f^{\pred} }_{X} }_{L^{q}}
&\leq
\norm{ \norm{ S f }_{X} }_{L^{q}}
+
\norm{ \norm{ S f^{\bv} }_{X} }_{L^{q}}
\\ &\leq
\norm{ \norm{ S f }_{X} }_{L^{q}}
+
\norm{ \norm{ \sum_{n} \abs{d_{n} f^{\bv}} }_{X} }_{L^{q}}
\\ &\leq
\norm{ \norm{ S f }_{X} }_{L^{q}}
+
\norm{ \sum_{n} \norm{ d_{n} f^{\bv} }_{X} }_{L^{q}}
\\ &\leq
\norm{ \norm{ S f }_{X} }_{L^{q}}
+
(q+1) \norm{ \sup_{n} \norm{ d_{n} f }_{X} }_{L^{q}}
\\ &\lesssim
\norm{ \norm{ S f }_{X} }_{L^{q}}.
\qedhere
\end{align*}
\end{proof}

\subsection{Vector-valued maximal paraproduct estimate}
We call a process $(F_{s,t})_{s \leq t}$ depending on two time variables \emph{adapted} if $F_{s,t}$ is $\calF_{t}$-measurable for every $s \leq t$.

For an adapted process $(F_{s,t})$ and a martingale $(g_{n})$, we define
\begin{equation}
\label{eq:paraprod}
\Pi(F,g)_{s,t} := \sum_{s < j \leq t} F_{s,j-1} dg_{j}
\end{equation}
Note that $\Pi(F,g)_{s,\cdot}$ only depends on $(F_{s,\cdot})$.

\begin{proposition}
\label{prop:vv-pprod}
Let $0 < q,q_{1} \leq \infty$, $1 \leq q_{0},r,r_{0} < \infty$, $1 \leq r_{1} \leq \infty$.
Assume $1/q = 1/q_{0} + 1/q_{1}$ and $1/r = 1/r_{0} + 1/r_{1}$.
Then, for any martingales $(g^{(k)}_{n})_{n}$, any adapted sequences $(F^{(k)}_{s,t})_{s\leq t}$, and any stopping times $\tau_{k}' \leq \tau_{k}$ with $k\in\Z$, we have
\begin{equation}
\label{eq:vv-pprod}
\norm[\big]{ \ell^{r}_{k} \sup_{\tau_{k}' \leq t \leq \tau_{k}} \abs{\Pi(F^{(k)},g^{(k)})_{\tau_{k}',t}} }_{q}
\leq C_{q_{0},q_{1},r_{0},r_{1}}
\norm[\big]{ \ell^{r_{1}}_{k} \sup_{\tau_{k}' \leq t < \tau_{k}} \abs{F^{(k)}_{\tau_{k}',t}} }_{q_{1}}
\norm{ \ell^{r_{0}}_{k} Sg^{(k)}_{\tau_{k}',\tau_{k}} }_{q_{0}},
\end{equation}
where $Sg_{s,t} := (\delta (S g)^{2})^{1/2}_{s,t} = \bigl( \sum_{j=s+1}^{t} \abs{dg_{j}}^{2} \bigr)^{1/2}$.
\end{proposition}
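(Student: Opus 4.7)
The approach is to view the stopped paraproduct coordinate-by-coordinate as a martingale in $t$, reduce its maximal-function bound to a square-function bound via the vector-valued BDG inequality, and then split the resulting product estimate with two applications of Hölder's inequality.

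For each $k$, I would set $h^{(k)}_n := \Pi(F^{(k)},g^{(k)})_{\tau_k', n \bmin \tau_k}$. Since $F^{(k)}_{\tau_k',n-1}\mathbf{1}_{\tau_k' < n \leq \tau_k}$ is $\calF_{n-1}$-measurable and $dg^{(k)}_n$ is a martingale increment, $h^{(k)}$ is a martingale whose running maximum equals $\sup_{\tau_k'\leq t\leq \tau_k}\abs{\Pi(F^{(k)},g^{(k)})_{\tau_k', t}}$. Its square function admits the pointwise factorization
\[
Sh^{(k)} = \Bigl(\sum_{\tau_k'<j\leq \tau_k} \abs{F^{(k)}_{\tau_k',j-1}}^{2}\abs{dg^{(k)}_j}^{2}\Bigr)^{1/2}
\leq
\Bigl(\sup_{\tau_k'\leq t<\tau_k}\abs{F^{(k)}_{\tau_k',t}}\Bigr) \cdot Sg^{(k)}_{\tau_k',\tau_k}.
\]

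For $q,r\in[1,\infty)$ I would then invoke Corollary~\ref{cor:lr-BDG} to obtain $\norm{\ell^{r}_{k} Mh^{(k)}}_{q}\lesssim_{q}\norm{\ell^{r}_{k} Sh^{(k)}}_{q}$, apply Hölder in $k$ with exponents $r_1/r$ and $r_0/r$ to split the pointwise product bound on $Sh^{(k)}$, and finish with Hölder in $L^{q}$ at exponents $q_1/q$ and $q_0/q$. The endpoint cases $q_1=\infty$ or $r_1=\infty$ are handled by replacing the respective norms with suprema and pulling the $F$-factor out before applying the scalar BDG inequality; the case $q=\infty$ is trivial.

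The principal obstacle is the range $q<1$, which is outside the scope of Corollary~\ref{cor:lr-BDG}. To handle it I would apply the vector-valued Davis decomposition of Lemma~\ref{lem:Davis-decomposition:Lq} to $h^{(k)}$ with Banach space $X=\ell^{r}$, splitting it into a predictable-jump part and a bounded-variation part. The BV part is controlled directly by $M\ell^{r}_{k} dh^{(k)}$ via \eqref{eq:davis:bv}, while for the predictable part I would imitate the stopping-time argument in the proof of Theorem~\ref{thm:Davis-L1}, combined with a good-$\lambda$ inequality (Lemma~\ref{lem:good-lambda}), to extend the square-function/maximal-function comparison to any $q\in(0,\infty)$. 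Once this extended vector-valued BDG inequality is in place, the three remaining steps above go through unchanged.
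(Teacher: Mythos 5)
For $q\geq1$ (and $r\geq1$) your approach is exactly the paper's: build the martingale $h^{(k)}_n=\Pi(F^{(k)},g^{(k)})_{\tau_k',n\bmin\tau_k}$, apply Corollary~\ref{cor:lr-BDG}, factor $Sh^{(k)}\leq MF^{(k)}\,Sg^{(k)}_{\tau_k',\tau_k}$, and finish with H\"older in $k$ and then in $\omega$. That part is correct and complete, including the $q_1=\infty$, $r_1=\infty$ endpoints.

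For $q<1$ you diverge, and here the proposal has a genuine gap. You propose to first establish an $\ell^r$-valued BDG inequality for all $q\in(0,\infty)$ by applying the Davis decomposition of Lemma~\ref{lem:Davis-decomposition:Lq} to $h^{(k)}$ and then running a stopping-time plus good-$\lambda$ argument, after which ``the three remaining steps go through unchanged.'' But the bounded-variation estimate \eqref{eq:davis:bv} is only proved for $q\geq1$ --- its proof uses Lemma~\ref{lem:sum-of-Ek}, which requires $p\geq1$ --- so the BV part of your decomposition is \emph{not} controlled via \eqref{eq:davis:bv} when $q<1$. And the key sentence, that a good-$\lambda$ estimate extends the vector-valued comparison to all $q\in(0,\infty)$, is exactly the hard content: no vector-valued good-$\lambda$ inequality is established in these notes, and proving one is a separate nontrivial task. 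In effect you are outsourcing the $q<1$ case to a stronger unproved theorem.

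The paper takes a different, more economical route for $q<1$ that never needs sub-$L^1$ vector-valued BDG. It normalizes both factors on the right-hand side to $1$, applies the $\ell^{r_0}$-valued Davis decomposition to the \emph{integrator} $g$ rather than to $h$, and bounds the BV contribution by H\"older followed by \eqref{eq:davis:bv} at exponent $q_0\geq1$. For the predictable part it writes $\|\cdot\|_q^q=\int_0^\infty\P\{\cdot>\lambda^{1/q}\}\,d\lambda$ and, for each $\lambda$, constructs a single stopping time \eqref{eq:vv-paraprod:stopping} that simultaneously truncates $MF$, $Sg^{\pred}$ and $Mdg$ at thresholds tied to $\lambda^{1/q_1}$ and $\lambda^{1/q_0}$. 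On the stopped region $MF$ is in $L^\infty$, so the already-proved case of the Proposition with $(q_0,q_1)$ replaced by $(\tilde q,\infty)$, $\tilde q>q_0$, applies; integrating in $\lambda$ closes the argument. This bootstrap only ever invokes BDG at exponents $\geq1$, which is exactly what is available. If you want to pursue your route you would first need to supply a proof of the $\ell^r$-valued BDG inequality for $q<1$ (including a $q<1$ analogue of \eqref{eq:davis:bv}); otherwise the paper's normalization-plus-stopping bootstrap is the way to go.
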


\begin{proof}[Proof of Proposition~\ref{prop:vv-pprod}]
We may replace each $g^{(k)}$ by the martingale
\begin{equation}
\label{eq:stop-g}
\tilde{g}^{(k)}_{n}
:=
g^{(k)}_{n \wedge \tau_{k}} - g^{(k)}_{n \wedge \tau_{k}'}
\end{equation}
without changing the value of either side of \eqref{eq:vv-pprod}.

Consider first $q \geq 1$.
For each $k$, the sequence
\[
h^{(k)}_{t}
:=
\begin{cases}
0, & t < \tau_{k}',\\
\Pi(F^{(k)},g^{(k)})_{\tau_{k}',t}, & t \geq \tau_{k}',
\end{cases}
\]
is a martingale.
We may also assume $F_{\tau_{k}',t} = 0$ if $t \not\in [\tau_{k}',\tau_{k})$.
By the $\ell^{r}$ valued BDG inequality (Corollary~\ref{cor:lr-BDG}), we can estimate
\begin{align*}
LHS~\eqref{eq:vv-pprod}
& \lesssim
\norm[\big]{ \ell^{r}_{k} \abs{S h^{(k)}} }_{q}
\\ &=
\norm[\big]{ \ell^{r}_{k} \ell^{2}_{j} \abs{F_{\tau_{k}',j-1}^{(k)} dg^{(k)}_{j}} }_{q}
\\ &\leq
\norm[\big]{ \ell^{r}_{k} MF^{(k)} \ell^{2}_{j} \abs{dg^{(k)}_{j}} }_{q}
\\ &\leq
\norm{ \ell^{r_{1}}_{k} MF^{(k)} }_{q_{1}}
\norm[\big]{ \ell^{r_{0}}_{k} Sg^{(k)} }_{q_{0}}.
\end{align*}
Here and later, we abbreviate $MF^{(k} := \sup_{j} \abs{F_{\tau_{k}',j}^{(k}}$.

Consider now $q<1$.
By homogeneity, we may assume
\begin{equation}
\label{eq:vv-paraprod-scaling-assumption}
\norm[\big]{ \ell^{r_{1}}_{k} MF^{(k)} }_{q_{1}}
= \norm[\big]{ \ell^{r_{0}}_{k} Sg^{(k)} }_{q_{0}}
= 1,
\end{equation}
and we have to show
\[
\norm[\big]{\ell^{r}_{k} \sup_{\tau_{k}' \leq t \leq \tau_{k}} \abs{\Pi(F^{(k)},g^{(k)})_{\tau_{k}',t}} }_{q}
\lesssim 1.
\]

We use the Davis decomposition $g=g^{\pred}+g^{\bv}$ (Lemma~\ref{lem:Davis-decomposition:Lq} with $X=\ell^{r_{0}}$).
The contribution of the bounded variation part is estimated as follows:
\begin{align*}
\MoveEqLeft
\norm{ \ell^{r}_{k} \sup_{\tau_{k}' \leq t \leq \tau_{k}} \abs{\Pi(F^{(k)},g^{(k),\bv})_{\tau_{k}',t}} }_{q}
\\ &\leq
\norm{ \ell^{r}_{k} \sum_{j} \abs{F^{(k)}_{\tau_{k}',j-1}} \cdot \abs{dg^{(k),\bv}_{j}} }_{q}
\\ &\leq
\norm{ \ell^{r_{1}}_{k} MF^{(k)} }_{q_{1}}
\norm{ \ell^{r_{0}}_{k} \Bigl( \sum_{j} \abs{dg^{(k),\bv}_{j}} \Bigr) }_{q_{0}}
\\ &\leq
\norm{ \ell^{r_{1}}_{k} MF^{(k)} }_{q_{1}}
\norm{ \sum_{j} \ell^{r_{0}}_{k} \abs{dg^{(k),\bv}_{j}} }_{q_{0}}
\\ &\lesssim
\norm{ \ell^{r_{1}}_{k} MF^{(k)} }_{q_{1}}
\norm{ \sup_{j} \ell^{r_{0}}_{k} \abs{dg^{(k)}_{j}} }_{q_{0}}
\\ &\leq
\norm{ \ell^{r_{1}}_{k} MF^{(k)} }_{q_{1}}
\norm{ \ell^{r_{0}}_{k} Sg^{(k)} }_{q_{0}},
\end{align*}
where we used \eqref{eq:davis:bv} in the penultimate step.

It remains to consider the part $g^{\pred}$ with predictable bounds for jumps.
We use the layer cake formula in the form
\[
\int f^{q}
= \int_{0}^{\infty} \P \Set{ f^{q} > \lambda } \dif \lambda
= \int_{0}^{\infty} \P \Set{ f > \lambda^{1/q} } \dif \lambda.
\]
By the layer cake formula, we have
\begin{multline}
\label{eq:vv-paraprod-layer-cake}
\norm[\big]{ \ell^{r}_{k} \sup_{\tau_{k}' \leq t \leq \tau_{k}} \abs{\Pi(F^{(k)},g^{(k),\pred})_{\tau_{k}',t}} }_{q}^{q}
\\ =
\int_{0}^{\infty} \P \Set{ \ell^{r}_{k} \sup_{\tau_{k}' \leq t \leq \tau_{k}} \abs{\Pi(F^{(k)},g^{(k),\pred})_{\tau_{k}',t}} > \lambda^{1/q} } \dif \lambda.
\end{multline}
Fix some $\lambda>0$ and define a stopping time
\begin{equation}
\label{eq:vv-paraprod:stopping}
\tau := \inf \Set[\Big]{ t \given
M(\ell^{r_{0}}_{k} d g^{(k)})_{t} > c\lambda^{1/q_{0}} \text{ or }
\ell^{r_{0}}_{k} S g^{(k),\pred}_{t} > c\lambda^{1/q_{0}} \text{ or }
\ell^{r_{1}}_{k} \sup_{0 < j \leq t} \abs{F_{\tau_{k}',j}^{(k)}} > \lambda^{1/q_{1}} }.
\end{equation}
Define stopped martingales $\tilde{g}^{(k)}_{t} := g^{(k),\pred}_{t \wedge \tau}$ and adapted processes
\[
\tilde{F}^{(k)}_{t',t} :=
F^{(k)}_{t', t \bmin \tau-1}.
\]
Then, on the set $\Set{\tau=\infty}$, we have
\[
\Pi(F^{(k)},g^{(k),\pred})_{\tau_{k}',t}
=
\Pi(\tilde{F}^{(k)},\tilde{g}^{(k)})_{\tau_{k}',t}
\quad\text{for all } k,t.
\]
Hence,
\begin{equation}
\label{eq:10}
\begin{split}
\MoveEqLeft
\Set{ \ell^{r}_{k} \sup_{\tau_{k}' \leq t \leq \tau_{k}} \abs{\Pi(F^{(k)},g^{(k),\pred})_{\tau_{k}',t}} > \lambda^{1/q} }
\\ \subset&
\Set{ \ell^{r}_{k} \sup_{\tau_{k}' \leq t \leq \tau_{k}} \abs{\Pi(\tilde{F}^{(k)},\tilde{g}^{(k)})_{\tau_{k}',t}} > \lambda^{1/q} }
\\ &\cup
\Set{ \ell^{r_{0}}_{k} S g^{(k)} > \lambda^{1/q_{0}} }
\cup \Set{ \ell^{r_{0}}_{k} S g^{(k),\pred} > \lambda^{1/q_{0}} }
\\ &\cup
\Set{ \ell^{r_{1}}_{k} MF^{(k)} > \lambda^{1/q_{1}} }
\end{split}
\end{equation}
The contributions of the latter three terms to \eqref{eq:vv-paraprod-layer-cake} are $\lesssim 1$ by \eqref{eq:vv-paraprod-scaling-assumption} and Lemma~\ref{lem:davis-dec:Spred}.
It remains to handle the first term.

By construction, we have $\ell^{r_{1}}_{k} M\tilde{F}^{(k)} \leq \lambda^{1/q_{1}}$, and due to \eqref{eq:davis-predictable} we also have $\ell^{r_{0}}_{k} S \tilde{g}^{(k)} \leq \lambda^{1/q_{0}}$, provided that the absolute constant $c$ in \eqref{eq:vv-paraprod:stopping} is small enough.
Choose an arbitrary exponent $\tilde{q}$ with $q_{0} < \tilde{q} < \infty$.
By the already known case of the Proposition with $(q_{0},q_{1})$ replaced by $(\tilde{q},\infty)$, we obtain
\begin{equation}
\label{eq:12}
\begin{split}
\MoveEqLeft
\P \Set{ \ell^{r}_{k} \sup_{\tau_{k}' \leq t \leq \tau_{k}} \abs{\Pi(\tilde{F}^{(k)},\tilde{g}^{(k)})_{\tau_{k}',t}} > \lambda^{1/q} }
\\ &\leq
\lambda^{-\tilde{q}/q} \norm{ \ell^{r}_{k} \sup_{\tau_{k}' \leq t \leq \tau_{k}} \abs{\Pi(\tilde{F}^{(k)},\tilde{g}^{(k)})_{\tau_{k}',t}} }_{\tilde{q}}^{\tilde{q}}
\\ &\lesssim_{\tilde{q}}
\lambda^{-\tilde{q}/q} \norm{\ell^{r_{1}}_{k} M\tilde{F}^{(k)}}_{\infty}^{\tilde{q}}
\norm{\ell^{r_{0}}_{k} S \tilde{g}^{(k)} }_{\tilde{q}}^{\tilde{q}}
\\ &\leq
\lambda^{-\tilde{q}/q_{0}} \norm{\ell^{r_{0}}_{k} S g^{(k),\pred} \wedge \lambda^{1/q_{0}}}_{\tilde{q}}^{\tilde{q}}.
\end{split}
\end{equation}
This estimate no longer depends on the stopping time $\tau$.
Integrating the right-hand side of \eqref{eq:12} in $\lambda$, we obtain
\begin{align*}
\int_{0}^{\infty} \lambda^{-\tilde{q}/q_{0}} \norm{\ell^{r_{0}}_{k} S g^{(k),\pred} \wedge \lambda^{1/q_{0}}}_{\tilde{q}}^{\tilde{q}}
\dif \lambda
&=
\mathbb{E} \int_{0}^{\infty}
\bigl( \lambda^{-\tilde{q}/q_{0}} (\ell^{r_{0}}_{k} S g^{(k),\pred})^{\tilde{q}} \wedge 1 \bigr)
\dif \lambda
\\ &\sim
\mathbb{E} (\ell^{r_{0}}_{k} S g^{(k),\pred})^{q_{0}}
\\ &\sim 1,
\end{align*}
where we used $\tilde{q}>q_{0}$, Lemma~\ref{lem:davis-dec:Spred} with $X=\ell^{r_{0}}$, and the assumption \eqref{eq:vv-paraprod-scaling-assumption}.
\end{proof}

\neuevorl{2021-12-21}

We will soon need a version of Proposition~\ref{prop:vv-pprod} with a supremum in both time arguments of the paraproduct.
Nice estimates of such form are only available under some structural assumptions on $F$.
In this course, we only consider $F=\delta f$.

\begin{theorem}
\label{thm:vv-pprod-delta-f}
Let $q,q_{0},q_{1},r,r_{1}$ be as in Proposition~\ref{prop:vv-pprod} with $r_{0}=2$, that is,
\[
0 < q,q_{1} \leq \infty,
1 \leq q_{0},r < \infty,
1 \leq r_{1} \leq \infty,
1/q = 1/q_{0} + 1/q_{1},
1/r = 1/2 + 1/r_{1}.
\]
Let $f$ be an adapted process, $g$ a martingale, and $\tau$ an adapted partition.
Then, we have
\begin{equation}
\label{eq:vv-pprod-delta-f}
\norm[\big]{ \ell^{r}_{k} \sup_{\tau_{k-1} \leq s \leq t \leq \tau_{k}} \abs{\Pi(\delta f,g)_{s,t}} }_{q}
\lesssim
\norm[\big]{ \ell^{r_{1}}_{k} \sup_{\tau_{k-1} \leq t < \tau_{k}} \abs{\delta f_{\tau_{k-1},t}} }_{q_{1}}
\norm{ Sg }_{q_{0}}.
\end{equation}
\end{theorem}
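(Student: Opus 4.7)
The plan is to reduce to Proposition~\ref{prop:vv-pprod} via a telescoping identity that absorbs the variable lower endpoint $s$. Specifically, for any $\tau_{k-1} \leq s \leq t \leq \tau_{k}$, inserting $\pm f_{\tau_{k-1}}$ into each coefficient in \eqref{eq:paraprod} yields
\[
\Pi(\delta f, g)_{s,t}
=
\Pi(\delta f, g)_{\tau_{k-1}, t} - \Pi(\delta f, g)_{\tau_{k-1}, s} - \delta f_{\tau_{k-1}, s}\,(g_{t} - g_{s}).
\]
Taking suprema and using $\abs{g_{t} - g_{s}} \leq 2 \sup_{u \in [\tau_{k-1},\tau_{k}]} \abs{g_{u} - g_{\tau_{k-1}}}$ reduces matters to bounding, in the norm $\norm{\ell^{r}_{k}\,\cdot\,}_{q}$, the two summands
\[
\sup_{\tau_{k-1} \leq t \leq \tau_{k}} \abs{\Pi(\delta f, g)_{\tau_{k-1}, t}}
\quad\text{and}\quad
\sup_{\tau_{k-1} \leq s < \tau_{k}} \abs{\delta f_{\tau_{k-1}, s}}
\cdot
\sup_{\tau_{k-1} \leq u \leq \tau_{k}} \abs{g_{u} - g_{\tau_{k-1}}}.
\]

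For the first summand, I apply Proposition~\ref{prop:vv-pprod} with $\tau_{k}' := \tau_{k-1}$, $F^{(k)} := \delta f$, $g^{(k)} := g$ and $r_{0} := 2$; all hypotheses translate directly. The key observation is that since the intervals $(\tau_{k-1}, \tau_{k}]$ partition the time axis,
\[
\ell^{2}_{k} Sg_{\tau_{k-1}, \tau_{k}}
=
\Bigl( \sum_{k} \sum_{\tau_{k-1} < j \leq \tau_{k}} \abs{dg_{j}}^{2} \Bigr)^{1/2}
=
Sg
\]
pointwise, so the conclusion of Proposition~\ref{prop:vv-pprod} is exactly the desired bound for this summand.

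For the second summand, Hölder's inequality in the $k$-summation (using $1/r = 1/r_{1} + 1/2$) followed by Hölder in $L^{q}$ (using $1/q = 1/q_{1} + 1/q_{0}$) reduces the estimate to
\[
\norm[\big]{ \ell^{2}_{k} \sup_{\tau_{k-1} \leq u \leq \tau_{k}} \abs{g_{u} - g_{\tau_{k-1}}} }_{q_{0}}
\lesssim
\norm{ Sg }_{q_{0}}.
\]
This follows from the $\ell^{2}$-valued BDG inequality (Corollary~\ref{cor:lr-BDG}) applied to the stopped martingales $\tilde g^{(k)}_{n} := g_{n \wedge \tau_{k}} - g_{n \wedge \tau_{k-1}}$, whose maximal functions dominate $\sup_{u} \abs{g_{u} - g_{\tau_{k-1}}}$ and whose square functions once more aggregate pointwise to $Sg$ because $\tau$ is a partition.

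The only genuine subtlety is spotting the telescoping identity and then recognising that the resulting correction factors cleanly as $(\delta f) \cdot (\Delta g)$, so that Hölder and the vector-valued BDG inequality suffice to control the $g$-piece by the square function. Everything else is a direct reuse of results already established in the paper.
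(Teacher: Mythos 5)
Your proof is correct and follows essentially the same route as the paper's: the telescoping identity you write out is exactly the Chen-type relation $\delta\Pi(\delta f,g)_{s,t,u}=\delta f_{s,t}\,\delta g_{t,u}$ that the paper derives from \eqref{eq:paraprod}, the first two terms are handled by Proposition~\ref{prop:vv-pprod} with $\tau_k'=\tau_{k-1}$ and $r_0=2$, and the rank-one correction term is controlled by Hölder followed by the $\ell^2$-valued BDG inequality applied to the stopped martingales $\startat{\tau_{k-1}}g^{\tau_k}$. The pointwise aggregation $\ell^2_k Sg_{\tau_{k-1},\tau_k}=Sg$ that you highlight is also used implicitly in the paper.
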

\begin{proof}
For any $s\leq t\leq u$, the sums \eqref{eq:paraprod} satisfy the relation
\begin{align*}
\delta \Pi(F,g)_{s,t,u}
&=
\Pi_{s,u}(F,g) - \Pi_{s,t}(F,g) - \Pi_{t,u}(F,g)
\\ &=
\sum_{s < j \leq u} F_{s,j-1} dg_{j} - \sum_{s < j \leq t} F_{s,j-1} dg_{j} - \sum_{t < j \leq u} F_{t,j-1} dg_{j}
\\ &=
\sum_{t < j \leq u} (F_{s,j-1} - F_{t,j-1} ) dg_{j}.
\end{align*}
In case of $F=\delta f$, the right-hand side becomes
\[
\sum_{t < j \leq u} (f_{t} - f_{s} ) dg_{j}
=
(f_{t} - f_{s} ) (g_{u}-g_{t}).
\]
Therefore, for any $\tau_{k}' \leq s \leq t$, we can estimate
\[
\abs{\Pi_{s,t}(\delta f,g)}
\leq
\abs{\Pi_{\tau_{k}',t}(\delta f,g)} + \abs{\Pi_{\tau_{k}',s}(\delta f,g)}
+ \abs{\delta f_{\tau_{k}',s} \delta g_{s,t}}.
\]
In the first two terms, we apply Proposition~\ref{prop:vv-pprod} with $\tau_{k}'=\tau_{k-1}$, $f^{(k)}=f$, $g^{(k)}=g$ for each $k$.
In the last term, by H\"older's inequality, we have
\[
\norm[\big]{ \ell^{r}_{k} \sup_{\tau_{k-1} \leq s < t \leq \tau_{k}} \abs{\delta f_{\tau_{k}',s} \delta g_{s,t}} }_{q}
\leq
\norm[\big]{ \ell^{r_{1}}_{k} \sup_{\tau_{k-1} \leq s < t \leq \tau_{k}} \abs{\delta f_{\tau_{k}',s}} }_{q_{1}}
\norm[\big]{ \ell^{2}_{k} \sup_{\tau_{k-1} \leq s < t \leq \tau_{k}} \abs{\delta g_{s,t}} }_{q_{0}}.
\]
In the former norm, we observe that the dependence on $t$ disappears.
In the latter norm, we use the $\ell^{2}$ valued BDG inequality (Corollary~\ref{cor:lr-BDG}) with the martingales $h^{(k)} = \startat{\tau_{k-1}} g^{\tau_{k}}$.
\end{proof}

\subsection{Stopping time construction}
\label{sec:var-stopping}
In this section, we estimate the $r$-variation of a two-parameter function by square function-like objects, like we did this for one-parameter functions in the proof of L\'epingle's inequality.

For an adapted process $(\Pi_{s,t})_{s \leq t}$, let
\[
\Pimax_{n''} := \sup_{ 0 \leq n < n' \leq n''} \abs{\Pi_{n,n'}},
\quad
\Pimax := \Pimax_{\infty}.
\]
\begin{lemma}
\label{lem:paraprod-stopping}
For any discrete time adapted process $(\Pi_{s,t})_{s<t}$, there exist adapted partitions $\tau^{(m)}_{j}$ such that, for every $0<\rho<r<\infty$, we have
\begin{multline}
\label{eq:5}
\sup_{\substack{l_{\max},\\ u_{0} < \dotsb < u_{l_{\max}}}}
\sum_{l=1}^{l_{\max}} \abs{\Pi_{u_{l-1},u_{l}}}^{r}
\leq
\frac{(\Pimax)^{r}}{1-2^{-r}}
+ 2^{\rho} \sum_{m=0}^{\infty} (2^{-m}\Pimax)^{r-\rho} \sum_{j=1}^{\infty} \Bigl( \sup_{\tau^{(m)}_{j-1} \leq t < \tau^{(m)}_{j}} \abs{\Pi_{t, \tau^{(m)}_{j}}} \Bigr)^{\rho}.
\end{multline}
\end{lemma}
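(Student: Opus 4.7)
The plan is to mimic the L\'epingle-style stopping time construction from the proof of Theorem~\ref{thm:lepingle}, adapted to the two-parameter process $\Pi$. For each $m \geq 0$, I would set $\tau^{(m)}_{0} := 0$ and recursively
\[
\tau^{(m)}_{j+1} := \inf \Set{ t > \tau^{(m)}_{j} \given \sup_{\tau^{(m)}_{j} \leq u < t} \abs{\Pi_{u,t}} \geq 2^{-m-1} \Pimax_{t} }.
\]
These are stopping times with respect to $(\calF_{t})$ because both $\Pimax_{t}$ and the inner supremum are $\calF_{t}$-measurable.

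Given an arbitrary increasing sequence $0 \leq u_{0} < \dotsb < u_{l_{\max}}$, for each index $l$ with $\Pi_{u_{l-1}, u_{l}} \neq 0$ I would choose $m = m(l) \geq 0$ with $\abs{\Pi_{u_{l-1}, u_{l}}}/\Pimax \in (2^{-m-1}, 2^{-m}]$; such an $m$ exists because $\abs{\Pi_{u_{l-1}, u_{l}}} \leq \Pimax$. Letting $j=j(l)$ be the largest integer with $\tau^{(m)}_{j} \leq u_{l-1}$, I would verify that $\tau^{(m)}_{j+1} \leq u_{l}$ (since at $t=u_{l}$ the defining supremum exceeds $2^{-m-1} \Pimax \geq 2^{-m-1} \Pimax_{u_{l}}$), and that $l \mapsto (m(l), j(l)+1)$ is injective, because each half-open interval $(\tau^{(m)}_{j}, \tau^{(m)}_{j+1}]$ contains at most one $u_{l}$ with a fixed classification $m$.

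The crux is a two-parameter analog of Claim~\ref{lem:comparable-jump}: for $(m, j+1)$ associated to a ``typical'' $l$, I expect
\[
\sup_{s \in [\tau^{(m)}_{j}, \tau^{(m)}_{j+1})} \abs{\Pi_{s, \tau^{(m)}_{j+1}}} \geq \tfrac{1}{2} \abs{\Pi_{u_{l-1}, u_{l}}}.
\]
When $\tau^{(m)}_{j+1} = u_{l}$ this is immediate with $s = u_{l-1}$; when $\tau^{(m)}_{j+1} < u_{l}$, the infimum in the stopping rule yields a left-hand side $\geq 2^{-m-1} \Pimax_{\tau^{(m)}_{j+1}}$, and I would argue as in L\'epingle that $\Pimax_{\tau^{(m)}_{j+1}} \geq \Pimax_{u_{l}}/2$ except for at most one exceptional $l$ per scale $m$, namely the one where $\Pimax$ grows by more than a factor of $2$ across $[\tau^{(m)}_{j+1}, u_{l}]$. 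Each exceptional jump contributes at most $(2^{-m} \Pimax)^{r}$, and the total over $m \geq 0$ is the geometric series $\sum_{m \geq 0}(2^{-m} \Pimax)^{r} = (\Pimax)^{r}/(1 - 2^{-r})$, yielding exactly the first term on the RHS. For all remaining $l$'s, the factorization
\[
\abs{\Pi_{u_{l-1}, u_{l}}}^{r}
= \abs{\Pi_{u_{l-1}, u_{l}}}^{r-\rho} \cdot \abs{\Pi_{u_{l-1}, u_{l}}}^{\rho}
\leq (2^{-m} \Pimax)^{r-\rho} \cdot 2^{\rho} \Bigl( \sup_{s} \abs{\Pi_{s, \tau^{(m)}_{j+1}}} \Bigr)^{\rho}
\]
combined with injectivity of $l \mapsto (m,j+1)$ produces the second term upon summation.

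The main obstacle is precisely this two-parameter comparable-jump claim. In L\'epingle, the localized $M_{t}$ played the role of both the classifying size and the stopping threshold, but here adaptedness forces the stopping rule to reference $\Pimax_{t}$ whereas the desired RHS uses the global $\Pimax$. Bounding the jumps where $\Pimax$ grows substantially across $[\tau^{(m)}_{j+1}, u_{l}]$---or equivalently, establishing that at most one such ``bad'' jump exists per dyadic scale $m$---is the delicate point, and is precisely what necessitates the additive $(\Pimax)^{r}/(1-2^{-r})$ correction.
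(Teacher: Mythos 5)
Your stopping rule matches the paper's, and you correctly identify the injectivity of $l\mapsto(m(l),j(l)+1)$ and the need for a comparable-jump claim. But two deviations from the paper's argument introduce a genuine gap that I don't see how to close along the lines you sketch.

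First, you classify indices by $\abs{\Pi_{u_{l-1},u_l}}/\Pimax$ against the \emph{global} $\Pimax$, whereas the paper classifies against the \emph{local} quantity $\Pimax_{u_l}$, i.e.\ $L(m)=\Set{l \given 2^{-m-1}\Pimax_{u_l} < \abs{\Pi_{u_{l-1},u_l}} \le 2^{-m}\Pimax_{u_l}}$. Second, you pick $j$ so that $\tau^{(m)}_{j+1}$ is the \emph{first} exit after $u_{l-1}$, which lands \emph{inside} $(u_{l-1},u_l]$; the paper instead takes $j(l)$ maximal with $\tau^{(m)}_{j(l)}\in(u_{l-1},u_l]$, so that the \emph{next} time $\tau^{(m)}_{j(l)+1}$ lies \emph{beyond} $u_l$. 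These two choices interact: since $u_l < \tau^{(m)}_{j(l)+1}$, monotonicity of $t\mapsto\Pimax_t$ gives $\Pimax_{u_l}\le\Pimax_{\tau^{(m)}_{j(l)+1}}$ for free, and the paper's chain $\abs{\Pi_{u_{l-1},u_l}}\le 2^{-m}\Pimax_{u_l}\le 2^{-m}\Pimax_{\tau^{(m)}_{j(l)+1}}\le 2\sup_{t'}\abs{\Pi_{t',\tau^{(m)}_{j(l)+1}}}$ closes with no doubling argument at all. With your choices, $\tau^{(m)}_{j+1}\le u_l$ gives the inequality $\Pimax_{\tau^{(m)}_{j+1}}\le\Pimax_{u_l}$ in the wrong direction, which is precisely why you need the doubling control. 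But even granting it, the numerics fail: if $\Pimax_{\tau^{(m)}_{j+1}}\ge\Pimax_{u_l}/2$, the stopping rule yields $\sup\ge 2^{-m-2}\Pimax_{u_l}$, whereas your target $\tfrac12\abs{\Pi_{u_{l-1},u_l}}$ is comparable to $2^{-m-1}\Pimax$; since $\Pimax_{u_l}$ can be as small as $2^{-m-1}\Pimax$ (only the lower bound $\abs{\Pi_{u_{l-1},u_l}}\le\Pimax_{u_l}$ is available), the comparison loses an extra factor $2^{-m}$ and does not give a uniform constant.

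Moreover, the claim that $\Pimax$ doubles on the interval $[\tau^{(m)}_{j+1},u_l]$ for at most one $l$ per scale $m$ is not substantiated. For distinct $l<l'$ with the same classification, the intervals $[\tau^{(m)}_{j(l)+1},u_l]$ and $[\tau^{(m)}_{j(l')+1},u_{l'}]$ are disjoint, so $\Pimax$ can perfectly well double on each of them; if $\Pimax/\Pimax_{u_0}$ is of order $2^K$, there may be $K$ such intervals. In the paper, only one index per scale is removed (the maximum of $L(m)$), but for a completely different reason: to guarantee that $\tau^{(m)}_{j(l)+1}$ is finite, so that the infimum in the stopping rule is attained as a minimum. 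No doubling control on $\Pimax$ appears or is needed.
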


\begin{proof}[Proof of Lemma~\ref{lem:paraprod-stopping}]
For $m\in\N$, define stopping times
\[
\tau^{(m)}_{0} := 0,
\]
and then, for $j \ge 0$, allowing values in $\N \cup \Set{\infty}$,
\begin{equation}
\label{eq:stopping-time}
\tau^{(m)}_{j+1} := \inf \Set[\Big]{ t > \tau^{(m)}_{j} \given \sup_{\tau^{(m)}_{j} \leq t' < t} \abs{ \Pi_{t', t} } > 2^{-m-1} \Pimax_{t} }.
\end{equation}
Fix $\omega\in\Omega$ and let $(u_{l})_{l=0}^{l_{\max}}$ be a finite strictly increasing sequence.
Consider $0<\rho<r<\infty$ and split
\begin{equation}
\label{eq:4}
\sum_{l=1}^{l_{\max}} \abs{\Pi_{u_{l-1},u_{l}}}^{r}
=
\sum_{m=0}^{\infty} \sum_{l \in L(m)} \abs{\Pi_{u_{l-1},u_{l}}}^{r},
\end{equation}
where
\begin{equation}
\label{eq:3}
L(m) := \Set[\big]{ l \in \Set{1,\dotsc,l_{\max}} \given 2^{-m-1}\Pimax_{u_{l}} < \abs{\Pi_{u_{l-1},u_{l}}} \leq 2^{-m}\Pimax_{u_{l}} }.
\end{equation}
In \eqref{eq:4}, we only omitted vanishing summands, since $|\Pi_{u_{l-1},u_l}| \leq \Pimax_{u_l}$.
Let also $L'(m) := L(m) \setminus \Set{\sup L(m)}$.
Using \eqref{eq:3}, we obtain
\begin{equation}
\label{eq:1}
\sum_{l=1}^{l_{\max}} \abs{\Pi_{u_{l-1},u_{l}}}^{r}
\leq
\sum_{m=0}^{\infty} (2^{-m}\Pimax)^{r-\rho} \sum_{l \in L'(m)} \abs{\Pi_{u_{l-1},u_{l}}}^{\rho}
+ \sum_{m=0}^{\infty} (2^{-m} \Pimax)^{r}.
\end{equation}

\begin{claim}
For every $l \in L(m)$, there exists $j$ s.t. $\tau_j^{(m)} \in (u_{l-1},u_{l}]$.
\end{claim}
\begin{proof}[Proof of the claim]
Let $j$ be maximal with $\tau^{(m)}_{j} \leq u_{l-1}$.
Since $l\in L(m)$, by definition \eqref{eq:3}, we have
\[
\abs{\Pi_{u_{l-1},u_{l}}}
>
2^{-m-1} \Pimax_{u_{l}}.
\]
By the definition of stopping times \eqref{eq:stopping-time}, we obtain $\tau^{(m)}_{j+1} \leq u_{l}$.
\end{proof}

Fix $m$.
For each $l \in L'(m)$, let $j(l)$ be the largest $j$ such that $\tau^{(m)}_{j} \in (u_{l-1}, u_{l}]$.
Then all $j(l)$ are distinct, and, since $l \neq \max L(m)$, the claim shows that $\tau^{(m)}_{j(l)+1} < \infty$.
Furthermore, by \eqref{eq:3}, the monotonicity of $t \mapsto \Pimax_{t}$, and the fact that the infimum in the definition \eqref{eq:stopping-time} of stopping times is in fact a minimum unless it is infinite, we have
\begin{equation}
\label{eq:2}
\abs{\Pi_{u_{l-1},u_{l}}}
\leq
2^{-m} \Pimax_{u_{l}}
\leq
2^{-m} \Pimax_{\tau^{(m)}_{j(l)+1}}
\leq
2 \sup_{\tau^{(m)}_{j(l)} \leq t' < \tau^{(m)}_{j(l)+1}} \abs{ \Pi_{t', \tau^{(m)}_{j(l)+1}}}.
\end{equation}
Since all $j(l)$ are distinct, this implies
\[
\sum_{l \in L'(m)} \abs{\Pi_{u_{l-1},u_{l}}}^{\rho}
\leq 2^{\rho}
\sum_{j=1}^{\infty} \sup_{\tau^{(m)}_{j-1} \leq t' < \tau^{(m)}_{j}} \abs{ \Pi_{t', \tau^{(m)}_{j}}}^{\rho}.
\]
Substituting this into \eqref{eq:1}, we conclude the proof of Lemma~\ref{lem:paraprod-stopping}.
\end{proof}

\begin{corollary}
\label{cor:paraprod-stopping}
Let $(\Pi_{s,t})_{s \leq t}$ be an adapted process with $\Pi_{t,t}=0$ for all $t$.
Then, for every $0 < \rho < r < \infty$ and $q \in (0,\infty]$, we have
\begin{equation}
\label{eq:Vr-stopping}
\norm{ V^{r} \Pi }_{L^{q}}
\lesssim
\sup_{\tau}
\norm[\Big]{ \Bigl( \sum_{j=1}^{\infty} \bigl( \sup_{\tau_{j-1} \leq t < t' \leq \tau_{j}} \abs{\Pi_{t, t'}} \bigr)^{\rho} \Bigr)^{1/\rho} }_{L^{q}},
\end{equation}
where the supremum is taken over all adapted partitions $\tau$.
\end{corollary}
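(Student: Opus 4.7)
The plan is to combine the pointwise decomposition from Lemma~\ref{lem:paraprod-stopping} with Hölder's inequality in $L^{q}$ and the summability of a geometric series in the scale parameter $m$. Let $A$ denote the right-hand side of \eqref{eq:Vr-stopping}. Lemma~\ref{lem:paraprod-stopping} produces, for each $m\in\N$, an adapted partition $(\tau^{(m)}_{j})_{j}$; writing
\[
Q_{m} := \Bigl( \sum_{j\geq 1} \bigl( \sup_{\tau^{(m)}_{j-1} \leq t < \tau^{(m)}_{j}} \abs{\Pi_{t,\tau^{(m)}_{j}}} \bigr)^{\rho} \Bigr)^{1/\rho},
\]
the lemma gives (after taking the supremum over strictly increasing sequences $u_{0}<\dotsb<u_{l_{\max}}$) the pointwise bound
\[
(V^{r}\Pi)^{r} \lesssim (\Pi^{*})^{r} + \sum_{m=0}^{\infty} 2^{-m(r-\rho)} (\Pi^{*})^{r-\rho} Q_{m}^{\rho}.
\]
The trivial adapted partition $\tau_{0}=0$, $\tau_{1}=\infty$ gives $\norm{\Pi^{*}}_{L^{q}} \leq A$, and since each $\tau^{(m)}$ is adapted and the inner supremum in $Q_{m}$ is dominated by $\sup_{\tau^{(m)}_{j-1} \leq t < t' \leq \tau^{(m)}_{j}} \abs{\Pi_{t,t'}}$, one also has $\norm{Q_{m}}_{L^{q}} \leq A$ for every $m$.

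Next I would take the $L^{q/r}$ norm of both sides (the $q/r$-th power of the quasi-norm when $q<r$), distribute the sum using the (quasi-)triangle inequality, and apply Hölder's inequality with conjugate exponents $q/(r-\rho)$ and $q/\rho$ to each summand, yielding
\[
\norm{(\Pi^{*})^{r-\rho} Q_{m}^{\rho}}_{L^{q/r}} \leq \norm{\Pi^{*}}_{L^{q}}^{r-\rho} \norm{Q_{m}}_{L^{q}}^{\rho} \leq A^{r}.
\]
Summing the resulting geometric series $\sum_{m} 2^{-m(r-\rho)s}$ (with $s=1$ or $s=q/r$), which converges because $r>\rho$, gives $\norm{V^{r}\Pi}_{L^{q}} \lesssim A$, as desired.

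The main bookkeeping issue is covering the regime $q<r$ uniformly with the regime $q\geq r$: in the former case the ordinary triangle inequality in $L^{q/r}$ is no longer available and one must use the quasi-triangle identity $\norm{\sum f_{m}}_{L^{q/r}}^{q/r} \leq \sum \norm{f_{m}}_{L^{q/r}}^{q/r}$. Fortunately, the geometric decay in $m$ survives being raised to the positive power $q/r$, so the same argument closes without change; the endpoint $q=\infty$ is immediate from the pointwise estimate.
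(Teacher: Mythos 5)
Your argument is correct and takes essentially the same route as the paper's (very terse) proof: restrict to finite time sets, apply Lemma~\ref{lem:paraprod-stopping} pointwise, observe that $\Pimax$ is dominated by the corollary's right-hand side via the trivial partition $\tau_1=\infty$, and close with Hölder's inequality and the (quasi-)triangle inequality, summing the geometric series in $m$. You have simply made explicit the bookkeeping (working in $L^{q/r}$, the choice of Hölder exponents $q/(r-\rho)$ and $q/\rho$, the $q/r$-subadditivity when $q<r$) that the paper leaves implicit in its one-paragraph sketch.
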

\begin{proof}
By the monotone convergence theorem, we can restrict the times in the definition of $V^{r}$ to a finite set, and then apply Lemma~\ref{lem:paraprod-stopping}.

The term $\Pimax$ is of the form on the right-hand side of \eqref{eq:Vr-stopping} with $\tau_{1}=\infty$.
Therefore, the claim follows from the triangle inequality in $L^{q}$ (if $q\geq 1$), $q$-convexity of $L^{q}$ (if $q<1$), and H\"older's inequality.
\end{proof}

\begin{theorem}
\label{thm:pprod-Vr}
Let
\[
0 < q,q_{1} \leq \infty,
1 \leq q_{0} < \infty,
1 \leq r_{1} \leq \infty,
1/q = 1/q_{0} + 1/q_{1},
1/r < 1/2 + 1/r_{1}.
\]
Let $f$ be an adapted process and $g$ a martingale.
Then, we have
\begin{equation}
\label{eq:pprod-Vr}
\norm{ V^{r}_{k} \Pi(\delta f,g) }_{q}
\lesssim
\norm{ V^{r_{1}} f }_{q_{1}}
\norm{ Sg }_{q_{0}}.
\end{equation}
\end{theorem}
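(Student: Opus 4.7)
My plan is to combine the $r$-variation stopping-time reduction from Corollary~\ref{cor:paraprod-stopping} with the maximal paraproduct estimate from Theorem~\ref{thm:vv-pprod-delta-f}. The strict inequality $1/r < 1/2 + 1/r_{1}$ is precisely the slack needed to interpolate between the two.

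The first step is to choose auxiliary exponents $\rho,r_{1}'$ satisfying $1 \leq \rho < r$, $r_{1}' \geq r_{1}$, and $1/\rho = 1/2 + 1/r_{1}'$. Such a choice is possible exactly because $1/r < 1/2 + 1/r_{1}$, at least when $r>1$; the corner case $r \leq 1$ follows from the conclusion for slightly larger $r$ by the monotonicity $V^{r'} \leq V^{r}$ when $r' \geq r$. Applied to $\Pi = \Pi(\delta f, g)$ with exponent $\rho$, Corollary~\ref{cor:paraprod-stopping} then bounds $\norm{V^{r}\Pi}_{q}$ by
\[
\sup_{\tau} \norm[\Big]{ \Bigl( \sum_{j} \bigl( \sup_{\tau_{j-1} \leq t < t' \leq \tau_{j}} \abs{\Pi(\delta f, g)_{t,t'}} \bigr)^{\rho} \Bigr)^{1/\rho} }_{q},
\]
the supremum being over all adapted partitions $\tau$.

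The second step is to apply Theorem~\ref{thm:vv-pprod-delta-f} with the triple $(\rho, r_{1}', 2)$ playing the role of $(r, r_{1}, r_{0})$. For each fixed adapted partition $\tau$, this bounds the inner $L^{q}$-norm by
\[
\norm[\Big]{ \ell^{r_{1}'}_{j} \sup_{\tau_{j-1} \leq t < \tau_{j}} \abs{\delta f_{\tau_{j-1}, t}} }_{q_{1}} \norm{Sg}_{q_{0}}.
\]
To finish, I would note that the oscillation of $f$ on each piece is dominated by $V^{r_{1}'} f_{\tau_{j-1}, \tau_{j}-}$, and that superadditivity of $(s,t) \mapsto (V^{r_{1}'} f_{s,t})^{r_{1}'}$, obtained by concatenating near-optimal partitions on each piece into one partition of $[0,\infty)$, yields
\[
\Bigl( \sum_{j} (V^{r_{1}'} f_{\tau_{j-1}, \tau_{j}-})^{r_{1}'} \Bigr)^{1/r_{1}'} \leq V^{r_{1}'} f \leq V^{r_{1}} f,
\]
the last inequality using $r_{1}' \geq r_{1}$ and monotonicity of $\ell^{p}$-norms on sequences. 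Taking $L^{q_{1}}$-norms and the supremum over $\tau$ closes the argument.

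The main obstacle is merely the exponent bookkeeping: one must verify that some $\rho \in [1,r)$ with $1/\rho = 1/2 + 1/r_{1}'$ and $r_{1}' \geq r_{1}$ lies in the admissible range, and handle the edge cases $r \leq 1$ and $r_{1}$ small. Beyond this, the proof invokes no new ingredient past the two cited results and the elementary superadditivity of variation.
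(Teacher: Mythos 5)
Your approach is exactly the paper's: the proof of Theorem~\ref{thm:pprod-Vr} there is literally the one-line instruction ``combine Corollary~\ref{cor:paraprod-stopping} and Theorem~\ref{thm:vv-pprod-delta-f},'' and you have spelled out the exponent bookkeeping correctly for $r>1$. Choosing $r_1' \geq \max(r_1,2)$ with $1/\rho = 1/2 + 1/r_1'$ and $\rho < r$, applying the corollary with exponent $\rho$, then Theorem~\ref{thm:vv-pprod-delta-f} with $(\rho, r_1', 2)$, and finishing by superadditivity of $(s,t)\mapsto (V^{r_1'}f_{s,t})^{r_1'}$ together with $V^{r_1'}f \leq V^{r_1}f$ is correct.

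The one place your write-up goes wrong is the treatment of the corner case $r \leq 1$ (which is admitted by the hypothesis when $r_1 < 2$). You write that it ``follows from the conclusion for slightly larger $r$ by the monotonicity $V^{r'} \leq V^{r}$ when $r' \geq r$,'' but this inequality runs the wrong way: if $r'>r$ then $V^{r'}\Pi \leq V^{r}\Pi$ pointwise, so $\norm{V^{r'}\Pi}_q \leq \norm{V^{r}\Pi}_q$, and a bound on the \emph{smaller} left-hand side at exponent $r'$ does not control the larger one at exponent $r$. The obstruction is real: Theorem~\ref{thm:vv-pprod-delta-f} requires its $\ell^{\rho}$ exponent to satisfy $\rho \geq 1$, while Corollary~\ref{cor:paraprod-stopping} requires $\rho < r$, so no admissible $\rho$ exists when $r\leq 1$. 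To be fair, the paper's own one-line proof does not address this regime either, so either the theorem should carry the implicit restriction $r>1$, or the case $r\leq 1$ needs a separate argument; but the fix you propose does not work as stated.
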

\begin{proof}
Combine Corollary~\ref{cor:paraprod-stopping} and Theorem~\ref{thm:vv-pprod-delta-f}.
\end{proof}

\begin{remark}
This section is mostly copied from \cite{arxiv:2008.08897}.
\end{remark}

% Local Variables:
% mode: LaTeX
% TeX-engine: xetex
% TeX-master: "martingale-inequalities.tex"
% End:
\neuevorl{2022-01-11}

\section{It\^{o} integration}

In this section, we discuss integration with respect to a \cadlag{} martingale in continuous time, generalizing the paraproduct $\Pi$ from the previous section.
We consider stochastic processes adapted to some filtration $(\calF_{t})_{t\in \R_{\geq 0}}$ indexed by positive real times.
We begin with some convenient regularity assumptions.
These assumptions are not restrictive, in the sense that, for any martingale, one can reparametrize time and change the filtration in such a way that they are satisfied, but we will not discuss this, since our focus is on more quantitative issues.

A filtration $(\calF_{t})_{t\in \R_{\geq 0}}$ is called \emph{right-continuous} if, for every $t\geq 0$, we have
\[
\calF_{t} = \bigcap_{t' > t} \calF_{t'}.
\]
For a function $f:R_{\geq 0} \to E$ with values in a metric space $E$, the left and right limits at a point $t$ are denoted by
\[
f_{t-} := \lim_{s\to t, s<t} f_{s},
\quad
f_{t+} := \lim_{s\to t, s>t} f_{s},
\]
if they exist.
A function $f : \R_{\geq 0} \to E$ is called \emph{\cadlag{}} (for ``continue à droite, limite à gauche'', ``right continuous with left limits''; some authors use the English abbreviation ``rcll'') if $f_{t-}$ exists for every $t>0$, and $f_{t}=f_{t+}$ for every $t\geq 0$.
A stochastic process $g : \Omega \times \R_{\geq 0} \to E$ is called \cadlag{} if every path $g(\omega,\cdot)$ is a \cadlag{} function.

\begin{theorem}[Regularization, see e.g.\ {\cite[Theorem 9.28]{MR4226142}}]
Let $\calF=(\calF_{t})_{t\in\R_{\geq 0}}$ be a right-continuous filtration.
If $g$ is a martingale with respect to $\calF$, then there exists a \cadlag{} martingale $\tilde{g}$ with respect to $\calF$ such that, for every $t\geq 0$, we have
\[
g(\cdot,t) = \tilde{g}(\cdot,t)
\quad\text{a.e.}
\]
\end{theorem}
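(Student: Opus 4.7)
The plan is to construct $\tilde{g}$ as the right-limit of $g$ along rationals on a full-measure event where such limits exist, then verify the resulting process is \cadlag, adapted, a martingale, and a modification of $g$.

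The first step is to show that, for any $r>2$ and every $N \in \N$, almost surely the restriction $q \mapsto g_{q}(\omega)$ of $g$ to $\mathbb{Q} \cap [0,N]$ has finite $r$-variation. For each finite $S \subset \mathbb{Q} \cap [0,N]$, ordered by $q$, the family $(g_{q})_{q \in S}$ is a discrete-time martingale, so L\'epingle's inequality (Theorem~\ref{thm:lepingle}) combined with the stopping-time truncation argument of Corollary~\ref{cor:L1-mart-convergence}, applied at $\tau_{\lambda} := \inf\Set{q \in S \given \abs{g_{q}} > \lambda}$, yields a bound of the form $\norm{V^{r}(g^{\tau_{\lambda}}|_{S})}_{1} \lesssim \lambda + \norm{g_{N}}_{1}$ which is uniform in $S$. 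Monotone convergence over an increasing chain of finite subsets with union $\mathbb{Q} \cap [0,N]$, together with letting $\lambda \to \infty$ on the full-measure event where $\sup_{q \in \mathbb{Q} \cap [0,N]} \abs{g_{q}} < \infty$ (which follows from Lemma~\ref{lem:Doob-weak}), shows that $V^{r}(g|_{\mathbb{Q} \cap [0,N]}) < \infty$ a.s. Taking the union of exceptional sets over $N \in \N$ produces a full-measure event $\Omega_{0}$ on which $q \mapsto g_{q}(\omega)$ admits left and right limits at every $t \in \R_{\geq 0}$.

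On $\Omega_{0}$, define
\[
\tilde{g}_{t}(\omega) := \lim_{q \downarrow t,\, q \in \mathbb{Q}} g_{q}(\omega),
\]
and $\tilde{g}_{t} := 0$ off $\Omega_{0}$; each path $\tilde{g}_{\cdot}(\omega)$ is then \cadlag{} by construction. Since $\tilde{g}_{t}$ is a pointwise limit of $g_{q_{n}}$ for any sequence $q_{n} \downarrow t$ of rationals, it is measurable with respect to $\bigcap_{q > t,\, q \in \mathbb{Q}} \calF_{q} = \calF_{t}$ by right-continuity of the filtration, so $\tilde{g}$ is adapted. It remains to verify $\tilde{g}_{t} = g_{t}$ a.s.\ for each fixed $t$; once this holds, the martingale property follows from $\E(\tilde{g}_{t} | \calF_{s}) = \E(g_{t} | \calF_{s}) = g_{s} = \tilde{g}_{s}$ a.s.\ for $s<t$, and the modification property holds by definition. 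Fix $t$ and any $T > t$; by Lemma~\ref{lem:E-unif-integr}, the family $\Set{g_{q} = \E(g_{T}|\calF_{q}) \given q \in \mathbb{Q} \cap [t,T]}$ is uniformly integrable, so for rationals $q_{n} \downarrow t$ the a.s.\ convergence $g_{q_{n}} \to \tilde{g}_{t}$ is also convergence in $L^{1}$. Applying $\E(\cdot | \calF_{t})$ and using that $\tilde{g}_{t}$ is $\calF_{t}$-measurable, we get $g_{t} = \E(g_{q_{n}}|\calF_{t}) \to \tilde{g}_{t}$ in $L^{1}$, so $g_{t} = \tilde{g}_{t}$ a.s.

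The main obstacle is the first step, lifting L\'epingle's inequality from an $\N$-indexed martingale to the dense, uncountable index set $\mathbb{Q} \cap [0,N]$. This is a routine monotone-convergence/stopping-time argument, but it is the only point at which any quantitative martingale inequality enters; once paths along rationals are regularized a.s., the remaining assertions reduce to bookkeeping with right-continuity of $\calF$ and uniform integrability.
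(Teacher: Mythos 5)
The paper does not prove this theorem; it cites \cite[Theorem 9.28]{MR4226142} and moves on, so there is no in-text proof to match. Judged on its own, your argument is essentially correct and is a nice variant that reuses the paper's machinery: the classical route in the cited reference obtains a.s.\ existence of one-sided limits of $g$ along $\mathbb{Q}\cap[0,N]$ from Doob's upcrossing inequality, whereas you get the same path regularity from a.s.\ finite $r$-variation via L\'epingle's inequality and the stopping-time truncation of Corollary~\ref{cor:L1-mart-convergence}. Once path regularity along rationals is established, the two proofs coincide: take right limits along rationals, use right-continuity of $\calF$ for adaptedness, use uniform integrability of $\Set{\E(g_{T}\mid\calF_{q})}_{q\leq T}$ (your Lemma~\ref{lem:E-unif-integr}) to upgrade a.s.\ convergence to $L^{1}$ convergence, and deduce the modification and martingale properties. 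The L\'epingle route is heavier machinery than the upcrossing lemma, but it is internal to these notes and gives more (quantitative $r$-variation of the limit), so within this course it is the more natural choice.

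Two small points worth tightening. First, in the monotone-convergence step the stopping time $\tau_{\lambda}$ depends on the finite set $S$, so $V^{r}(g^{\tau_{\lambda}}|_{S})$ is not itself monotone in $S$; the clean statement is that on the event $E_{\lambda}:=\Set{\sup_{q\in\mathbb{Q}\cap[0,N]}\abs{g_{q}}\leq\lambda}$ one has $g|_{S}=g^{\tau_{\lambda}}|_{S}$ for every $S$, hence $\int_{E_{\lambda}}V^{r}(g|_{S})\,\dif\mu$ is both monotone in $S$ and uniformly bounded by $\lambda+\norm{g_{N}}_{1}$; then take $S\uparrow\mathbb{Q}\cap[0,N]$ and afterwards $\lambda\uparrow\infty$. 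You gesture at exactly this, but it is worth spelling out. Second, defining $\tilde g_{t}:=0$ off the full-measure set $\Omega_{0}$ leaves $\tilde g_{t}$ $\calF_{t}$-measurable only if $\Omega\setminus\Omega_{0}\in\calF_{t}$; this is why the theorem is usually stated under the usual conditions (right-continuous \emph{and} complete), and the cited reference does augment the filtration. Either add completeness to the hypotheses, or replace the definition by a $\limsup$ along a fixed rational sequence decreasing to $t$ (which is automatically $\calF_{t}$-measurable by right-continuity) and note that this agrees with the right limit on $\Omega_{0}$.
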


For an adapted partition $\pi$, we write
\begin{equation}
\label{eq:def:floor}
\floor{t,\pi} := \max\Set{ s\in\pi \given s \leq t},
\quad 0 \leq t < \infty.
\end{equation}
For a \cadlag{} adapted process $f=(f_{t})_{t\geq 0}$, a \cadlag{} martingale $g=(g_{t})_{t\geq 0}$, and an adapted partition $\pi$, we consider the following Riemann--Stieltjes sums:
\begin{equation}
\label{eq:hPi}
\Pi^{\pi}(f,g)_{t,t'}
:=
\sum_{t < \pi_{j} < t'} \delta f_{\floor{t,\pi},\pi_{j}} \delta g_{\pi_{j},\pi_{j+1} \bmin t'},
\quad 0 \leq t \leq t' < \infty.
\end{equation}
In each summand, the integrand $f$ is evaluated at the left endpoint of the interval $[\pi_{j},\pi_{j+1}]$ on which we consider the increment of the integrator $g$ (and also at $\floor{t,\pi}$, which is even further to the left).
This is the distinguishing feature of the It\^o integral that makes it a martingale in the $t'$ variable.

Unlike in Riemann(--Stieltjes) integration with a bounded variation integrator, evaluating $t$ at other points in general produces different results (e.g.\ Stratonovich integral, where $f$ is averaged over $\pi_{j}$ and $\pi_{j+1}$).

Most classical treatments involve sums
\[
\sum_{\pi_{j} < t'} f_{\pi_{j}} \delta g_{\pi_{j},\pi_{j+1} \bmin t'},
\]
which corerspond to fixing $t=0$, but this obscures the observation of the natural regularity of the It\^o integral that fits nicely with rough path theory.

For an adpated process $f$ and an adapted partition $\pi$, we write
\begin{equation}
\label{eq:F-discrete}
f^{(\pi)}_{t} := f_{\floor{t,\pi}}.
\end{equation}

\begin{proposition}
\label{prop:cont-pprod-bd}
Let $0 < q_{1} \leq \infty$, $1\leq q_{0} < \infty$, and $0 < r,p_{1} \leq \infty$.
Suppose
\begin{equation}
\label{eq:Vr-exponent-condition}
1/r < 1/p_{1} + 1/2,
\quad
1/q = 1/q_{0} + 1/q_{1}.
\end{equation}
Let $(f_{t})$ be a \cadlag{} adapted process and $(g_{t})$ a \cadlag{} martingale.

Then, for every adapted partition $\pi$, we have the estimate
\begin{equation}
\label{eq:YoungBDG}
\norm[\big]{ V^{r} \Pi^{\pi}(f,g) }_{L^{q}}
\lesssim
\norm{ V^{p_1} f^{(\pi)} }_{L^{q_1}} \norm{ V^{\infty}g }_{L^{q_{0}}}.
\end{equation}
\end{proposition}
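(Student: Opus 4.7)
The plan is to reduce to the discrete paraproduct bound Theorem~\ref{thm:pprod-Vr} applied to the sampled sequences $\tilde f_{k} := f_{\pi_{k}}$ and $\tilde g_{k} := g_{\pi_{k}}$, where $\tilde g$ is a martingale for $\tilde\calF_{k} := \calF_{\pi_{k}}$ by optional sampling. A direct computation, exploiting that $\Pi^{\pi}(f,g)_{t,t'}$ depends on $t$ only through $\floor{t,\pi}$ and that the rightmost summand is the only one affected when $t'$ moves inside a cell, gives the decomposition
\[
\Pi^{\pi}(f,g)_{t,t'}
=
H_{t,t'} + E_{t,t'},
\quad
H_{t,t'} := \Pi^{\pi}(f,g)_{\floor{t,\pi},\floor{t',\pi}},
\quad
E_{t,t'} := \delta f^{(\pi)}_{t,t'} \cdot \bigl(g_{t'} - g^{(\pi)}_{t'}\bigr),
\]
in which $H$ agrees at pairs of partition points with the discrete paraproduct $\Pi(\delta\tilde f,\tilde g)$, and $E$ is an ``end-of-cell'' correction, nonzero only when $t'$ lies strictly inside a cell of $\pi$. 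By the quasi-triangle inequality for $V^{r}$, it suffices to bound $\norm{V^{r}H}_{L^{q}}$ and $\norm{V^{r}E}_{L^{q}}$ separately.

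For $H$: since $H$ is constant on cells of $\pi$ in each argument, $V^{r}H$ coincides with the discrete $V^{r}$ of $\Pi(\delta\tilde f,\tilde g)$. Theorem~\ref{thm:pprod-Vr} applied with $r_{1}=p_{1}$ yields
\[
\norm{V^{r}H}_{L^{q}}
\lesssim
\norm{V^{p_{1}} f^{(\pi)}}_{L^{q_{1}}}
\norm{S\tilde g}_{L^{q_{0}}}.
\]
Assuming WLOG $g_{0}=0$, the BDG inequality (Corollary~\ref{cor:BDG}) together with $M\tilde g \leq V^{\infty}g$ gives $\norm{S\tilde g}_{L^{q_{0}}} \sim \norm{M\tilde g}_{L^{q_{0}}} \leq \norm{V^{\infty}g}_{L^{q_{0}}}$, completing this case.

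For $E$: since $f^{(\pi)}$ is constant on cells, only pairs $(u_{l-1},u_{l})$ in distinct cells contribute to $V^{r}E$, and we may assume the cell indices $b_{l}$ of $u_{l}$ are all distinct. Set $J_{b} := \sup_{t\in[\pi_{b},\pi_{b+1}]} \abs{g_{t}-g_{\pi_{b}}}$, so that $\abs{g_{u_{l}} - g^{(\pi)}_{u_{l}}} \leq J_{b_{l}}$. The hypothesis $1/r < 1/p_{1}+1/2$ allows us to pick $p_{2}\in(2,\infty]$ with $1/p_{2} \geq \max\bigl(0,\, 1/r-1/p_{1}\bigr)$, and then Hölder's inequality (using the trivial embedding $V^{r} \leq V^{p_{1}}$ when $p_{1}\leq r$) yields the pathwise estimate
\[
V^{r}E \lesssim V^{p_{1}} f^{(\pi)} \cdot \Bigl(\sum_{b} J_{b}^{p_{2}}\Bigr)^{1/p_{2}}.
\]
Taking $L^{q}$ norms and applying Hölder in $\Omega$ reduces the remaining task to bounding $\norm{\ell^{p_{2}}_{b} J_{b}}_{L^{q_{0}}}$. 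Since $J_{b} = \sup_{s}\abs{g^{(b)}_{s}}$ for the cell-wise martingale $g^{(b)}_{s} := g_{(\pi_{b}+s)\wedge\pi_{b+1}} - g_{\pi_{b}}$, the $\ell^{p_{2}}$-valued BDG inequality (Corollary~\ref{cor:lr-BDG}), combined with the embedding $\ell^{p_{2}} \hookrightarrow \ell^{2}$ (since $p_{2}\geq 2$) and the additivity $\sum_{b}(Sg^{(b)})^{2} = (Sg)^{2}$ of quadratic variation, gives $\norm{\ell^{p_{2}}_{b} J_{b}}_{L^{q_{0}}} \lesssim \norm{Sg}_{L^{q_{0}}} \sim \norm{V^{\infty}g}_{L^{q_{0}}}$ via continuous-time BDG. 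The main obstacle lies in this error estimate: the strict inequality $1/r < 1/p_{1}+1/2$ is precisely the slack that yields $p_{2}>2$, without which the $\ell^{p_{2}} \hookrightarrow \ell^{2}$ step for the cell-wise square functions would fail.
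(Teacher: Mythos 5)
Your proof is correct in substance but takes a genuinely different route from the paper, and it is worth contrasting the two. The paper's proof rests on the refinement identity
\[
\Pi^{\pi}(f,g)_{t,t'} = \Pi^{\tau}(f^{(\pi)},g)_{t,t'} \quad\text{for any adapted partition } \tau\supseteq\pi,
\]
derived by telescoping $\delta g_{\pi_k,\pi_{k+1}\wedge t'}$ over the finer partition $\tau$ and using $f^{(\pi)}$'s constancy on cells of $\pi$. It then restricts all times to $\tau=\pi\cup 2^{-n}\N$ (justified by the c\`adl\`ag property and monotone convergence), so that $\Pi^{\pi}(f,g)$ becomes \emph{exactly} the discrete paraproduct $\Pi(f^{(\pi)}_{\tau},g_{\tau})$; one single application of Theorem~\ref{thm:pprod-Vr} at the fine partition $\tau$, plus the discrete BDG $\norm{Sg_{\tau}}_{q_0}\lesssim\norm{V^{\infty}g}_{q_0}$, finishes the proof. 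Your decomposition $\Pi^{\pi}=H+E$ effectively pre-splits the same computation: $H$ captures what $\Pi^{\pi}$ does at partition points of $\pi$, and $E$ is the residual intra-cell term. Both of your pieces are handled correctly, but the $E$-estimate costs you an additional $\ell^{p_2}$-valued BDG application, the $\ell^{p_2}\hookrightarrow\ell^2$ embedding, and the additivity $\sum_b (Sg^{(b)})^2=(Sg)^2$ — machinery the paper entirely avoids by absorbing the intra-cell oscillation of $g$ into the fine discrete square function $Sg_{\tau}$. So the paper's argument is more economical, while yours makes the ``where does the intra-cell variation of $g$ go'' question very explicit.

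Two technical points in your write-up deserve more care. First, you invoke ``continuous-time BDG'' to conclude $\norm{Sg}_{L^{q_0}}\sim\norm{V^{\infty}g}_{L^{q_0}}$ and you invoke the $\ell^{p_2}$-valued BDG for the family $g^{(b)}$ without first passing to a discrete time grid; the paper's device of restricting the $V^{r}$-supremum to a fixed $\pi^{(n)}$ (so that all square functions are genuinely discrete) is exactly the step that makes these invocations rigorous, and you should import it. Second, in your Hölder step you need the exponent $p_2$ to satisfy both $p_2\geq r$ (for $\ell^{p_2}$ to majorize the $r$-th power sum over $J_{b_l}$) and $p_2>2$ (for the $\ell^{p_2}\hookrightarrow\ell^2$ embedding); your stated condition $1/p_2\geq\max(0,1/r-1/p_1)$ together with $p_2\in(2,\infty]$ does not automatically force $p_2\geq r$ — you should state $p_2\in[\max(2,r),\,1/\max(0,1/r-1/p_1)]$ (interpreting the upper endpoint as $\infty$ when the max is zero), noting that this interval is nonempty precisely because $1/r<1/p_1+1/2$.
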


\begin{proof}[Proof of Proposition~\ref{prop:cont-pprod-bd}]
Since $\Pi^{\pi}(F,g)_{t,t'}$ is \cadlag{} in both $t$ and $t'$, we have
\[
V^{r} \Pi^{\pi}(F,g) =
\lim_{n\to\infty} \sup_{l_{\max}, u_{0} < \dotsb < u_{l_{\max}}, u_{l} \in \pi^{(n)}}
\Bigl( \sum_{l=1}^{l_{\max}} \abs{ \Pi^{\pi}(F,g)_{u_{l-1},u_l} }^{r} \Bigr)^{1/r},
\]
where $\pi^{(n)} = \pi \cup 2^{-n}\N$.
By the monotone convergence theorem, it suffices to consider a fixed $\pi^{(n)}$, as long as the bound does not depend on $n$.

For any adapted partitions $\pi \subseteq \tau$, we have
\begin{equation}
\label{eq:Pi-pi'-F-pi}
\begin{split}
\Pi^{\pi}(f,g)_{t,t'}
&=
\sum_{k : t < \pi_{k} < t'} \delta f_{\floor{t,\pi},\pi_{k}} \delta g_{\pi_{k},\pi_{k+1} \bmin t'}
\\ &=
\sum_{k : t < \pi_{k} < t'} \delta f_{\floor{t,\pi},\pi_{k}}
\sum_{l : \pi_{k} \leq \tau_{l} < \pi_{k+1} \bmin t'} \delta g_{\tau_{l},\tau_{l+1} \bmin t'}
\\ &=
\sum_{k : t < \pi_{k} < t'}
\sum_{l : \pi_{k} \leq \tau_{l} < \pi_{k+1} \bmin t'}
\delta f_{\floor{t,\pi},\floor{\tau_{l},\pi}}
\delta g_{\tau_{l},\tau_{l+1} \bmin t'}
\\ &=
\sum_{l : t < \tau_{l} < t'}
\delta f^{(\pi)}_{\floor{t,\tau},\tau_{l}}
\delta g_{\tau_{l},\tau_{l+1} \bmin t'}
\\ &=
\Pi^{\tau}(f^{(\pi)},g)_{t,t'},
\end{split}
\end{equation}
where $f^{(\pi)}$ is given by \eqref{eq:F-discrete}.
Define discrete time processes $f^{(\pi)}_{\tau},g_{\tau}$ by
\[
(f^{(\pi)}_{\tau})_{j} = f^{(\pi)}_{\tau_{j}},
\quad
(g_{\tau})_{j} = g_{\tau_{j}}.
\]
Then, we have
\begin{align*}
\Pi^{\pi}(f,g)_{\tau_{j},\tau_{j'}}
&=
\Pi^{\tau}(f^{(\pi)},g)_{\tau_{j},\tau_{j'}}
\\ &=
\sum_{l : \tau_{j} < \tau_{l} < \tau_{j'}}
\delta f^{(\pi)}_{\floor{\tau_{j},\tau},\tau_{l}}
\delta g_{\tau_{l},\tau_{l+1} \bmin \tau_{j'}}
\\ &=
\sum_{l : j < l < j'}
\delta f^{(\pi)}_{\tau_{j},\tau_{l}}
\delta g_{\tau_{l},\tau_{l+1}}
\\ &=
\Pi(f^{(\pi)}_{\tau},g_{\tau})_{j,j'},
\end{align*}
where the last line is the discrete time paraproduct defined in \eqref{eq:paraprod}.
By Theorem~\ref{thm:pprod-Vr} and the BDG inequality (Corollary~\ref{cor:BDG}) for the discrete time martingale $g_{\tau}$, we obtain
\begin{multline*}
\norm{V^{r} \Pi(f^{(\pi)}_{\tau},g_{\tau}) }_{q}
\lesssim
\norm{ V^{r_{1}} f^{(\pi)}_{\tau} }_{q_{1}}
\norm{ Sg_{\tau} }_{q_{0}}
\lesssim
\norm{ V^{r_{1}} f^{(\pi)}_{\tau} }_{q_{1}}
\norm{ V^{\infty}g_{\tau} }_{q_{0}}
\leq
\norm{ V^{r_{1}} f }_{q_{1}}
\norm{ V^{\infty}g }_{q_{0}}.
\qedhere
\end{multline*}
\end{proof}

\begin{lemma}
\label{lem:F-pi-converges-to-F}
Let $(f_{t})_{t\geq 0}$ be a \cadlag{} adapted process.
Suppose that $V^{p_{1}}f \in L^{q_{1}}$ for some $p_{1},q_{1} \in (0,\infty]$.
Then, for every $\tilde{p}_{1} \in (p_{1},\infty) \cup \Set{\infty}$, we have
\[
\lim_{\pi} \norm{V^{\tilde{p}_{1}} (f-f^{(\pi)})}_{L^{q_{1}}} = 0.
\]
\end{lemma}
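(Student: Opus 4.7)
The plan is a three-step reduction of the net limit to a single partition. First, a pointwise interpolation $V^{\tilde{p}_{1}} g \leq (V^{\infty} g)^{1-p_{1}/\tilde{p}_{1}} (V^{p_{1}} g)^{p_{1}/\tilde{p}_{1}}$, valid for any scalar-valued $g$ (factor $\abs{g_{u_{j}}-g_{u_{j-1}}}^{\tilde{p}_{1}}$ as $\abs{g_{u_{j}}-g_{u_{j-1}}}^{\tilde{p}_{1}-p_{1}} \cdot \abs{g_{u_{j}}-g_{u_{j-1}}}^{p_{1}}$ and bound the first factor by $(V^{\infty} g)^{\tilde{p}_{1}-p_{1}}$), reduces the $L^{q_{1}}$ decay of $V^{\tilde{p}_{1}}(f-f^{(\pi)})$ to that of $V^{\infty}(f-f^{(\pi)})$ against the dominator $(V^{p_{1}}(f-f^{(\pi)}))^{p_{1}/\tilde{p}_{1}}$. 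The latter is uniformly bounded: since $f^{(\pi)}$ is piecewise constant and its increments are a sub-collection of the increments of $f$ sampled at $\pi$, one has $V^{p_{1}} f^{(\pi)} \leq V^{p_{1}} f$ and hence $V^{p_{1}}(f-f^{(\pi)}) \leq 2\, V^{p_{1}} f \in L^{q_{1}}$.

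Second, for $\pi \supseteq \pi_{0}$ the identity $(f^{(\pi_{0})})^{(\pi)} = f^{(\pi_{0})}$, which holds because $f^{(\pi_{0})}$ is already constant on each interval of $\pi_{0}$, gives, with $h := f - f^{(\pi_{0})}$,
\[
f - f^{(\pi)} = h - h^{(\pi)},
\qquad
V^{\tilde{p}_{1}}(f - f^{(\pi)}) \leq V^{\tilde{p}_{1}} h + V^{\tilde{p}_{1}} h^{(\pi)} \leq 2\, V^{\tilde{p}_{1}}(f - f^{(\pi_{0})}),
\]
where the last step again uses $V^{\tilde{p}_{1}} h^{(\pi)} \leq V^{\tilde{p}_{1}} h$. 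Consequently, the net limit follows as soon as, for every $\epsilon>0$, one exhibits a single adapted partition $\pi_{0}$ with $\norm{V^{\tilde{p}_{1}}(f - f^{(\pi_{0})})}_{L^{q_{1}}} < \epsilon/2$.

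Third, such a $\pi_{0}$ is produced by extracting it from a sequence. Take $\pi^{(n)}$ to be the union of the deterministic dyadic mesh $\Set{k 2^{-n} : 0 \leq k \leq 4^{n}}$ and the stopping times enumerating the jumps of $f$ of size exceeding $1/n$. Since $V^{p_{1}} f < \infty$ a.s., these jumps are locally finite, $\lim_{t\to\infty} f_{t}$ exists, and one gets $V^{\infty}(f - f^{(\pi^{(n)})}) \to 0$ pointwise a.s.\ by the standard modulus argument for \cadlag{} paths: once $\pi^{(n)}$ contains all jumps above $\eta$, has mesh below $\delta$, and reaches past the time at which $f_{t}$ lies within $\eta$ of its limit, every interval of the partition sees oscillation at most $\eta$ plus a continuous modulus depending on $\delta$. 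Combined with the first two steps, this gives $V^{\tilde{p}_{1}}(f - f^{(\pi^{(n)})}) \to 0$ a.s., dominated by $2\, V^{p_{1}} f \in L^{q_{1}}$, so dominated convergence yields $L^{q_{1}}$-vanishing and $\pi_{0} := \pi^{(n)}$ works for large $n$. The delicate point is this pointwise claim, which rests on the path-wise \cadlag{} structure forced by $V^{p_{1}} f < \infty$: only finitely many jumps above any threshold, and existence of a limit at infinity.
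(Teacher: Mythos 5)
Your proposal is correct, and your Step 1 (interpolation $V^{\tilde p_1}g \le (V^\infty g)^{1-p_1/\tilde p_1}(V^{p_1}g)^{p_1/\tilde p_1}$ plus the uniform bound $V^{p_1}(f-f^{(\pi)}) \lesssim V^{p_1}f$) is exactly the paper's first move. Your Step 2 (the reduction $f-f^{(\pi)} = h - h^{(\pi)}$ with $h = f - f^{(\pi_0)}$ and $\pi \supseteq \pi_0$, so that it suffices to exhibit one good $\pi_0$) is a nice cleanly-stated way of packaging what the paper does implicitly.

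Where you diverge is in producing the good partition. You use a deterministic dyadic mesh joined with the jump times above $1/n$, then invoke a cadlag modulus-of-continuity argument to get $V^\infty(f-f^{(\pi^{(n)})})\to 0$ a.s., and finish with dominated convergence against $V^{p_1}f \in L^{q_1}$. The paper instead defines a single adapted partition directly via first-exceedance stopping times $\pi_{j+1} := \inf\{t>\pi_j : |\delta f_{\pi_j,t}| \ge \epsilon\}$; the \cadlag{} property guarantees this is a genuine adapted partition, and it yields the \emph{uniform pointwise} estimate $V^\infty(f-f^{(\pi')}) \le 4\epsilon$ for every refinement $\pi' \supseteq \pi$, so that no dominated convergence and no separate path-regularity lemma are needed. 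The trade-off: your route is conceptually familiar (mesh refinement plus jump removal), but the ``standard modulus argument'' you lean on is a genuine lemma about \cadlag{} functions (finitely many jumps above a threshold, oscillation control away from them, and tail control near $\infty$) that requires its own proof and is arguably as much work as the whole lemma. The paper's stopping-time construction sidesteps all of this and is uniform across paths, which is why it can dispense with dominated convergence. If you keep your approach, you should at least prove the modulus lemma rather than cite it, and note that your $\pi^{(n)}$ is only a finite partition, so you need the paper's convention that finite adapted partitions are allowed (which \eqref{eq:def:floor} implicitly assumes by using $\max$).
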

\begin{proof}
We have $V^{p_{1}}f^{(\pi)} \leq V^{p_{1}}f$ and, by Hölder's inequality,
\[
V^{\tilde{p}_{1}}(f-f^{(\pi)})
\leq
V^{p_{1}}(f-f^{(\pi)})^{1-\theta}
V^{\infty}(f-f^{(\pi)})^{\theta}
\]
with some $\theta \in (0,1]$, so it suffices to consider $\tilde{p}_{1}=\infty$.

Let $\epsilon > 0$ and define a sequence of stopping times recursively, starting with $\pi_{0} := 0$, by
\[
\pi_{j+1} := \inf \Set[\Big]{ t>\pi_{j} \given%
\abs{\delta f_{\pi_{j},t}} \geq \epsilon}.
\]
Since $f$ is \cadlag{}, the infimum is either $+\infty$ or a minimum, so that this is indeed a stopping time.
Also by the \cadlag{} assumption, this sequence of stopping times is strictly monotonically increasing in the sense that $\pi_{j} < \infty \implies \pi_{j} < \pi_{j+1}$.
Moreover, if $T:=\sup_{j} \pi_{j} < \infty$, then the left limit $f_{T-}$ does not exist, contradicting the \cadlag{} hypothesis.
Therefore, $\pi_{j}\to\infty$, so that $\pi$ is an adapted partition.

Then, by \eqref{eq:7}, for any adapted partition $\pi' \supseteq \pi$ and $s\leq t$, we have
\begin{align*}
\abs{ \delta f_{s,t} - \delta f^{(\pi')}_{s,t} }
&\leq
\abs{ f_{t} - f_{\floor{t,\pi'}}} + \abs{f_{s} - f_{\floor{s,\pi'}} }
\\ &\leq
2\epsilon + 2\epsilon.
\qedhere
\end{align*}
\end{proof}

\begin{theorem}[It\^o integral]
\label{thm:cont-pprod-convergence}
In the situation of Proposition~\ref{prop:cont-pprod-bd}, suppose that the right-hand side of \eqref{eq:YoungBDG-limit} is finite.
Then
\begin{equation}
\label{eq:paraprod-limit}
\Pi(f,g) := \lim_{\pi} \Pi^{\pi}(f,g)
\end{equation}
exists in $L^{q}(\Omega,V^{r})$, satisfies the bound
\begin{equation}
\label{eq:YoungBDG-limit}
\norm[\big]{ V^{r} \Pi(f,g) }_{L^{q}}
\lesssim
\norm{ V^{p_1} f }_{L^{q_1}} \norm{ V^{\infty}g }_{L^{q_{0}}},
\end{equation}
and, for any $0 \leq t \leq t' \leq t'' < \infty$, \emph{Chen's relation}
\begin{equation}
\label{eq:Chen}
\Pi(f,g)_{t,t''}
- \Pi(f,g)_{t,t'} - \Pi(f,g)_{t',t''}
= \delta f_{t,t'} \delta g_{t',t''}.
\end{equation}
\end{theorem}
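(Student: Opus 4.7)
The plan is to combine Proposition~\ref{prop:cont-pprod-bd} with Lemma~\ref{lem:F-pi-converges-to-F} to show that the net $(\Pi^{\pi}(f,g))_{\pi}$, indexed by adapted partitions ordered by refinement, is Cauchy in $L^{q}(\Omega, V^{r})$. Once the limit $\Pi(f,g)$ is constructed, the bound \eqref{eq:YoungBDG-limit} will drop out by passing \eqref{eq:YoungBDG} to the limit, and Chen's relation \eqref{eq:Chen} will follow by verifying an analogous identity at the discrete level.

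For the Cauchy step, fix adapted partitions $\pi \subseteq \pi'$. The identity \eqref{eq:Pi-pi'-F-pi} together with linearity of $\Pi^{\pi'}(\cdot,g)$ in its first argument gives
\[
\Pi^{\pi}(f,g) - \Pi^{\pi'}(f,g) = \Pi^{\pi'}(f^{(\pi)} - f, g).
\]
Since $\pi \subseteq \pi'$ implies $(f^{(\pi)})^{(\pi')} = f^{(\pi)}$, the relevant discretization $(f^{(\pi)}-f)^{(\pi')}$ equals $f^{(\pi)} - f^{(\pi')}$. Exploiting the strictness of \eqref{eq:Vr-exponent-condition}, I choose $\tilde p_{1} \in (p_{1},\infty]$ with $1/r < 1/\tilde p_{1} + 1/2$ still satisfied, and apply Proposition~\ref{prop:cont-pprod-bd} with $\tilde p_{1}$ in place of $p_{1}$ to $f^{(\pi)}-f$:
\[
\norm{V^{r}(\Pi^{\pi}(f,g) - \Pi^{\pi'}(f,g))}_{L^{q}}
\lesssim
\norm{V^{\tilde p_{1}}(f^{(\pi)} - f^{(\pi')})}_{L^{q_{1}}} \norm{V^{\infty}g}_{L^{q_{0}}}.
\]
Lemma~\ref{lem:F-pi-converges-to-F} is applicable because $\tilde p_{1} > p_{1}$, so the (quasi-)triangle inequality in $L^{q_{1}}$ shows that the right-hand side vanishes as $\pi,\pi'$ refine, proving the Cauchy property.

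The bound \eqref{eq:YoungBDG-limit} follows by applying Proposition~\ref{prop:cont-pprod-bd} to $f$ itself, using $V^{p_{1}}f^{(\pi)} \leq V^{p_{1}}f$, and combining this uniform estimate with the $L^{q}(V^{r})$-convergence $\Pi^{\pi}(f,g) \to \Pi(f,g)$. For Chen's relation, whenever $\pi \supseteq \Set{t,t',t''}$, a direct reindexing of the Riemann--Stieltjes sums \eqref{eq:hPi} yields
\[
\Pi^{\pi}(f,g)_{t,t''} - \Pi^{\pi}(f,g)_{t,t'} - \Pi^{\pi}(f,g)_{t',t''} = \delta f_{t,t'}\, \delta g_{t',t''}.
\]
Since $\abs{F_{s,u}} \leq V^{r}F$ for each fixed pair $s \leq u$, convergence in $L^{q}(V^{r})$ entails $L^{q}$-convergence of each $\Pi^{\pi}(f,g)_{s,u}$, so \eqref{eq:Chen} follows by passing to the limit along refinements containing $\Set{t,t',t''}$.

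The main obstacle is aligning the two strict inequalities in play: the exponent condition $1/r < 1/p_{1} + 1/2$ from Proposition~\ref{prop:cont-pprod-bd} with the strict requirement $\tilde p_{1} > p_{1}$ from Lemma~\ref{lem:F-pi-converges-to-F}. These are compatible precisely because \eqref{eq:Vr-exponent-condition} is stated with a strict inequality, allowing a small upward perturbation of $p_{1}$. The discrete Chen calculation, while elementary, is the other delicate point and requires careful bookkeeping around the index $\pi_{j}=t'$ to assemble the cross term $\delta f_{t,t'}\, \delta g_{t',t''}$.
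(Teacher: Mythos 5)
Your proof is correct and follows essentially the same route as the paper's: establish the Cauchy property via the reindexing identity \eqref{eq:Pi-pi'-F-pi}, apply Proposition~\ref{prop:cont-pprod-bd} with an upward-perturbed exponent $\tilde p_1 > p_1$, and invoke Lemma~\ref{lem:F-pi-converges-to-F}; then verify Chen's relation at the discrete level and pass to the limit. The one small streamlining you make is in the Chen step: by restricting to the cofinal subnet of adapted partitions containing $\Set{t,t',t''}$ you get the discrete identity directly with $\delta f_{t,t'}$ on the right, whereas the paper works with arbitrary $\pi$, obtains $\delta f^{(\pi)}_{t,t'}\,\delta g_{t',t''}$, and needs one more appeal to Lemma~\ref{lem:F-pi-converges-to-F} to pass to the limit there as well; both are valid.
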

The limit \eqref{eq:paraprod-limit} is called the \emph{It\^o integral} (with integrand $f$ and integrator $g$).
\begin{proof}[Proof of Theorem~\ref{thm:cont-pprod-convergence}]
By the Cauchy criterion for net convergence, the existence of the limit \eqref{eq:paraprod-limit} will follow if we can show that
\begin{equation}
\label{eq:14}
\lim_{\pi} \sup_{\tau \supseteq \pi} \norm[\big]{ V^{r} (\Pi^{\pi}(f,g)-\Pi^{\tau}(f,g)) }_{L^{q}} = 0.
\end{equation}
To this end, we use that, by \eqref{eq:Pi-pi'-F-pi}, we have
\[
\Pi^{\pi}(f,g)-\Pi^{\tau}(f,g)
=
\Pi^{\tau}(f^{(\pi)}-f^{(\tau)},g).
\]
Let $\tilde{p}_{1} \in (p_{1},\infty] \cup \Set{\infty}$ be such that $1/r < 1/\tilde{p}_{1}+1/2$.
By Proposition~\ref{prop:cont-pprod-bd} with $f$ replaced by $f^{(\pi)}-f^{(\tau)}$, we obtain
\begin{align*}
\MoveEqLeft
\norm[\big]{ V^{r} \Pi^{\tau}(f^{(\pi)}-f^{(\tau)},g) }_{L^{q}}
\\ &\lesssim
\norm{ V^{\tilde{p}_1} (f^{(\pi)}-f^{(\tau)})^{(\tau)} }_{L^{q_1}} \norm{ V^{\infty}g }_{L^{q_{0}}}
\end{align*}
This converges to $0$ by Lemma~\ref{lem:F-pi-converges-to-F}.

In order to show the Chen relation \eqref{eq:Chen}, we first show that the corresponding relation holds pointwise for the discretized paraproducts $\Pi^{\pi}$.
Indeed, by definition \eqref{eq:hPi}, for $t \leq t' \leq t''$, we have
\begin{equation}
\label{eq:delta-Pi-pi}
\Pi^{\pi}(f,g)_{t,t''}
- \Pi^{\pi}(f,g)_{t,t'} - \Pi^{\pi}(f,g)_{t',t''}
\end{equation}
\begin{multline*}
= \sum_{t < \pi_{j} < t''} \delta f_{\floor{t,\pi},\pi_{j}} \delta g_{\pi_{j},\pi_{j+1} \bmin t''}
- \sum_{t < \pi_{j} < t'} \delta f_{\floor{t,\pi},\pi_{j}} \delta g_{\pi_{j}, \pi_{j+1} \bmin t'}
\\- \sum_{t' < \pi_{j} < t''} \delta f_{\floor{t',\pi},\pi_{j}} \delta g_{\pi_{j}, \pi_{j+1} \bmin t''}
\end{multline*}
\begin{multline*}
= \sum_{t < \pi_{j} < t'} f_{\floor{t,\pi},\pi_{j}} (\delta g_{\pi_{j},\pi_{j+1} \bmin t''} - \delta g_{\pi_{j},\pi_{j+1} \bmin t'})
+
\sum_{t < \pi_{j} = t' < t''} \delta f_{\floor{t,\pi},\pi_{j}} \delta g_{\pi_{j},\pi_{j+1} \bmin t''}
\\ +
\sum_{t' < \pi_{j} < t''} (\delta f_{\floor{t,\pi},\pi_{j}}-\delta f_{\floor{t',\pi},\pi_{j}}) \delta g_{\pi_{j},\pi_{j+1} \bmin t''}
\end{multline*}
\begin{multline*}
= \sum_{t < \pi_{j} < t'} f_{\floor{t,\pi},\pi_{j}} \delta g_{\pi_{j+1} \bmin t',\pi_{j+1} \bmin t''}
+
\sum_{t < \pi_{j} = t' < t''} \delta f_{\floor{t,\pi},\pi_{j}} \delta g_{\pi_{j},\pi_{j+1} \bmin t''}
\\ +
\delta f_{\floor{t,\pi},\floor{t',\pi}} \sum_{t' < \pi_{j} < t''} \delta g_{\pi_{j},\pi_{j+1} \bmin t''}
\end{multline*}
All summands except possibly the one with $\pi_{j}<t'<\pi_{j+1}$ in the first sum vanish, and it follows that
\begin{multline*}
\dots =
\delta f_{\floor{t,\pi},\floor{t',\pi}} \Bigl(\sum_{t < \pi_{j} < t' < \pi_{j+1}} \delta g_{\pi_{j+1} \bmin t',\pi_{j+1} \bmin t''}
\\+
\sum_{t<\pi_{j} = t'<t''}\delta g_{\pi_{j},\pi_{j+1} \bmin t''}
+
\sum_{t' < \pi_{j} < t''} \delta g_{\pi_{j},\pi_{j+1} \bmin t''} \Bigr)
\end{multline*}
\[
=
\delta f_{\floor{t,\pi},\floor{t',\pi}} \Bigl(\sum_{t < \pi_{j} \leq t' < \pi_{j+1} \bmin t''} \delta g_{t',\pi_{j+1} \bmin t''}
+
\sum_{t' < \pi_{j} < t''} \delta g_{\pi_{j},\pi_{j+1} \bmin t''} \Bigr)
=
\delta f^{(\pi)}_{t,t'} \delta g_{t',t''}.
\]
By the already known conclusion \eqref{eq:paraprod-limit}, the process \eqref{eq:delta-Pi-pi} converges to the left-hand side of \eqref{eq:Chen}.
By Lemma~\ref{lem:F-pi-converges-to-F}, the last expression in the above chain of equalities converges to the right-hand side of \eqref{eq:Chen}.
\end{proof}

\subsection{Quadratic covariation}

\begin{proposition}
\label{prop:covariation-exists}
Let $q_{0},q_{1} \in [1,\infty)$, $1/q=1/q_{0}+1/q_{1}$, and $r>1$.
For \cadlag{} martingales $f,g$ with $V^{\infty}f\in L^{q_{0}}$, $V^{\infty}g \in L^{q_{1}}$, let
\[
[f,g]^{\pi}_{t,t'} :=
\sum_{\floor{t,\pi} \leq \pi_{j} < \floor{t',\pi}} \delta f_{\pi_{j}, \pi_{j+1}} \delta g_{\pi_{j}, \pi_{j+1}}.
\]
Then,
\begin{equation}
\label{eq:covariation}
[f,g]_{t,t'} := \lim_{\pi} [f,g]^{\pi}_{t,t'}
\end{equation}
exists in $L^{q}(V^{r})$.
Moreover, we have the integration by parts formula
\begin{equation}
\label{eq:int-parts}
\delta f_{t,t'} \delta g_{t,t'}
= \Pi(f,g)_{t,t'} + \Pi(g,f)_{t,t'} + [f,g]_{t,t'}.
\end{equation}
\end{proposition}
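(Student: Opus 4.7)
My plan is to \emph{define} $[f,g]$ through integration by parts using the already-constructed paraproducts, and then verify convergence of the Riemann sums $[f,g]^\pi$ to this candidate separately.

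First, for $t\le t'$ both in $\pi$, summing the elementary bilinear identity
\[
\delta f_{t,\pi_{j+1}}\delta g_{t,\pi_{j+1}} - \delta f_{t,\pi_j}\delta g_{t,\pi_j}
= \delta f_{t,\pi_j}\delta g_{\pi_j,\pi_{j+1}} + \delta g_{t,\pi_j}\delta f_{\pi_j,\pi_{j+1}} + \delta f_{\pi_j,\pi_{j+1}}\delta g_{\pi_j,\pi_{j+1}}
\]
over the partition points between $t$ and $t'$ telescopes on the left to $\delta f_{t,t'}\delta g_{t,t'}$ and reconstructs the two paraproducts plus $[f,g]^\pi$ on the right, giving the discrete integration by parts
\begin{equation}\label{eq:cov-ibp-disc}
\delta f_{t,t'}\delta g_{t,t'} = \Pi^\pi(f,g)_{t,t'} + \Pi^\pi(g,f)_{t,t'} + [f,g]^\pi_{t,t'}.
\end{equation}

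Next, Theorem~\ref{thm:cont-pprod-convergence} applied with $p_1=\infty$ yields $\Pi^\pi(f,g)\to \Pi(f,g)$ and $\Pi^\pi(g,f)\to\Pi(g,f)$ in $L^q(V^{\tilde r})$ for every $\tilde r>2$. The two-variable process $\Xi_{s,t}:=\delta f_{s,t}\delta g_{s,t}$ obeys the pointwise H\"older bound $V^{\tilde r}\Xi\le V^\infty f\cdot V^{\tilde r}g$; combined with L\'epingle's inequality (Theorem~\ref{thm:lepingle}), which gives $\|V^{\tilde r}g\|_{q_1}\lesssim\|V^\infty g\|_{q_1}$ for $\tilde r>2$, this yields $\|V^{\tilde r}\Xi\|_q\lesssim\|V^\infty f\|_{q_0}\|V^\infty g\|_{q_1}$. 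I then set
\[
[f,g]_{t,t'} := \delta f_{t,t'}\delta g_{t,t'} - \Pi(f,g)_{t,t'} - \Pi(g,f)_{t,t'},
\]
which makes \eqref{eq:int-parts} tautological, and pass to the limit in \eqref{eq:cov-ibp-disc} (with the $\floor{\cdot,\pi}$ versus $\cdot$ boundary discrepancy controlled via Lemma~\ref{lem:F-pi-converges-to-F}) to deduce $[f,g]^\pi\to[f,g]$ in $L^q(V^{\tilde r})$ for every $\tilde r>2$.

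The main obstacle will be bridging from $\tilde r>2$ down to the full range $r>1$. The key observation is that $[f,g]^\pi$ has uniformly controlled total variation in $L^q$: by Cauchy--Schwarz
\[
V^1([f,g]^\pi)\le Sf^{(\pi)}_\infty\cdot Sg^{(\pi)}_\infty,
\]
and the discrete BDG inequality (Corollary~\ref{cor:BDG}) bounds each factor in $L^{q_0}$, resp.\ $L^{q_1}$, by $\|V^\infty f\|_{q_0}$, resp.\ $\|V^\infty g\|_{q_1}$. I will then apply the elementary pointwise inequality
\[
V^r(h)\le V^\infty(h)^{(r-1)/r}\, V^1(h)^{1/r}\qquad(r\ge 1)
\]
to $h=[f,g]^\pi-[f,g]$, followed by H\"older in $\omega$ with conjugate exponents $qr/(r-1)$ and $qr$, to convert the $L^q(V^{\tilde r})$-convergence (which dominates $V^\infty$) together with the uniform $L^q(V^1)$-bound into convergence in $L^q(V^r)$ for every $r>1$.
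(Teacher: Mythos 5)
Your proposal is correct and reaches the same conclusion, but it bridges the exponent gap differently than the paper. Both proofs establish the discrete integration-by-parts identity
\[
\delta f^{(\pi)}_{t,t'}\,\delta g^{(\pi)}_{t,t'}
= \Pi^\pi(f,g)_{t,\floor{t',\pi}} + \Pi^\pi(g,f)_{t,\floor{t',\pi}} + [f,g]^\pi_{t,t'}
\]
and pass to the limit using Theorem~\ref{thm:cont-pprod-convergence}. The genuine difference is how to cover the full range $r>1$. You invoke the paraproduct convergence theorem with $p_1=\infty$, which only yields $L^q(V^{\tilde r})$-convergence for $\tilde r>2$, and then \emph{interpolate}: a uniform $L^q(V^1)$-bound on $[f,g]^\pi$ via Cauchy--Schwarz and BDG, combined with the pointwise inequality $V^r \leq (V^\infty)^{(r-1)/r}(V^1)^{1/r}$ and H\"older, downgrades from $\tilde r>2$ to any $r>1$. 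The paper instead observes that L\'epingle's inequality already gives $V^{p_1}f\in L^{q_0}$ for every $p_1>2$, so applying the paraproduct theorem with $p_1$ close to $2$ directly satisfies $1/r<1/p_1+1/2$ for every $r>1$ and requires no interpolation step. Your route is a little longer but establishes the clean and separately useful fact $\sup_\pi\norm{V^1([f,g]^\pi)}_q<\infty$ (bounded total variation in $L^q$, not just bounded $V^r$); the paper's route is shorter once one notices that L\'epingle can be used to boost the regularity of the integrand, not only the integrator. One minor elision: you attribute the control of the $\floor{\cdot,\pi}$-versus-$\cdot$ boundary discrepancy solely to Lemma~\ref{lem:F-pi-converges-to-F}, but the term $\sum_{\pi_j<t'<\pi_{j+1}}\delta f_{\floor{t,\pi},\pi_j}\,\delta g_{\pi_j,t'}$ needs the same Cauchy--Schwarz/L\'epingle estimate as the term $\delta f^{(\pi)}\delta g^{(\pi)}$ before Lemma~\ref{lem:F-pi-converges-to-F} finishes the job; you clearly have the tools, so this is a presentational gap, not a mathematical one. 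Also, when applying the interpolation inequality to $h=[f,g]^\pi-[f,g]$ you should either first verify $V^1([f,g])\in L^q$ by Fatou, or (cleaner) run the Cauchy criterion with $h=[f,g]^\pi-[f,g]^{\pi'}$ so that only the uniform bound on the discrete approximants is needed.
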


The process \eqref{eq:covariation} is called the \emph{quadratic covariation} of $f$ and $g$.
Here are a few facts about it that easily follow from the definition:
\[
\forall t\leq t'\leq t'',
\quad
[f,g]_{t,t''} = [f,g]_{t,t'} + [f,g]_{t',t''}
\quad\text{a.s.}
\]
To see this, note that the same identity holds for $[\cdot,\cdot]^{\pi}$ for any adapted partition $\pi$ that contains the times $t,t',t''$.
\[
\forall t\leq t', \quad
\E_{t} [f,f]_{t,t'} = \E_{t} \abs{\delta f_{t,t'}}^{2}.
\]
To see this, note that the same identity holds for $[\cdot,\cdot]^{\pi}$ for any adapted partition $\pi$ that contains the times $t,t'$, because, by the optinal sampling theorem, $\delta f_{\pi_{j},\pi_{j+1}}$ are martingale increments, and therefore orthogonal in $L^{2}$.

\begin{proof}[Proof of Proposition~\ref{prop:covariation-exists}]
First, we compute
\begin{align*}
\MoveEqLeft
\delta f_{\floor{t,\pi},t'} \delta g_{\floor{t,\pi},t'}
- \Pi^{\pi}(f,g)_{t,t'} - \Pi^{\pi}(g,f)_{t,t'}
\\ &=
\bigl( \sum_{\floor{t,\pi} \leq \pi_{k} < t'} \delta f_{\pi_{k}, \pi_{k+1} \bmin t'} \bigr)
\bigl( \sum_{\floor{t,\pi} \leq \pi_{j} < t'} \delta g_{\pi_{j}, \pi_{j+1} \bmin t'} \bigr)
\\ &-
\sum_{\floor{t,\pi} < \pi_{j} < t'} \delta f_{\floor{t,\pi},\pi_{j}} \delta g_{\pi_{j}, \pi_{j+1} \bmin t'}
-
\sum_{\floor{t,\pi} < \pi_{k} < t'} \delta g_{\floor{t,\pi},\pi_{k}} \delta f_{\pi_{k}, \pi_{k+1} \bmin t'}
\\ &=
\sum_{\floor{t,\pi} \leq \pi_{j} < t'} \delta f_{\pi_{j}, \pi_{j+1} \bmin t'} \delta g_{\pi_{j}, \pi_{j+1} \bmin t'}
\\ &+
\sum_{\floor{t,\pi} \leq \pi_{k} < \pi_{j} < t'} \delta f_{\pi_{k}, \pi_{k+1} \bmin t'}
\delta g_{\pi_{j}, \pi_{j+1} \bmin t'}
-
\sum_{\floor{t,\pi} < \pi_{j} < t'} \delta f_{\floor{t,\pi},\pi_{j}} \delta g_{\pi_{j}, \pi_{j+1} \bmin t'}
\\ &+
\sum_{\floor{t,\pi} \leq \pi_{j} < \pi_{k} < t'} \delta f_{\pi_{k}, \pi_{k+1} \bmin t'}
\delta g_{\pi_{j}, \pi_{j+1} \bmin t'}
-
\sum_{\floor{t,\pi} < \pi_{k} < t'} \delta g_{\floor{t,\pi},\pi_{k}} \delta f_{\pi_{k}, \pi_{k+1} \bmin t'}.
\end{align*}
Each of the last two lines vanishes identically.

In particular, replacing $t'$ by $\floor{t',\pi}$, we obtain
\begin{equation}
\label{eq:9}
\delta f_{\floor{t,\pi},\floor{t',\pi}} \delta g_{\floor{t,\pi},\floor{t',\pi}}
- \Pi^{\pi}(f,g)_{t,\floor{t',\pi}} - \Pi^{\pi}(g,f)_{t,\floor{t',\pi}}
=
\sum_{\floor{t,\pi} \leq \pi_{j} < \floor{t',\pi}} \delta f_{\pi_{j}, \pi_{j+1}} \delta g_{\pi_{j}, \pi_{j+1}}.
\end{equation}
Note that
\begin{equation}
\label{eq:8}
\Pi^{\pi}(f,g)_{t,\floor{t',\pi}}
=
\Pi^{\pi}(f,g)_{t,t'} - \sum_{\pi_{j} < t' < \pi_{j+1}} \delta f_{\floor{t,\pi},\pi_{j}} \delta g_{\pi_{j},t'},
\end{equation}
where the sum consists of either $0$ or $1$ summands.
For any inrcreasing sequence $u_{0} \leq \dotsc \leq u_{K}$, we have
In particular, with any $\alpha \in (0,r-1)$, we have
\begin{align*}
\MoveEqLeft
\sum_{k=0}^{K-1} \abs[\Big]{ \sum_{\pi_{j} < u_{k+1} < \pi_{j+1}} \delta f_{\floor{u_{k},\pi},\pi_{j}} \delta g_{\pi_{j},u_{k+1}} }^{r}
\\ &=
\sum_{k : \floor{u_{k},\pi} < \floor{u_{k+1},\pi}} \abs{ \delta f_{\floor{u_{k},\pi},\floor{u_{k+1},\pi}} \delta g_{\floor{u_{k+1},\pi},u_{k+1}} }^{r}
\\ &\leq
\Bigl( \sum_{k : \floor{u_{k},\pi} < \floor{u_{k+1},\pi}} \abs{ \delta f_{\floor{u_{k},\pi},\floor{u_{k+1},\pi}} }^{2r} \Bigr)^{1/2}
\Bigl( \sum_{k : \floor{u_{k},\pi} < \floor{u_{k+1},\pi}} \abs{ \delta g_{\floor{u_{k+1},\pi},u_{k+1}} }^{2r} \Bigr)^{1/2}
\\ &\leq
(V^{2r} f)^{r} \sup_{t} \abs{ \delta g_{\floor{t,\pi},t} }^{\alpha} (V^{2(r-\alpha)} g)^{r-\alpha}.
\end{align*}
This expression no longer depends on the sequence $(u_{k})$.
By L\'epingle's and Hölder's inequalities, this is bounded in $L^{q}$, and, taking into account Lemma~\ref{lem:F-pi-converges-to-F}, this converges to $0$ in $L^{q}$.
Hence, the second term on the right-hand side of \eqref{eq:8} converges to $0$ in $L^{q}V^{r}$.
Theorem~\ref{thm:cont-pprod-convergence} and L\'epingle's inequality for $f$ now imply
\[
\lim_{\pi} \Pi^{\pi}(f,g)_{t,\floor{t',\pi}}
=
\Pi(f,g)_{t,t'}
\quad\text{in } L^{q}V^{r}.
\]
Writing \eqref{eq:9} as
\begin{equation}
[f,g]^{\pi}_{t,t'}
=
\delta f^{(\pi)}_{t,t'} \delta g^{(\pi)}_{t,t'}
- \Pi^{\pi}(f,g)_{t,\floor{t',\pi}} - \Pi^{\pi}(g,f)_{t,\floor{t'}},
\end{equation}
we see that the first summand on the right-hand side converges to $\delta f \delta g$ in $L^{q}V^{r}$ by Lemma~\ref{lem:F-pi-converges-to-F}, and the remaining summands to It\^o integrals by the above discussion.
Hence, the left hand side converges, as was claimed in \eqref{eq:covariation}.
The identity \eqref{eq:int-parts} is the limit of the above equality.
\end{proof}

% Local Variables:
% mode: LaTeX
% TeX-engine: xetex
% TeX-master: "martingale-inequalities.tex"
% End:
\neuevorl{2022-01-18}

\section{A sharp inequality for the square function}
The proof of the BDG inequality that we have seen in this notes was quite indirect: we started with some $L^{2}$ identities and used the Davis decomposition to lower the $L^{p}$ exponent.
In this section, we will take a look at a more direct method for proving martingale inequalities.
In this special case, it will yield an inequality with an optimal constant.

This section follows \cite{MR1859027}.

\begin{theorem}
\label{thm:sharp-S}
Let $(f_{n})_{n\in\N}$ be a real-valued martingale.
Then, for every $N\in\N$, we have
\begin{equation}
\label{eq:sharp-S-quot}
\E \Bigl( 3\abs{f_{0}} + \sum_{n=1}^{N} \frac{\abs{df_{n}}^{2}}{f^{*}_{n}} \Bigr)
\leq
\E\Bigl( 2 f^{*}_{N} + \frac{\abs{f_{N}}^{2}}{f^{*}_{N}} \Bigr).
\end{equation}
\end{theorem}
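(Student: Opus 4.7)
The plan is to apply the Burkholder method: find a functional $V_{n}$ such that the conclusion reduces to the submartingale-style estimate $\E V_{N} \geq \E V_{0}$. The natural candidate is
\[
V_{n} := 2 f^{*}_{n} + \frac{\abs{f_{n}}^{2}}{f^{*}_{n}} - \sum_{k=1}^{n} \frac{\abs{df_{k}}^{2}}{f^{*}_{k}},
\]
with the convention $0/0 := 0$ (all three fractions vanish on $\Set{f^{*}_{n}=0}$). Then $V_{0} = 3\abs{f_{0}}$ identically, and \eqref{eq:sharp-S-quot} is exactly $\E V_{N} \geq \E V_{0}$, so it suffices to establish $\E_{n-1} V_{n} \geq V_{n-1}$ for every $n \geq 1$.

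Abbreviating $x := f_{n-1}$, $y := f^{*}_{n-1}$, $d := df_{n}$ (so $y \geq \abs{x}$, $\E_{n-1} d = 0$, $f^{*}_{n} = \max(y, \abs{x+d})$, and $\abs{f_{n}}^{2} - \abs{d}^{2} = x^{2} + 2xd$ in the real-valued case), the one-step inequality rewrites as
\[
\E_{n-1}\Bigl[ 2\max(y, \abs{x+d}) + \frac{x^{2} + 2xd}{\max(y, \abs{x+d})} \Bigr] \geq 2y + \frac{x^{2}}{y}.
\]
Since $\E_{n-1} d = 0$ absorbs any linear-in-$d$ correction, it suffices to prove the deterministic pointwise bound
\[
\Phi(d) := 2\max(y, \abs{x+d}) + \frac{x^{2} + 2xd}{\max(y, \abs{x+d})} \geq 2y + \frac{x^{2}}{y} + \frac{2x}{y} d
\]
for all $d \in \R$, with $y \geq \abs{x}$ fixed.

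This splits into two cases. When $\abs{x+d} \leq y$, both sides equal $2y + (x^{2} + 2xd)/y$, so equality holds. When $\abs{x+d} > y$, set $u := x + d$ and $\sigma := \mathrm{sgn}(u) \in \Set{\pm 1}$; multiplying $\Phi(d) - 2y - x^{2}/y - (2x/y) d$ by $y\abs{u} > 0$ yields, after a routine simplification, the factorization
\[
(\abs{u} - y)\bigl( 2\abs{u}(y - \sigma x) + x^{2} \bigr),
\]
which is non-negative since $\abs{u} > y \geq 0$ and $y \geq \abs{x} \geq \sigma x$. This closes the submartingale step, and iterating gives $\E V_{N} \geq \E V_{0} = 3\E\abs{f_{0}}$, i.e., \eqref{eq:sharp-S-quot}. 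The conceptual hurdle is identifying the correct functional $V_{n}$ together with the linear correction $(2x/y)d$ (recognisable as $\partial_{d}\Phi(d)$ evaluated at $d=0$ in the interior regime $\abs{x} < y$); once these are in hand, the remaining verification is the elementary case analysis above.
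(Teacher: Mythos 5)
Your proof is correct and is essentially the paper's argument. The functional $V_{n}$ you define satisfies $V_{n} = 3\abs{f_{0}} - U(f_{n},\tilde{S}_{n},f^{*}_{n})$, where $U(x,y,m)=y-\frac{x^{2}+2m^{2}}{m}$ is the paper's Bellman function (with $\gamma=3$) and $\tilde{S}_{n}=3\abs{f_{0}}+\sum_{j\leq n}\abs{df_{j}}^{2}/f^{*}_{j}$; so your one-step estimate $\E_{n-1}V_{n}\geq V_{n-1}$ is exactly the paper's Proposition on concavity of $U$ combined with $\E_{n-1}(df_{n})=0$, and iterating from $\E V_{0}=3\E\abs{f_{0}}$ is the same telescoping. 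The only cosmetic difference is in the nontrivial case $\abs{x+d}>y$: the paper normalizes by $m$, sets $t=\abs{x+h}/m,\ \tilde{t}=\abs{h}/m$ with $\abs{t-\tilde{t}}\leq 1$, and optimizes over $\tilde{t}$, whereas you multiply through by $y\abs{u}$ and factor directly as $(\abs{u}-y)\bigl(2\abs{u}(y-\sigma x)+x^{2}\bigr)$; the factorization is arguably tidier, but it is the same computation. You also make the $0/0:=0$ convention explicit on the degenerate set $\Set{f^{*}_{n}=0}$, which the paper leaves tacit; that is a welcome clarification, not a difference in substance.
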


We will see later that the inequality \eqref{eq:sharp-S-quot} implies the Davis inequality for the martingale square function with the sharp constant.
In applications, one replaces \eqref{eq:sharp-S-quot} by the slightly weaker inequality
\[
\E \Bigl( 3\abs{f_{0}} + \sum_{n=1}^{N} \frac{\abs{df_{n}}^{2}}{f^{*}_{n}} \Bigr)
\leq
3 \E f^{*}_{N}.
\]
However, the preculiar form of the right-hand side of \eqref{eq:sharp-S-quot} permits to show this result \emph{by induction on $N$}.

\subsection{A Bellman function}
The inductive step in the proof of Theorem~\ref{thm:sharp-S} is usually stated as a concavity property of a special function.
Functions used in such arguments are called ``Bellman functions''; many more examples can be found in the books \cite{MR2964297,VasVol_Bellman_book}.

Thoroughout this section,
\[
\calD := \Set{ (x,t,z) \in \R\times\R_{\geq 0}\times\R_{\geq 0} \given \abs{x}\leq z }.
\]
We define $U : \calD \to \R$ by
\[
U(x,y,m) := y - \frac{\abs{x}^{2}+(\gamma-1)m^{2}}{m},
\]
where $\gamma = 3$.
The main feature of this function is the following concavity property.
\begin{proposition}
\label{prop:concavity}
For any $x,h \in \R$ and $y,m \in \R_{\geq 0}$ with $\abs{x}\leq m$, we have
\begin{equation}
\label{eq:inductive-step}
U(x+h,y+\frac{\abs{h}^{2}}{(\abs{x+h}\vee m)},\abs{x+h}\vee m)
\leq
U(x,y,m) - \frac{2(x) h}{m}.
\end{equation}
\end{proposition}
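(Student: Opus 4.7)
The plan is to prove Proposition~\ref{prop:concavity} by a direct algebraic verification: despite being phrased as a concavity-style property, the inequality collapses to an elementary comparison once $U$ is expanded using $\gamma = 3$. The substitution is the whole point of the Bellman function, which has been designed so that no subtle analysis enters. Throughout I treat the expression $2(x)h/m$ on the right-hand side as $2xh/m$, i.e.\ the linear correction one would expect from $\partial_{x} U = -2x/m$.

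First I would plug in $U(a,b,c) = b - a^{2}/c - 2c$ on both sides. Writing $M := \abs{x+h}\bmax m$, the miracle on the left is that the second argument $y + h^{2}/M$ conspires with $-(x+h)^{2}/M$ to give $y - (x^{2}+2xh)/M - 2M$, because $(x+h)^{2} - h^{2} = x^{2}+2xh$. The right-hand side is $y - x^{2}/m - 2m - 2xh/m$. Cancelling $y$, the inequality rearranges to
\[
(x^{2}+2xh)(M-m) \leq 2mM(M-m).
\]
Since $M \geq m$ always, both sides are zero when $M = m$, so the claim is trivial in that case.

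In the remaining case $M > m$, by definition $M = \abs{x+h}$, and dividing by $M-m > 0$ reduces the target to $(x+h)^{2} - h^{2} \leq 2m\abs{x+h}$, i.e.\ $(\abs{x+h}-m)^{2} \leq m^{2}+h^{2}$. Here the hypothesis $\abs{x}\leq m$ enters exactly once via the triangle inequality: $\abs{x+h} - m \leq \abs{x}+\abs{h} - m \leq \abs{h}$, and since $\abs{x+h} > m \geq 0$ we may take square roots, giving $\abs{x+h}-m \leq \abs{h} \leq \sqrt{m^{2}+h^{2}}$.

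There is no genuinely hard step; the only thing to check is that the signs work out and that the hypothesis $\abs{x}\leq m$ is invoked in precisely the right place. The instructive observation is that the unusual denominator $\abs{x+h}\bmax m$ appearing in the second argument of $U$ on the left is chosen exactly so that the $M$-dependent terms acquire a common factor of $M-m$, leaving behind a bound that follows from a one-line triangle-inequality estimate.
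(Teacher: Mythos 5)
Your proof is correct and follows essentially the same route as the paper: expand $U$ with $\gamma=3$, split on whether $\abs{x+h}\leq m$ (giving equality) or $\abs{x+h}>m$, and verify the resulting elementary inequality. Your finish is cleaner than the paper's, though: after factoring out the common $(M-m)$ you reduce to $x^{2}+2xh\leq 2m\abs{x+h}$ and close it with one triangle-inequality bound, whereas the paper passes to the variables $t=\abs{x+h}/m$, $\tilde t=\abs{h}/m$ and optimizes over $\tilde t$ subject to $\abs{t-\tilde t}\leq 1$ — a detour that is really there to explain \emph{why} $\gamma=3$ is the smallest admissible constant rather than just to check that it works.
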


\begin{proof}[Proof of Theorem~\ref{thm:sharp-S} assuming Proposition~\ref{prop:concavity}]
Using \eqref{eq:inductive-step} with
\[
x = f_{n},
\quad
y = \tilde{S}_{n} := \gamma\abs{f_{0}}
+ \sum_{j=1}^{n} \frac{\abs{df_{j}}^{2}}{f^{*}_{j}},
\quad
m = f^{*}_{n},
\quad
h = df_{n+1},
\]
we obtain
\begin{equation}
U(f_{n+1},\tilde{S}_{n+1},f^{*}_{n+1})
\leq
U(f_{n},\tilde{S}_{n},f^{*}_{n}) - \frac{2 f_{n} df_{n+1}}{f^{*}_{n}}.
\end{equation}
By conditional independence, we have
\[
\E (\frac{2 f_{n} df_{n+1}}{f^{*}_{n}} \abs{ \calF_{n} )
=
\frac{2f_{n}}{f^{*}_{n}} \E ( df_{n+1} } \calF_{n} )
= 0.
\]
Taking expectations, we obtain
\[
\E U(f_{n+1},\tilde{S}_{n+1},f^{*}_{n+1})
\leq
\E U(f_{n},\tilde{S}_{n},f^{*}_{n}).
\]
Iterating this inequality, we obtain
\[
\E \Bigl( 3 \abs{f_{0}} + \sum_{n=1}^{N} \frac{\abs{df_{n}}^{2}}{(f^{*}_{n})} - \frac{\abs{f_{N}}^{2}}{f^{*}_{N}} - 2 f^{*}_{N} \Bigr)
=
\E U(f_{N},\tilde{S}_{N},f^{*}_{N})
\leq
\E U(f_{0},\tilde{S}_{0},f^{*}_{0})
=
0.
\qedhere
\]
\end{proof}

\begin{remark}
The above proof in fact shows the pathwise inequality
\[
3 \abs{f_{0}} + \sum_{n=1}^{N} \frac{\abs{df_{n}}^{2}}{(f^{*}_{n})}
\leq
2 f^{*}_{N} + \frac{\abs{f_{N}}^{2}}{f^{*}_{N}}
- \sum_{n=1}^{N}
\frac{2(f_{n}) df_{n+1}}{(f^{*}_{n})}.
\]
\end{remark}

\begin{proof}[Proof of Proposition~\ref{prop:concavity}]
If $\abs{x+h}\leq m$, then
\begin{align*}
\MoveEqLeft
U(x+h,y+\frac{\abs{h}^{2}}{(\abs{x+h}\vee m)},\abs{x+h}\vee m)
\\ &=
(y+\frac{\abs{h}^{2}}{m})
-\frac{\abs{x+h}^{2}+(\gamma-1)m^{2}}{m}
\\ &=
y+\frac{\abs{h}^{2}}{m}
-
\frac{\abs{x}^{2} + 2x h + \abs{h}^{2}+(\gamma-1)m^{2}}{m}
\\ &=
y
-
\frac{\abs{x}^{2} +(\gamma-1)m^{2}}{m}
-
\frac{2 x h}{m}
\\ &=
U(x,y,m) - \frac{2(x) h}{m}.
\end{align*}
If $\abs{x+h}>m$, then we need to show
\begin{equation}
(y+\frac{\abs{h}^{2}}{\abs{x+h}})
- \frac{\abs{x+h}^{2} + (\gamma-1) \abs{x+h}^{2}}{\abs{x+h}}
\leq
y - \frac{\abs{x}^{2} + (\gamma-1)m^{2}}{m}
- \frac{2x h}{m}.
\end{equation}
This is equivalent to
\begin{equation}\label{eq:1}
\frac{\abs{h}^{2}-\gamma \abs{x+h}^{2}}{\abs{x+h}}
\leq
\frac{-\abs{x}^{2}-(\gamma-1)m^{2}}{m} - \frac{2x h}{m}.
\end{equation}
The inequality \eqref{eq:1} is equivalent to
\[
\frac{\abs{h}^{2}m}{m^{2}\abs{x+h}} - \gamma \frac{\abs{x+h}}{m}
\leq
\frac{-\abs{x+h}^{2}+\abs{h}^{2}}{m^{2}} -(\gamma-1).
\]
Let $t := \abs{x+h}/m > 1$ and $\tilde{t} := \abs{h}/m$.
Note that $\abs{t-\tilde{t}} = \abs{\abs{x+h}-\abs{h}}/m \leq \abs{x}/m \leq 1$.
With this notation, \eqref{eq:1} is equivalent to
\[
\tilde{t}^{2}/t-\gamma t
\leq
-t^{2}+\tilde{t}^{2}-(\gamma-1),
\]
or
\[
\gamma
\geq
\frac{1}{t-1} \bigl( t^{2}-1 - \tilde{t}^{2}(1-1/t) \bigr)
=
(t+1) - \tilde{t}^{2}/t.
\]
Hence, it suffices to ensure
\[
\gamma \geq
\sup_{t>1, \abs{t-\tilde{t}} \leq 1} (t+1) - \tilde{t}^{2}/t.
\]
The supremum in $\tilde{t}$ is assumed for $\tilde{t} = (t-1)$, so this condition becomes
\[
\gamma \geq
\sup_{t>1} t+1 - (t-1)^{2}/t
=
\sup_{t>1} 3 - 1/t
=
3.
\]
\end{proof}

\subsection{Sharp constant in the Davis inequality for the square function}
\begin{proposition}
\label{prop:Davis-S-upper-bd}
Let $f$ be a real-valued martingale.
Then
\[
\E Sf \leq \sqrt{3} \E f^{*}.
\]
\end{proposition}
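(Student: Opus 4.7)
The plan is to derive the Davis inequality as a direct consequence of Theorem~\ref{thm:sharp-S} by a weighted Cauchy--Schwarz trick, using $f^{*}_{n}$ as the weight. The key observation is that dividing by $f^{*}_{n}$ inside the sum (as appears on the left of \eqref{eq:sharp-S-quot}) and multiplying back by it gives $Sf_{N}^{2}$, while the extra factor $f^{*}_{N}$ can be pulled out because $f^{*}_{n} \leq f^{*}_{N}$.

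First, I would reduce to a finite stage $N$ and appeal to monotone convergence at the end. Then I would bound the right-hand side of \eqref{eq:sharp-S-quot} by $3 \E f^{*}_{N}$, using $|f_{N}|^{2}/f^{*}_{N} \leq f^{*}_{N}$ and discarding the nonnegative term $3|f_{0}|$ on the left. This yields
\[
\E \sum_{n=1}^{N} \frac{|df_{n}|^{2}}{f^{*}_{n}} \leq 3\, \E f^{*}_{N}.
\]
Next, pathwise, since $f^{*}_{n} \leq f^{*}_{N}$ for $n \leq N$,
\[
Sf_{N}^{2} = \sum_{n=1}^{N} \frac{|df_{n}|^{2}}{f^{*}_{n}}\, f^{*}_{n} \leq f^{*}_{N} \sum_{n=1}^{N} \frac{|df_{n}|^{2}}{f^{*}_{n}},
\]
so $Sf_{N} \leq (f^{*}_{N})^{1/2} \bigl( \sum_{n=1}^{N} |df_{n}|^{2}/f^{*}_{n} \bigr)^{1/2}$.

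Finally, applying Cauchy--Schwarz in expectation and inserting the previous bound gives
\[
\E Sf_{N} \leq (\E f^{*}_{N})^{1/2} \Bigl( \E \sum_{n=1}^{N} \frac{|df_{n}|^{2}}{f^{*}_{n}} \Bigr)^{1/2} \leq (\E f^{*}_{N})^{1/2} (3 \E f^{*}_{N})^{1/2} = \sqrt{3}\, \E f^{*}_{N}.
\]
Letting $N \to \infty$ via monotone convergence completes the proof. There is essentially no obstacle here; the content lives entirely in Theorem~\ref{thm:sharp-S}, and the present proposition is just the combinatorial extraction of the Davis inequality from the Bellman-type estimate, with the sharp constant $\sqrt{3}$ arising from the $\gamma=3$ appearing in the definition of $U$.
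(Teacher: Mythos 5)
Your proof is correct and follows essentially the same route as the paper: pathwise weighted Cauchy--Schwarz bounding $Sf_N$ by $(f^*_N)^{1/2}\bigl(\sum_n |df_n|^2/f^*_n\bigr)^{1/2}$, then Cauchy--Schwarz in expectation, then the bound $\E \sum_n |df_n|^2/f^*_n \leq 3\E f^*_N$ extracted from \eqref{eq:sharp-S-quot} by dropping $3|f_0|$ and using $|f_N|^2/f^*_N \leq f^*_N$. The paper merely compresses these steps into a single display, so you have simply unpacked the same argument.
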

\begin{proof}
By Hölder's inequality and \eqref{eq:sharp-S-quot}, we obtain
\begin{align*}
\E Sf
&\leq
\E \bigl( (f^{*})^{1/2} \bigl( \sum_{n=1}^{N} \frac{\abs{df_{n}}^{2}}{f^{*}_{n}} \bigr)^{1/2} \bigr)
\\ &\leq
\bigl( \E f^{*} \bigr)^{1/2}
\bigl( \E \bigl( \sum_{n=1}^{N} \frac{\abs{df_{n}}^{2}}{f^{*}_{n}} \bigr) \bigr)^{1/2}
\\ &\leq
\sqrt{3} \E f^{*}.
\qedhere
\end{align*}
\end{proof}

The next result shows that $\sqrt{3}$ is the best possible constant in Proposition~\ref{prop:Davis-S-upper-bd}.
\begin{proposition}
\label{prop:Davis-S-lower-bd}
Let $\gamma\geq 0$.
Suppose that the inequality
\begin{equation}
\label{eq:Davis-S-constant}
\E Sf \leq \gamma \E f^{*}
\end{equation}
holds for all real-valued simple martingales $f$ with $f_{0}=0$.
Then $\gamma \geq \sqrt{3}$.
\end{proposition}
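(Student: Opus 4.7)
To show $\gamma\ge\sqrt 3$, my plan is to construct, for each $\epsilon>0$, a simple martingale $f$ with $f_0=0$ satisfying $\E Sf\ge(\sqrt 3-\epsilon)\E f^*$. The design is read off from the proof of Proposition~\ref{prop:Davis-S-upper-bd}, which chains three inequalities: (a)~$Sf_N^2\le f^*_N\sum df_n^2/f^*_n$ (from $f^*_n\le f^*_N$); (b)~Cauchy--Schwarz applied to $\E\sqrt{f^*_N\cdot\sum df_n^2/f^*_n}$; and (c)~the Bellman bound $\E\sum df_n^2/f^*_n\le\E f_N^2/f^*_N+2\E f^*_N\le 3\E f^*_N$ coming from~\eqref{eq:sharp-S-quot}. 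Any extremizing sequence must asymptotically saturate all three.

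Reading off the equality cases: (a) is tight when $f^*_n=f^*_N$ whenever $df_n\ne 0$; (b) is tight when $f^*_N$ and $\sum df_n^2/f^*_n$ are pointwise proportional; and, by Proposition~\ref{prop:concavity}, (c) requires every new-maximum step to realize the extremal parameters $t=|f_{n+1}|/f^*_n\to\infty$ and $\tilde t=t-1$, equivalently $f_n=\mathrm{sgn}(f_{n+1})\,f^*_n$ and $|f_{n+1}|\gg f^*_n$, together with $|f_N|\approx f^*_N$ at the terminal time so that $\E f_N^2/f^*_N\approx\E f^*_N$. Jointly these impose $\sum df_n^2/f^*_n\approx 3 f^*_N$ pointwise, and hence $Sf\approx\sqrt{3}\,f^*_N$ pointwise on the relevant paths.

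The candidate family: for parameters $K,N\to\infty$, start $f_0=0$, set $f_1=\pm 1$ symmetrically, and iterate a ``geometric jump from the current extreme'' step---from $(f_n,f^*_n)=(\pm m,m)$, with probability $1/K^2$ send $f_{n+1}=Km$ with the same sign (saturating the $t\to\infty$ case of (c)), and with probability $1-1/K^2$ send $f_{n+1}=mK/(K+1)$ (the martingale-consistent value staying just below the current maximum) and then stop the growth. A direct check gives the martingale identity $(1/K^2)(Km)+(1-1/K^2)\cdot mK/(K+1)=m$. To also saturate (b), I would append after each stop a short bounded oscillation phase, taking values in $[-m,m]$, that deposits additional squared increments without enlarging $f^*$, calibrated so that the cumulative $\sum df_n^2/f^*_n$ reaches $\approx 3 f^*_N$ and $|f_N|$ lands near $\pm m$.

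The principal obstacle is that a bounded martingale starting exactly at its supremum is constant, so the oscillation phase must be run on a set where $|f|$ is strictly below $f^*$; any such strict separation degrades equality (a). Balancing the three scales---the multiplicative jump size $K$, the number of growth steps, and the length/amplitude of the oscillation---so that the combined loss in (a), (b), (c) is $o(\E f^*)$ is the delicate technical step, and is where I expect the bulk of the quantitative work to lie.
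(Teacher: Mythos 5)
Your plan is a genuinely different route from the paper, which proves $\gamma\geq\sqrt{3}$ not by exhibiting extremizers but by an abstract Bellman argument: it defines the optimal Bellman function $U$ as a supremum over simple martingales, verifies a supermartingale property and scaling, derives the recursion $u(1,1-)\geq u(0,2-)+u(1,1-)$ by letting the jump size $r\to\infty$, and concludes $u(0,2-)\leq 0$ and hence $\gamma\geq\sqrt{3-\epsilon}$. Explicit near-extremizers never appear. Your approach is to construct them directly, which could in principle work but is strictly harder, and as written it does not close.

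Beyond the gap you yourself flag (calibrating the scales), there is a concrete arithmetic problem with the proposed family. On the ``stop'' branch you send $f$ from $m$ to $mK/(K+1)$, which is within $O(m/K)$ of the running maximum $m$. Any oscillation phase taking values in $[-m,m]$ and starting at $x_0=mK/(K+1)$ has
\[
\E\,S^2(\text{osc})=\E f_{\mathrm{end}}^2-x_0^2\leq m^2-\Bigl(\frac{mK}{K+1}\Bigr)^2=\frac{(2K+1)\,m^2}{(K+1)^2}=O(m^2/K),
\]
so it contributes only $O(m/K)$ to $\sum_n |df_n|^2/f_n^*$, not the extra $\approx 2m$ needed to push the total from $\approx m$ up to $\approx 3m$. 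Meanwhile the geometric growth phase contributes $\approx(1-1/K)\,f^*_N$, so altogether $\E Sf/\E f^*\to 1$, not $\sqrt 3$, as $K\to\infty$. The extremal step implicit in the paper's proof is different in a crucial way: the non-jump branch sends $f$ from its running maximum all the way down to $0$ (with probability $r/(r+1)$, jump size $r$), leaving the full interval $[-m,m]$ available for the random walk that builds up $\tilde S$. Also note that the first step from $(f_0,\tilde S_0,f_0^*)=(0,0,0)$ to $(\pm1,1,1)$ already loses a fixed amount ($U$ drops from $0$ to $-2$), which is exactly why the paper works with $f^*\vee 1$ rather than $f^*$, and why the argument there proceeds by comparing values of $u$ rather than summing a telescoping deficit along a single path. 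To repair your construction you would need to change the non-jump branch to drop to (or near) $0$ and re-balance the jump probability to $\approx 1/K$; even then, turning the heuristic into a proof requires quantitative control that the Bellman route sidesteps entirely.
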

\begin{proof}
Let $V: \calD \to \R$ be given by
\[
V(x, t, z) := \sqrt{t}-\gamma z.
\]
Define $U : \calD \to (-\infty, \infty]$ by
\begin{equation}
\label{eq:B5}
U(x, t, z) :=
\sup \Set[\Big]{ \E V (f_{\infty}, t+S^{2}(f), f^{*} \vee z ) \given f_{0}=x },
\end{equation}
where the supremum is taken over all simple martingales (that is, martingales $f$ that take only finitely many values).
This function will play the role of the ``best'' Bellman function for the inequality \eqref{eq:Davis-S-constant}.

Substituting the constant martingale into the definition \eqref{eq:B5}, we see that, for any $(x,t,z) \in \cal D$, we have
\begin{equation}
\label{eq:B1}
V(x, t, z) \leq U(x, t, z),
\end{equation}
We claim that, for any simple measurable function $d: \Omega \to \R$ with $\E d=0$, we have
\begin{equation}
\label{eq:B3}
\E U (x+d, t+\abs{d}^{2}, \abs{x+d} \vee z) \leq U(x, t, z).
\end{equation}
\begin{proof}[Proof of \eqref{eq:B3}]
Fix $x \in \R$ with $\abs{x} \leq z$.
Let $d : \Omega \to \R$ be a simple function with $\E d=0$ and $\P(d=s_{j})=p_{j} \in(0,1]$ for $1 \leq j \leq m$, where $\sum_{j=1}^{m} p_{j}=1$.
Choose $b_{j} \in \R$ so that
\[
U\left(x+s_{j}, t+\abs{s_{j}}^{2}, \abs{x+s_{j}} \vee z\right)>b_{j} .
\]
Then, by the definition of $U$, there exists a martingale $f^{j}$ with $f^{j}_{0} = x+s_{j}$ satisfying
\[
\E V\bigl(f_{\infty}^{j}, t+\abs{s_{j}}^{2}+S^{2}\left(f^{j}\right),(f^{j})^{*} \vee z\bigr) > b_{j} .
\]
Let $f$ be a martingale with $f_{1}=x+d$ which continues with the same distribution as $f^{j}$ (rescaled by $p_{j}$ in measure) on the set $\Set{d=s_{j}}$.
Because $\abs{x} \leq z$, we have $f^{*} \vee z=(f^{j})^{*} \vee z$ on $\Set{d=s_{j}}$.
By \eqref{eq:B5}, we have
\begin{align*}
U(x, t, z)
&\geq
E V\left(f_{\infty}, t+S^{2}(f), f^{*} \vee z\right)
\\ &=
\sum_{j=1}^{m} \int_{\Set{d=s_{j}}} V\left(f_{\infty}, t+\abs{s_{j}}^{2}+\abs{d f_{2}}^{2}+\cdots, f^{*} \vee z\right) \dif\P \\
&=
\sum_{j=1}^{m} p_{j} \E V\left(f_{\infty}^{j}, t+\abs{s_{j}}^{2}+S^{2}(f^{j}), (f^{j})^{*} \vee z \right) \\
&\geq \sum_{j=1}^{m} p_{j} b_{j}.
\end{align*}
Using the freedom in the choice of $b_{j}$'s, this implies \eqref{eq:B3}.
\end{proof}

For every $\lambda>0$, we have
\[
V(x, t, z)=\lambda V(x / \lambda, t / \lambda^{2}, z / \lambda),
\quad
U(x, t, z)=\lambda U(x / \lambda, t / \lambda^{2}, z / \lambda).
\]
Define $u,v : [-1,1] \times \R_{\geq 0} \to (-\infty,\infty]$ by
\[
v(x, t) := V(x, t, 1),
\quad
u(x, t)=U(x, t, 1).
\]
Since $t \mapsto V(x, t, z)$ is nondecreasing, the same holds for $t \mapsto u(x, t)$, so left limits exist.
We claim that
\begin{equation}
\label{eq:16}
u(1,1-) \geq u(0,2-)+u(1,1-) .
\end{equation}
To see this, let $0<s<1 < r$.
Let $d$ be a random variable such that $\P(d=-1) = r/(r+1)$ and $\P(d=r)=1/(r+1)$.
Then, \eqref{eq:B3} and scaling imply that
\begin{align*}
u(1, s)
&= U(1,s,1) \\
& \geq \frac{r}{r+1} U(0, s+1, 1)+\frac{1}{1+r} U(1+r, s+r^{2}, 1+r )
\\ &=
\frac{r}{r+1} u(0, s+1)+u(1,(s+r^{2})/(1+r)^{2})
\\ &\geq
\frac{r}{r+1} u(0, s+1)+u(1,r^{2}/(1+r)^{2}),
\end{align*}
where we used monotonicity of $u$ in the second variable in the last step.
Both summands on the RHS are increasing in both variables $s,r$.
Taking $r\to\infty$ and $s\to 1$, we obtain \eqref{eq:16}.

If $f$ is a simple martingale with $f_{0}=0$, then, by the hypothesis \eqref{eq:Davis-S-constant}, we have
\[
\E V(f_{\infty},0+S^{2}f,f^{*}\vee 1)
=
\E Sf - \gamma \E (f^{*} \vee 1)
\leq
\E Sf - \gamma \E f^{*}
\leq
0.
\]
By the definition \eqref{eq:B5}, this implies that $u(0,0)=U(0,0,1) \leq 0$.

Let $d$ be a random variable with $\P(d=1)=\P(d=-1)=1/2$.
Using \eqref{eq:B3} and the fact that $u(x, t)=u(-x, t)$, we obtain
\[
0 \geq u(0,0) \geq \frac{1}{2}[u(1,1)+u(-1,1)]=u(1,1) \geq u(1,1-) \geq v(1,1-)
=1-\gamma.
\]
This implies that $u(1,1-)$ is finite.
So \eqref{eq:16} yields $u(0,2-) \leq 0$.
Let $d$ be a random variable with $\P(d=1)=\P(d=-1)=1/2$.
By \eqref{eq:B3}, for every $\epsilon>0$, we obtain
\begin{multline*}
0 \geq
u(0,2-)
\geq
u(0,2-\epsilon)
\geq
\frac{1}{2} [u(1,3-\epsilon)+u(-1,3-\epsilon)]
\\=
u(1,3-\epsilon)
\geq
v(1,3-\epsilon)
=
\sqrt{3-\epsilon}-\gamma
\end{multline*}
Therefore, $\gamma \geq \sqrt{3-\epsilon}$.
Since $\epsilon>0$ was arbitrary, this implies $\gamma \geq \sqrt{3}$.
\end{proof}

% Local Variables:
% mode: LaTeX
% TeX-engine: xetex
% TeX-master: "martingale-inequalities.tex"
% End:

\printbibliography[heading=bibintoc]
\end{document}